\newcommand{\myitem}[1]{%
\item[(#1)]\protected@edef\@currentlabel{#1}%
}
\newcommand{\llangle}{\langle\hspace{-2.7pt}\langle}
\newcommand{\rrangle}{\rangle\hspace{-2.7pt}\rangle}
\tikzset{
  symbol/.style={
    draw=none,
    every to/.append style={
      edge node={node [sloped, allow upside down, auto=false]{$#1$}}}
  }
}
\mathchardef\mhyphen="2D % Define a "math hyphen"
\newtheorem{thm}{Theorem}
\newtheorem*{thm*}{Theorem}
\newtheorem{prop}[thm]{Proposition}
\newtheorem{cor}[thm]{Corollary}
\newtheorem{corollary}[thm]{Corollary}
\newtheorem{thm&defn}[thm]{Theorem \& Definition}
\newtheorem{thr}[thm]{Theorem}
\newtheorem{theorem}[thm]{Theorem}
\newtheorem{lemma}[thm]{Lemma}
\newtheorem{proposition}[thm]{Proposition}
\theoremstyle{definition}
\newtheorem{definition}[thm]{Definition}
\newtheorem*{notation*}{Notation}
\newtheorem{convention}[thm]{Convention}
\newtheorem*{convention*}{Convention}
\newtheorem{example}[thm]{Example}
\newtheorem*{claim*}{Claim}
\theoremstyle{remark}
\newtheorem*{rem*}{Remark}
\newtheorem{remark}[thm]{Remark}
\newtheorem{rmk}[thm]{Remark}
\numberwithin{equation}{section}
\numberwithin{thm}{section}
\newcommand{\bZ}{\mathbb{Z}}
\newcommand{\bN}{\mathbb{N}}
\newcommand{\bQ}{\mathbb{Q}}
\newcommand{\bR}{\mathbb{R}}
\newcommand{\bC}{\mathbb{C}}
\newcommand{\bH}{\mathbb{H}}
\newcommand{\bP}{\mathbb{P}}
\newcommand{\bF}{\mathbb{F}}
\newcommand{\cC}{\mathcal{C}}
\newcommand{\cI}{\mathcal{I}}
\newcommand{\cL}{\mathcal{L}}
\newcommand{\cO}{\mathscr{O}}
\newcommand{\cV}{\mathcal{V}} %bundle %locally semistable reduction of a nbd of a singularity
\newcommand{\cW}{\mathcal{W}} %bundle %family
\newcommand{\cX}{\mathcal{X}}
\newcommand{\cY}{\mathcal{Y}}
\newcommand{\cU}{\mathcal{U}} %locally deformation of a nbd of excep. divisor
\newcommand{\cZ}{\mathcal{Z}}
\newcommand{\fV}{\mathfrak{V}} %locally deformation of a singularity
\newcommand{\fS}{\mathfrak{S}}
\newcommand{\fX}{\mathfrak{X}}
\newcommand{\fg}{\mathfrak{g}}
\newcommand{\rf}{\mathrm{f}} %a temporary notation used in the proof of Lemma \ref{lm:mt}
\newcommand{\rg}{\mathrm{g}} %a temporary notation used in the proof of Lemma \ref{lm:mt}
\newcommand{\rj}{\bm{j}} %automorphy factor
\newcommand{\rk}{\bm{k}} %quasi-automorphy factor
\newcommand{\rE}{\mathbb{E}} %extremal GW?
\newcommand{\rI}{\mathrm{I}} %integral
\newcommand{\rX}{\mathrm{X}} %modular curve
\DeclareMathOperator{\rL}{\mathrm{L}} %L-function
\newcommand{\rF}{\mathrm{F}} % hypergeometric function
\DeclareMathOperator{\Eis}{E}
\newcommand{\T}{T} %ambient space
\newcommand{\sI}{\mathsf{I}} % ideal 
\newcommand{\sR}{\mathsf{R}} % ring 
\newcommand{\sS}{\mathsf{S}} % ring 
\newcommand{\oM}{\overline{\mathcal{M}}} %moduli
\newcommand{\oY}{\bar{Y}}
\newcommand{\oU}{\bar{U}} %nbd of singulaity
\newcommand{\obH}{\bH^*} %compactified upper half plane
\newcommand{\ocU}{\bar{\mathcal{U}}} %locally smoothing of a singularity
\newcommand{\tcU}{\widetilde{\mathcal{U}}} %locally deformation of a singularity
\newcommand{\hcO}{\hat{\cO}} %completion local structure ring
\newcommand{\moe}{\mu} % M\"obius function
\newcommand{\PSR}[1]{\llbracket #1\rrbracket}
\newcommand{\Laurent}[1]{(\hspace{-0.25em}(#1)\hspace{-0.25em})}
\newcommand{\Xres}{Y} %resoltion space in a transition
\newcommand{\Xsing}{\bar{Y}} %singular space in a transition
\newcommand{\Xsm}{X} %smoothing space in a transition
\newcommand{\crpcon}{\phi} %crpeant contraction in a transition
\newcommand{\tauq}{\tau_q} % variable of upper half plane
\newcommand{\trpt}[1]{\tau_{q,#1}^\infty} % transition point
\renewcommand*\env@matrix[1][\arraystretch]{%
  \edef\arraystretch{#1}%
  \hskip -\arraycolsep
  \let\@ifnextchar\new@ifnextchar
  \array{*\c@MaxMatrixCols c}}
\newcommand{\mx}[4]{\begin{pmatrix}[0.8]
    #1 & #2 \\
    #3 & #4
\end{pmatrix}}
\newcommand{\smx}[4]{\left(\begin{smallmatrix}
    #1 & #2 \\
    #3 & #4
\end{smallmatrix}\right)}
\newcommand{\Def}{\mathrm{Def}}
\newcommand{\Sing}{\mathrm{Sing}}
\newcommand{\ev}{\mathrm{ev}}
\newcommand{\loc}{\mathrm{loc}}
\newcommand{\vir}{\mathrm{vir}}
\newcommand{\pt}{\mathrm{pt}}
\newcommand{\reg}{\mathrm{reg}}
\newcommand{\chitop}{\chi_\mathrm{top}} %topological Euler characteristic
\newcommand{\typeII}{\Xres \xlongrightarrow{\crpcon} \Xsing \rightsquigarrow \Xsm} %Type II extremal transition
\newcommand{\Rloc}{\sR_\loc}
\newcommand{\Iloc}{\sI_\loc}
\newcommand{\Xloc}{X_\loc}
\newcommand{\Yloc}{Y_\loc}
\newcommand{\ii}{\mathtt{i}} %imaginary i
\newcommand{\I}{\mathrm{I}}
\newcommand{\II}{\mathrm{I\!I}}
\DeclareMathOperator{\Proj}{Proj}
\DeclareMathOperator{\Aut}{Aut}
\DeclareMathOperator{\NE}{NE} %Mori cone
\DeclareMathOperator{\Pic}{Pic} %Picard group
\DeclareMathOperator{\Exc}{Ex} %exceptional locus
\DeclareMathOperator{\Spec}{Spec}
\DeclareMathOperator{\Bl}{Bl} %blow-up
\newcommand{\ignore}[1]{}
\title{Quantum Extremal Transitions \\ and Special L-values}
\author{Shuang-Yen Lee}
\address[]{Department of Mathematics, National Taiwan University, Taipei 10617, Taiwan}
\email{d10221004@ntu.edu.tw}
\author{Chin-Lung Wang}
\address[]{Department of Mathematics and Taida Institute for Mathematical Sciences (TIMS), National Taiwan University, Taipei 10617, Taiwan}
\email{dragon@math.ntu.edu.tw}
\author{Sz-Sheng Wang}
\address[]{Department of Applied Mathematics, National Yang Ming Chiao Tung University, Hsinchu 30010, Taiwan}
\email{sswangtw@math.nctu.edu.tw}
\subjclass{14N35, 14E30}
\date{August 1, 2025}
\begin{document}

%%%%%%%%%%%%%%%%%%%%
%%%%%%%%%%%%%%%%%%%%
\begin{abstract} 

A threefold extremal transition $Y \searrow X$ consists of a crepant extremal contraction $\phi \colon Y \to \bar Y$ with curve class $\ell \in \NE(Y)$, followed by a smoothing $\bar Y\rightsquigarrow X$. We consider the Type II case that $\phi$ contracts a divisor $E$ to a point and prove that the quantum cohomology $QH(X)$ is obtained from $QH(Y)$ via \emph{analytic continuation}, \emph{regularization}, and \emph{specialization} in $Q^\ell$. Besides roots of unity, special $\rL$-values appear in $\lim Q^\ell$ whenever $\bar Y$ admits more than one smoothings. 

Further techniques are employed and explored beyond known tools in Gromov--Witten theory including (i) the \emph{canonical local B model} attached to $Y \searrow X$, (ii) existence of \emph{semistable reduction of double point type} for the smoothing, (iii) the \emph{modularity of the extremal function} $\rE \coloneqq E^3/\langle E, E, E\rangle^Y$, and (iv) \emph{periods integrals of Eisenstein series}. Our study provides a geometric framework linking classifications of del Pezzo surfaces, Ramanujan's theta functions, and Zagier's special ODE list via Type II transitions. 
\end{abstract}

\maketitle

\tableofcontents

\setcounter{section}{-1}

\subsection*{Notation and convention}
%%%%%%%%%%%%%%%%%%%%

\begin{itemize}\itemindent = -1.5em
    \item In this paper the coefficients for cohomology $H^k(M)$ are
    in $\bC$ if not stated otherwise. 
    \item To shorten notation, we set $H(M) \coloneqq H^{\ev} (M)$, the even part of the cohomology groups.
    \item $\NE (M)$ is the integral Mori cone, and $\NE (M)_\bR = \NE (M) \otimes_\bZ \bR$. 
    \item $q^\beta$ is the Novikov variable for $\beta \in \NE (M)$.  
    \item $P^\beta = q^\beta e^{(t, \beta)}$, where $t \in H^2(M)$ is the small parameter of the $I$-function.  
    \item $Q^\beta = q^\beta e^{(\tau, \beta)}$, where $\tau \in H^2(M)$ is the small parameter of the $J$-function. 
    \item $\hat{D}$ is the quantization operator of a divisor $D$, defined by $\hat{D} P^\beta = z(D, \beta) P^\beta$.  
    \item $\delta^D$ is the power operator of $D$, defined by $\delta^D Q^\beta = (D, \beta) Q^\beta$.
    \item $\theta = P^\ell \frac{\partial}{\partial P^\ell}$ where $\ell$ is the extremal curve class for the contraction $\crpcon \colon \Xres \to \Xsing$.  
    \item $q = e^{2\pi \ii \tauq}$ where $\tauq \in \bH$.  
    \item $[z^{-k}]F$ denotes the coefficient of $z^{-k}$ in $F$. 
    \item $\bm{1}_N$ denotes the trivial Dirichlet character modulo $N$. 
    \item $\omega_N = e^{2\pi \ii/N}$ is the $N$-th root of unity. 
    \item $\fS_n$ denotes the symmetric group of degree $n$.  
    \item $R\PSR{x}$ is the formal power series ring with coefficients in $R$. 
    \item $R\Laurent{x}$ is the formal Laurent series ring with coefficients in $R$. 
\end{itemize}

%%%%%%%%%%%%%%%%%%%%%%%%%%%%%%%%%%%%%%%%
\section{Introduction}
%%%%%%%%%%%%%%%%%%%%%%%%%%%%%%%%%%%%%%%%

We study the relation of (big) quantum cohomologies of smooth projective threefolds $\Xres$ and $\Xsm$ when they are related by an extremal transition 
\[
    \Xres \searrow \Xsm \quad \text{via} \quad \Xres \xlongrightarrow{\crpcon} \Xsing \rightsquigarrow \Xsm,
\]
where $\crpcon \colon \Xres \to \Xsing$ is a primitive (birational) crepant contraction (so $\rho(\Xres / \Xsing) = 1$) and $\Xsing \rightsquigarrow \Xsm$ is a smoothing $\fX \to \Delta$ of $\Xsing = \fX_0$ with the fiber $\Xsm = \fX_t$ for some $t \neq 0$. 

Let $\ell \in \NE(\Xres)$ be the extremal curve class for $\crpcon$, the contraction of $ \bR_{\geq 0} \ell$. Following Wilson \cite{Wilson92}, $\crpcon$ is of Type I if it contracts finitely many curves to points $p_i \in \bar Y$, of Type II if it contracts a divisor to a point, and of Type III if it contracts a divisor to a curve $C \subseteq \bar Y$. We thus call the transition $Y \searrow X$ being of Type I, II or III accordingly. 

It is well known that for Type I transitions a symplectic deformations of it leads to a conifold transition (all $p_i$'s are ordinary double points) where the question was fully understood by Li--Ruan \cite{LR01} around 2000. In the Calabi--Yau case it states that 
\begin{equation} \label{e:TypeI}
    \sum_{\beta \mapsto \bar{\beta}} \langle -  \rangle^{\Xres}_\beta = \langle -\rangle^{\Xsm}_{\bar{\beta}}.
\end{equation} 
The sum of Gromov--Witten (GW) invariants is in fact finite and the RHS can be understood by setting $q^\ell = 1$ in the Novikov variables. Moreover the notion of linked GW invariants was developed in \cite{LLW18} in order to read out the individual terms on $\Xres$ from those on $\Xsm$.  

For Type III transitions, Wilson had shown that $C$ is a smooth curve \cite{Wilson92, Wilson97} and if $g(C) \ne 0$ then one may deform the primitive contraction $\phi$ to a Type I case with $2g(C) - 2$ singular fibers in an appropriate length counting (cf.~\cite[Proposition 4.2]{Wilson92}). Since GW invariants are invariant under deformations, it is not hard to see that \eqref{e:TypeI} is still valid in this case.  

Thus the remaining cases are Type II transitions and Type III with $g(C) = 0$. It is our goal to solve both cases. Due to the difference of techniques involved, we will solve the Type II case in the current paper and defer the Type III genus zero case to a subsequent work.  
%\smallskip

%%%%%%%%%%%%%%%%%%%%
\subsection{Statement of main results}
%%%%%%%%%%%%%%%%%%%%

For a Type II transition, $\phi$ contracts a (possibly non-normal) del Pezzo surface $E \subseteq Y$ to a rational Gorenstein singularity $p \in \bar Y$. 
We call $d = E^3$ the degree of the transition. Based on earlier works of Gross and Wilson \cite{Gross97a, Wilson97}, we proved that there is also a symplectic deformation on $Y \searrow X$ so that $E$ is deformed to a smooth del Pezzo surface $S_d$ of degree $d$. %Based on Fujita--Iskovskikh's classification theory of del Pezzo surfaces and threefolds (Appendix \ref{sec;appdP}), as well as Reid's classifications of isolated Gorenstein singularities \cite{Reid80}, we are able to construct projective local model of the transition. 
Combining Fujita--Iskovskikh's classification theory with Reid's classification of isolated Gorenstein singularities \cite{Reid80}, we further construct a projective local model of the transition. We call it a del Pezzo transition $\Xres_d \to \Xsing_d \rightsquigarrow \Xsm_d$ where $X_d$ is a del Pezzo threefold of degree $d$ and $Y_d = \bP_{S_d}(K_{S_d} \oplus \cO)$. Since $\Xsing_d$ is smoothable, the surface $S_d$ is not isomorphic to $\bP^2$ or the Hirzebruch surface $\bF_1$. In particular, we have $1 \le d \le 8$. Up to deformations, the smoothing is unique for $d \ne 6$. For $d = 6$, there are two smoothings of $\bar Y_6$ named by $X_{6\I}$ and $X_{6\II}$.

We are able to construct semistable reductions on both the \emph{K\"ahler degeneration} $\phi\colon Y \to \bar Y$ and the \emph{complex degeneration} $\fX \to \Delta$ of $\Xsing = \fX_0$ so that we may try to prove a statement on GW invariants first in the local setting $Y_d \searrow X_d$ and then for any $Y \searrow X$ by the degeneration formula of GW theory. Furthermore, again by the classification theory of $S_d$ and $X_d$, we find directly the degree of the finite base change needed for the semistable reduction of $\fX \to \Delta$, avoiding toroidal embeddings. %(see Proposition \ref{prop;locDef_sm} and Theorem \ref{thm;ssdeg}). 

We may state our main results informally as the following: we provide analytic continuations of quantum cohomology $QH(Y)$ in $Q^\ell$ under Type II extremal transitions $Y \searrow X$ of threefolds and recover $QH(X)$ after regularization and specialization of $Q^\ell$ to certain value $Q^\ell_r$. %(see Definition \ref{df:Q_r^l}). 
Exotic behavior of $Q^\ell_r$ is discovered: under the contraction $\phi$ we have $\phi_*(\ell) = 0$, however $Q^\ell_r$ might not even be a root of unity. It could be a special $\rL$-value!

Now the formal description: a canonical local B model, i.e., a pair of $I$-functions $(I^{Y_d}, I^{X_d})$ with a distinguished variable $P^\ell$ mirror to the quantum variable $Q^\ell$ is constructed. It allows us to study analytic continuations of $QH(Y_d)$ via Picard--Fuchs equations and asymptotic analysis near $P^\ell = \infty$. Namely, $QH(X_d)$ is obtained from $QH(Y_d)$ by a regularized holomorphic limit as $P^\ell \to \infty$. In fact, this construction can be globalized through the degeneration formula: 

\begin{theorem}[= Corollary~\ref{cor:GWlimitd=12347} + Corollary~\ref{cor:GWlimitd=568}]\label{t:reg-th}
Let $n \ge 0$. For any insertions $\vec{b} = b_1 \otimes \dots \otimes b_n \in H(X)^{\otimes n}$ and $0 \neq \bar{\beta} \in \NE(X)$, we consider the generating function 
\[
    \langle \phi^*\vec{b} \rangle_{\bar{\beta}}^{\Xres \searrow \Xsm} \coloneqq \sum_{\beta\mapsto \bar{\beta}} \langle \phi^*\vec{b}\rangle_\beta^Y Q^\beta. 
\]
Then we have
\begin{equation}
\lim_{P^\ell \to \infty} \Bigl(\langle \phi^*\vec{b} \rangle_{\bar{\beta}}^{\Xres \searrow \Xsm} Q^{-\tilde{\beta}}\Bigr)^{\reg} = \langle \vec{b}\rangle_{\bar{\beta}}^X
\end{equation}
where $\tilde{\beta}$ is a lifting of $\bar{\beta}$ such that $(E, \tilde{\beta}) = 0$. Moreover, the regularization is superfluous precisely when $d \in \{1, 2, 3, 4, 7\}$. 
\end{theorem}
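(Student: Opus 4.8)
The plan is to reduce first to the canonical local del Pezzo transition $Y_d\searrow X_d$ and then globalize. Both the K\"ahler degeneration $\crpcon\colon\Xres\to\Xsing$ and the complex degeneration $\fX\to\Delta$ admit the semistable reductions furnished by Proposition~\ref{prop;locDef_sm} and Theorem~\ref{thm;ssdeg}, and the degeneration formula of GW theory reduces the comparison of the two sides to relative invariants supported near the contracted del Pezzo $E$. The insertions $\crpcon^*\vec b$ descend through $\crpcon$, and the hypothesis $(E,\tilde\beta)=0$ guarantees that the contributing curve classes split off the local model without wrapping $E$; hence it suffices to establish the identity for $Y_d\searrow X_d$, where the $I$-function machinery applies.

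For the local model I would realize the genus-zero generating function as a component of the $I$-function of $Y_d$ and track its dependence on the extremal variable $P^\ell$. By the canonical local B model the pair $(I^Y,I^X)$ solves an explicit Picard--Fuchs system, and the operator $\theta=P^\ell\,\partial/\partial P^\ell$ governs the $P^\ell$-direction. After fixing $\bar\beta$ and the insertions, the quantity $\langle\crpcon^*\vec b\rangle_{\bar\beta}^{\Xres\searrow\Xsm}\,Q^{-\tilde\beta}$ is controlled by a single scalar solution of a Fuchsian ODE in $P^\ell$ drawn from Zagier's list. The modularity of the extremal function $\rE$ identifies this ODE and pins down its local exponents at the two singular points $P^\ell=0$ and $P^\ell=\infty$, which are the large-radius points of $Y_d$ and $X_d$ respectively.

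I would then solve the connection problem: analytically continuing the distinguished flat section from $P^\ell=0$ and reading off its leading asymptotics at $P^\ell=\infty$ produces the $X_d$-invariant, with $Q^\ell$ tending to the specialization value $Q^\ell_r$ of \eqref{eqn;Q_r^l}. When $d\in\{1,2,3,4,7\}$ the exponents at infinity keep the leading term holomorphic, so the naive limit already equals $\langle\vec b\rangle_{\bar\beta}^X$ and regularization is unnecessary; this is Corollary~\ref{cor:GWlimitd=12347}. When $d\in\{5,6,8\}$ resonance among the exponents forces logarithmic terms, and the regularized holomorphic limit of Definition~\ref{df:regmap} must remove them before passing to $P^\ell\to\infty$; this is Corollary~\ref{cor:GWlimitd=568}.

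The main obstacle is the connection problem at $P^\ell=\infty$ in the resonant range, namely isolating the correct flat section, controlling the logarithmic corrections under the regularization map, and evaluating $Q^\ell_r$ in closed form. For $d=6$, where $\Xsing_6$ admits two smoothings, $Q^\ell_r$ is governed by a monodromy-invariant period that reduces to a period integral of an Eisenstein series and is therefore a special $\rL$-value rather than a root of unity; this period computation, powered by the modularity input, is the technical heart of the argument, while the semistable reduction of double point type is what guarantees the regularized limit is finite.
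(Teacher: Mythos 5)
Your global-to-local reduction via the two semistable degenerations of Theorem~\ref{thm;ssdeg} and the degeneration formula is the right first move and matches the paper (though the actual argument also needs a virtual-dimension count forcing the contact data to be $(1,1)^\rho$ and $(\pt,1)^\rho$, and an induction on the number $\rho$ of contact points to compare the relative local invariants $[\rho]^{\Yloc}$ and $[\rho]^{\Xloc}$). The gap is in your local step. You propose to control $\langle\phi^*\vec b\rangle_{\bar\beta}\,Q^{-\tilde\beta}$ as ``a single scalar solution of a Fuchsian ODE'' read off from the $I$-function and then solve a connection problem at $P^\ell=\infty$. But the $I$-function only encodes one-point descendant invariants; for general $n$-point insertions no single Picard--Fuchs solution governs the generating function. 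The paper's mechanism is entirely different: it computes the $1$- and $2$-point invariants explicitly from $I^{X_d}$, $I^{Y_d}$ (Propositions~\ref{propGWXloc}, \ref{propGWYloc}), introduces the ring $\sR_d\subseteq\bC(y)[v_d]$ with ideal $\sI_d$ built from $v_d=\theta f_d/f_d$ and $w_d=v_d-v_d^{\reg}$, proves closure of $(\sR_d,\sI_d)$ under the operators occurring in reconstruction (Lemma~\ref{lm:thetaRdinId}), and then runs an induction on $n$ and curve degree through the Lee--Pandharipande reconstruction formula. The crux of that induction is taming the terms $\langle E^2,\vec c\rangle^{Y}$ via the identity $\langle E,E,E\rangle^{Y}=d/(uf^3)$, which produces the cancellation that keeps everything inside $\sR_d$ and pushes the discrepancy into $\sI_d$ (Proposition~\ref{proplocalmodI}, equation~\eqref{eqreconEE}). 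Without some replacement for this reconstruction argument your proof does not reach $n$-point invariants at all.

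Two further inaccuracies. First, your dichotomy ``non-resonant exponents at infinity $\Rightarrow$ no regularization'' fails for $d=4$: there the local exponents at $y=P^{-\ell}=0$ are $(\tfrac12,\tfrac12)$, which is resonant and does produce a logarithmic solution, yet regularization is still superfluous. The correct dividing line is $\mu_d=0$ versus $\mu_d\neq0$, reflected in the different definitions of $\sR_d$ and $\sI_d$ in Definition~\ref{defRI}: for $d\in\{1,2,3,4\}$ every non-constant term carries a factor of $y$ and $v_d$ stays bounded, whereas for $d\in\{5,6\I,6\II,8\}$ terms of the form $w_d\cdot y^{-k}$ occur and must be killed by the regularization $v_d\mapsto v_d^{\reg}$. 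Second, the Eisenstein-series period integrals and the special $\rL$-values are not part of the proof of Theorem~\ref{t:reg-th} at all; they enter only in computing the limit of $Q^\ell$ (Theorems~\ref{t:Qlim-1} and~\ref{t:Qlim-2}), which is a separate statement. The technical heart of Theorem~\ref{t:reg-th} is the reconstruction induction with $(\sR_d,\sI_d)$, not the connection problem or the period computation.
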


Here we give a rough definition of regularization $(-)^\reg$. We only need to treat the cases $d \in \{5,6\I,6\II,8\}$. For the remaining values of $d$, the limit can be taken as $P^\ell \to \infty$ without applying $(-)^\reg$. For notational convenience set $y = 1 / P^\ell$. Via the mirror transform, the quantum variable $Q^\ell$ can be considered as a function of $P^\ell$, and hence of $y$. Under analytic continuation along any given path to $y = 0$, one finds that 
\[
    \langle \phi^*\vec{b} \rangle_{\bar{\beta}}^{\Xres \searrow \Xsm} Q^{-\tilde{\beta}} \in \bC \Laurent{y}\PSR{(\log y)^{-1}}. 
\]
Define the regularization of $\langle \phi^*\vec{b} \rangle_{\bar{\beta}}^{\Xres \searrow \Xsm} Q^{-\tilde{\beta}}$ to be its image under the projection 
\[
    (-)^\reg\colon \bC\Laurent{y}\PSR{(\log y)^{-1}} \longrightarrow \bC\Laurent{y}, 
\]
i.e., by taking the constant term in $(\log y)^{-1}$. 

We remark that, to prove Theorem \ref{t:reg-th}, a more refined definition of the regularization $(-)^\reg$ is required (Definition \ref{df:regmap}). This refinement is essential for carrying out the induction. We will infer that the regularization of $\langle \phi^*\vec{b} \rangle_{\bar{\beta}}^{\Xres \searrow \Xsm} Q^{-\tilde{\beta}}$ belongs to $\bC [\![y]\!]$ and thus $\langle \vec{b}\rangle_{\bar{\beta}}^X$ is the constant term of it.

As a corollary of Theorem~\ref{t:reg-th}, the big quantum cohomology $QH(X)$ is a sub-quotient of $QH(Y)$ (see Corollary~\ref{cor:QHsubquot}). 

The limit of $Q^\ell$ is more subtle and requires tools from modular forms. We show that (see Table~\ref{tab:GammadSingfd} for the congruence subgroups $\Gamma_d$):%(see Theorem~\ref{thr:mainmodularver}, and Table~\ref{tab:GammadSingfd} for the congruence subgroups $\Gamma_d$):

\begin{theorem}[= Proposition~\ref{p:MZ} + Theorem~\ref{thr:mainmodularver}] \label{t:modver}
For $d \ne 7$, there exists a natural identification of $\mathbb{P}^1_{P^\ell}$ as a modular curve $\rX(\Gamma_d)$ (a double cover of $\rX(\Gamma(1))$ if $d = 1$). 

The coordinate change is expressed via the extremal function 
\begin{equation}
\rE^{Y \searrow X} = \rE_d \coloneqq \frac{E^3}{\sum \langle E, E, E\rangle_{m\ell}^Y Q^{m\ell}} = \odv{\log Q^\ell}{\log q},
\end{equation} 
where $q = e^{2\pi \ii \tauq}$ and $Q^\ell = - q + O(q^2)$. It is an Eisenstein series of weight $3$ (with root and monodromy if $d \le 2$). 

Moreover $\langle \phi^*\vec{b} \rangle_{\bar{\beta}}^{\Xres \searrow \Xsm}Q^{-\tilde{\beta}}$ is a meromorphic quasi-modular form of weight $0$ with value $\langle \vec{b}\rangle_{\bar{\beta}}^X$ at a special cusp or elliptic point, depending on the monodromy of $\rE^{Y \searrow X}$ being logarithmic or finite. 
\end{theorem}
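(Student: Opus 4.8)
The plan is to prove Theorem~\ref{t:modver} by first establishing the modular identification of $\bP^1_{P^\ell}$ and then deducing the quasi-modularity and special-value statements as consequences of that identification together with the regularized-limit result of Theorem~\ref{t:reg-th}.

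\textbf{Step 1: Identifying the extremal function as an Eisenstein series.}
First I would compute the extremal function $\rE_d = E^3/\sum_m \langle E, E, E\rangle_{m\ell}^Y Q^{m\ell}$ explicitly in the local del Pezzo model $Y_d \searrow X_d$. Because the contraction $\phi$ has relative Picard rank one with extremal class $\ell$, the triple self-intersection series $\sum_m \langle E,E,E\rangle^Y_{m\ell} Q^{m\ell}$ is the genus-zero multiple-cover/BPS contribution of the divisor $E \cong S_d$, which is governed by the local Gromov--Witten theory of the del Pezzo surface. The key is that the canonical local B model supplies a Picard--Fuchs operator (a Zagier-type ODE, by the allusion to ``Zagier's special ODE list'') whose solution coordinates are $\log Q$ and $\log q$ with $Q = -q + O(q^2)$. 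The logarithmic derivative $\odv{\log Q}{\log q}$ is then by construction a weight-$3$ modular object on the relevant $\Gamma_d$; I would verify the weight and level by matching its $q$-expansion against the known Eisenstein series attached to $\Gamma_d$ (using the table of congruence subgroups), checking holomorphy and behavior at the cusps. The degree-by-degree case analysis is where Fujita--Iskovskikh classification enters: each $d \in \{1,\dots,8\}\setminus\{7\}$ gives a specific $S_d$, hence a specific local model and a specific $\Gamma_d$.

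\textbf{Step 2: The coordinate change and the modular curve.}
Given that $\rE_d$ is an Eisenstein series of weight $3$, the relation $\rE_d = \odv{\log Q}{\log q}$ is an ODE relating the B-model coordinate $P^\ell$ (equivalently $Q^\ell$) to the modular parameter $\tauq$. Integrating, $\log Q$ becomes a Hauptmodul-type function of $\tauq$, yielding the biholomorphism $\bP^1_{P^\ell} \isom \rX(\Gamma_d)$ that sends $P^\ell = \infty$ (the large-volume limit of $Y$) to one distinguished cusp and $P^\ell = 0$ (or the regularization locus) to the cusp or elliptic point corresponding to $X$. For $d = 1$ the weight-$3$ Eisenstein series on the full level involves a quadratic relation, forcing the double cover of $\rX(\Gamma(1))$; I would handle this by exhibiting the degree-two map explicitly and checking the ramification matches the two preimages predicted by the monodromy. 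The $d = 7$ exclusion is presumably because $S_7 \cong \Bl_2 \bP^2$ yields a base curve that is not a modular curve (e.g.\ the relevant local system has no arithmetic monodromy), so I would simply exclude it.

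\textbf{Step 3: Quasi-modularity of the GW correlators and the special value.}
For the final clause, I would take the descendent/ancestor correlator $\langle \phi^*\vec{b}\rangle_{\bar\beta}^{Y\searrow X}$ as a function of $P^\ell$, pull it back under the isomorphism of Step~2 to a function of $\tauq$, and show it is quasi-modular of weight $0$ for $\Gamma_d$. The mechanism is that each fixed-degree correlator is, by the structure of the local B model, a finite sum of products of periods of Eisenstein series (the ``period integrals of Eisenstein series'' of the abstract), and iterated integrals/quasiperiods of Eisenstein series are exactly quasi-modular forms; the weight bookkeeping collapses to $0$ because the Novikov normalization $Q^{-\tilde\beta}$ with $(E,\tilde\beta)=0$ cancels the weight contributions. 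Finally, the value at the special point is read off from the regularized limit: by Theorem~\ref{t:reg-th}, $\lim_{P^\ell\to\infty}(\langle\phi^*\vec b\rangle_{\bar\beta}^{Y\searrow X} Q^{-\tilde\beta})^{\reg} = \langle\vec b\rangle_{\bar\beta}^X$, and under the modular identification ``$P^\ell\to\infty$'' is precisely evaluation at the distinguished cusp (logarithmic monodromy) or elliptic point (finite monodromy).

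\textbf{Main obstacle.}
The hard part will be Step~1 combined with the analytic control needed in Step~3: proving that $\rE_d$ is \emph{exactly} the claimed weight-$3$ Eisenstein series for each $d$, rather than merely a modular form of the right weight up to an ambiguity, requires matching enough Fourier coefficients and controlling the genus-zero local GW series of $S_d$ to all orders. Equally delicate is justifying that the regularized holomorphic limit commutes with the modular coordinate change so that ``evaluation at the special point'' is well-defined and genuinely equals the limit in Theorem~\ref{t:reg-th}; distinguishing the logarithmic-monodromy (cusp) case from the finite-monodromy (elliptic point) case amounts to reading the local exponents of the Picard--Fuchs operator, and the elliptic case in particular requires care because the ``limit'' is then an evaluation at an interior point of $\bH$ rather than a genuine $P^\ell\to\infty$ degeneration. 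I expect the modularity verification for the borderline degrees $d = 1$ (double cover) and $d = 6$ (two smoothings $X_{6\I}, X_{6\II}$, hence two special points) to absorb most of the technical effort.
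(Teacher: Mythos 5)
Your skeleton (identify $\rE_d$ as an Eisenstein series, integrate to get the modular identification, deduce weight-$0$ quasi-modularity, read the special value off the regularized limit) matches the paper's, but the two steps you yourself flag as the ``hard part'' are exactly where the paper uses ideas your plan is missing, and without them your plan stalls. For the Eisenstein identification, the paper never matches Fourier coefficients to all orders. It first obtains the closed form $\langle E,E,E\rangle^{Y_d} = d/(u_d f_d^3)$, hence $\rE_d = u_d f_d^3$, purely from the Picard--Fuchs reconstruction (Proposition~\ref{propGWYloc}(v)); then $f_d$ is identified with a classical Ramanujan theta function / Zagier sporadic solution by quoting Maier--Zagier (Proposition~\ref{p:MZ}), which simultaneously supplies the Hauptmodul $P_d$ and the isomorphism $\rX(\Gamma_d)\cong\bP^1_{P^\ell}$; the relation $\odv{\log P^\ell}{\log q}=u_df_d^2$ (hence $\rE_d=\odv{\log Q}{\log q}$) follows from a Wronskian argument using that $\tauq f_d$ is a second solution of \eqref{eqfODE}; and finally $\rE_d$ is pinned down as the Eisenstein series $\Eis^{\Gamma_1(d)}_{\ii\infty,3}$ by the dimension count $\dim\mathcal{S}_3(\Gamma_1(d))=0$ together with its cusp values (Proposition~\ref{prop:EdinEis}) --- only the constant term and cusp behavior need checking, not the whole $q$-expansion. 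Your proposed ``all-orders matching of the local GW series of $S_d$'' is not a terminating strategy without these inputs.

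Second, your mechanism for quasi-modularity of the correlators is misattributed. The paper does not realize fixed-degree correlators as iterated integrals or periods of Eisenstein series; the period integrals of Eisenstein series in \S\ref{s:wild} are used only to evaluate $\lim Q^\ell$ (Theorems~\ref{t:Qlim-1}, \ref{t:Qlim-2}), a separate result. Quasi-modularity here is immediate from Theorem~\ref{thrGWmodI}: the normalized correlators lie in $\sR_d[Q^{\phi^*\NE(X)}]\subseteq\bC(y)[v_d][\cdots]$, where $y=P^{-\ell}$ is a weight-$0$ modular function and $v_d=\theta f_d/f_d = D_qf_d/\rE_d$ is quasi-modular of weight $0$ and depth $1$; the identity $v_d^\reg(A\tauq)=\bigl(D_qf_d/(u_df_d^3)\bigr)_0^A(\tauq)$ is what makes ``value at the transition point'' literally equal to the regularized limit of Corollary~\ref{cor:GWlimitd=568}. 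Finally, a small but real orientation error: $P^\ell=0$ corresponds to the cusp $[\ii\infty]$ (large volume of $Y$), and it is $P^\ell\to\infty$ that is the transition point where $\langle\vec b\rangle^X_{\bar\beta}$ is recovered --- you have these swapped.
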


We call the special cusp or elliptic point $[\tauq]$ corresponding to $P^\ell = \infty$ \emph{the transition point} (Definition~\ref{def;tran_pt}). 
%We call the elliptic point/cusp of $q$ corresponding to $P^\ell \to \infty$ \emph{the transition point}  (Definition~\ref{def;tran_pt}). 

The corresponding limiting values of $Q^\ell$ are necessarily path dependent.\footnote{The modularity for $d = 7$ has a different nature which will be discussed elsewhere. }

\begin{theorem}[= Proposition~\ref{prop:Qlimd=1234} + Theorem~\ref{thr:limQabs} \eqref{thr:limQabs_1}] \label{t:Qlim-1}
For $d \neq 6$, where the smoothing is unique up to deformations, we have $Q^\ell \to 1$ or a root of unity. 

Moreover, for $d \in \{1, 2, 3, 4, 7\}$, $QH(Y)$ can be analytically continued along $P^\ell \in [0, \infty)$ and $Q^\ell \to 1$ as $P^\ell \to \infty$. 

% In general, for $d \ne 7$, whenever $\bar Y_d$ has unobstructed deformations, namely the smoothing is unique up to deformations, we have $Q^\ell \to 1$ or a root of unity. 
\end{theorem}

This is of course expected since the curve class $\ell$ is contracted. However, when $\bar Y$ has more than one smoothings, namely $d = 6$, we show that: 

\begin{theorem}[= Theorem~\ref{thr:limQabs} \eqref{thr:limQabs_2}] \label{t:Qlim-2}
The limit of $Q^\ell$ in case $6\I$ is still a root of unity while in case $6\II$ is given by 
\begin{equation}
Q_r^\ell = \omega\cdot e^{-c \rL'(-1, \chi_{3,2})} 
\end{equation}
for some root of unity $\omega$, where the cusp $r = \frac{a}{c} \in \mathbb{Q}$ with $\gcd(a, c) = 1$ and $c \equiv 2 \pmod{6}$, and $\rL$ is a Dirichlet $\rL$-function. In particular, $Q_r^\ell$ is not a root of unity.
\end{theorem}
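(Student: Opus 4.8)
The plan is to turn the limit of $Q^\ell$ at the transition point into a regularized period of the weight~$3$ Eisenstein series $\rE_6$ and to evaluate that period by a Mellin transform. Integrating the relation $\rE_6 = \odv{\log Q}{\log q}$ and using $q = e^{2\pi\ii\tauq}$ gives $\log Q(\tauq) = 2\pi\ii\int_{\ii\infty}^{\tauq}\rE_6(w)\,dw + \pi\ii$, where the additive constant is fixed by the large--volume normalization $Q = -q + O(q^2)$. By Theorem~\ref{t:modver} the transition point $P^\ell\to\infty$ corresponds to a cusp $r$ of $\rX(\Gamma_6)$, which in case $6\II$ is $r = a/c$ with $c\equiv 2\pmod 6$. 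The whole problem is thus the evaluation of the regularized period $\int_{\ii\infty}^{r}\rE_6\,dw$.

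First I would record the local behaviour of $\rE_6$ at the transition cusp. Picking $\gamma = \smx{a}{b}{c}{d}\in SL_2(\bZ)$ with $\gamma(\ii\infty) = r$ and using $c\tau + d = -1/\bigl(c(w-r)\bigr)$ for $w = \gamma\tau$, the slash relation $\rE_6(\gamma\tau) = (c\tau+d)^3(\rE_6|_\gamma)(\tau)$ gives $\rE_6(w)\sim -\alpha_r\,c^{-3}(w-r)^{-3}$ as $w\to r$, where $\alpha_r$ is the constant term of $\rE_6|_\gamma$. A finite nonzero $\lim Q^\ell$ forces $\alpha_r = 0$, i.e.\ $\rE_6$ must vanish at the transition cusp; I would read this vanishing off the cusp divisor of $\rE_6$ on $\Gamma_6$ (Table~\ref{tab:GammadSingfd}) for the $6\I$ and $6\II$ cusps alike. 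Granting $\alpha_r = 0$ the integral converges at the $r$ end, the only regularization needed is the removal of the $\log q$ growth at the $\ii\infty$ end coming from $\rE_6(\ii\infty)=1$, and $\log Q^\ell_r$ equals $2\pi\ii$ times the resulting finite period plus $\pi\ii$.

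The core step is to identify this regularized period with an $\rL$-value. Writing $\rE_6$ through the weight~$3$ Eisenstein series attached to $(\bm 1, \chi_{3,2})$, whose coefficients satisfy $\sum_{n\ge 1} a_n n^{-s} = \zeta(s)\rL(s-2,\chi_{3,2})$, the standard Mellin identity $\int_0^{\ii\infty}(\rE_6(\ii y)-1)\,y^{s-1}\,dy = (2\pi)^{-s}\Gamma(s)\,\zeta(s)\rL(s-2,\chi_{3,2})$ specializes at $s=1$ to $\int_0^{\ii\infty}(\rE_6(\ii y)-1)\,dy = \tfrac{1}{2\pi}\lim_{s\to1}\zeta(s)\rL(s-2,\chi_{3,2})$. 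Here $\zeta$ has a simple pole of residue~$1$ at $s=1$ while $\rL(s-2,\chi_{3,2})$ has the trivial zero $\rL(-1,\chi_{3,2})=0$, trivial precisely because $\chi_{3,2}$ is odd; the pole and the zero cancel and the limit equals $\tfrac{1}{2\pi}\rL'(-1,\chi_{3,2})$. Moving from cusps $\{0,\ii\infty\}$ to $\{r,\ii\infty\}$ introduces the level factor $c$ and a root-of-unity phase $e^{-2\pi\ii d/(ch)}$ from the width $h$, and together with the $d\tau = \ii\,dy$ along the imaginary axis it turns the imaginary period into the real quantity $-c\,\rL'(-1,\chi_{3,2})$; hence $\log Q^\ell_r = -c\,\rL'(-1,\chi_{3,2}) + 2\pi\ii\,\bQ$, i.e.\ $Q^\ell_r = \omega\,e^{-c\rL'(-1,\chi_{3,2})}$ with $\omega$ a root of unity. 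In case $6\I$ the relevant factorization is instead $\rL(s,\chi_{3,2})\zeta(s-2)$, in which no trivial zero meets the pole, so the period lands in $2\pi\ii\,\bQ$ and $Q^\ell_r$ is a root of unity.

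The main obstacle I anticipate is exactly this last identification: matching the regularized period of $\rE_6$ at the cusp $r$ to $\rL'(-1,\chi_{3,2})$ with the correct prefactor $-c$ and the correct root of unity $\omega$. This demands careful bookkeeping of the slash normalization, the cusp width, the branch of $\log$, and the analytic continuation of the Mellin transform through the coincident pole and zero. Once the period is shown to be a nonzero real multiple of $\rL'(-1,\chi_{3,2})$, the final claim follows at once: since $\rL'(-1,\chi_{3,2})\neq 0$ (up to a nonzero elementary factor it is $\rL(2,\chi_{3,2})\ne 0$ by the functional equation), the real part $-c\,\rL'(-1,\chi_{3,2})$ of $\log Q^\ell_r$ is nonzero, so $|Q^\ell_r|\ne 1$ and $Q^\ell_r$ cannot be a root of unity.
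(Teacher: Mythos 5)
Your overall strategy is the right one and matches the paper's: express $\log Q_r^\ell$ as a regularized period of $\rE_6$ from $\ii\infty$ to the transition cusp, and produce $\rL'(-1,\chi_{3,2})$ from the collision of the pole of $\zeta(s+1)$ with the trivial zero $\rL(-1,\chi_{3,2})=0$ in the Mellin transform (this is exactly Proposition~\ref{prop:Yang04ext} in the paper, applied at $s\to 0$ in their normalization). However, there is a genuine gap at the step you yourself flag as the main obstacle. Moving the endpoint of integration from $0$ to a general cusp $r=a/c$ does \emph{not} merely ``introduce the level factor $c$ and a root-of-unity phase from the width.'' After the substitution $\tauq\mapsto\tauq+r$, the translated series $\Eis_3^{\bm{1}_1,\chi_{3,2},n}(\tauq+r)$ decomposes (Proposition~\ref{prop:TFforEis}) as a sum over all factorizations $c=c_1c_2c_3$ and over \emph{all} Dirichlet characters $\chi$ modulo $c_3$, weighted by Gauss sums $\fg(\bar\chi)$. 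The terms with trivial $\chi$ telescope by M\"obius inversion to $\psi(c_0)c_0\,\rL'(-1,\psi)/n$ with $c_0=c/\gcd(c,n)$, and summing over the two components of $\rE_6=-\Eis_3^{\bm{1}_1,\chi_{3,2},1}-8\Eis_3^{\bm{1}_1,\chi_{3,2},2}$ with $c\equiv 2\pmod 6$ yields the coefficient $-c$. But the terms with nontrivial odd $\chi$ contribute genuinely nonzero periods of the form $\fg(\bar\chi)\rL(1,\chi)\rL(-1,\bm{1}_{c_2c_3}\chi\psi)$, and the assertion that their total lies in $2\pi\ii\,\bQ$ (so that it only feeds into the root of unity $\omega$) is a theorem, not a bookkeeping fact: the paper proves it by rewriting each term via Gauss sums and generalized Bernoulli numbers and showing the full sum divided by $2\pi\ii$ is invariant under every Galois automorphism of $\bar\bQ$. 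Your proposal contains no substitute for this argument, and without it the conclusion ``$\omega$ is a root of unity'' does not follow.

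A secondary inaccuracy: your explanation of case $6\I$ is not correct as stated. The $\rL'(-1,\chi_{3,2})$-type terms (pole meeting trivial zero) do appear in the translated expansion at the $6\I$ transition cusp; they drop out because the transition cusp for $6\I$ has denominator $c\equiv 3\pmod 6$, so the telescoped trivial-part coefficient $\chi_{3,2}(c_0)c_0$ vanishes since $3\mid c_0$ --- not because ``the relevant factorization is $\rL(s,\chi_{3,2})\zeta(s-2)$'' and no zero meets the pole. The surviving nontrivial-character terms in case $6\I$ are again handled by the same Galois argument (or, in the paper's worked example at $r=1/3$, by direct evaluation of $\rL(1,\chi_{3,2})\rL(-1,\bm{1}_3)$), landing in $2\pi\ii\,\bQ$. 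Finally, note that the paper proves the statement for \emph{every} representative $r=a/c$ of the transition cusp class with $c\equiv 2\pmod 6$, which is why the uniform treatment via the translation formula is needed rather than a computation at a single convenient cusp.
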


To our knowledge, this is the first appearance of special $\rL$-values as specialization points in the study of Gromov--Witten theory. Our proof relies on detailed analysis on period integrals of Eisenstein series. 

Some of the results extend to certain higher dimensional transitions which contract a divisor to a singular point. Also the period integral of Eisenstein series can be carried out for $q$ approaching any of the elliptic points or cusps. These extensions will be contained in the first author's 2026 PhD thesis.  

%%%%%%%%%%%%%%%%%%%%
\subsection{Outline of the paper}
%%%%%%%%%%%%%%%%%%%%

In this subsection we discuss the main ideas behind the proofs of our main results. The details are referred to the corresponding locations in this paper.

%%%%%%%%%%
\subsubsection{Reduction to local models}
%%%%%%%%%%

%To globalize the result we construct semistable reductions of both the K\"ahler and complex degenerations for a Type II extremal transition $\Xres \xlongrightarrow{\crpcon} \Xsing \rightsquigarrow \Xsm$ in \S\ref{subsec;ss_deg}. 

We will construct semistable reductions of both the K\"ahler and complex degenerations for a Type II extremal transition $\Xres \xlongrightarrow{\crpcon} \Xsing \rightsquigarrow \Xsm$ of degree $d$ in \S\ref{subsec;ss_deg}. They must be of double point type in order for the GW degeneration formula to be applicable. It is crucial to control the base change degree $n_d$ for complex degeneration $\fX \to \Delta$ so that we may first perform the base change, then perform one weighted blow-up. This can be done when $E = \Exc (\crpcon)$ is smooth (Proposition \ref{prop;locDef_sm}). Note that we have $n_d = 6, 4, 3$ for $d = 1,2,3$, respectively, and $n_d = 2$ for $d \geq 4$ by using the classification theory of del Pezzo surfaces and threefolds (Appendix \ref{sec;appdP}). In general, by deforming the complex structure on an open neighbourhood of $E$, we can deform $E$ to smooth del Pezzo surface $E'$ (Proposition \ref{prop;locDef_res}). Gluing complex structures in a $C^\infty$ way, we conclude that the (symplectic) semistable model is of double point type (Theorem \ref{thm;ssdeg}):
\[
    \cX_0 = Y' \cup X_d
\] 
where $X_d$ is a smooth del Pezzo threefold of degree $d$ and $Y' \cap X_d = E'$. Moreover, $Y$ and $Y'$ are symplectic deformation equivalent of each other. We emphasize that the result seems too good to be true at first sight! 

On the other hand, for K\"ahler degeneration the semistable reduction is simply the deformations to the normal cone which is naturally of double point type with the central fiber
\[
    \cY_0 = \Xres' \cup Y_d
\]
and $\Xres' \cap Y_d = E'$. Therefore we reduce the problem to calculating GW invariants on the local model $Y_d \searrow X_d$.

%%%%%%%%%%
\subsubsection{Canonical local B models}
%%%%%%%%%%

There are classifications of smooth del Pezzo surfaces (Theorem \ref{thm;dP2}) according to its degree $d$ as well as of smoothable isolated rational Gorenstein threefold singularities (by Gross, Namikawa and Reid \cite{Gross97a,Namikawa97,Reid80}). It turns out to admit a perfect match on both and we get for each $1 \le d \le 8$, $E = S_d$ (smooth del Pezzo surface of degree $d$), a local model of Type II extremal transition (Example \ref{ex:dPtrans}):
\[
    Y_d = \bP_{S_d}(K_{S_d} \oplus \cO) \searrow X_d.
\] 
%where $X_d$ is the del Pezzo threefold of degree $d$ arising from smoothing of $\bar Y_d$. 

The smoothing family is unique if $d \ne 6$. For $d = 6$ there are two smoothings denoted by cases $d = 6\I$ and $d = 6\II$. So we let $d \in \{1, 2, 3, 4, 5, 6\I, 6\II, 7, 8\}$. 

The classification of del Pezzo threefolds (Theorem \ref{thm;dP3claf}) allows us to find a smooth polarized variety $(\T_d, \cO_{\T_d}(1))$ with known $QH(\T_d)$ and a convex vector bundle $\cV_d$ on $\T_d$ such that $X_d = Z(s_d) \subseteq \T_d$ for $s_d \in \Gamma(T_d, \cV_d)$, and $\cO_{X_d} (1) = \cO_{T_d} (1)|_{X_d}$, see Table \ref{tab;ambXd}. 

All these geometric data fit into the following basic diagram
\[\begin{tikzcd}
    Y_d \ar[d, "\pi_d"] & &\cV_d \ar[d] \\
    S_d \ar[r, hook, "i_d"] \ar[u, "j_d", bend left = 30] & X_d \ar[r, hook] & \T_d\ar[u, "s_d", bend left = 30]
\end{tikzcd}\]
which allows us to setup a \emph{canonical local B model} of $Y_d \searrow X_d$, i.e.~a pair of closely related $I$-functions $I^{Y_d}$ and $I^{X_d}$, to compare GW invariants of $Y_d$ and $X_d$. 

%%%%%%%%%%
\subsubsection{Picard-Fuchs equations}
%%%%%%%%%%

The annihilator of the $I$-function is the Picard--Fuchs ideal $\operatorname{PF}$ generated by equations $\square_\beta$ over curve classes $\beta$. For $d \neq 7$, we show that $\operatorname{PF}^{X_d}$ is generated by $\square_{\bar{\gamma}} I^{X_d} = 0$ with
$$
  \square_{\bar{\gamma}} = \hat{h}^4 - P^{\bar{\gamma}}(\kappa \hat{h}^2 + \kappa z\hat{h} + \lambda z^2) - \mu P^{2\bar{\gamma}},
$$
and $\operatorname{PF}^{Y_d}$ is generated by $\square_\gamma I^{Y_d} = \square_\ell I^{Y_d} = 0$ with 
    \begin{align*}
        \square_\gamma &= \hat{E}\hat{H} - P^\gamma, \\
        \square_\ell &= \hat{F}^3 - P^\ell (\kappa \hat{F}^2 + \kappa z\hat{F} + \lambda z^2) \hat{E} - \mu P^{2\ell} (\hat{F} + z) \hat{E} (\hat{E} - z). 
    \end{align*}
(See Proposition \ref{prop:PFI}.) Here the constants $(\kappa, \lambda, \mu) = (\kappa_d, \lambda_d, \mu_d)$ are listed below:\footnote{Technically speaking, the contraction $Y_d \to \bar Y_d$ has $\rho(Y_d/\bar Y_d) > 1$ and we need to perform identifications to get the single equation $\square_\ell$. 
For $d \ne 7$ we symmetrize the Picard--Fuchs equations and GW theory to achieve this in \S\ref{subsec;GW_neq_7}, with the group $G_{S_d}$ being the Weyl group of the root system $E_{9 - d}$ in Table \ref{tab;root_sys}. However, for $d = 7$ the symmetrization is not working and we take an ad hoc approach instead (see \S\ref{subsec;GW=7}).}
    \begin{table}[H]
        \centering
        \begin{tabular}{cccccccccc}
            \toprule
            $d$ & $1$ & $2$ & $3$ & $4$ & $5$ & $6 \I$ & $6 \II$ & $8$ \\
            \midrule
            $\kappa_d$ & $432$ & $64$ & $27$ & $16$ & $11$ & $7$ & $10$ & $0$ \\ 
            \midrule
            $\lambda_d$ & $60$ & $12$ & $6$ & $4$ & $3$ & $2$ & $3$ & $0$ \\ 
            \midrule
            $\mu_d$ & $0$ & $0$ & $0$ & $0$ & $1$ & $8$ & $-9$ & $16$ \\
            \bottomrule
        \end{tabular}
        \caption{Coefficients in $\square_\ell$.}
        \label{table:klmd}
    \end{table}

To simplify notations, the subscript $d$ will often be omitted throughout this section, expect $X_d$ and $Y_d$. Since $X_d$ is Fano, we have $J^{X_d} = I^{X_d}$. For $J^{Y_d}$, we showed that the mirror transform $\tau = t - g(P) E$ depends only on the canonical variables $P = P^\ell$ and $Q = Q^\ell$ via
\begin{equation} \label{e:QP}
    Q = P e^{g(P)},
\end{equation} 
where $f \coloneqq 1 + \theta g$ is uniquely determined by the equation
\begin{equation} \label{e:Eqf}
u\, \theta^2 f + \theta u\, \theta f + (\lambda P - \mu P^2)f = 0
\end{equation} 
with $u = u(P) \coloneqq 1 + \kappa P - \mu P^2$, $\theta = P \odv{}{P}$ (Lemma~\ref{lm:mt}). 

In principle, knowing $f$ near $P = 0$ determines all genus zero GW invariants of $Y_d$ by mirror theorem and reconstructions. The key player to make the reconstruction manageable is the $3$-point function $\langle E, E, E\rangle^{Y_d}$ which is determined by \emph{the extremal function} in $P$ via
\begin{equation} \label{e:Ed}
\rE \coloneqq \frac{E^3}{\langle E, E, E\rangle^{Y_d}} = u f^3
\end{equation}
(cf.\ Definition \ref{def;extr_fun}).

%%%%%%%%%%
\subsubsection{Regularization}
%%%%%%%%%%

To actually compare GW invariants on $Y_d$ with GW invariants of $X_d$ we have to take advantage of the canonical local B model and the comparison turns out has to take place at $P = \infty$. A computation shows that for $1$-point and $2$-point insertions $\vec{b}$ in $H(X_d)$, the invariant $\langle \phi^*\vec{b}\rangle^{Y_d}$ lies in $\bC + O(P^{-1})$ precisely when $\mu = 0$. However, when $\mu \ne 0$ (i.e., $d \in \{5, 6\I, 6\II, 8\}$), $\langle \phi^*\vec{b}\rangle^{Y_d}$ might contain terms like $v \coloneqq \frac{\theta f}{f} = -1 + O(\frac{1}{\log P})$ near $P = \infty$ and a holomorphic regularization needs to be introduced in order to perform specializations. More precisely, the local exponents of \eqref{e:Eqf} at $y \coloneqq P^{-1}= 0$ is $(1, 1)$, hence $f \in y\,\bC\PSR{y} \oplus y\log y \,\bC\PSR{y}$. Let $f^\reg \in y\,\bC\PSR{y}$ be the non-zero \emph{monodromy invariant} solution (unique up to a scalar) of \eqref{e:Eqf} then it is clear that
\[
v^\reg \coloneqq \frac{\theta f^\reg}{f^\reg} = -1 + O(y)
\] 
and the regularization we have in mind is basically the replacement of $v$ by $v^\reg$. The point is that we need to make it compatible with reconstructions in GW theory. For this purpose, let $\sS = \bC (y)[v]$ and $w = v - v^\reg$. Consider the subring $\sR \subseteq \sS$ and the ideal $\sI$ of $\sR$ (cf.~\S \ref{subsubsec;log-derring_neq_7}):
$$
\sR = \sS\cap (\bC\PSR{y} + w\cdot \bC\Laurent{y}[w]), \qquad \sI = \sS\cap (y\cdot \bC\PSR{y} + w \cdot \bC\Laurent{y}[w]).
$$

We first prove that for certain initial 1-point and 2-point insertions $\vec{b}$ on $X_d$, the invariant $\langle \phi^*\vec{b}\rangle^{Y_d}$ belongs to $\sR[Q^{\tilde{\gamma}}]$ where $\tilde{\gamma}$ is the canonical lifting of $\gamma$ in $Y_d$ and 
\begin{equation*}
    \langle \phi^*\vec{b}\rangle^{Y_d} - \phi^*\langle \vec{b} \rangle^{X_d} \in \sI[Q^{\tilde{\gamma}}].
\end{equation*}
Once this is achieved, by working out the delicate comparison of the two reconstruction procedure on both $Y_d$ and $X_d$, we discovered a mysterious cancellation via $\rE$ to get rid off logarithmic singularities and the procedure can be carried over (Proposition \ref{proplocalmodI}). This eventually leads to the proof of Theorem \ref{t:reg-th} in the local case $Y_d \searrow X_d$. (For $d = 7$ see \S\ref{subsec;GW=7}.) Now by applying (symplectic) degeneration formula of GW invariants the global case $Y \searrow X$ is reduced to the local models. This completes the outline of proof of Theorem \ref{t:reg-th}. \footnote{Nevertheless, logically we need to firstly figure out the correct regularization statement before we can run any induction on reconstruction or degeneration analysis.}

It is also worth mentioning that, after the change of coordinates $y = P^{-1}$, the Picard--Fuchs ideal $\operatorname{PF}^Y$ has an extension $\widetilde{\operatorname{PF}}{}^Y$ over $y = 0$. However, the plausible restriction identity $\widetilde{\operatorname{PF}}{}^Y|_{y = 0} = \operatorname{PF}^X$ holds only when $\mu = 0$ (Remark \ref{l:fail}). This also explains the necessity of regularization.

%%%%%%%%%%
\subsubsection{Riemann surfaces  and quasi-modular forms}
%%%%%%%%%%

While the regularization/specialization formula in Theorem \ref{t:reg-th} is accurate, it will be satisfactory only if we understand the limit of the intrinsic quantum variables $Q^\beta$ on $Y$ and determine the Riemann surface (\emph{natural domain of definition}) of $\rE$ and of $QH(Y)$. 

To study the problem, we proceed to show that $f = f_d$ and $\rE = \rE_d$ are $\Gamma_d$ modular forms in $q = e^{2\pi \ii \tauq}$ of weight one and three respectively, under the change of variable 
\begin{equation} \label{e:q-Q}
\odv{\log P}{\log q} = uf^2 \qquad \text{or equivalently} \qquad \odv{\log Q}{\log q} = \rE = uf^3
\end{equation}
with $q = -P + O(P^2) = -Q + O(Q^2)$. For $f$ the result is essentially due to Maier \cite{Maier09} and Zagier \cite{Zag09} where $f(P(\tauq))$ is known as Ramanujan's theta functions (cf.~Cooper's book \cite[\S4--\S6]{Coo17}). 

For example for $d = 1$, near $q = 0$ we have $P(\tauq) = (\Eis_6(\tauq)/\Eis_4^{3/2}(\tauq) - 1)/864$ and
\[
f(P(\tauq)) = {}_2\rF_1\bigl(\tfrac16, \tfrac56; 1; -432P(\tauq)\bigr) = \Eis_4(\tauq)^{1/4}. 
\]
Since $j(\tauq)^{-1} = -P(\tauq)(1 + 432P(\tauq))$, the multi-valued function $P$ parametrizes a double cover of $\rX(\Gamma(1))$. For $d \ge 2$ and $d \ne 7$, $P$ indeed parametrize $\rX(\Gamma_d)$ and similar expressions are listed in Proposition \ref{p:MZ}. Since $P$ and hence $u$ is a weight $0$ meromorphic modular function and $f$ is weight $1$, we see that $\rE = u f^3$ is a weight $3$ modular form. 

To test modularity of GW $n$-point functions, the reconstructions makes use of differentiations which generally destroys the genuine modularity. This leads us to invoke the notion of \emph{meromorphic quasi-modular forms} instead (see Definition \ref{d:QMF}).

Let $D_q = \odv{}{\log q} = q \odv{}{q}$. It follows from~\eqref{e:q-Q} that
\[
    v = \frac{\theta f}{f} = \frac{D_q f}{\rE},
\]
which is a quasi-modular of weight $0$ and depth $1$ (with root and monodromy if $d = 1$). This implies that every element in $\sS = \bC(y)[v]$ is quasi-modular of weight $0$. 

For $d \in \{4, 5, 6\I, 6\II, 8\}$, the transition point is a cusp. Take a representative $r \in \bQ$ and an element $A = \smx {a}{*}{c}{*} \in \operatorname{SL}(2, \bZ)$ such that $A\cdot \ii\infty = r$. A brief computation leads to 
\[
v^\reg = \frac{\theta f^\reg}{f^\reg} =  \rE^{-1} \cdot \Big(D_q f -\frac{1}{2\pi \ii}\frac{c}{(a - c \tauq)}\,f \Big)
\]
as well as $v^\reg(A\tauq) = v_0^A$. By taking into account the regularization procedure, it implies that given $0 \ne \bar{\beta} \in \NE(X)$ with lifting $\tilde{\beta} \in \NE(Y)$ such that $(E, \tilde{\beta}) = 0$,
\[\langle \phi^*\vec{b} \rangle_{\bar{\beta}}^{\Xres \searrow \Xsm}Q^{-\tilde{\beta}} = \sum_m \langle \phi^*\vec{b}\rangle_{\tilde{\beta} + m\ell} Q^{m\ell} 
\]
is a meromorphic quasi-modular form of weight $0$ which coincides with $\langle \vec{b}\rangle_{\bar{\beta}}^X$ at the transition point. This completes the outline of proof of Theorem \ref{t:modver} (cf.~Theorem~\ref{thr:mainmodularver}). 

Consequently, we see that the GW invariants on $Y$ in the extremal direction, which are polynomials in $v$, are quasi-modular forms of weight $0$ on $\Gamma_d$. Since non-constant quasi-modular forms all have logarithmic singularities near the real line, it follows that the unit disk $\mathbb{D} = \{q \mid |q| < 1\}$ is exactly the Riemann surface we want (cf.~Remark~\ref{rmk:mdisD}).

%%%%%%%%%%
\subsubsection{Periods integrals of Eisenstein series}
%%%%%%%%%%

As a byproduct, we can compute the limit of $Q^\ell$ by integrating $\rE_d$ along some path in $\mathbb{D}$ ending at a representative of the transition point. It turns out that $\rE_d$ is an Eisenstein series, and the integration can be expressed explicitly as a combination of $\rL$-values. 

To start with, $f$ admits an analytic continuation to a multi-valued function on $\bP^1 \setminus \operatorname{Sing}_{f}$ ($\operatorname{Sing}_{f}$ is contained in the set of singular points of \eqref{e:Eqf}). For any path $p\colon [0,1] \to \bP^1$ with $p(0) = 0$, $p(1) = \infty$, and $p(s) \in \bP^1 \setminus \operatorname{Sing}_{f}$ for $s \in (0, 1)$, the limit of $Q^\ell$ along the path $p$ is 
\[
Q_p^\ell \coloneqq \lim_{s \to 1^-} Q^\ell(s) = \lim_{\varepsilon \to 0^+} p(\varepsilon)\exp\Bigl( \int_\varepsilon^1 f(p(s)) \frac{\odif{p(s)}}{p(s)}\Bigr). 
\]

For $d \in \{1, 2, 3, 4\}$, we may tale the path $p$ to be the line $[0, \infty)$ and the integral can be evaluated by techniques in complex function theory. It is based on the fact that
\[
f(P) = {}_2\rF_1\bigl(\tfrac{1}{n}, 1 - \tfrac{1}{n}; 1; -\kappa P \bigr),
\]
where $n = 6, 4, 3, 2$ for $d = 1, 2, 3, 4$ ($n = n_d$ is precisely the degree of base change needed in Proposition \ref{prop;locDef_sm}), and Euler's integral representation of hypergeometric functions. The result is $Q_p^\ell = 1$. The case $d = 7$ is treated separately, and it is in fact easier.   
Thus the first half of Theorem \ref{t:Qlim-1} is essentially classical (see Proposition~\ref{prop:Qlimd=1234}).

For the second half of Theorem \ref{t:Qlim-1}, as well as for Theorem \ref{t:Qlim-2}, we develop period integrals of Eisenstein series in \S \ref{s:wild} to evaluate the limit $Q^\ell$ to a certain value $Q^\ell_r$ (see Definition \ref{df:Q_r^l}). The first ingredient is the following extension of a similar formula in \cite{ABYZ02} for the case $\chi = \bm{1}_1$, using the method in \cite{Yang04} (see Proposition \ref{prop:Yang04ext}). Let $k\ge 2$ be an integer. For any triple $(\chi, \psi, n)$ with $\chi\psi(-1) = (-1)^k$, define
\[
\rI_k^{\chi, \psi, n} = n^{-1}\lim_{s \to 0} \rL(s+1, \chi)\rL(s+2-k, \psi). 
\]
Let $a_{\ii\infty,0}$ and $a_{0,0}$ be the cusp values of $\Eis_k^{\chi, \psi, n}(\tauq)$ at $\tauq = \ii\infty$ and $\tauq = 0$, respectively. Then 
\[
\int_{\ii\infty}^0 (\Eis_k^{\chi, \psi, n}(\tauq) - a_{\ii\infty,0} - a_{0,0}\tauq^{-k}) \odif{(2\pi \ii\tauq)} = \rI_k^{\chi, \psi, n}. 
\] 

The second ingredient is the translation formula of Eisenstein series we proved in Proposition \ref{prop:TFforEis}: Let $r = \frac ac \in \bQ$ with $\gcd (a, c) = 1$. Let $\Eis_r^{\psi}(\tauq) = \Eis_3^{\bm{1}_1,\psi,1}(\tauq + r)$. Then 
\[
\Eis_r^{\psi} = \sum_{c = c_1c_2c_3}  \sum_{\chi \in \widehat{(\bZ/c_3\bZ)^\times}} \frac{\chi(a)\fg(\bar{\chi})\psi(c_1)c_1^2}{\varphi(c_3)}\cdot  \Eis_3^{\chi, \bm{1}_{c_2c_3}\chi \psi,c_1c_2}, 
\]
where $\fg$ is the Gauss sum and $\varphi$ is Euler's totient function. It is then clear that the period integrals of Eisenstein series are reduced to sums over arithmetic functions. 

We conclude this paper by working out some explicit examples and to conclude the proofs of both Theorem \ref{t:Qlim-1} and Theorem \ref{t:Qlim-2}, We also include a complete table for the limit of $Q^\ell$ at any $r = \frac{a}{c}\notin [\ii \infty]$ in Remark \ref{r:complete}.

%%%%%%%%%%%%%%%%%%%%
%\subsection{Overview of the contents}

%In Section \ref{sec;geo_of_tran}, we....

%In Section \ref{sec;prel_GW}, we....

%Section \ref{sec;can_loc_model}.....

%%%%%%%%%%%%%%%%%%%%
\subsection{Acknowledgments}

The authors are grateful to Jie Zhou for his valuable suggestions, to Yifan Yang for his suggestions on modular forms, and to Chiu-Chu Melissa Liu for her interest in this work. C.-L.~is supported by NSTC, Taiwan with grant number NSTC 113-2115-M-002-004-MY3, and by the Core Research Group of National Taiwan University. S.-Y.~is supported by the PhD fund in C.-L.'s NSTC grant above. S.-S.~is supported by NSTC, Taiwan with grant number NSTC 111-2115-M-A49-019-MY3 and 114-2115-M-A49-007-MY3.
%the National Science and Technology Council
%%%%%%%%%%%%%%%%%%%%

%%%%%%%%%%%%%%%%%%%%

%%%%%%%%%%%%%%%%%%%%%%%%%%%%%%%%%%%%%%%%
\section{Geometry of Type II extremal transitions}\label{sec;geo_of_tran}
%%%%%%%%%%%%%%%%%%%%%%%%%%%%%%%%%%%%%%%%

%%%%%%%%%%%%%%%%%%%%
\subsection{Extremal rays and Type II contractions}
%%%%%%%%%%%%%%%%%%%%

We recall some basic terminology from Mori theory, see \cite{KM98} for a complete treatment.

Let $\Xres$ be a smooth projective threefold. A \emph{birational contraction} on $Y$ is a projective birational morphism $\crpcon \colon \Xres \to \Xsing$ onto a normal variety $\Xsing$ such that $\crpcon_\ast \cO_Y = \cO_{\oY}$. We are interested in the case that $\crpcon$ is \emph{divisorial}, i.e, the exceptional set $\Exc (\crpcon)$ has codimension one. 

Let $\NE(Y)$ be the \emph{integral Mori cone} of effective one cycles, let $\NE(Y)_\bR = \NE (Y) \otimes_\bZ \bR$. 
A divisorial contraction $\crpcon$ is called \emph{extremal} in the sense of Mori theory if for every irreducible curve $C$ in $Y$, we have $\crpcon_\ast [C] = 0$ if and only if $[C]$ lies in some fixed ray $\bR_{\geq 0} \ell$ in $\NE (\Xres)_\bR$. 
Such ray $\bR_{\geq 0} \ell$ is an \emph{extremal ray} of $\NE (\Xres)_\bR$. It is called \emph{crepant} if the canonical divisor $K_\Xres$ is $\crpcon$-trivial.

We recall the following terminology from \cite[Definition 2.1]{Wilson92}.

\begin{definition}
A divisorial extremal contraction $\crpcon$ is called primitive of Type II if it is crepant and $\dim \crpcon (\Exc (\crpcon)) = 0$.
\end{definition}

\begin{remark}
Notice that $E = \Exc (\crpcon)$ is irreducible and therefore contracted to a point \cite[Proposition 2.5]{KM98}.

In Wilson's terminology, a birational contraction $\crpcon$ is \emph{primitive} if it cannot be factored in the algebraic category \cite[Definition 1.2]{Wilson92}. As a standard argument shows (cf.\ \cite[Theorem 3.25]{KM98}), $\crpcon$ is primitive if and only if it is a contraction of an extremal ray corresponding to $K_\Xres + E$. Wilson classified a primitive birational contraction according to the dimensions of its exceptional
set and its image. \emph{Types I and III contractions} are small contractions and divisorial contractions to curves, respectively \cite[Definition 2.1]{Wilson92}.
\end{remark}

Recall that a \emph{generalized del Pezzo surface} is a projective Gorenstein surface with anti-ample dualizing sheaf (Remark \ref{rmk;GdP2}).

\begin{proposition}[\cite{Gross97a, Wilson92, Wilson97}] \label{prop:birExc}
Let $\crpcon \colon \Xres \to \Xsing$ be a primitive Type II contraction. Then $\Xsing$ is $\bQ$-factorial and Gorenstein, and the exceptional divisor $E$ is a generalized del Pezzo surface. Moreover, if $d \coloneqq E^3 > 3$ then $E$ is either
\begin{enumerate}
    \item a normal and rational del Pezzo surface of degree $d \leqslant 9$, or
    \item a non-normal del Pezzo surface of degree $d = 7$ whose normalization is a smooth rational ruled surface.
\end{enumerate}
\end{proposition}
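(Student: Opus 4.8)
The plan is to establish three assertions in turn: (a) $\bar Y$ is $\bQ$-factorial and Gorenstein; (b) $E$ is a generalized del Pezzo surface; and (c) the degree classification for $d > 3$. Assertion (a) is essentially standard Mori theory: $\bQ$-factoriality of $\bar Y$ follows because $\crpcon$ contracts a single extremal ray out of the smooth (hence $\bQ$-factorial) threefold $Y$ with $\rho(Y/\bar Y) = 1$, so the strict transform lets one push forward and $\bQ$-Cartier-ify every Weil divisor on $\bar Y$ (cf.\ \cite{KM98}). For the Gorenstein property, crepancy means $K_Y$ is $\crpcon$-trivial, whence $K_Y = \crpcon^* K_{\bar Y}$; by Grauert--Riemenschneider vanishing ($R^i\crpcon_*\omega_Y = 0$ for $i > 0$) together with relative duality, the dualizing sheaf $\omega_{\bar Y} \cong \crpcon_*\omega_Y$ is invertible at the image point $p$, so $p \in \bar Y$ is an isolated Gorenstein (canonical) singularity in the sense of \cite{Reid80}.

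The heart of (b) is a clean adjunction computation. As an effective Cartier divisor in the smooth threefold $Y$, the surface $E$ is a local complete intersection, hence projective and Gorenstein, with $\omega_E \cong \cO_Y(K_Y + E)|_E$ by adjunction. Since $\crpcon$ is crepant, $\cO_Y(K_Y) = \crpcon^*\cO_{\bar Y}(K_{\bar Y})$, and because $\crpcon(E) = \{p\}$ is a point this line bundle restricts trivially to $E$; therefore $\omega_E \cong \cO_Y(E)|_E = N_{E/Y}$. As $\crpcon$ is a divisorial extremal contraction, $-E$ is $\crpcon$-ample (its class is negative on the extremal ray), so restricting to the fibre $E$ over $p$ shows $\omega_E^{-1} \cong \cO_E(-E)$ is ample. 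Hence $\omega_E^{-1}$ is ample and $E$ is a generalized del Pezzo surface, of degree $d = E^3 = (\cO_Y(E)|_E)^2 = \omega_E^2 = K_E^2 > 0$.

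For (c) I would treat the normal and non-normal cases separately. If $E$ is normal, it is a normal Gorenstein surface with ample anticanonical sheaf; by Serre duality $h^2(\cO_E) = h^0(\omega_E) = 0$ and $H^0(\omega_E^{\otimes m}) = 0$ for all $m > 0$, so all plurigenera vanish and $E$ is a rational surface, while the classification of normal Gorenstein del Pezzo surfaces bounds the degree by $K_E^2 \le 9$ (with equality only for $\bP^2$). If $E$ is non-normal, let $\nu\colon \tilde E \to E$ be the normalization with reduced conductor curve $C \subset \tilde E$ (nonempty precisely because $E$ is non-normal); Reid's adjunction via the different gives $\nu^*\omega_E \cong \omega_{\tilde E}(C)$, so $-(K_{\tilde E} + C)$ is ample and $(\tilde E, C)$ is a log del Pezzo pair with $C \ne 0$ and $(K_{\tilde E} + C)^2 = d$. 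Running through the classification of such pairs under the numerical constraint $d > 3$ forces $\tilde E$ to be a smooth rational ruled surface and $d = 7$, as analyzed in \cite{Reid80, Wilson92, Wilson97, Gross97a}.

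The main obstacle is the non-normal part of (c): one must show that non-normality together with $d > 3$ pins the degree down to exactly $7$ and identifies $\tilde E$ as a smooth Hirzebruch surface. This is a genuine case analysis on the log del Pezzo pair $(\tilde E, C)$ --- bounding the arithmetic genus and self-intersection of the conductor $C$ on each candidate ruled surface and checking that the gluing data is compatible with $\omega_E^{-1}$ being ample --- and it is here that the hypothesis $d > 3$ is essential, since smaller degrees admit further non-normal configurations. By contrast, steps (a), (b), and the normal case of (c) are comparatively formal, resting only on Mori theory, adjunction, and the classical classification of normal Gorenstein del Pezzo surfaces.
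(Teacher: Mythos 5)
Your parts (a) and (b) follow the paper's proof essentially verbatim: $\bQ$-factoriality of $\Xsing$ is quoted from \cite[Corollary 3.18]{KM98}, the Gorenstein property comes from descending the $\crpcon$-trivial canonical bundle (your Grauert--Riemenschneider/duality phrasing is just a heavier way of saying $\omega_{\Xsing} \cong \crpcon_*\omega_\Xres$ is invertible), and the generalized del Pezzo property is exactly the paper's adjunction computation $K_E = (K_\Xres + E)|_E = E|_E$ combined with $\crpcon$-ampleness of $-E$ and the fact that an effective divisor on a smooth threefold is Gorenstein. For part (c) the paper simply cites \cite[Lemma 2.3]{Wilson97} (or \cite[Theorem 5.2]{Gross97a}), as do you for the non-normal case, so there is no disagreement of route there.

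However, your sketch of the \emph{normal} case of (c) contains a genuine error. Vanishing of all plurigenera does \emph{not} imply rationality for a normal Gorenstein del Pezzo surface: by Hidaka--Watanabe \cite[Theorem 2.2]{HW81} such a surface is either rational or a cone over an elliptic curve, and the elliptic cones have $h^0(\omega_E^{\otimes m}) = 0$ for all $m > 0$ and $h^1(\cO_E) = 0$ while their minimal resolutions are elliptic ruled surfaces (irregularity $1$), hence not rational. Moreover these cones exist in every degree $d \geq 1$, so neither rationality nor the bound $d \leq 9$ follows from the surface-level classification you invoke. Excluding the elliptic cones when $d > 3$ genuinely uses the ambient contraction --- that $(\Xsing, p)$ is a rational (canonical, Gorenstein) singularity with a crepant resolution by a \emph{smooth} $\Xres$ --- and this exclusion, together with the non-normal analysis, is precisely the content of \cite[Lemma 2.3]{Wilson97} and \cite[Theorem 5.2]{Gross97a}; indeed Remark~\ref{rmk;can_sing_13} records that elliptic cones do occur for $d \leq 3$. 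So the normal case is not ``comparatively formal'' as you claim: either cite the lemma for it as well, or supply the argument ruling out elliptic cones.
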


\begin{proof}
Since $\crpcon$ is a crepant contraction of an extremal ray, $\Xsing$ is $\bQ$-factorial \cite[Corollary 3.18]{KM98}. According to the fact that $K_\Xres$ is trivial in a neighborhood of $E$, it descends to $\Xsing$, and thus $\Xsing$ is Gorenstein.  

Recall that $E$ is irreducible. By the adjunction formula and $\crpcon$ being crepant, we get $K_E = E|_E$ and hence $- K_E$ is ample, because $- E$ is $\crpcon$-ample. Also, $E$ is Gorenstein since it is an effective divisor on a smooth threefold. Hence $E$ is a generalized del Pezzo surface (see also \cite[Proposition 2.13]{Reid80}). The second part follows from \cite[Lemma 2.3]{Wilson97} (or from \cite[Theorem 5.2]{Gross97a} when $\Xres$ is Calabi--Yau).
\end{proof}

\begin{remark}\label{rmk;can_sing_13}
With notation as in Proposition \ref{prop:birExc}, let $\crpcon (E) = \{p\}$. If $d \leq 3$, then the singularity $(\Xsing, p)$ is a hypersurface singularity with an equation of the form (cf.~\cite[Corollary 2.10]{Reid80}):
\begin{enumerate}
    \item $d = 1$, and $x_1^2 + x_2^3 + x_2 f (x_3, x_4) + g (x_3, x_4) = 0$ where $f$ (resp.\ $g$) is a sum of monomials of degree at least $4$ (resp.\ $6$).
    \item $d = 2$, and $x_1^2 + f(x_2, x_3, x_4) = 0$ where $f$ is a sum of monomials of degree at least $4$.
    \item $d = 3$, and  $x_1^3 + x_1 f(x_2, x_3, x_4) + g (x_2, x_3, x_4) = 0$ where $f$ (resp.\ $g$) is a sum of monomials of degree at least $2$ (resp.\ $3$).
\end{enumerate}
In this case the generalized del Pezzo surface $E$ is rational or is a cone over an elliptic curve, see \cite[Theorem 2.2]{HW81} and \cite[Remark 5.3]{Gross97a}.
\end{remark}

The following proposition will be used in the proof of Proposition \ref{prop;topo_defect} and the construction of (symplectic) semistable degenerations (\S\ref{subsec;ss_deg}).

\begin{proposition}[\cite{Gross97a, Wilson97}] \label{prop;locDef_res} 
Let $\crpcon \colon \Xres \to \Xsing$ be a primitive Type II contraction with exceptional divisor $E$ and $d = E^3$. Then there exists a neighborhood $U$ of $E$ and a holomorphic deformation of the complex structure on $U$ such that $E$ deforms to a smooth del Pezzo surface of degree $d$.
\end{proposition}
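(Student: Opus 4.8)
The plan is to separate the statement into an abstract smoothing and an ambient lifting. Concretely I would prove: (i) as an abstract surface, the generalized del Pezzo surface $E$ admits a flat deformation to a smooth del Pezzo surface of the same degree $d$; and (ii) every abstract deformation of $E$ is induced by a holomorphic deformation of the complex structure on a neighbourhood $U$ of $E$ in $\Xres$, through an \emph{unobstructed} deformation of the pair $(U,E)$. Granting both, I lift the smoothing direction of (i) to the pair-deformation space, integrate it (possible since that space is unobstructed) to an honest holomorphic family $(\mathcal U_t,\mathcal E_t)$, and read off that $\mathcal E_t$ is smooth for small $t\ne 0$; since $K_{\mathcal E_t}^2=K_E^2=d$ is constant in a flat family and ampleness of $-K_{\mathcal E_t}$ is an open condition, $\mathcal E_t$ is again a del Pezzo surface of degree $d$.

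The geometric input making (ii) work is crepancy: by adjunction $K_E=(K_\Xres+E)|_E=E|_E$, so $N_{E/U}=\cO_E(E)=K_E$ and $A\coloneqq-E|_E=-K_E$ is ample. I would control deformations of the pair $(U,E)$ by the logarithmic tangent sheaf $\Theta_U(-\log E)$, with tangent space $H^1(U,\Theta_U(-\log E))$ and obstructions in $H^2(U,\Theta_U(-\log E))$. Because a neighbourhood $V$ of $p=\crpcon(E)$ in $\Xsing$ is Stein, Leray together with the theorem on formal functions reduces every such $H^2$ to the inverse limit of the graded pieces $H^2\!\big(E,\,\Theta_U(-\log E)|_E\otimes A^{\,n}\big)$, $n\ge 0$. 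Feeding the residue sequence $0\to\Theta_E\to\Theta_U(-\log E)|_E\to\cO_E\to 0$ through $\otimes A^{\,n}$, both $H^2(E,\Theta_E\otimes A^{\,n})$ and $H^2(E,A^{\,n})$ vanish for all $n\ge 0$ by Serre duality, rationality of $E$, and the ampleness of $A$; hence $H^2(U,\Theta_U(-\log E))=0$ and the pair deformations are unobstructed.

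For the surjectivity of $\Def(U,E)\to\Def(E)$ I would factor it as
\[
H^1(U,\Theta_U(-\log E))\twoheadrightarrow H^1(U,\Theta_U)\twoheadrightarrow H^1(E,\Theta_U|_E)\xleftarrow{\ \sim\ }H^1(E,\Theta_E),
\]
so that the composite with the last isomorphism is onto. The first map is surjective from $0\to\Theta_U(-\log E)\to\Theta_U\to N_{E/U}\to 0$ and $H^1(E,K_E)=0$; the second from $0\to\Theta_U(-E)\to\Theta_U\to\Theta_U|_E\to 0$ and the vanishing $H^2(U,\Theta_U(-E))=0$ (the same formal-functions computation, now with graded pieces $H^2(E,\Theta_U|_E\otimes A^{\,n+1})$); and the last isomorphism from $0\to\Theta_E\to\Theta_U|_E\to K_E\to 0$ with $H^0(E,K_E)=H^1(E,K_E)=0$ and $H^2(E,\Theta_E)=0$. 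All these vanishings rest on $-K_E$ being ample and $E$ rational, i.e.\ on $E$ being del Pezzo. This establishes (ii).

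For (i) I would invoke the classification recalled in Proposition~\ref{prop:birExc} and Remark~\ref{rmk;can_sing_13} together with the results of Reid \cite{Reid80} and Gross \cite{Gross97a}: in the normal case $E$ carries only rational double points, which are simultaneously smoothable inside the family of degree-$d$ del Pezzo surfaces, while the non-normal degree-$7$ surface is smoothed using the explicit description of its normalization as a rational ruled surface. The hard part is precisely these singular and non-normal cases, where $\Theta_E$ must be replaced by the tangent complex and $\Def(E)$ by $T^1_E$: one has to check that the local smoothings of the rational double points glue to a global first-order deformation and that the obstruction bookkeeping above survives verbatim, again leaning on the ampleness of $A=-K_E$ to annihilate the relevant $H^2$'s. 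This is exactly the content packaged in the cited theorems of Gross \cite{Gross97a} and Wilson \cite{Wilson97}, and the proposal is to reconstruct it along the deformation-theoretic lines above rather than to redo their surface classifications.
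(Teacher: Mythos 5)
Your strategy for $d>3$ is essentially the one the paper uses in that range: the surjectivity of $\Def((\Xres,E),E,\iota)\to\Def(E)$ combined with smoothability of $E$ is exactly what the paper imports from Gross's Theorem~5.8, and your log-tangent-sheaf bookkeeping is a plausible reconstruction of that argument (modulo the tangent-complex issues you yourself flag). The genuine gap is that your argument does not cover $d\le 3$. The classification in Proposition~\ref{prop:birExc} (normal rational with rational double points, or the non-normal degree-$7$ surface) is only asserted for $d>3$; by Remark~\ref{rmk;can_sing_13}, for $d\le 3$ the exceptional divisor may be a cone over an elliptic curve. That surface is normal Gorenstein with $-K_E$ ample but it is \emph{not} rational and its vertex is a simple elliptic singularity, not a rational double point. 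This breaks your proof at two places: step~(i) asserts that in the normal case $E$ has only rational double points that are ``simultaneously smoothable inside the family of degree-$d$ del Pezzo surfaces'', which is false here; and every $H^2$-vanishing in step~(ii) is anchored on ``rationality of $E$''. This is precisely why the paper treats $d\le 3$ by a different, explicit route: it writes down Reid's weighted-homogeneous equation for the hypersurface singularity $(\Xsing,p)$, realizes $\crpcon$ as the weighted blow-up, deforms the equation so that the new exceptional divisor in $\bP(\alpha)$ is a smooth del Pezzo surface, and resolves simultaneously in the family.

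A secondary omission: producing a flat holomorphic family $(\,\mathcal{U}_t,\mathcal{E}_t)$ is not yet the statement. The proposition asks for a deformation of the \emph{complex structure on a fixed neighborhood} $U$, so one must know the family of (non-compact!) complex manifolds is differentiably trivial. The paper obtains this from Looijenga's good representatives, which are differentiably trivial over the boundary, together with the Ehresmann fibration theorem with boundary; your proposal stops at ``integrate to an honest holomorphic family'' and never makes this identification. Neither point destroys the architecture for $d>3$, where your write-up in effect lands on the same citation to Gross and Wilson that the paper makes, but as written the proof is incomplete in exactly the cases where the paper's argument is genuinely different.
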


\begin{proof}

Let $\crpcon (E) = \{p\}$. If $d \leq 3$, then the singularity $(\Xsing, p)$ is a hypersurface singularity with an equation of the form given in Remark \ref{rmk;can_sing_13}.

Let $\alpha$ be the weight
\begin{align}\label{eqn;wt13}
    \alpha (x_1, x_2, x_3, x_4) =
    \begin{cases}
       (3, 2, 1, 1) & \text{if } d = 1,\\
       (2, 1, 1, 1) & \text{if } d = 2,\\
       (1, 1, 1, 1) & \text{if } d = 3,\\
    \end{cases}
\end{align}
and $\bP (\alpha)$ the weighted projective space. Then the morphism $\crpcon$ is the weighted blow-up of the singularity with weight $\alpha$. If we denote by $h_a$ the weighted homogeneous part of degree $a$ in a power series $h$, then $E$ is naturally embedded in $\bP(\alpha)$ with the equation:
\begin{align}\label{eqn;leadterm_13}
\begin{cases}
   x_1^2 + x_2^3 + x_2 f_4 (x_3, x_4) + g_6(x_3, x_4) = 0 & \text{if } d = 1, \\
   x_1^2 + f_4(x_2, x_3, x_4) = 0 & \text{if } d = 2, \\
   x_1^3 + x_1 f_2 (x_2, x_3, x_4) + g_3 (x_2, x_3, x_4) = 0 & \text{if } d = 3.
\end{cases}    
\end{align}
It is easily seen that there exists a deformation of the hypersurface singularity $(\Xsing, p)$ such that the exceptional locus $E'$ of the deformed singularity is a smooth del Pezzo surface of degree $d$ in $\bP(\alpha)$. Note that the singularities can be resolved in the family by means of the weighted blow-up described above.

We may take a good representative $\ocU \to \Delta$ of the deformation of $(\Xsing, p)$ such that on the boundary the family is differentiably trivial (see \cite[Theorem 2.8]{Looijenga84}). If we take $\cU \to \Delta$ to be the corresponding simultaneous resolution\footnote{It is a family of manifolds with boundary where the boundary is differentiably trivial over the disc $\Delta$.}, Ehresmann fibration theorem (with boundary) then applies to show that $\cU \to \Delta$ is itself differentiably trivial. Therefore $\cU \to \Delta$ is regarded as a family of holomorphic deformations of the complex structure on the (fixed) neighborhood $U$ of $E$ in $\Xres$, and therefore we can deform $E$ to a smooth del Pezzo surface $E'$.

From now on we will assume that $d > 3$. Let $\iota \colon E \to (\Xres, E)$ be the inclusion map of $E$ into the germ $(\Xres, E)$ and $\Def((\Xres, E), E, \iota)$ the deformation space of the inclusion map. By Proposition \ref{prop:birExc} and \cite[Lemma 5.6 (iii)]{Gross97a}, the generalized del Pezzo surface $E$ is smoothable. Moreover, the argument from \cite[pp.217-218, Theorem~5.8]{Gross97a} shows that the maps of deformation spaces $\Def((\Xres, E), E, \iota) \to \Def (E)$ and  $\Def((\Xres, E), E, \iota) \to \Def (\Xres, E)$ are surjective. Hence we deduce that a smoothing $E'$ of $E$ can be achieved by holomorphically deforming the complex structure on some neighborhood $U$ of $E$.
\end{proof}

We will also need the existence of the holomorphic tubular neighborhood.%:

\begin{proposition}[{\cite[Proposition 5.4]{Gross97a}}] \label{prop;hol_tubu}
Suppose that $(\Xsing, p)$ is an isolated rational Gorenstein threefold and that $\Xres \to \Xsing$ is the blow-up of $\Xsing$ at $p$ with exceptional divisor $E$. If $\Xres$ and $E$ are smooth and $E^3 \geq 5$, then $(\Xsing, p)$ is analytically isomorphic to a cone over $E$, that is, there exists a neighborhood $U \subseteq Y$ of $E$ which is biholomorphic to a neighborhood of the zero section in the total space of the normal bundle $N_{E / Y}$.
\end{proposition}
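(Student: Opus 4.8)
Throughout write $N \coloneqq N_{E/\Xres} = \cO_E(E)$ for the normal bundle. Since $E$ is contracted to $p$ and $-E$ is $\crpcon$-ample, $N^\ast = \cO_E(-E)$ is ample, so $N$ is anti-ample (\emph{negative}); by adjunction, the Gorenstein structure makes $K_{\Xsing}$ Cartier, and as $\crpcon(E)$ is a point, $K_\Xres|_E$ is trivial, whence $N = \cO_E(E) = (K_\Xres + E)|_E = K_E = \omega_E$ (cf.~Proposition~\ref{prop:birExc}). Thus $N^\ast = \omega_E^{-1}$ is ample of degree $(N^\ast)^2 = E^3 = d \geq 5$, so $E$ is a smooth del Pezzo surface of degree $\geq 5$. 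The plan is to prove the statement in two stages: first \emph{linearize} the formal neighborhood, showing $\hat\Xres_E \cong \hat N_E$ (formal completion of the zero section in the total space of $N$), and then \emph{upgrade} this to a biholomorphism of analytic neighborhoods using the negativity of $N$.

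\emph{Formal linearization.} Let $I = \cO_\Xres(-E)$ and let $Y_m = (E, \cO_\Xres/I^{m+1})$ be the $m$-th infinitesimal neighborhood; as $E$ is a smooth Cartier divisor, $I^m/I^{m+1} \cong (N^\ast)^{\otimes m}$. I would construct the isomorphism $\hat\Xres_E \cong \hat N_E$ inductively: the first-order term is the (tautological) identification of $N$, and one extends an isomorphism of $Y_{m-1}$ with the $(m-1)$-st neighborhood of the zero section to order $m$. Via the sequence $0 \to T_E \to T_\Xres|_E \to N \to 0$, the obstruction to each extension lies in
\[
H^1\bigl(E,\, T_E \otimes (N^\ast)^{\otimes m}\bigr) \quad\text{and}\quad H^1\bigl(E,\, N \otimes (N^\ast)^{\otimes m}\bigr), \qquad m \geq 2.
\]

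\emph{Vanishing and the main obstacle.} The second group is $H^1\bigl(E, \omega_E \otimes (\omega_E^{-1})^{\otimes m}\bigr)$, which vanishes for all $m \geq 1$ by Kodaira vanishing since $(\omega_E^{-1})^{\otimes m}$ is ample. The first group $H^1\bigl(E, T_E \otimes (\omega_E^{-1})^{\otimes m}\bigr)$ vanishes for $m \gg 0$ by Serre vanishing, so the real work is the finitely many low values $m \geq 2$. This is exactly where $d \geq 5$ is indispensable: I would verify the vanishing from the explicit structure of smooth del Pezzo surfaces of degree $\geq 5$ (namely $\bP^1 \times \bP^1$ or blow-ups of $\bP^2$ at $\leq 4$ points in general position, with $T_E$ governed by the Euler and blow-up sequences). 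For $d \leq 4$ these groups are genuinely nonzero — consistent with Remark~\ref{rmk;can_sing_13}, where for $d \leq 3$ the germ $(\Xsing, p)$ is a general, \emph{non-conical} hypersurface singularity — so this step is simultaneously the technical heart of the proof and the source of the degree hypothesis. This is the step I expect to be the main obstacle.

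\emph{From formal to analytic.} Granting $\hat\Xres_E \cong \hat N_E$, I would descend it through the two contractions: $\crpcon$ on the $\Xres$ side, and the Grauert contraction of the zero section of $N$ (which exists because $N$ is negative) to the vertex of the affine cone $C$ over $(E, \omega_E^{-1})$ on the other. Using that $\Xsing$ has rational singularities, the theorem on formal functions identifies $\hcO_{\Xsing, p}$ with $\varprojlim_m H^0(Y_m, \cO_{Y_m})$, so the formal isomorphism descends to an isomorphism of completed local rings $\hcO_{\Xsing, p} \cong \hcO_{C, 0}$ of the two isolated singularities. Artin's approximation theorem then promotes this to an analytic isomorphism of germs $(\Xsing, p) \cong (C, 0)$. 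Finally, blowing up the respective singular points — the blow-up of the cone vertex being precisely the total space of $N$ with exceptional divisor the zero section $E$ — transports this germ isomorphism to the desired biholomorphism between a neighborhood $U \subseteq \Xres$ of $E$ and a neighborhood of the zero section in $N_{E/\Xres}$. (Alternatively, the passage from formal to analytic can be phrased as the formal principle, which is valid here because the neighborhoods are strongly pseudoconvex.)
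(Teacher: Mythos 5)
The paper does not prove this statement: it is quoted verbatim from \cite[Proposition 5.4]{Gross97a} with no argument given, so your sketch has to be judged on its own merits rather than against an internal proof. Your overall architecture is the standard route for such results: identify $N_{E/\Xres}\cong\omega_E$, linearize the formal neighborhood of $E$ order by order, then pass from formal to analytic using the negativity of the normal bundle (Grauert contraction, theorem on formal functions and Artin approximation, or the formal principle for exceptional divisors). That back end is indeed routine given the references you name.

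The front end, however, has two genuine gaps. First and decisively, the vanishing $H^1(E, T_E\otimes\omega_E^{-m})=0$ that drives the induction is the entire content of the proposition --- it is the only place the hypothesis $E^3\ge 5$ can act --- and you do not prove it; you explicitly defer it as ``the main obstacle''. Without that computation there is no proof. Second, your induction starts one step too late: the first-order neighborhood $(E,\cO_\Xres/\cO_\Xres(-2E))$ is a square-zero extension of $\cO_E$ by $N^{*}$ classified by $H^1(E,T_E\otimes N^{*})$, and it is \emph{not} tautologically split, so the obstruction groups must be killed for all $m\ge 1$, not only $m\ge 2$. This is not a cosmetic indexing issue: for a smooth cubic surface one computes $h^1(E,T_E\otimes\omega_E^{-1})=1$ (the degree-$4$ socle of the Jacobian ring), and this one-dimensional group is exactly what produces the non-conical $d=3$ singularities of Remark~\ref{rmk;can_sing_13}; the first obstruction you discard is the one that actually fails in low degree. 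Relatedly, your assertion that these groups are ``genuinely nonzero for $d\le 4$'' is supported only for $d\le 3$: for the quartic del Pezzo a direct check with the diagonalized pencil of quadrics indicates that the relevant cokernels vanish, so you have not in fact located why the hypothesis is $E^3\ge 5$ rather than $\ge 4$. (A minor slip in the setup: $K_\Xres|_E$ is trivial because the extraction is crepant --- which for $E^3\ge 5$ follows from $K_E=(a+1)\,E|_E$ together with $K_E^2\le 9$ --- not merely because $E$ is contracted to a point.)
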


%%%%%%%%%%%%%%%%%%%%
\subsection{Type II extremal transitions}
%%%%%%%%%%%%%%%%%%%%

We first review the definition of geometric transitions.

\begin{definition}
Given smooth projective threefolds $\Xsm$ and $\Xres$, we say that $\Xsm$ and $\Xres$ are related by a \emph{geometric transition} if there exists a crepant contraction $\crpcon \colon \Xres \to \Xsing$ followed by a smoothing $\fX \to \Delta$ of $\Xsing = \fX_0$ with the fiber $X = \fX_t$ (for some $t \neq 0$). We write $\Xres \searrow \Xsm$ or $\typeII$ for this process. 

A geometric transition $\Xres \searrow \Xsm$ is called:
\begin{enumerate}[(i)]
    \item a \emph{del Pezzo transition} if $\Exc (\crpcon)$ is a smooth divisor and $\crpcon (\Exc (\crpcon))$ is a point.

    \item a \emph{Type II extremal transition} if $\crpcon$ is a primitive Type II contraction. In particular, the relative Picard number of $\crpcon$ is one.
\end{enumerate}
If the self-intersection number of the exceptional divisor $\Exc (\crpcon)$ is $d$, we say that $\Xres \searrow \Xsm$ is of degree $d$.
\end{definition}

Note that the exceptional divisor $\Exc (\crpcon)$ is a smooth del Pezzo surface of degree $d$ if $\Xres \searrow \Xsm$ is a del Pezzo transition of degree $d$.

\begin{remark}\label{rmk:P2F1}
If the singularity $\Xsing$ is isomorphic to the cone over $\bP^2$ or the Hirzebruch surface $\bF_1$ then it is not smoothable (see \cite[(5.5)]{Gross97a}). Therefore these two surfaces do not appear in the exceptional divisor of Type II extremal transitions. 
\end{remark}

The following are the standard examples of del Pezzo transitions, which we will study how the GW invariants are related under such transitions in \S\ref{sec;can_loc_model}.

\begin{example}\label{ex:dPtrans}
For $1 \leq d \leq 8$, we denote $S_d$ by a smooth del Pezzo surface of degree $d$ which is isomorphic to the blow-up of $\bP^1 \times \bP^1$ at $8 - d$ points, see Remark \ref{rmk;Sd}. We shall construct a del Pezzo transition $Y_d \searrow X_d$ with exceptional divisor $S_d$ for each $d$ (cf.\ Remark \ref{rmk:P2F1}).

Let $\alpha$ be the weight as in \eqref{eqn;wt13} for $1 \leq d \leq 3$ and 
\begin{align}\label{eqn;wt48}
    \alpha (x_1, \ldots, x_{d + 1}) = (1, \ldots, 1)
\end{align}
for $4 \leq d \leq 8$. Then we have an embedding $S_d \hookrightarrow \bP (\alpha)$ with 
\begin{align}\label{eqn:eaDelT}
    -K_{S_d} = \cO_{\bP (\alpha)} (1) |_{S_d}.
\end{align}
Let $\mathrm{Proj}_p \colon \bP(\alpha, 1) \dashrightarrow \bP (\alpha)$ be the rational map of weighted projective spaces given by the projection from the point $p = [0: \cdots :0:1] \in \bP (\alpha, 1)$. It is easily seen that the projective bundle $\bP_{\bP (\alpha)} (\cO(-1) \oplus \cO)$ over $\bP (\alpha)$ is the graph of the rational map $\mathrm{Proj}_p$. Note that $\bP_{\bP (\alpha)} (\cO(-1) \oplus \cO)$ is also isomorphic to the weighted blow-up of $\bP (\alpha, 1)$ at $p$ with the weight $(\alpha, 1)$.

Now we define $\oY_d$ the projective cone over $S_d$ with vertex $p$ in $\bP (\alpha, 1)$ and 
\begin{align*}
    Y_d = \bP_{S_d}(K_{S_d} \oplus \cO).
\end{align*}
According to \eqref{eqn:eaDelT}, it follows that $Y_d$ is the weighted blow-up $ \bP_{S_d}(\cO(-1) \oplus \cO)$ of $\oY_d$ at $p$ with the weight $\alpha$ and thus $S_d$ is the exceptional divisor of $Y_d \to \oY_d$. Note that $Y_d \to \oY_d$ is the restriction of the weighted blow-up $\bP_{\bP (\alpha)} (\cO(-1) \oplus \cO) \to \bP (\alpha, 1)$.
\begin{equation*}%\label{local_eqn}
    \begin{tikzcd}[column sep = -12pt]
       Y_d \ar[rrr,hook] \ar[d] &&& \bP_{\bP (\alpha)} (\cO(-1) \oplus \cO)  \ar[dl,swap] \ar[dr] & \\
        \oY_d \ar[rr,hook] &\qquad\qquad & \bP (\alpha, 1) \ar[rr,"\mathrm{Proj}_p",dashed] & & \bP(\alpha)  & \qquad\qquad & S_d \ar[ll,hook']
    \end{tikzcd}
\end{equation*}
Let $X_d$ be a smooth del Pezzo threefold of degree $d$ in $\bP(\alpha, 1)$. Then $X_d$ is a smoothing of the singular del Pezzo threefold $\oY_d$ and therefore we get the desired del Pezzo transition $Y_d \searrow X_d$. 

We remark that, when $d = 6$ there are two cases \eqref{thm;dP3claf_6I} and \eqref{thm;dP3claf_6II} in Theorem \ref{thm;dP3claf}, we will denote $X_{6\I}$ and $X_{6\II}$, respectively. It will be distinguished by the topological Euler characteristics, see Table \ref{tab;diffchiXY}. 
\end{example}

We shall construct the (local) semistable projective degenerations, which will be used in \S\ref{subsec;ss_deg}.

\begin{proposition}\label{prop;locDef_sm}
Suppose that $(\Xsing, p)$ is a rational Gorenstein singularity. Let $\mathfrak{V} \to \Delta$ be a good representative of a smoothing of $(\Xsing, p)$ with the special fiber $\oU \coloneqq \fV_0$. Given the weight $\alpha$ as in \eqref{eqn;wt13} and \eqref{eqn;wt48}, we denote $\crpcon \colon U \to \oU$ by the weighted blow-up at $p$ with weight $\alpha$. Let $E = \Exc (\crpcon)$ and $d = E^3$. Set $n_d = 6, 4, 3$ for $d = 1,2,3$, respectively, and $n_d = 2$ for $d \geq 4$.

If $U$ and $E$ are smooth, then there exists a semistable reduction $\cV \to \Delta$ of $\mathfrak{V} \to \Delta$ wihch is obtained by a degree $n_d$ base change $\cV' \to \Delta$ allowed by the weighted blow-up at $p \in \cV'$ with weight $(\alpha, 1)$. Moreover, the special fiber $\cV_0 = U \cup X_d$ is a simple normal crossing divisor such that $X_d$ is a smooth del Pezzo threefold of degree $d$ and $U \cap X_d = E$.
\end{proposition}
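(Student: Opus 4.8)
The statement is local and analytic near the vertex $p$, so the plan is to work inside the weighted projective space $\bP(\alpha,1)$ in which both the cone over $E$ and the del Pezzo threefold $X_d$ of Example~\ref{ex:dPtrans} live. The guiding observation is that $n_d$ is exactly the weighted degree $\delta$ of the defining equation(s) of $E\subseteq \bP(\alpha)$ in its anticanonical embedding: for $d\le 3$ one has $\delta=6,4,3$, the degrees of the hypersurfaces of Remark~\ref{rmk;can_sing_13}, while for $d\ge 4$ the anticanonical ideal is generated by quadrics and $\delta=2$. Equivalently, $n_d$ is the multiplicity with which the exceptional divisor of the weighted blow-up at $p$ occurs in the divisor of the function cutting out the special fibre, and the base change $s=t^{n_d}$ is calibrated precisely to divide that multiplicity down to one.

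First I would fix weighted coordinates $x$ on $\bA(\alpha)$ so that, using the cone hypothesis on $(\Xsing,p)$ together with Remark~\ref{rmk;can_sing_13} for $d\le 3$ (resp. the quadric generators of the anticanonical ideal for $d\ge 4$), the special fibre $\oU=\fV_0$ is the affine weighted-homogeneous cone over $E$ cut out by equation(s) $F_0$ of weighted degree $\delta=n_d$, with $E=\{F_0=0\}\subseteq\bP(\alpha)$. The smoothing direction of such Gorenstein del Pezzo cones is concentrated in the lowest graded piece of $T^1$ (Reid, Gross, Namikawa), so up to an analytic change of coordinates the one-parameter family $\fV\to\Delta$ is the weight-$\delta$ deformation obtained by adding $s$ to the cone equation $F_0$; this is also the point at which the two inequivalent smoothings $6\I,6\II$ bifurcate.

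Next comes the base change $s=t^{n_d}$, which introduces the weight-one coordinate $t$ and lands us in $\bA(\alpha,1)=\bA(\alpha)\times\Delta_t$. Since $t^{n_d}$ is weighted-homogeneous of degree $n_d=\delta$ for the extended weight $(\alpha,1)$, the fibre product $\cV'=\fV\times_{\Delta,\,s=t^{n_d}}\Delta_t$ is, analytically near $p$, the affine cone over the projective variety $X_d\subseteq\bP(\alpha,1)$ obtained by homogenising with $t$ (for $d\le 3$ the hypersurface $\{F_0(x)-t^{n_d}=0\}$, with the evident quadric version for $d\ge 4$), and $X_d\cap\{t=0\}=E$. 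Invoking the classification of del Pezzo threefolds (Theorem~\ref{thm;dP3claf}) together with the smoothness of $E$, I would verify that $X_d$ is a smooth del Pezzo threefold of degree $d$: smoothness can fail only along $E=X_d\cap\{t=0\}$, where it follows from transversality and the smoothness of $E$, and on $\{t\ne 0\}$ it follows from the smoothness of the generic fibre of the smoothing. Hence $\cV'$ is smooth away from its cone vertex $p$. I then take $\cV\to\cV'$ to be the weighted blow-up at $p$ with weight $(\alpha,1)$. Because $X_d$ is smooth, blowing up the vertex of the cone $\cV'$ over $X_d$ yields a smooth total space $\cV$ whose exceptional divisor is $\cong X_d$ (the total space of $\cO_{X_d}(-1)$); the proper transform of $\oU$ is its weighted blow-up at $p$, which is the smooth space $U$ of the hypothesis, and the two meet along $U\cap X_d=E$. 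Pulling back $t$: as $t$ has weight one it vanishes to order one along both $X_d$ and $U$ — exactly the effect of having divided the multiplicity $n_d$ down to $1$ — so $\cV_0=U\cup X_d$ is reduced, with smooth components meeting transversally along the smooth surface $E$. In local coordinates near $E$ the family acquires the standard double-point normal form $t=uv$, so $\cV_0$ is a simple normal crossing divisor of double point type, as claimed.

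The step I expect to be the main obstacle is the normalisation in the second paragraph: showing that the given abstract smoothing is analytically the weight-$\delta$ homogeneous one, so that the base change genuinely turns $\cV'$ into the cone over $X_d$ and any higher-order-in-$s$ corrections do not disturb the double-point structure. This is precisely where the deformation theory and classification of smoothable Gorenstein del Pezzo cones — including the bifurcation at $d=6$ — must be used, and where working with explicit weighted equations rather than toroidal embeddings pays off, by pinning $n_d$ down directly as the weighted degree $\delta$.
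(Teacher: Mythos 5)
Your proposal is correct and follows essentially the same route as the paper: identify the completed local ring with the weighted-homogeneous cone equations of $E$, normalize the smoothing to the standard form (the paper simply ``observes'' $\hcO_{\fV,p}\cong\bC\llbracket \underline{x},t\rrbracket/(F-t)$, resp.\ $(Q_k-t)$, where you more carefully flag this as the step requiring the deformation theory of the cone), perform the degree-$n_d$ base change so the total equation becomes weighted homogeneous of degree $n_d$ in $(\alpha,1)$, and take the weighted blow-up at $p$, whose exceptional divisor is $X_d$ meeting the proper transform $U$ along $E=(t=0)$. The only cosmetic difference is that for $d\ge 4$ the paper checks that $X_d$ is del Pezzo of degree $d$ via the Lefschetz theorem and adjunction rather than by appeal to the classification.
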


\begin{proof}
Note that the exceptional divisor $E$ of $\crpcon$ is a smooth del Pezzo surface  of degree $d$ by assumption. To study $\hcO_{\Xsing, p}$, we are going to use Reid's classification of rational Gorenstein singularities \cite[Corollary 2.10]{Reid80}.

For $1 \leq d \leq 3$, recall that $(\Xsing, p)$ is a hypersurface singularity with an equation $\tilde{F}$ of the form as in Remark \ref{rmk;can_sing_13}, whose leading term $F$ as in \eqref{eqn;leadterm_13} defines the del Pezzo surface $E$ in $\bP(\alpha)$. Also, for $d = 4$, since $(\Xsing, p)$ is Gorenstein in codimension $2$, it is a complete intersection of two equations $\tilde{Q}_1, \tilde{Q}_2$ whose leading terms are quadratic $Q_1,Q_2$, respectively. It define the del Pezzo surface $E$ of degree $4$ in $\bP^4$. For $d \geq 5$, let $C(E)$ be the affine cone with vertex $o$ over $E$, that is, 
\begin{align}\label{eqn;conesg}
    C(E) = \Spec \left( \bigoplus_{m \geq 0} H^0 (E, \cO (- mK)) \right).
\end{align}
By Proposition \ref{prop;hol_tubu} and Theorem \ref{thm;dP2}, we have 
\begin{align*}
    \hcO_{\Xsing, p} \cong \hcO_{C(E), o} \cong \bC \llbracket \underline{x} \rrbracket / \cI
\end{align*}
and $E$ is defined by the generator of $\cI$. Here we write $\underline{x}$ a short form for indeterminates $\{x_m\}$. Moreover, $\cI$ is generated by $d (d - 3) / 2$ quadrics $\{Q_k\}$.

Let us denote by $t$ the coordinate of the disk $\Delta$ and let $t$ be weight one. We let $\cV' \to \Delta$ be a degree $n_d$ base change of $\fV \to \Delta$, and let $\cV$ be the weighted blow-up of $\cV'$ at $p$ with weight $(\alpha, 1)$. Note that $\cV \to \cV'$ is an ordinary blow-up at $p$ if $d \geq 3$. We claim that $\cV \to \Delta$ is the desired degeneration. Indeed, we observe that
\begin{align*}
    \hcO_{\fV, p} \cong 
    \begin{cases}
      \bC \llbracket x_1, x_2, x_3, x_4, t\rrbracket / ( \tilde{F} - t ) & \text{if } 1 \leq d \leq 3, \\
      \bC \llbracket x_1, \ldots, x_{5}, t \rrbracket / ( \tilde{Q}_1 - t, \tilde{Q}_{2} - t)  & \text{if } d = 4\\
      \bC \llbracket x_1, \ldots, x_{d + 1}, t \rrbracket / ( Q_1 - t,\dots,Q_{d(d-3)/2} - t)  & \text{if } 5 \leq d \leq 8.
    \end{cases}
\end{align*} 
We will denote $X_d$ by the exceptional divisor of the weighted blow-up $\cV$.

For $1 \leq d \leq 3$, we find that the defining equation of $(\cV', p)$ is $\tilde{F} - t^{n_d} = 0$. Then, by construction, the exceptional divisor $X_d$ of $\cV \to \cV'$ is the hypersurface in $\bP (\alpha, 1)$ defined by the weighted homogeneous polynomial $F - t^{n_d} = 0$ of degree $n_d$ and therefore $X_d$ is a smooth del Pezzo threefold of degree $d$. We remark that if $t$ is regarded as a section $t \in H^0 (X_d, \cO(1))$ then it defines the del Pezzo surface $E = ( t = 0 )$ in $X_d$. Also, according to that $U$ is the weighted blow-up of $\oU$ at $p$ with weight $\alpha$, it follows that the proper transform of $\cV_0' = \oU$ in $\cV$ is the open neighborhood $U$ of $E$. Hence we conclude that the special fiber $\cV_0$ of $\cV \to \Delta$ over $t = 0$ consists of two irreducible components $U$ and $X_d$, which are smooth and $U \cap X_d = (F=0)$ is the del Pezzo surface $E$.

Similarly, for $d \geq 4$, the exceptional divisor $X_d$ of $\cV \to \cV'$ is defined by quadrics $Q_k - t^2$ in $\bP^{d + 1}$. It is easily seen that $X_d$ is a smooth threefold, which contains the del Pezzo surface $E$ of degree $d$ defined by the section $t \in H^0 (X_d, \cO(1))$. By the Lefschetz theorem, the restriction $\Pic (X_d) \to \Pic (E)$ is injective. Hence, using the adjunction formula, we get that $\cO_{X_d}(- K) \cong \cO_{X_d}(2)$, and thus $X_d$ is del Pezzo of degree $d$. The remaining part follows from the same argument as above.
\end{proof}

%%%%%%%%%%%%%%%%%%%%
\subsection{Global topology}
%%%%%%%%%%%%%%%%%%%%

In this subsection we describe topological properties of Type II extremal transitions.

\begin{proposition}\label{prop;topo_defect}
If $\Xres \searrow \Xsm$ is a Type II extremal transition of degree $d$, then  
\begin{align*}
    \chitop (Y) - \chitop(X) = 2 \chitop (S_d) - \chitop(X_d),
\end{align*}
where $S_d$ (resp.\ $X_d$) is a smooth del Pezzo surface (resp.\ threefold) of degree $d$ and $\chitop(-)$ is the topological Euler characteristic.
\end{proposition}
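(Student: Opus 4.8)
The plan is to show that the Euler characteristic defect $\chitop(Y) - \chitop(X)$ is a \emph{purely local} quantity, concentrated near the contracted point $p = \crpcon(E) \in \oY$, and then to evaluate it on the local model $Y_d \searrow X_d$ of Example~\ref{ex:dPtrans}. First I would localize by additivity of the (compactly supported) topological Euler characteristic for complex algebraic/analytic decompositions. Since $\crpcon\colon Y \to \oY$ is an isomorphism over $\oY \setminus \{p\}$ and contracts $E$ to $p$, the decomposition $Y = E \sqcup (Y \setminus E)$ gives
\[
\chitop(Y) = \chitop(E) + \chitop(\oY \setminus \{p\}) = \chitop(\oY) + \chitop(E) - 1.
\]

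Next I would account for the smoothing $\fX \to \Delta$ with $\fX_0 = \oY$ and $X = \fX_t$. Choose a contractible cone neighborhood $B$ of the isolated singularity $p$ with link $L = \partial B$, set $\oY^\circ = \oY \setminus B^\circ$, and let $F$ be the Milnor fiber, so that $\oY = \oY^\circ \cup_L \mathrm{cone}(L)$ and $X = \oY^\circ \cup_L F$. Comparing $\chitop(\oY) = \chitop(\oY^\circ) + 1 - \chitop(L)$ with $\chitop(X) = \chitop(\oY^\circ) + \chitop(F) - \chitop(L)$ yields $\chitop(X) - \chitop(\oY) = \chitop(F) - 1$. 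Combining with the previous computation produces the clean local formula
\[
\chitop(Y) - \chitop(X) = \chitop(E) - \chitop(F),
\]
which depends only on the germ of $\crpcon$ along $E$ and on the smoothing of $(\oY, p)$. The identical computation applied to $Y_d \searrow X_d$ (whose exceptional divisor is $S_d$ and whose Milnor fiber I denote $F_d$) gives $\chitop(Y_d) - \chitop(X_d) = \chitop(S_d) - \chitop(F_d)$.

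It then remains to match the two local data. By Proposition~\ref{prop;locDef_res} the exceptional divisor $E$ deforms, through a differentiably trivial family of resolutions, to the smooth del Pezzo surface $S_d$; since $\chitop$ is a diffeomorphism invariant and the family is differentiably trivial (Ehresmann), $\chitop(E) = \chitop(S_d)$. By Proposition~\ref{prop;hol_tubu} for $d \ge 5$, and by the explicit weighted-homogeneous equations of Remark~\ref{rmk;can_sing_13} for $d \le 3$, the singularity $(\oY, p)$ is, after this deformation, the cone over $S_d$, i.e.\ exactly the germ of $\oY_d$; because its smoothing agrees with that of $(\oY_d, p)$ up to deformation, the Milnor fibers satisfy $\chitop(F) = \chitop(F_d)$. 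Hence $\chitop(Y) - \chitop(X) = \chitop(Y_d) - \chitop(X_d)$. Finally, since $Y_d = \bP_{S_d}(K_{S_d} \oplus \cO)$ is a $\bP^1$-bundle over $S_d$, multiplicativity of the Euler characteristic in fiber bundles gives $\chitop(Y_d) = \chitop(\bP^1)\,\chitop(S_d) = 2\chitop(S_d)$, and substituting yields the asserted identity $\chitop(Y) - \chitop(X) = 2\chitop(S_d) - \chitop(X_d)$.

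I expect the main obstacle to be the transfer of the local datum from the general transition to the local model, specifically the identification $\chitop(F) = \chitop(F_d)$ of Milnor fibers: matching the smoothings requires knowing that the smoothing of $(\oY, p)$ is, up to deformation, the smoothing realized in Example~\ref{ex:dPtrans} (unique for $d \ne 6$, with the two components distinguished only in the case $d = 6$), which is where the classification of smoothable rational Gorenstein singularities and the semistable degeneration of \S\ref{subsec;ss_deg} enter. An alternative that bypasses the Milnor fiber altogether is to invoke the symplectic deformation equivalence of $Y \searrow X$ to a del Pezzo transition with smooth exceptional divisor $S_d$, so that $\chitop(Y)$ and $\chitop(X)$ are individually preserved and the problem reduces directly to the local model; I would use whichever of these is already available at this point in the paper.
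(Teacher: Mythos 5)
Your proof is correct and follows the same overall strategy as the paper's: deform so that $E$ becomes a smooth del Pezzo surface $S_d$ and $(\oY,p)$ a cone over it, then express the defect as $\chitop(E)$ minus the Euler characteristic of the Milnor fiber. The paper reaches $\chitop(Y)-\chitop(X)=\chitop(E)-\chitop(B)$ via the long exact sequences of the pairs $(\Xres,E)$ and $(\Xsm,B)$, which is equivalent to your scissor/Mayer--Vietoris computation, and then evaluates $\chitop(B)$ \emph{directly}: it identifies the Milnor fiber $B$ with $X_d\setminus S_d$ for $S_d\in|{-K_{X_d}}|$ general and applies the Gysin sequence to obtain $\chitop(B)=\chitop(X_d)-\chitop(S_d)$. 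You instead bypass the Gysin sequence by running the same localization on the local model $Y_d\searrow X_d$ and using $\chitop(Y_d)=\chitop(\bP^1)\chitop(S_d)=2\chitop(S_d)$ from the $\bP^1$-bundle structure; this is equivalent and arguably cleaner, but note that the burden has merely moved: your step $\chitop(F)=\chitop(F_d)$ is precisely the paper's assertion that the Milnor fiber of the smoothing of $(\oY,p)$ corresponding to $\Xsm$ is the one realized by $X_d$. Both arguments therefore require the same input --- that after the deformation of Proposition~\ref{prop;locDef_res} the germ is the cone over $S_d$ and the induced smoothing direction picks out the correct $X_d$ (with the two components $6\I$ and $6\II$ distinguished when $d=6$, where the Milnor fibers genuinely differ) --- so the ``main obstacle'' you flag is present, and handled only implicitly, in the paper as well.
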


\begin{proof}
Let $\crpcon \colon \Xres \to \Xsing$ be the crepant contraction of $\Xres \searrow \Xsm$, $E$ the exceptional divisor of $\crpcon$ and $\crpcon (E) = \{p\}$. Since the question is topological, we apply Proposition \ref{prop;locDef_res} and replace an open neighborhood of $E$
by another open set that is diffeomorphic to it. So we may assume that $E$ is a smooth del Pezzo surface of degree $d$ and $(\Xsing, p)$ is analytically isomorphic to the cone \eqref{eqn;conesg} over $E$.
 
Let $B$ be the Milnor fiber of the smoothing of $(\Xsing, p)$ corresponding to $\Xsm$. Note that $B$ is homotopy equivalent to a finite cell complex of $\dim \leq 3$ by \cite[(5.6)]{Looijenga84}. It is checked that $B$ is homeomorphic to $X_d \setminus (X_d \cap A)$, where $X_d$ is a smooth del Pezzo threefold of degree $d$ and $A \in |-K_{X_d}|$ a general hyperplane. Write $S_d = X_d \cap A$, a smooth del Pezzo surface of degree $d$, and let $\iota \colon X_d \setminus S_d \to X_d$ be the inclusion. By using the Gysin sequence
\begin{align*}
   \cdots \to H^k (X_d) \xrightarrow{\iota^\ast} H^k (X_d \setminus S_d) \xrightarrow{\mathrm{Res}} H^{k-1} (S_d) \to H^{k + 1} (X_d) \to \cdots,
\end{align*}
we have 
\begin{align}\label{eqn;chi_Milnor}
    \chitop (B) = \chitop (X_d) - \chitop (S_d).
\end{align}
Also, by Theorems \ref{thm;dP2} and \ref{thm;dP3claf}, we get the Betti numbers $b_0 (B) = 1$, $b_2 (B) = h^{1, 1} (X_d) - 1$, 
\[
    b_3 (B) = 2 h^{2, 1}(X_d) - h^{1, 1} (X_d) + 10 - d
\]
and $b_k (B) = 0$ for other $k$. Note that $b_1 (B) = 0$ by \cite[Theorem 2]{GS83}.

Since $\Xsing$ is $\Xres$ with $E$ collapsed to the point $p$ ($\Xsing$ is homeomorphic to $\Xres / E$), the relative cohomology $H^i (\Xres, E) $ is isomorphic to the reduced cohomology $\widetilde{H}^k (\Xres / E) \cong \widetilde{H}^k (\Xsing)$ for all $k$. By the long exact sequence of the pair $(\Xres, E)$, we get
\begin{align}\label{eqn;longex_res}
    \cdots \to \widetilde{H}^k(\Xsing) \xrightarrow{\crpcon^\ast} H^k (Y) \to H^k (E) \to \widetilde{H}^{k + 1} (\Xsing) \to \cdots.
\end{align}
On the other hand, one can construct a homeomorphism between $\Xsing$ and $X/B_d$. We denote the collapsing (continuous) map by 
\begin{align}\label{eqn;collapsing_map}
    c\colon \Xsm \to \Xsing.
\end{align}
Hence there also exists the long exact sequence
\begin{align}\label{eqn;longex_sm}
    \cdots \to \widetilde{H}^k (\Xsing) \xrightarrow{c^\ast} H^k (\Xsm) \to H^k (B_d) \to \widetilde{H}^{k + 1} (\Xsing) \to \cdots. 
\end{align}
By \eqref{eqn;longex_res} and \eqref{eqn;longex_sm}, we get 
\[
    \chitop (X) - \chitop(B) = \chitop(Y)- \chitop(E).
\]
Then the proposition follows from $\chitop (E) = \chitop (S_d)$ and \eqref{eqn;chi_Milnor}. 
\end{proof}

Write $\Delta\chitop \coloneqq \chitop (Y) - \chitop(X)$, the topological difference of a Type II extremal transition $\Xres \searrow \Xsm$ of degree $d$. We give $\Delta\chitop$ in Table \ref{tab;diffchiXY} by using Theorem \ref{thm;dP2} and Table \ref{tab;HGXd}.

\begin{table}[H]
    \centering
    \begin{tabular}{cccccccccc}
        \toprule
        $d$ & $1$ & $2$ & $3$ & $4$ & $5$ & $6 \I$ & $6 \II$ & $7$ & $8$ \\
        \midrule
        $\Delta \chitop$ & $60$ & $36$ & $24$ & $16$ & $10$ & $6$ & $4$ & $4$ & $4$ \\ 
        \bottomrule
    \end{tabular}
    \caption{The topological difference $\Delta\chitop$ of $\Xres \searrow \Xsm$.}
  \label{tab;diffchiXY}
\end{table}

\begin{remark}

The differences $\frac{1}{2}\Delta\chitop$ are exactly the dual Coxeter numbers of $E_{9 - d}$ (cf.~\cite[pp.241--242]{MS97}). The root system is summarized in Table \ref{tab;root_sys}. Here we set $E_1 = E_2 = A_1$, and root systems $E_7$, $E_6$, $E_5$, $E_4$ and $E_3$ are obtained from $E_8$ by deleting one by one the vertices from the long end of its Dynkin diagram. For $d \leq 7$, these root systems are associated to a smooth del Pezzo surfaces of degree $d$, see Proposition 25.2 and Theorem 25.4 in \cite{Manin86}.

For the case of $d=6$, the transition of degree $d=6 \I$ (resp. $d=6\II$) corresponds to the direct summand $A_2$ (resp.\ $A_1$) of the root system $A_2 \times A_1$. The dual Coxeter number of $A_2$ and $A_1$ are $3$ and $2$, respectively.
\begin{table}[H]
  \centering
  \begin{tabular}{ccccccccc}
    \toprule
    $d$ & $1$ & $2$ & $3$ & $4$ & $5$ & $6$ & $7$ & $8$ \\
    \midrule
    Type of $E_{9 - d}$ & $E_8$ & $E_7$ & $E_6$ & $D_5$ & $A_4$ & $A_2 \times A_1$ & $A_1$ & $A_1$ \\
    \bottomrule
  \end{tabular}
  \caption{Root systems and del Pezzo surfaces.}
  \label{tab;root_sys}
\end{table} 
\end{remark}

The following proposition relates cohomology groups of $\Xsm$ and $\Xres$.

\begin{proposition}\label{prop;coho_seq}
Let $\typeII$ be a Type II extremal transition with exceptional divisor $E$.
\begin{enumerate}[(i)]
    \item\label{prop;coho_seq_1}  Let $\crpcon (E) = \{p\}$, and let $B$ be the Milnor fiber of the smoothing of $(\Xsing, p)$ corresponding to $\Xsm$. Then there exists an exact sequence
    \begin{align*}
        0 \to H^2(B) \to H^3(\Xsing) \to H^3 (X) \to H^3(B) \to 0,
    \end{align*}
    and $H^k (\Xsing) \xrightarrow{\sim} H^k (\Xsm)$ for all $k \neq 3$. 

    \item\label{prop;coho_seq_2} There exist exact sequences
    \begin{align}
        &0 \to H^2 (\Xsing) \xrightarrow{\crpcon^\ast} H^2 (Y) \twoheadrightarrow \bC [K_E] \subseteq H^2 (E), \label{prop;coho_seq_2-1}\\
        &0 \to H^2 (E) / \bC [K_E] \to  H^3 (\Xsing) \to H^3(\Xres) \to H^3 (E) \to 0, \label{prop;coho_seq_2-2}\\
        &0 \to H^4 (\Xsing) \to H^4 (\Xres) \to H^4 (E) = \bC [\pt] \to 0, \label{prop;coho_seq_2-3}
    \end{align}
    and $H^k (\Xsing) \xrightarrow{\sim} H^k (\Xres)$ if $k \neq 2$, $3$, $4$. 
\end{enumerate}
\end{proposition}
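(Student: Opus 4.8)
The plan is to extract both families of sequences from the two long exact cohomology sequences already produced in the proof of Proposition~\ref{prop;topo_defect}: the sequence \eqref{eqn;longex_res}, coming from the collapse $\bar Y\cong Y/E$, and \eqref{eqn;longex_sm}, coming from $\bar Y\cong X/B$. Since every group involved is a topological invariant, I first invoke Proposition~\ref{prop;locDef_res} to deform a neighborhood of $E$ so that $E$ becomes the smooth del Pezzo surface $S_d$; this leaves $H^\bullet(Y)$, $H^\bullet(\bar Y)$, $H^\bullet(X)$ unchanged while giving $H^0(E)=H^4(E)=\bC$, $H^1(E)=H^3(E)=0$, and $H^2(E)=\bC^{10-d}$ of pure type $(1,1)$. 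The Milnor fiber $B$ has $H^0(B)=\bC$, $H^1(B)=0$ and $H^k(B)=0$ for $k\ge 4$, by the Betti numbers recorded in the proof of Proposition~\ref{prop;topo_defect}.

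For part (ii) I feed $H^\bullet(E)$ into \eqref{eqn;longex_res}. Since $H^1(E)=0$, the map $\crpcon^\ast\colon H^2(\bar Y)\hookrightarrow H^2(Y)$ is injective; the crux is to identify the image of the restriction $r\colon H^2(Y)\to H^2(E)$. Because $\crpcon|_E$ is the constant map to $p$, a pullback class $\crpcon^\ast\beta$ restricts through $H^2(\mathrm{pt})=0$, so $\crpcon^\ast H^2(\bar Y)\subseteq\ker r$; and since $\rho(Y/\bar Y)=1$ forces $H^2(Y)/\crpcon^\ast H^2(\bar Y)$ to be the line spanned by $[E]$, the image of $r$ is $\bC\cdot[E]|_E=\bC[K_E]$, where $[E]|_E=c_1(N_{E/Y})=[K_E]\neq 0$ by adjunction and crepancy (in the proof of Proposition~\ref{prop:birExc}, $K_E=E|_E$, and $-K_E$ is ample). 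This gives \eqref{prop;coho_seq_2-1}. The kernel of the connecting map $H^2(E)\to H^3(\bar Y)$ is this same line and $H^3(E)=0$, which yields \eqref{prop;coho_seq_2-2}; finally $H^3(E)=0$ makes $H^4(\bar Y)\hookrightarrow H^4(Y)$ injective, and $H^4(Y)\to H^4(E)=\bC[\mathrm{pt}]$ is surjective since some $\alpha\in H^4(Y)$ pairs nontrivially with the fundamental class $[E]\in H_4(Y)$, giving \eqref{prop;coho_seq_2-3}. In every remaining degree $H^k(E)=H^{k-1}(E)=0$, forcing $H^k(\bar Y)\cong H^k(Y)$ for $k\neq 2,3,4$.

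For part (i) the same bookkeeping on \eqref{eqn;longex_sm} is easy away from the middle: $H^0(X)\xrightarrow{\sim}H^0(B)$ by connectedness, $H^1(B)=0$ gives $H^1(\bar Y)\cong H^1(X)$ and an injection $H^2(\bar Y)\hookrightarrow H^2(X)$, and $H^k(B)=0$ for $k\ge 4$ gives $H^k(\bar Y)\cong H^k(X)$ for $k\ge 5$. It remains to prove that $H^2(X)\to H^2(B)$ vanishes and $H^3(X)\to H^3(B)$ is onto; the first upgrades $H^2(\bar Y)\hookrightarrow H^2(X)$ to an isomorphism and makes $H^2(B)\hookrightarrow H^3(\bar Y)$, while the second closes the four-term sequence on the right. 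Writing $c^\ast\colon H^k(\bar Y)\cong H^k(\fX)\to H^k(X)$ and letting $T$ denote the monodromy of the smoothing, the local invariant cycle theorem (applied to a smooth total space of the smoothing, or to the semistable model of Proposition~\ref{prop;locDef_sm}) identifies $\operatorname{im}c^\ast=\ker(T-\mathrm{id})$, so both statements reduce to the claim that $T$ acts trivially on $H^2(X)$ and $H^4(X)$. Granting this, $c^\ast$ is onto in degrees $2$ and $4$; with injectivity on $H^2$ it gives $H^2(\bar Y)\cong H^2(X)$, and for $H^4$ I match dimensions using Poincar\'e duality on the smooth $Y$ and $X$ together with part (ii): $b_4(\bar Y)=b_4(Y)-1=b_2(Y)-1=b_2(\bar Y)=b_2(X)=b_4(X)$, so $c^\ast$ is an isomorphism on $H^4$ as well, forcing $H^3(X)\to H^3(B)$ to be surjective. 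Assembling the connecting maps then produces the four-term sequence.

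The main obstacle is exactly the triviality of the monodromy $T$ on $H^2(X)$ and $H^4(X)$, i.e.\ the concentration of the vanishing cohomology of the isolated singularity $(\bar Y,p)$ in the middle degree. This is vacuous whenever $b_2(B)=h^{1,1}(X_d)-1=0$: by the classification (Theorem~\ref{thm;dP3claf}) all $X_d$ with $d\le 5$ have Picard rank one, and for $d\le 3$ the singularity is a hypersurface, so $B$ is even a bouquet of $3$-spheres. The genuine cases are the higher-rank del Pezzo threefolds (degrees $6$ and $7$), where $H^2(B)\neq 0$ and one must control the monodromy in degrees $2$ and $4$ directly---via the Clemens--Schmid sequence for a smooth total space of the smoothing, or by arguing that the restriction to the affine Milnor fiber $B\simeq X_d\setminus S_d$ of a global class on the projective $X$ cannot reach the primitive classes generating $H^2(B)$. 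Once these vanishing and surjectivity statements are secured, the remainder is a routine diagram chase.
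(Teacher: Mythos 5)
There is a genuine gap, and it sits exactly where you locate it yourself: the triviality of the monodromy on $H^2(X)$ and $H^4(X)$ (equivalently, the vanishing of $H^2(X)\to H^2(B)$ and the surjectivity of $H^3(X)\to H^3(B)$) is asserted as the ``main obstacle'' and then only sketched via the local invariant cycle theorem or Clemens--Schmid, without being carried out in the nontrivial cases $d=6,7$. The paper never touches monodromy. Its decisive input is that $\Xsing$ is $\bQ$-factorial (because $\crpcon$ is the contraction of an extremal ray, \cite[Corollary 3.18]{KM98}) and has rational singularities, so $b_2(\Xsing)=b_4(\Xsing)$ by \cite[Theorem A]{PP24}. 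Since $H^1(B)=0$ and $H^k(B)=0$ for $k\ge 4$ already give injectivity of $H^2(\Xsing)\to H^2(\Xsm)$ and surjectivity of $H^4(\Xsing)\to H^4(\Xsm)$, Poincar\'e duality on the smooth $\Xsm$ yields the squeeze $b_2(\Xsing)\le b_2(\Xsm)=b_4(\Xsm)\le b_4(\Xsing)=b_2(\Xsing)$, forcing isomorphisms in degrees $2$ and $4$ and hence the four-term sequence in degree $3$ --- with no monodromy statement needed at all. Note also that your own dimension chase $b_4(\Xsing)=b_4(\Xres)-1=b_2(\Xres)-1=b_2(\Xsing)=b_2(\Xsm)$ quietly uses $b_2(\Xsing)=b_2(\Xsm)$, which is precisely the surjectivity you have not yet established; so the argument as written is circular at that point.

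A second problem is the opening reduction: replacing $E$ by a smooth del Pezzo surface via Proposition~\ref{prop;locDef_res} is legitimate for the Euler-characteristic computation in Proposition~\ref{prop;topo_defect}, but not here, because the stated sequences involve $H^\bullet(E)$ and $H^\bullet(\Xsing)$ of the \emph{actual} exceptional divisor and singular threefold. For $d\le 3$ the generalized del Pezzo surface $E$ may be a cone over an elliptic curve and for $d=7$ it may be non-normal, so $H^1(E)$ and $H^3(E)$ need not vanish; the deformation also changes $E$ as a subspace and hence $\Xsing\cong\Xres/E$, so preservation of $H^\bullet(\Xsing)$ is not automatic. This is why the paper keeps $H^3(E)$ in \eqref{prop;coho_seq_2-2}, derives the injectivity of $\crpcon^\ast$ on $H^2$ with one-dimensional cokernel from the extremal contraction and rational singularities rather than from $H^1(E)=0$, and proves $b_4(\Xsing)=b_4(\Xres)-1$ again from the defect-zero identity $b_2(\Xsing)=b_4(\Xsing)$ rather than from $H^3(E)=0$. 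Your identification of the image of $H^2(Y)\to H^2(E)$ with $\bC[K_E]$ and the surjectivity onto $H^4(E)=\bC[\pt]$ are fine; the missing ingredient throughout is the $\bQ$-factorial/rational-singularities defect theorem.
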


\begin{proof}
To prove \eqref{prop;coho_seq_1}, we use the long exact sequence \eqref{eqn;longex_sm} induced by the topological pair $(\Xsm, B)$. Recall that $B$ is homotopy equivalent to a finite cell complex of $\dim \leq 3$, and $H^1(B) = 0$. Then we get $H^k (\Xsing) \xrightarrow{\sim} H^k (\Xsm)$ for $k = 0$, $1$, $5$, $6$, and $H^k (\Xsing) \to H^k (\Xsm)$ is injective (resp.~surjective) for $k = 2$ (resp.~$k = 4$). By Poincar\'e duality, we get 
\begin{align}\label{eqn;betti_inq}
    b_2 (\Xsing) \leq b_2 (\Xsm) = b_4 (\Xsm) \leq b_4 (\Xsing).
\end{align}
Since $\Xsing$ has rational singularities and is $\bQ$-factorial, we have
\begin{align}\label{eqn;defect=0}
    b_2 (\Xsing) = b_4 (\Xsing)
\end{align}
by \cite[Theorem A]{PP24}. Then \eqref{prop;coho_seq_1} follows from \eqref{eqn;betti_inq} and \eqref{eqn;defect=0}.

The proof of \eqref{prop;coho_seq_2} uses the other long exact sequence \eqref{eqn;longex_res} induced by the topological pair $(\Xres, E)$. Since $\phi$ is an extremal contraction and $\Xsing$ has rational singularities, we have $\phi^\ast \colon H^2 (\Xsing) \to H^2 (Y)$ is injective with $\dim \mathrm{Coker}(\phi^\ast) = 1$. Therefore the image of the restriction map $H^2( \Xres) \to H^2(E)$ is generated by the class of $E|_E = K_E$ and \eqref{prop;coho_seq_2-1} follows.

On the other hand, since $\Xsing$ has only an isolated singularity, the transition $\Xres \searrow \Xsm$ does not change the fundamental group and thus the first Betti number, i.e., $b_1 (X) = b_1 (Y)$. Also, $b_1 (\Xsing) = b_1 (X)$ by \eqref{prop;coho_seq_1}. Therefore the injective map $H^1 (\Xsing) \to H^1 (\Xres)$ is an isomorphism. The assertion for $k = 0$ is trivial. Observe that $H^4 (\Xres) \to H^4 (E)$ is surjective, since it is a nonzero map and $H^4 (E) = \bC  [\pt]$. Then $H^k (\Xsing) \xrightarrow{\sim} H^k (\Xres)$ for $k = 5$, $6$. To finish \eqref{prop;coho_seq_1}, we claim that $b_4(\Xsing) = b_4(\Xres) - 1$, this will give \eqref{prop;coho_seq_2-2} and \eqref{prop;coho_seq_2-3}. To see this, we already know $b_2(\Xsing) = b_2(Y) - 1$. The claim then follows from Poincar\'e duality, $b_2 (\Xres) = b_4 (\Xres)$, and \eqref{eqn;defect=0}.
\end{proof}

\begin{remark}
For a Type II extremal transition $\Xres \searrow \Xsm$, we have
\begin{align}
    h^{1,1}(Y) - h^{1,1}(X) &= 1, \label{eqn;h11_defect}\\
    h^{2, 1} (\Xsm) - h^{2, 1} (\Xres) &= \frac{1}{2} \Delta\chitop - 1. \label{eqn;h21_defect}
\end{align}
by Propositions \ref{prop;topo_defect} and \ref{prop;coho_seq}. Indeed, the transition $\Xres \searrow \Xsm$ preserves canonical bundles and thus the Hodge number $h^{3,0}$. Note that it also preserves the Hodge numbers $h^{p, 0} = h^0 (\Omega^p)$ for $p = 1$, $2$.

When $X$ and $Y$ are smooth Calabi--Yau threefolds and the exceptional divisor $E$ of the transition is smooth, \eqref{eqn;h11_defect} is Proposition 3.1 in \cite{Kapustka209I} by applying Koll\'ar--Mori's results in \cite[\S 12.1 and 12.2]{KM92} (see also \cite[Proposition\ 3.1]{Gross97b} and \cite[Proposition 6.1]{Namikawa94}). In addition, \eqref{eqn;h21_defect} was proved in \cite[Theorem 3.3, Remark 3.6]{Kapustka209I} (see also \cite[p.761]{Namikawa97} for $E^3 =6$). Moreover, if $E^3 \geq 5$, then the miniversal Kuranishi space of $\Xsing$ is the product of the miniversal space of the singularity $(\Xsing, p)$, analytically isomorphic to the cone \eqref{eqn;conesg} over $E$, with an appropriate germ of a linear space \cite[Remark 3.6]{Kapustka209I}.
\end{remark}

\begin{remark}
It is interesting to compare \eqref{eqn;h21_defect} with the situation of conifold transitions, see Lemma 1.6 in \cite{LLW18}.
\end{remark}

\begin{corollary}
Let $\typeII$ be a Type II extremal transition. Then there exists an embedding 
\begin{align}\label{eqn;H_linear}
     H(\Xsm) \hookrightarrow H(\Xres)
\end{align}
of cohomology rings which we abuse notation slightly and denote by $\crpcon^\ast$, i.e., it preserves the cup product.
\end{corollary}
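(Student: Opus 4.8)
The plan is to factor the desired map through the singular variety $\Xsing$, using the birational pullback $\crpcon^*$ on one side and the collapsing map on the other, and to exploit the fact that we only ever work with the \emph{even} cohomology $H(M) = H^{\ev}(M)$. First I would record that the only odd-degree discrepancy between $\Xsing$ and $\Xsm$ lives in degree $3$: by Proposition \ref{prop;coho_seq}\eqref{prop;coho_seq_1}, the collapsing map \eqref{eqn;collapsing_map} induces isomorphisms $c^* \colon H^k(\Xsing) \xrightarrow{\sim} H^k(\Xsm)$ for every $k \neq 3$, and the nontrivial four-term sequence involving $H^3(B)$ is confined to odd degree. Since every even $k$ is automatically $\neq 3$, and since $c$ is a continuous map (so $c^*$ respects cup products), assembling over $k \in \{0,2,4,6\}$ yields a genuine \emph{ring isomorphism} $c^* \colon H(\Xsing) \xrightarrow{\sim} H(\Xsm)$, whose inverse $(c^*)^{-1}$ is therefore also a ring homomorphism. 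The point I would stress here is that the isomorphism is realized by an honest continuous map, not merely as an abstract isomorphism of graded vector spaces, which is exactly what makes it multiplicative.

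Next I would analyze $\crpcon^* \colon H(\Xsing) \to H(\Xres)$. Because $\crpcon$ is a genuine morphism, $\crpcon^*$ automatically preserves the cup product, so the only thing to verify is injectivity in each even degree. Degree $0$ is trivial and degree $6$ is the top degree (injectivity is immediate there); injectivity in degree $2$ is precisely the left-exactness in \eqref{prop;coho_seq_2-1}, and injectivity in degree $4$ is the left-exactness of \eqref{prop;coho_seq_2-3}. As $\dim_\bR \Xres = 6$, these exhaust the even degrees, so $\crpcon^* \colon H(\Xsing) \to H(\Xres)$ is an injective ring homomorphism.

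Finally I would define the embedding \eqref{eqn;H_linear} as the composite $\crpcon^* \circ (c^*)^{-1} \colon H(\Xsm) \to H(\Xres)$. This is injective, being the composite of the isomorphism $(c^*)^{-1}$ with the injection $\crpcon^*$, and it preserves cup products, being a composite of ring homomorphisms; abusing notation by naming the composite $\crpcon^*$ as well gives the statement. I do not expect a serious obstacle: the entire argument is bookkeeping on top of Proposition \ref{prop;coho_seq}. The only subtlety worth flagging is the one already isolated above, namely that one must use the collapsing map $c$ to realize $H(\Xsing) \cong H(\Xsm)$ as a ring map so that the composite is multiplicative rather than merely linear.
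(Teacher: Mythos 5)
Your proposal is correct and follows essentially the same route as the paper: both factor the map as $\crpcon^* \circ (c^*)^{-1}$, using Proposition \ref{prop;coho_seq}\eqref{prop;coho_seq_1} to see that the collapsing map induces a ring isomorphism $c^*\colon H(\Xsing) \to H(\Xsm)$ on even cohomology and Proposition \ref{prop;coho_seq}\eqref{prop;coho_seq_2} for the injectivity of $\crpcon^*\colon H(\Xsing) \to H(\Xres)$. Your extra remarks (that only degree $3$ is problematic and that multiplicativity comes from $c$ being an honest continuous map) simply make explicit what the paper leaves implicit.
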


\begin{proof}
By Proposition \ref{prop;coho_seq} \eqref{prop;coho_seq_1}, the collapsing (continuous) map $c \colon \Xsm \to \Xsing$ \eqref{eqn;collapsing_map} induces a ring isomorphism $c^\ast \colon H (\Xsing) \to H(\Xsm)$. On the other hand, the ring homomorphism $\crpcon^\ast \colon H (\Xsing) \to H (\Xres)$ is injective by Proposition \ref{prop;coho_seq} \eqref{prop;coho_seq_2}. Therefore the composition $\crpcon^\ast \circ (c^\ast)^{- 1}$ gives the desired embedding \eqref{eqn;H_linear}.
\end{proof}

\begin{remark}
Let $\fX \to \Delta$ be the smoothing of the transition $\Xres \searrow \Xsm$. By Proposition \ref{prop;coho_seq} \eqref{prop;coho_seq_1} and the fact that $\Xsing$ is a deformation retract of $\fX$, we get
\begin{align}\label{eqn;H_inv}
     H (\fX) \cong H(\Xsing) \xrightarrow{\sim} H(\Xsm).
\end{align}
Hence every (even) cohomology class of $\Xsm$ has a lifting in $H (\fX)$.
\end{remark}

\begin{remark}
Using the argument of the proof of Proposition \ref{prop;coho_seq}, one can infer that $b_4(\Xsing_d) > b_2(\Xsing_d)$ for the canonical local model $\Xres_d \to \Xsing_d \rightsquigarrow \Xsm_d$, see Example \ref{ex:dPtrans}, for all $1 \leq d \leq 8$. In particular, $\Xsing_d$ is non-$\bQ$-factorial (cf.\ \cite[Theorem 3.2]{NS95} and \cite[Theorem A]{PP24}).
\end{remark}

%%%%%%%%%%%%%%%%%%%%%%%%%%%%%%%%%%%%%%%%
\section{Preliminaries on Gromov--Witten theory}\label{sec;prel_GW}
%%%%%%%%%%%%%%%%%%%%%%%%%%%%%%%%%%%%%%%%

%%%%%%%%%%%%%%%%
\subsection{Gromov--Witten invariants}
%%%%%%%%%%%%%%%%

We consider only \emph{genus zero} GW invariants.

Let $M$ be a smooth projective variety. For an effective curve class $\beta \in \operatorname{NE}(M)$, we denote by $\overline{\mathcal{M}}_{0,n}(M, \beta)$ the moduli space of $n$-pointed stable maps $f\colon (C, p_1, \dots, p_n) \to X$ with arithmetic genus $g(C) = 0$ and with degree $f_*[C] = \beta$. Let $\psi_\nu$ be the first Chern class of the $\nu$-th universal cotangent line bundle on $\oM_{0,n}(M, \beta)$ and $\operatorname{ev}_\nu\colon \overline{\mathcal{M}}_{0, n}(M, \beta) \to M$ the evaluation map $[f] \mapsto f(p_\nu)$. For $\vec{a} = a_1 \otimes \dots \otimes a_n \in H(M)^{\otimes n}$ and integers $k_1$, $\dots$, $k_n \ge 0$, we define the (genus zero, descendant) GW invariant as 
\begin{align*}
     \langle \psi^{k_1}a_1, \dots, \psi^{k_n}a_n\rangle_\beta^M = \int_{[\oM_{0,n}(M, \beta)]^\vir} \prod_{\nu = 1}^n \ev_\nu^* a_\nu \cup \psi_\nu^{k_\nu}
\end{align*}
where $[\oM_{0,n}(M, \beta)]^\vir$ denotes the \emph{virtual fundamental class} on $\oM_{0,n}(M, \beta)$ \cite{BF97, LT98}. The virtual dimension is 
\begin{align*}
    \operatorname{vdim}_\beta \coloneqq (c_1(M),\beta) + n + (\dim M -3).
\end{align*}

Let $q^\beta$ be the (formal) Novikov variable corresponding to $\beta \in \NE (M)$. 
For a parameter $\tau \in H(M)$, we define the \emph{$n$-point function} (in $\tau$) by 
\[\llangle a_1, \dots, a_n\rrangle^M(\tau) = \sum_{n,\beta} \frac{q^\beta}{n!}\langle a_1, \dots, a_n, \tau^{\otimes n}\rangle_\beta. \]
The big quantum product $\ast_\tau$ is then defined by 
\begin{equation}\label{eq:bigquantumprod}
    a_1 \ast_\tau a_2 = \sum_\mu \llangle a_1, a_2, T_\mu \rrangle^M(\tau) T^\mu,
\end{equation}
where $\{T_\mu\}$, $\{T^\mu\}$ are dual bases of $H(M)$. When $\tau \in H^2 (M)$, we write $Q^\beta = q^\beta e^{(\tau, \beta)}$, so that for $n \ge 3$ the $n$-point function reduces to the usual generating series
\begin{align*}
    \llangle a_1, \dots, a_n\rrangle^M(\tau) = \sum_\beta \langle a_1, \dots, a_n\rangle_\beta^M Q^\beta \equalscolon \langle a_1, \dots, a_n\rangle^M, 
\end{align*}
by the divisor equation. We extend this notation to include descendants and arbitrary $n \ge 0$ by declaring 
\[\langle \psi^{k_1}a_1, \dots, \psi^{k_n}a_n\rangle^M \colonequals \sum_{\beta} \langle \psi^{k_1}a_1, \dots, \psi^{k_n}a_n\rangle_\beta^M Q^\beta. \]
Given $D\in H^2(M)$, we define the \emph{power operator} $\delta^D$ by setting 
\begin{align*}
    \delta^D Q^\beta = (D, \beta) Q^\beta \quad \text{for each } \beta \in \NE (M)
\end{align*}
and extending linearly.

Applying \cite[Corollary 1]{LP04} and the WDVV equations, we have the following relations between multipoint functions, known as the reconstruction formula. 
\begin{theorem}[\cite{LP04}] \label{thr:rc}

Let $a_1\otimes \dots \otimes a_n \in H(M)^{\otimes n}$ and $D\in H^2(M)$. Pick a basis $\{T_\mu\}$ of $H^\ast(M)$ with dual basis $\{T^\mu\}$. By abuse of notation, we denote the insertions $\bigotimes_{\nu \in I} a_\nu$ briefly by $\vec{a}_I$ for any subset $I$ of $\{1, \ldots, n\}$.  
    \begin{enumerate}[(a)]
        \item For $n\ge 2$, we have 
        \begin{align*}
            \langle Da_1, a_2, \dots, a_n \rangle^M &= \langle a_1, Da_2, \dots, a_n\rangle^M + \delta^D\langle a_1, \psi a_2, \dots, a_n\rangle^M \\
            & \quad - \sum_{I_L \sqcup I_R = \{3, \dots, n\}, \mu} \delta^D\langle a_1, \vec{a}_{I_L}, T_\mu\rangle^M \cdot \langle T^\mu, a_2, \vec{a}_{I_R} \rangle^M. 
        \end{align*}
        \item For $n\ge 3$, we have 
        \begin{align*}
            \langle Da_1, a_2, \dots, a_n \rangle^M &= \langle a_1, Da_2, \dots, a_n\rangle^M \\
            &\quad + \sum_{3 \in I_L, 2\in I_R, \mu} \langle a_1, \vec{a}_{I_L}, T_\mu\rangle^M \cdot \delta^D \langle T^\mu, \vec{a}_{I_R} \rangle^M \\
            &\quad - \sum_{2, 3 \in I_R, \mu} \delta^D\langle a_1, \vec{a}_{I_L}, T_\mu\rangle^M \cdot \langle T^\mu, \vec{a}_{I_R} \rangle^M, 
        \end{align*}
        where $I_L \sqcup I_R = \{2,3,\dots, n\}$ is a partition. 
    \end{enumerate}
\end{theorem}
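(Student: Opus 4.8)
The plan is to derive both identities from two standard inputs of genus-zero Gromov--Witten theory: the \emph{divisor equation} and the \emph{WDVV} relations (equivalently, the genus-zero topological recursion relations), exactly as in \cite{LP04}. Recall the divisor equation in generating-function form: for $D \in H^2(M)$ placed at a separate marked point,
\[
\langle D, \psi^{k_1}a_1, \ldots, \psi^{k_n}a_n\rangle^M = \delta^D\langle \psi^{k_1}a_1, \ldots, \psi^{k_n}a_n\rangle^M + \sum_{i\,:\,k_i\ge 1}\langle \psi^{k_1}a_1, \ldots, \psi^{k_i-1}(Da_i), \ldots\rangle^M,
\]
the factor $(D,\beta)$ being absorbed into the power operator $\delta^D$ once we pass to the variables $Q^\beta$. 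The one point I would stress at the outset is that this identity acquires an \emph{exceptional classical term} on the low-degree strata: a three-pointed $\beta = 0$ correlator satisfies $\langle D, a, b\rangle_0 = (Da, b)$, which is \emph{not} captured by $\delta^D$, since $(D, 0) = 0$. These exceptional terms are exactly what will produce the cupped insertions $\langle Da_1, \ldots\rangle^M$ and $\langle a_1, Da_2, \ldots\rangle^M$ on the two sides of (a) and (b).

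For part (a) I would introduce the auxiliary $(n+1)$-point function $\langle D, a_1, \psi a_2, a_3, \ldots, a_n\rangle^M$, with $D$ at a fresh marking and a single descendant on $a_2$, and evaluate it in two ways. Applying the divisor equation at the fresh marking immediately yields $\langle a_1, Da_2, a_3, \ldots\rangle^M + \delta^D\langle a_1, \psi a_2, a_3, \ldots\rangle^M$, which are the first two terms on the right-hand side. Applying instead the topological recursion relation to the $\psi$ on $a_2$, with the two auxiliary markings chosen to be the $D$-marking and the $a_1$-marking, splits the function into a sum of products over $\{3, \ldots, n\} = I_L \sqcup I_R$; the factor carrying $D$ is descendant-free, so a final application of the divisor equation converts it into a $\delta^D$, while its $\beta = 0$ stratum supplies the cupped term $\langle Da_1, \ldots\rangle^M$. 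Equating the two evaluations and transposing the split sum gives the stated identity. Because the auxiliary marking raises the point count by one, this argument is valid already for $n \ge 2$.

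For part (b) I would instead apply the WDVV equation directly to the four distinguished insertions $D, a_1, a_2, a_3$, distributing the remaining insertions $a_4, \ldots, a_n$ across the node; this is where the hypothesis $n \ge 3$ enters, since three structural markings beyond $D$ are needed. The two boundary presentations of the relevant point of $\oM_{0,4}$ give an identity between two families of products indexed by $I_L \sqcup I_R = \{2, 3, \ldots, n\}$, with the constraints ``$3 \in I_L$ and $2 \in I_R$'' and ``$2,3 \in I_R$'' recording which component each of $a_2, a_3$ lands on in the two channels. Applying the divisor equation to whichever factor contains $D$ (no $\psi$-classes being present) turns those factors into $\delta^D$, and the $\beta = 0$ three-pointed strata, as above, contribute the cupped correlators $\langle Da_1, \ldots\rangle^M$ and $\langle a_1, Da_2, \ldots\rangle^M$. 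Equivalently, one can obtain (b) from (a) by eliminating the remaining descendant $\delta^D\langle a_1, \psi a_2, \ldots\rangle^M$ through one further topological recursion relation, which again costs one structural marking and explains the shift from $n \ge 2$ to $n \ge 3$.

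The genuine work is entirely bookkeeping, and the delicate points are: (i) tracking the $\beta = 0$ exceptional strata so that the classical cup-product terms are neither dropped nor double-counted; (ii) ensuring that the operator $\delta^D$ lands on the correct factor of each product, which is dictated by the choice of which markings are held on which component and may require one WDVV transposition to move $\delta^D$ across the node; and (iii) matching signs. I expect (i) to be the main obstacle, since the interplay between the divisor equation and the low-degree boundary strata is precisely where such reconstruction identities are most error-prone; this is exactly the content packaged in \cite[Corollary~1]{LP04}, which I would invoke to organize these exceptional contributions cleanly.
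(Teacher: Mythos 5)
Your proposal is correct and follows essentially the route the paper itself indicates: the paper offers no proof of this theorem, attributing it to \cite[Corollary~1]{LP04} together with the WDVV equations, and your derivation --- divisor equation plus genus-zero topological recursion applied to the auxiliary function $\langle D, a_1, \psi a_2, \dots\rangle^M$ for (a), and WDVV on the four markings $D, a_1, a_2, a_3$ plus the divisor equation for (b), with the degree-zero three-point strata supplying the cup-product terms --- is precisely the standard argument packaged in that citation. The only remark worth adding is that the full \cite[Corollary~1]{LP04}, which permits descendants at the first marking, requires Lee--Pandharipande's geometric divisor relation rather than just TRR, but for the descendant-free form stated here your argument suffices.
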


%%%%%%%%%%%%%%%%
\subsection{Symmetrized Gromov--Witten}\label{subsec;symGW}
%%%%%%%%%%%%%%%%

Let $M$ be a smooth projective variety. Suppose that there is a smooth projective algebraic family $\mathcal{M} = \{M_t\}$ such that $M_0 \cong M \cong M_1$. Then $\mathcal{M}$ induces an automorphism on $H(M)$: 
\[\sigma\colon H(M) \cong H(M_0) \xleftarrow{\ \sim\ } H(\mathcal{M}) \xrightarrow{\ \sim\ } H(M_1) \cong H(M). \]
\begin{lemma}\label{lem:GWisGinv}
    For each $\vec{a} \in H(M)^{\otimes n}$, we have
    \[\langle \sigma(\vec{a})\rangle^M = \sigma\langle \vec{a}\rangle^M, \]
    where $\sigma(Q^\beta) = Q^{\sigma(\beta)}$. 
\end{lemma}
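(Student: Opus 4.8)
\emph{Proof plan.} The plan is to recognize the identity as an instance of the \emph{deformation invariance} of genus-zero Gromov--Witten invariants, organized so as to track the monodromy action simultaneously on insertions and on curve classes. Let $\pi\colon \mathcal{M}\to B$ denote the family, with $B$ a smooth connected base carrying two points $0,1\in B$ and isomorphisms $M_0\cong M\cong M_1$. I would fix a path in $B$ from $0$ to $1$; by definition $\sigma$ is the parallel transport of the local system $R\pi_\ast\bC$ along this path, read through the two isomorphisms. As parallel transport preserves the integral lattice and the cup product, $\sigma$ is a ring automorphism of $H(M)$ carrying $H_2(M,\bZ)$ to itself; writing $\sigma(\beta)$ for the induced action on curve classes, this is precisely the substitution $\sigma(Q^\beta)=Q^{\sigma(\beta)}$ in the statement.

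The crux is the fixed-degree identity
\[
\langle \vec a\rangle^M_\beta=\langle\sigma(\vec a)\rangle^M_{\sigma(\beta)}
\qquad(\vec a\in H(M)^{\oplus n},\ \beta\in\NE(M)).
\]
To establish it, I would work over $B$ with the relative moduli space $\oM_{0,n}(\mathcal{M}/B,\tilde\beta)$ of $n$-pointed genus-zero stable maps to the fibers, where $\tilde\beta$ is the flat section of the local system of curve classes with $\tilde\beta(0)=\beta$ and $\tilde\beta(1)=\sigma(\beta)$. This space is proper over $B$ and carries a relative perfect obstruction theory, hence a relative virtual class whose restriction to each fiber is the absolute virtual class $[\oM_{0,n}(M_t,\tilde\beta(t))]^{\vir}$ (Behrend--Fantechi, Li--Tian); the relative evaluation maps and $\psi$-classes restrict fiberwise as well. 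Extending each $a_\nu$ to a flat section of $R\pi_\ast\bC$, the fiberwise integral
\[
t\longmapsto \int_{[\oM_{0,n}(M_t,\tilde\beta(t))]^{\vir}}\prod_{\nu}\ev_\nu^\ast a_\nu(t)
\]
is then the degree of a flat family of cohomology classes against a flat family of virtual cycles, hence locally constant on the connected base $B$. Evaluating at $t=0$ and $t=1$ and translating through $M_0\cong M\cong M_1$ gives the displayed identity (the descendant version, with each $\psi_\nu^{k_\nu}$ inserted, is verbatim the same).

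It remains to assemble the generating series. Since each $\langle\vec a\rangle^M_\beta\in\bC$ is a scalar, $\sigma$ acts on $\langle\vec a\rangle^M$ only through the Novikov variables, so $\sigma\langle\vec a\rangle^M=\sum_\beta\langle\vec a\rangle^M_\beta\,Q^{\sigma(\beta)}$. On the other hand, deformation invariance shows that $\sigma$ permutes the classes carrying nonzero invariants (if $\langle\vec a\rangle^M_\beta\neq 0$ then $\langle\sigma(\vec a)\rangle^M_{\sigma(\beta)}\neq 0$, forcing $\sigma(\beta)$ effective); reindexing the defining sum of $\langle\sigma(\vec a)\rangle^M$ by $\beta'=\sigma(\beta)$ and applying the fixed-degree identity yields
\[
\langle\sigma(\vec a)\rangle^M=\sum_\beta\langle\sigma(\vec a)\rangle^M_{\sigma(\beta)}\,Q^{\sigma(\beta)}=\sum_\beta\langle\vec a\rangle^M_\beta\,Q^{\sigma(\beta)},
\]
which is exactly $\sigma\langle\vec a\rangle^M$. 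The only genuine obstacle is the foundational input of the second paragraph: that the relative virtual class exists for $\mathcal{M}\to B$ and restricts to the fiberwise virtual classes, so that the fiber integral is locally constant. Granting this standard deformation-invariance package, the monodromy bookkeeping on $\NE(M)$ and on the Novikov ring is purely formal.
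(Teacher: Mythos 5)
Your proposal is correct and follows the same route as the paper: both reduce the identity to the coefficientwise statement $\langle\sigma(\vec a)\rangle^M_{\sigma(\beta)}=\langle\vec a\rangle^M_\beta$ and then invoke deformation invariance of Gromov--Witten invariants. The paper simply cites deformation invariance as a known fact, whereas you unpack the standard relative-virtual-class argument behind it; the mathematical content is the same.
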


\begin{proof}
    By comparing the coefficient of $Q^{\sigma(\beta)}$ on both sides, we need to show that $\langle \sigma(\vec{a})\rangle_{\sigma(\beta)}^M = \langle \vec{a}\rangle_\beta^M$ holds for each $\beta$. This follows from the fact that the GW invariant is deformation invariant. 
\end{proof}

Let $G \subseteq \operatorname{Aut}(H(M))$ be a finite group that consists of some of the automorphisms $\sigma$ that came from deformations as above. Let $H(M)^G$ be the $G$-invariant part of $H(M)$ and $K$ the kernel of the symmetrization
\[\begin{tikzcd}[row sep = 0pt]
    H(M) \ar[r, "\Sigma"] & H(M)^G \\
    a \ar[r, mapsto] & \displaystyle\frac{1}{|G|} \sum_{\sigma \in G} \sigma(a). 
\end{tikzcd}\]
Then $H(M) = H(M)^G \oplus K$ is an orthogonal direct sum with respect to the Poincar\'e pairing. 

If $\tau^{(2)} \in H^2(M)^G$, then $e^{(\tau^{(2)}, \beta)} = e^{(\tau^{(2)}, \sigma(\beta))}$. So we may identify the Novikov variables $Q^\beta = Q^{\sigma(\beta)}$. We also denote the identification by $\Sigma$: 
\[\begin{tikzcd}[row sep = 0pt]
    \bC \llbracket Q^{\operatorname{NE}(M)} \rrbracket \ar[r, "\Sigma"] & \bC \llbracket Q^{\operatorname{NE}(M)/G}\rrbracket \\
    Q^\beta \ar[r, mapsto] & Q^{G\beta}.
\end{tikzcd}\]

Define the $G$-symmetrized invariant as 
\begin{equation}\label{eq:syminv}
    \langle \vec{a}\rangle_G^M = \Sigma \langle \vec{a}\rangle^M,\quad \vec{a} \in H(M)^{\otimes n}. 
\end{equation}
By Lemma~\ref{lem:GWisGinv}, $\langle \vec{a}\rangle_G^M$ is $G$-invariant, i.e., 
\begin{equation}\label{eqambisGinv}
    \langle \sigma(\vec{a}) \rangle_G^M = \langle \vec{a} \rangle_G^M,\quad \vec{a} \in H(M)^{\otimes n},\quad \sigma \in G. 
\end{equation}

\begin{prop}\label{propambrec}
    The reconstruction formula for symmetrized invariants holds, i.e., if $a_1\otimes \dots\otimes a_n \in (H(M)^G)^{\otimes n}$, $D\in H^2(M)^G$ and $\{T_\mu\}$, $\{T^\mu\}$ are dual bases of $H(M)^G$, then, 
    \begin{enumerate}[(a)]
        \item for $n\ge 2$, we have 
        \begin{align*}
            \langle Da_1, a_2, \dots, a_n \rangle_G^M &= \langle a_1, Da_2, \dots, a_n\rangle_G^M + \delta^D\langle a_1, \psi a_2, \dots, a_n\rangle_G^M \\
            & \quad - \sum_{I_L \sqcup I_R = \{3, \dots, n\}, \mu} \delta^D\langle a_1, \vec{a}_{I_L}, T_\mu\rangle_G^M \cdot \langle T^\mu, a_2, \vec{a}_{I_R} \rangle_G^M; 
        \end{align*}
        \item for $n\ge 3$, we have 
        \begin{align*}
            \langle Da_1, a_2, \dots, a_n \rangle_G^M &= \langle a_1, Da_2, \dots, a_n\rangle_G^M \\
            &\quad + \sum_{3 \in I_L, 2\in I_R, \mu} \langle a_1, \vec{a}_{I_L}, T_\mu\rangle^M \cdot \delta^D \langle T^\mu, \vec{a}_{I_R} \rangle_G^M \\
            &\quad - \sum_{2, 3 \in I_R, \mu} \delta^D\langle a_1, \vec{a}_{I_L}, T_\mu\rangle^M \cdot \langle T^\mu, \vec{a}_{I_R} \rangle_G^M, 
        \end{align*}
        where $I_L \sqcup I_R = \{2,3,\dots, n\}$ is a partition.  
    \end{enumerate}
\end{prop}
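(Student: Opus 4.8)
The plan is to obtain both symmetrized formulas by applying the Novikov symmetrization $\Sigma$ directly to the ordinary reconstruction formula of Theorem~\ref{thr:rc} and pushing it through every term. The feature that makes this feasible is that, under the identification $Q^\beta = Q^{\sigma(\beta)}$, the Novikov $\Sigma$ is a \emph{ring homomorphism}: it therefore distributes over the quadratic WDVV terms, so the genuine content is only to understand how $\Sigma$ interacts with $\delta^D$ and with the individual GW factors. Note also that $\Sigma\circ\sigma=\Sigma$ on Novikov variables, since $\sigma$ sends $Q^\beta\mapsto Q^{\sigma(\beta)}$ and these are identified.

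I would first record two auxiliary facts. (i) For $D\in H^2(M)^G$ the eigenvalue $(D,\beta)$ is constant on $G$-orbits, so $\delta^D$ preserves the relations $Q^\beta=Q^{\sigma(\beta)}$ and descends to the quotient; hence $\Sigma\circ\delta^D=\delta^D\circ\Sigma$. (ii) A \emph{single-slot averaging} identity: using $\Sigma\circ\sigma=\Sigma$ together with $\langle\sigma(\vec a)\rangle^M=\sigma\langle\vec a\rangle^M$ from Lemma~\ref{lem:GWisGinv}, one gets, for insertions $a_i\in H(M)^G$ in all but one slot, $\Sigma\langle\dots,b,\dots\rangle^M=\Sigma\langle\dots,\sigma(b),\dots\rangle^M$ for every $\sigma$, and averaging the distinguished slot yields $\Sigma\langle\dots,b,\dots\rangle^M=\Sigma\langle\dots,\Sigma(b),\dots\rangle^M$. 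In particular, if $b\in K=\ker\bigl(\Sigma\colon H(M)\to H(M)^G\bigr)$ and the other insertions lie in $H(M)^G$, then this vanishes. This is the crucial vanishing.

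With these in hand I would choose a homogeneous basis of $H(M)$ adapted to the orthogonal decomposition $H(M)=H(M)^G\oplus K$, so that the dual basis splits the same way and the Casimir $\sum_\mu T_\mu\otimes T^\mu$ decomposes into an $H(M)^G$-part and a $K$-part. Applying $\Sigma$ to Theorem~\ref{thr:rc}(a) (respectively (b)), the linear terms on the right become the corresponding symmetrized invariants at once, moving $\Sigma$ past $\delta^D$ by (i). For the quadratic term the ring-homomorphism property lets $\Sigma$ act on each factor separately; in the $K$-part the factor carrying the class $T_\mu$ (or $T^\mu$) in $K$ is annihilated by (ii), so the entire $K$-Casimir contribution drops out and the sum collapses to the $H(M)^G$-basis. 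Every surviving factor has only $G$-invariant insertions, hence is a $G$-invariant $n$-point function by Lemma~\ref{lem:GWisGinv}, so its ordinary and symmetrized forms coincide under $\Sigma$; this is why the factors may be displayed with or without the subscript $G$, accounting for the precise shapes of (a) and (b).

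The main obstacle is exactly this quadratic step: justifying that the full Casimir may be replaced by its $G$-invariant part inside the symmetrized invariant. It relies on two mechanisms acting in tandem — that $\Sigma$ is a genuine ring homomorphism (so it factors through the WDVV product) and that the factor-wise vanishing (ii) kills every $K$-contribution — together with the compatibility of the dual basis with $H(M)=H(M)^G\oplus K$ and the descent of $\delta^D$ to the quotient. Once this reduction is established, the bookkeeping of the partitions $I_L\sqcup I_R$ is identical to the unsymmetrized case, and both (a) and (b) follow by comparing coefficients of $Q^{G\beta}$.
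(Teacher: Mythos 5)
Your proposal is correct and follows essentially the same route as the paper: split the Casimir according to the orthogonal decomposition $H(M)=H(M)^G\oplus K$, apply $\Sigma$ to the ordinary reconstruction formula of Theorem~\ref{thr:rc}, and kill the $K$-part of the quadratic term via the averaging identity $\langle \tilde T,\vec a_I\rangle_G^M=\langle \Sigma(\tilde T),\vec a_I\rangle_G^M=0$ for $\tilde T\in K$, which is exactly your single-slot vanishing (ii). The paper is merely terser (it proves only (a) and notes that (b) follows), leaving the commutation of $\Sigma$ with $\delta^D$ and its multiplicativity implicit, whereas you spell them out.
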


\begin{proof}
    We will only prove (a), since (b) follows from (a). Let $\{\tilde{T}_\nu\}$, $\{\tilde{T}^\nu\}$ be dual bases of $K$. Since $H(M) = H(M)^G \oplus^\perp K$, the original reconstruction formula (cf.~Theorem~\ref{thr:rc}) gives 
    \begin{align*}
        \langle Da_1, a_2, \dots, a_n \rangle_G^M &= \langle a_1, Da_2, \dots, a_n\rangle_G^M + \delta^D\langle a_1, \psi a_2, \dots, a_n\rangle_G^M \\
        &\quad - \sum_{I_L \sqcup I_R = \{3, \dots, n\}, \mu} \delta^D\langle a_1, \vec{a}_{I_L}, T_\mu\rangle_G^M \cdot \langle T^\mu, a_2, \vec{a}_{I_R} \rangle_G^M \\
        &\quad - \sum_{I_L \sqcup I_R = \{3, \dots, n\}, \mu} \delta^D\langle a_1, \vec{a}_{I_L}, \tilde{T}_\nu\rangle_G^M \cdot \langle \tilde{T}^\nu, a_2, \vec{a}_{I_R} \rangle_G^M. 
    \end{align*}
    To prove the statement, it suffices to show that for $\tilde{T} \in K$, $\langle \tilde{T}, \vec{a}_{I} \rangle_G^M = 0$. Indeed, by \eqref{eqambisGinv},  
    \[\langle \tilde{T}, \vec{a}_{I}\rangle_G^M = \frac{1}{|G|}\sum_\sigma \langle \sigma(\tilde{T}), \vec{a}_{I}\rangle_G^M = \langle \Sigma(\tilde{T}), \vec{a}_{I}\rangle_G^M = 0. \qedhere\]
\end{proof}

Recall that $S_d$ is a del Pezzo surface of degree $d$ (Example \ref{ex:dPtrans}). Let $\cC$ be the minimal set of generators of $\operatorname{NE}(S_d)$, the Mori cone of $S_d$. When $d\le 7$, $\cC$ is the set of $(-1)$-curves on $S_d$ by cone theorem. When $d = 8$, $\cC = \{\ell_1, \ell_2\}$, the fibers of the projections $p_1$, $p_2\colon S_8 = \bP^1\times \bP^1\to \bP^1$. 

The following proposition will be used in \S \ref{subsec;GW_neq_7}.

\begin{proposition}\label{prop:defonSd}
    Suppose that $d\neq 7$. For any pairs of $(-1)$-curves $\ell$, $\ell' \in \cC$, there is a deformation $\mathcal{S}_d$ such that $S_{d,0} \cong S_d \cong S_{d,1}$ and the induced automorphism $\sigma$ maps $\ell$ to $\ell'$. 

    In particular, there exists a finite group $G_{S_d} \subseteq \operatorname{Aut}(H(S_d))$ such that $G_{S_d}$ acts on $\cC$ transitively. 
\end{proposition}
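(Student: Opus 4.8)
The plan is to realize the Weyl group $W = W(E_{9-d})$ as a group of $\sigma$'s coming from families, and then invoke the classical transitivity of $W$ on $(-1)$-curves. Assume first $1 \le d \le 6$, so that $S_d$ (the blow-up of $\bP^1 \times \bP^1$ at $8-d$ points) is equivalently the blow-up of $\bP^2$ at $n = 9 - d \ge 3$ points $p_1, \dots, p_n$ in general position; the case $d = 8$, where $S_8 = \bP^1 \times \bP^1$ and $\cC = \{\ell_1, \ell_2\}$, I would dispose of at the end. Writing $\operatorname{Pic}(S_d) = \bZ H \oplus \bigoplus_i \bZ E_i$ with $K_{S_d} = -3H + \sum_i E_i$, recall (Table~\ref{tab;root_sys} and \cite{Manin86}) that the isometries of $\operatorname{Pic}(S_d)$ fixing $K_{S_d}$ form the finite Weyl group $W(E_{9-d})$, generated by the reflections in $E_i - E_{i+1}$ and in $H - E_1 - E_2 - E_3$, and that $W(E_{9-d})$ acts transitively on the set $\cC$ of $(-1)$-curve classes precisely when $d \le 6$. (This is exactly where $d = 7$ must be excluded: for $S_7 = \operatorname{Bl}_2\bP^2$ the class $H - E_1 - E_2$ lies in a distinct orbit.) It therefore suffices to realize each generating reflection as an automorphism $\sigma$ induced by a family $\mathcal{S}_d$ with $S_{d,0} \cong S_d \cong S_{d,1}$.

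For the transpositions $s_{E_i - E_j}$ I would use honest monodromy: choose a path $\gamma$ in the (irreducible, hence connected) space $\mathrm{Conf}$ of general ordered configurations from $(p_1, \dots, p_n)$ to the tuple with $p_i$ and $p_j$ interchanged. Blowing up the trivial family $\bP^2 \times \gamma$ along the tautological sections gives $\mathcal{S}_d$ whose two endpoints are the same surface $S_d$, and parallel transport in the frame $(H, E_1, \dots, E_n)$ induces precisely the transposition $E_i \leftrightarrow E_j$. For the reflection $s_{H - E_1 - E_2 - E_3}$ I would use the quadratic Cremona transformation $\mathrm{cr}$ centred at $p_1, p_2, p_3$: it induces an isomorphism $c\colon \operatorname{Bl}_{p}\bP^2 \xrightarrow{\sim} \operatorname{Bl}_{p'}\bP^2$ of degree-$d$ del Pezzo surfaces, where $p' = \mathrm{cr}(p)$ is again a general configuration, and $c^*$ acts on $\operatorname{Pic}$ by the desired reflection. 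Connecting $p$ to $p'$ by a path inside $\mathrm{Conf}$ yields a family with $S_{d,0} = \operatorname{Bl}_p \bP^2$ and $S_{d,1} = \operatorname{Bl}_{p'}\bP^2$; taking the identification $S_{d,1} \cong S_d$ to be $c$ (rather than the tautological marking) makes the induced $\sigma$ equal to $c^* = s_{H - E_1 - E_2 - E_3}$. Each such $\sigma$ is a genuine deformation composed with an isomorphism of smooth projective surfaces, so by deformation invariance of Gromov--Witten invariants it is of the kind governed by Lemma~\ref{lem:GWisGinv}.

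Finally I would assemble these into the group statement. Concatenating families (gluing along the chosen end-identifications) shows that the collection of all $\sigma$ realizable by families with both special fibers $\cong S_d$ is closed under composition and inversion, hence is a subgroup $G_{S_d} \subseteq \operatorname{Aut}(H(S_d))$. It is finite because every such $\sigma$ fixes $H^0$ and $H^4$ and restricts on $H^2 = \operatorname{Pic}(S_d)_{\bC}$ to an isometry fixing the deformation-invariant class $K_{S_d}$, and such isometries form a finite group since $K_{S_d}^\perp$ is negative definite. By the preceding paragraph $G_{S_d}$ contains all generating reflections, so $G_{S_d} \supseteq W(E_{9-d})$, and transitivity on $\cC$ follows from the classical transitivity of the Weyl group; in particular, for any $\ell, \ell' \in \cC$ there is $w \in W$ with $w\ell = \ell'$ and a corresponding family realizing it. For $d = 8$ the conclusion is immediate: the automorphism of $\bP^1 \times \bP^1$ exchanging the two rulings induces $\sigma$ swapping $\ell_1$ and $\ell_2$.

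The step I expect to be the main obstacle is the Cremona realization: one must verify that the quadratic transformation indeed produces an honest isomorphism between the two smooth del Pezzo surfaces (so that the path in $\mathrm{Conf}$ together with the remarking is a legitimate instance of the $\sigma$-construction) and that $p'$ stays in the general-position locus along the entire path, so that the family is smooth throughout. The transpositions, the closure/group-theoretic bookkeeping, and the finiteness of $G_{S_d}$ are comparatively routine.
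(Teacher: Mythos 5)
Your argument is correct, but it takes a genuinely different route from the paper's. The paper's proof is a single direct construction: given $\ell,\ell'\in\cC$ (with $d\le 6$), contract them to obtain two pointed del Pezzo surfaces $(S_{d+1},p)$ and $(S_{d+1}',p')$ of degree $d+1$, join these by a path in the connected moduli of pointed degree-$(d+1)$ del Pezzo surfaces, and blow up the moving point fiberwise; the resulting family tautologically carries $\ell$ to $\ell'$, so transitivity is immediate and no root-system input is needed. (The exclusion of $d=7$ is visible there too: contracting $E_1$ versus $H-E_1-E_2$ on $\operatorname{Bl}_2\bP^2$ lands in the two distinct deformation classes $\bF_1$ and $\bP^1\times\bP^1$ of degree-$8$ surfaces.) You instead realize the whole Weyl group $W(E_{9-d})$ by families --- configuration-space monodromy for the transpositions $s_{E_i-E_j}$, and the quadratic Cremona transformation with a remarked end-identification for $s_{H-E_1-E_2-E_3}$ --- and then quote the classical transitivity of $W(E_{9-d})$ on $(-1)$-classes. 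This costs more (the Cremona and general-position checks you flag are needed, though both are standard: the Cremona transform of a general configuration is general, and the general-position locus is Zariski-open in $(\bP^2)^n$, hence path-connected), but it buys more: it shows directly that $G_{S_d}$ contains, and by your finiteness argument equals, $W(E_{9-d})$, a fact the paper only records afterwards in a remark by citing Manin. Your closure-under-concatenation and finiteness arguments are sound; the one step worth making explicit is that ``parallel transport is the identity in the tautological frame'' because $H(t)$ and the exceptional classes over the tautological sections are flat sections of the local system $R^2$ of the family.
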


\begin{proof}
    If $d = 8$, then $(\bP^1 \times \bP^1, \bP^1\times *)$ is clearly isomorphic to $(\bP^1 \times \bP^1, *\times \bP^1)$ by interchanging two components.  

    If $d \le 6$, let $(S_d, \ell) \to (S_{d+1}, p)$, $(S_d, \ell') \to (S_{d+1}', p')$ be the contractions of $\ell$, $\ell'$, respectively. There is a deformation that links $(S_{d+1}, p)$ and $(S_{d+1}', p')$. Indeed, there is a family $\mathcal{S} = \{S(t)\}_{t}$ on the moduli space of del Pezzo surfaces of degree $d+1$ such that $S(0) = S_{d+1}$ and $S(1) = S_{d+1}'$. Then take any path $p(t) \in S(t)$ on the family $\mathcal{S}$ that connects $p = p(0) \in S(0)$ and $p' = p(1) \in S(1)$. Let $\ell(t)$ be the exceptional curve of the blow up $\operatorname{Bl}_{p(t)} S(t) \to S(t)$. Then the family $(\operatorname{Bl}_{p(t)} S(t), \ell(t))$ is the desired deformation. 
\end{proof}

\begin{remark}
For $1 \leq d \leq 6$, the group $G_{S_d}$ is the Weyl group $W(E_{9-d})$ of the root system $E_{9 - d}$ in Table \ref{tab;root_sys}, see for example \cite[Theorem 23.9]{Manin86}.
\end{remark}

%%%%%%%%%%%%%%%%%%%%
\subsection{I-function and mirror theorem}
%%%%%%%%%%%%%%%%%%%%

For $\tau = \tau^{(0)} + \tau^{(2)}\in H^0(M) \oplus H^2(M)$, we define the (small) \emph{$J$-function} of $M$ to be the generating function
    \begin{align*}
        J^M(\tau, z^{-1}) \coloneqq{}& e^{\tau/z} \sum_{\beta \ge 0} J_\beta^M(z^{-1}) Q^\beta \\
        \coloneqq{}& e^{\tau/z} \Bigl(1 + \sum_{\beta > 0} \ev_{1*}\Bigl(\frac{1}{z(z - \psi_1)} \cap \bigl[\overline{\mathcal{M}}_{0,1}(M, \beta)\bigr]^\vir \Bigr) Q^\beta\Bigr) \\
        ={} & e^{\tau/z} \Bigl(1 + \sum_{\beta > 0, k\ge 0} \frac{1}{z^{k+2}}\langle \psi^k T_\mu\rangle_\beta T^\mu Q^\beta\Bigr),
    \end{align*}
where $Q^\beta = q^\beta e^{(\tau^{(2)}, \beta)}$ and $\{T_\mu\}$ is a homogeneous basis of $H(M)$ with dual basis $\{T^\mu\}$. 

Let $\cV = \bigoplus_m \cL_m$ be a vector bundle over $M$ and $i\colon Z \hookrightarrow M$ a smooth subvariety defined by a section of $\cV$. We further assume that $\cV$ is \emph{convex}, i.e., if $H^1 (\bP^1, \varphi^\ast \cV) = 0$ for any holomorphic map $\varphi \colon \bP^1 \to M$. For $t = t^{(0)} + t^{(2)}\in H^0(M) \oplus H^2(M)$, we define the \emph{$I$-function} of $Z \hookrightarrow M$ to be the generating function
\begin{align*}
    I^Z(t, z) &\coloneqq e^{t/z} \sum_{\beta \ge 0} I_\beta^Z(z) P^\beta \\
    &\coloneqq e^{t/z} \sum_{\beta \ge 0} \bigl(J_\beta^M(z^{-1}) \cdot \cV^{\overline{\beta}}\bigr) P^\beta \\
    &\coloneqq e^{t/z} \sum_{\beta \ge 0} \Bigl(J_\beta^M(z^{-1}) \cdot \prod_m (c_1(\cL_m))^{\overline{(c_1(\cL_m), \beta)}}\Bigr) P^\beta, 
\end{align*}
where $P^\beta = e^{(t^{(2)},\beta)}$ and 
\[(D)^{\overline{n}} \coloneqq \prod_{\nu=1}^n (D + \nu z) \coloneqq \frac{\prod_{\nu = -\infty}^n (D + \nu z)}{\prod_{\nu = -\infty}^0 (D + \nu z)}. \]
The following classical mirror theorem allows us to compute the GW invariants of $Z \subseteq M$. 
\begin{theorem}[\cite{LLY99, CG07}]\label{thr:I=J}
    Suppose that $c_1(M) \ge c_1(\cV) = \sum_m c_1(\cL_m)$. Then 
    \[i_*J^Z(\tau, z^{-1}) = e(\cV)\cdot I^Z(t, z) \]
    under some mirror transform $\tau = \tau(t)$, where $i_*Q^\beta = Q^{i_*\beta}$ for $\beta \in \operatorname{NE}(Z)$. More precisely, $\tau^{(0)} = t^{(0)} + zg^{(0)}(P)$, $\tau^{(2)} = t^{(2)} + g^{(2)}(P)$ for some power series $g^{(0)}$, $g^{(2)}$ that is fully determined by $I^Z$. 
\end{theorem}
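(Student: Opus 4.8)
The plan is to place both sides on Givental's genus-zero Lagrangian cone and to identify them by combining the cone's ruling with a quantum Lefschetz (hypergeometric modification) argument; this is the route of \cite{CG07}, and I will indicate at the end the alternative \cite{LLY99} route. Write $\Lambda_M \subset \mathcal{H}_M$ for the Lagrangian cone of $M$ in the loop space $\mathcal{H}_M = H(M)\Laurent{z^{-1}}$ with its standard symplectic form, and similarly $\Lambda_Z$. The values of the small $J$-function sweep out a slice of the cone, and Givental's characterization (the cone is a Lagrangian cone whose tangent spaces $T$ satisfy $\Lambda\cap T = zT$) means that any point of $\Lambda$ lies on a unique ruling $zT_f\Lambda$; this is the structural fact I will use to compare $I^Z$ with $J^Z$ once both are known to lie on the same cone. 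The goal is thus to show that $e(\cV)\,I^Z$ and $i_*J^Z$ are two slices of a single cone living in $\mathcal{H}_M$.

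The central step is the twisting construction on the ambient $M$. Introduce a torus $T=\bC^\times$ scaling the fibers of $\cV=\bigoplus_m \cL_m$, with equivariant parameter $\lambda$, and consider the $e_T(\cV)$-twisted Gromov--Witten theory of $M$. By the quantum Riemann--Roch theorem of Coates--Givental, its Lagrangian cone $\Lambda_M^{\mathrm{tw}}$ is the image of $\Lambda_M$ under an explicit symplectic transformation assembled from the Chern roots of $\cV$ and Bernoulli factors. Using the product expression
\[
(D)^{\overline n} = \prod_{\nu=1}^{n}(D+\nu z) = \frac{\prod_{\nu=-\infty}^{n}(D+\nu z)}{\prod_{\nu=-\infty}^{0}(D+\nu z)},
\]
one checks term by term that the hypergeometric modification $J^M_\beta(z^{-1})\cdot \cV^{\overline\beta}$ is produced from the $J$-slice by exactly this transformation, so that $e_T(\cV)\,I^Z$ is a point of $\Lambda_M^{\mathrm{tw}}$.

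Next is the non-equivariant limit and the descent to $Z$. The hypothesis $c_1(M)\ge c_1(\cV)=\sum_m c_1(\cL_m)$ is precisely what bounds the degrees appearing in $\cV^{\overline\beta}$ so that, after $\lambda\to 0$, no destabilizing positive powers of $z$ appear and $I^Z$ remains a genuine (non-equivariant) point of the cone. In this limit the $e(\cV)$-twisted theory of $M$ is identified with the untwisted theory of $Z$: convexity of $\cV$ forces $H^1(\bP^1,\varphi^*\cV)=0$ along honest curves, so capping the virtual class of $\oM_{0,n}(M,\beta)$ with $e(\cV)$ realizes the pushforward under $i$ of the virtual class of $\oM_{0,n}(Z,\beta)$ for classes lifting to $\NE(Z)$. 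Hence $i_*\Lambda_Z$ and $\Lambda_M^{\mathrm{tw}}|_{\lambda\to0}$ coincide as subsets of $\mathcal{H}_M$, and both $i_*J^Z$ and $e(\cV)\,I^Z$ lie on this common cone.

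It remains to match the two slices. Writing $I^Z = e^{t/z}\bigl(I_0^Z(P) + I_1^Z(P)z^{-1} + O(z^{-2})\bigr)$, the Birkhoff-type normalization $J^Z = I^Z/I_0^Z$ of Givental's formalism recovers the $J$-slice: the scalar factor $I_0^Z = e^{g^{(0)}(P)}$, with $g^{(0)}\in\bC\PSR{P}$, accounts for the $z g^{(0)}$ term in $\tau^{(0)} = t^{(0)} + z g^{(0)}(P)$, while the $z^{-1}$-coefficient $I_1^Z/I_0^Z$ supplies $g^{(2)}(P)$ in $\tau^{(2)} = t^{(2)} + g^{(2)}(P)$; both are determined by $I^Z$ alone. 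That this normalization is legitimate — i.e. that $I^Z$ is, up to the scalar $I_0^Z$ and the shift, literally a $J$-slice — follows because $I^Z$ lies on the cone and has no positive powers of $z$ after the non-equivariant limit. The main obstacle is the quantum Lefschetz step: verifying that the specific factor $\cV^{\overline\beta}$ matches the Coates--Givental twisting transformation and survives the non-equivariant limit under $c_1(M)\ge c_1(\cV)$, this positivity being exactly what keeps the modification holomorphic in $z$ and the limit clean. Alternatively, one can avoid Givental's cone altogether and argue as in \cite{LLY99}: package both sides as linked Euler data on the graph spaces, establish the functorial gluing relations by torus localization on $\oM_{0,n}(M,\beta)$, and conclude by the uniqueness lemma for Euler data sharing their initial term; there the corresponding difficulty is the localization bookkeeping that forces the two Euler data to be linked.
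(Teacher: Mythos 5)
The paper gives no proof of this statement: it is quoted directly from \cite{LLY99, CG07}, so there is no internal argument to compare against. Your sketch is a faithful outline of the Coates--Givental route (twisting by $e_T(\cV)$, quantum Riemann--Roch, the positivity hypothesis $c_1(M)\ge c_1(\cV)$ controlling the non-equivariant limit, convexity giving the functorial identification with $i_*$ of the theory of $Z$, and the Birkhoff normalization producing the mirror transform $\tau=\tau(t)$), with the Euler-data alternative of \cite{LLY99} correctly indicated, and is correct at the level of detail given.
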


When $M$ is a weighted projective space with singularities, we can still use 

\begin{theorem}[\cite{CCLT09,Prz07}]\label{thr:wtI=J}
    Suppose that $M$ is a weighted projective space $\bP(\alpha_1, \dots, \alpha_{k+1})$. Then Theorem~\ref{thr:I=J} holds if $Z$ is a smooth complete intersection of hypersurfaces $Z_1$, $\dots$, $Z_r$ which do not intersect the singular locus of $M$ with 
    \[J_\beta^M(z^{-1}) \coloneqq \frac{1}{\prod_{n=1}^{k+1} (\alpha_n h)^{\overline{(\alpha_nh, \beta)}}}, \quad \cV = \bigoplus_{m=1}^r \cO(Z_m), \]
    where $h = c_1(\cO(1))$ is the hyperplane class of $M$. 
\end{theorem}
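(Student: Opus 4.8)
The plan is to assemble two inputs: the orbifold mirror theorem for the ambient stack $M = \bP(\alpha_1,\dots,\alpha_{k+1})$ itself, and a quantum Lefschetz passage from $M$ down to the complete intersection $Z$; the hypothesis $Z\cap\Sing(M)=\varnothing$ is what lets the second step reduce to the honest-variety statement of Theorem~\ref{thr:I=J}. First I would record the ambient $J$-function. Viewing $\bP(\alpha)$ as a smooth toric Deligne--Mumford stack, its genus-zero theory is governed by Givental's toric formalism and its small $I$-function has an explicit hypergeometric product form, which is the content of \cite{CCLT09}. Extracting the component supported on the untwisted sector --- the part valued in the subring of the coarse cohomology generated by $h = c_1(\cO(1))$ --- yields exactly the stated coefficient
\[ J_\beta^M(z^{-1}) = \frac{1}{\prod_{n=1}^{k+1}(\alpha_n h)^{\overline{(\alpha_n h,\beta)}}}. \]
The remaining twisted-sector data of the full orbifold $J$-function will be seen to play no role for $Z$ in the next step.

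Next I would run quantum Lefschetz. Since $\cV = \bigoplus_m \cO(Z_m)$ is convex and $c_1(M)\ge c_1(\cV)$, the hyperplane property of \cite{CG07} (in the orbifold form established in \cite{CCLT09, Prz07}) relates the Euler-twisted genus-zero theory of $M$ to the genus-zero theory of $Z$: in the non-equivariant limit the twisted ambient $I$-function specializes to $I^Z$, with the modification factor $\cV^{\overline{\beta}} = \prod_m c_1(\cO(Z_m))^{\overline{(c_1(\cO(Z_m)),\beta)}}$ implementing the twist. The positivity $c_1(M)\ge c_1(\cV)$ guarantees that each such factor contributes only nonnegative powers of $z$, so that $I^Z$ differs from $J^Z$ only by a mirror transform of the asserted shape $\tau^{(0)} = t^{(0)} + z\,g^{(0)}(P)$, $\tau^{(2)} = t^{(2)} + g^{(2)}(P)$, which one then reads off from the $1/z$-asymptotics of $I^Z$.

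The crux, and the step most in need of care, is the hypothesis $Z\cap\Sing(M)=\varnothing$, which I would use twice. First, it makes $Z$ an honest smooth projective variety, so $J^Z$ is valued in ordinary cohomology $H(Z)$ and the target identity $i_*J^Z = e(\cV)\cdot I^Z$ is an equality of untwisted cohomology-valued series, exactly as in Theorem~\ref{thr:I=J}. Second, every genus-zero stable map to $M$ whose image lies in $Z$ factors through the representable (non-stacky) locus, so the obstruction data entering the Lefschetz comparison are those of a scheme-theoretic complete intersection, and the twisted sectors of $M$ drop out. The one technical point to verify is that convexity persists for the orbicurves appearing in $\oM_{0,n}(M,\beta)$, i.e.\ that $H^1(\bP^1,\varphi^*\cV)=0$ continues to hold so that the Euler twist is unobstructed; this is precisely the verification carried out in \cite{Prz07, CCLT09}. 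Granting it, the mirror-map extraction completes the identification and reproduces Theorem~\ref{thr:I=J} verbatim in the weighted setting.
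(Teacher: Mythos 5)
The paper does not prove this statement; it is quoted directly from \cite{CCLT09, Prz07} as a known input, so there is no internal proof to compare against. Your outline is the standard route those references take: the orbifold mirror theorem for the toric Deligne--Mumford stack $\bP(\alpha)$ supplies the hypergeometric $I$-function whose untwisted-sector part is the stated $J^M_\beta$, and an orbifold quantum Lefschetz step passes to the complete intersection, with the hypothesis $Z\cap\Sing(M)=\varnothing$ ensuring both that $Z$ is an honest variety and that the comparison only sees the untwisted sector. Two small points of precision: the reason twisted sectors are irrelevant is not quite that ``stable maps to $M$ with image in $Z$ factor through the representable locus'' (quantum Lefschetz manipulates the twisted theory of all of $M$, not just maps landing in $Z$), but rather that $i_*$ of ordinary cohomology classes on $Z$ lands in the untwisted part of the Chen--Ruan cohomology, so only the untwisted component of the twisted $I$-function enters the identity $i_*J^Z = e(\cV)\cdot I^Z$. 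And the convexity of $\cO(Z_m)$ over orbicurves is a genuine subtlety (orbifold line bundles are generically \emph{not} convex when their degrees are not divisible by the weights); you correctly flag this and defer to \cite{Prz07, CCLT09}, where it is handled precisely because the relevant computation is confined to the untwisted sector. With those caveats your sketch is a faithful reconstruction of the cited argument.
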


We will use the following theorem due to Brown, which give us the $I$-function of projective bundle $\bP_M(\cW \oplus \cO)$ for $\cW = \bigoplus_m \cL_m$ a split vector bundle. 
\begin{theorem}[{\cite[Corollary~2]{Bro14}}] \label{thm:Brown}
    Let $\gamma$ be the fiber curve class of $\pi\colon \bP_M(\cW \oplus \cO) \to M$, and let $\xi = c_1(\cO_{\bP_M(\cW\oplus \cO)}(1)) = [\bP_M(\cW)]$ be the infinity hyperplane class. Assume that $-c_1(\cL_m) \ge 0$ for all $m$ and $c_1(M) + c_1(\cW) \ge 0$. Then 
    \[J^{\bP_M(\cW \oplus \cO)}(\tau, z^{-1}) = I^{\bP_M(\cW \oplus \cO)}(t, z) \]
    under some mirror transform $\tau = \tau(t)$, where 
    \begin{equation}\label{eqbrown}
        I^{\bP_M(\cW \oplus \cO)}(t, z) = e^{t/z} \sum_{\beta} \pi^*J_\beta^M(z^{-1}) \sum_{n \ge 0}\frac{1}{\prod_m (\xi + \pi^*c_1(\cL_m))^{\overline{n + (c_1(\cL_m), \beta)}}} P^{j_*\beta + n\gamma}, 
    \end{equation}
    and $j\colon M \to \bP_{M}(\cO) \subseteq \bP_M(\cW \oplus \cO)$ is the zero section. 
\end{theorem}

\begin{lemma}\label{lemma:brownI}
    With the same assumption and notation as Theorem~\ref{thm:Brown}, let $A(z^{-1}) = \sum_{\beta} A_\beta(z^{-1}) Q^\beta \in \bC[z^{-1}]\PSR{Q^{\operatorname{NE}(M)}}$
    be an invertible element satisfies $A_0 = 1$ and $A_\beta \neq 0$ only if $(c_1(\cL_m), \beta) = 0$ for all $m$. Define $I^M(z^{-1}) = e^{t/z} \sum I_\beta^M(z^{-1}) Q^\beta$ via the equation 
    \[I^M(z^{-1}) = A(z^{-1})J^M(z^{-1}). \]
    Then $\sum_\beta A_\beta(z^{-1}) P^{j_*\beta} I^{\bP_M(\cW \oplus \cO)}(t, z)$ is equal to the RHS of \eqref{eqbrown}, with $J_\beta^M$ replaced by $I_\beta^M$. 
\end{lemma}

\begin{proof}
    The proof is a direct computation. For each $\beta$, denote
    \[\cW_{\overline{\beta}} = \sum_{n\ge 0} \frac{1}{\prod_m (\xi + \pi^*c_1(\cL_m))^{\overline{n + (c_1(\cL_m), \beta)}}} P^{n\gamma}.\]
    The assumption implies that $V_{\overline{\beta}} = V_{\overline{\beta+\beta'}}$ if $A_{\beta'} \neq 0$. Thus 
    \begin{align*}
        \sum_\beta A_\beta(z^{-1}) P^{j_*\beta}\cdot  I^{\bP_M(\cW \oplus \cO)}(t, z) &=  \sum_{\beta_1} A_{\beta_1}(z^{-1}) P^{j_*\beta_1}\cdot e^{t/z} \sum_{\beta_2} \pi^*J_\beta^M(z^{-1}) \cW_{\overline{\beta_2}} P^{j_*\beta_2} \\
        &= e^{t/z} \sum_{\beta_1, \beta_2} \pi^*(A_{\beta_1}(z^{-1})J_{\beta_2}^M(z^{-1}))\cW_{\overline{\beta_1 + \beta_2}} P^{j_*(\beta_1 + \beta_2)} \\
        &= e^{t/z} \sum_{\beta} \pi^* I_\beta^M(z^{-1}) \cW_{\overline{\beta}} P^{j_*\beta}. 
    \end{align*}
    This completes the proof. 
\end{proof}

Since $A(z^{-1}) I^{\bP_M(\cW \oplus \cO)}(t, z) = J^{\bP_M(\cW \oplus \cO)}(\tau, z^{-1})$ under some mirror transform, we can compute the Gromov--Witten invariants of $\bP_M(\cW \oplus \cO)$ from $I^M$ using this modified ``$I$-function''. 

%%%%%%%%%%%%%%%%%%%%%%%%%%%%%%%%%%%%%%%%
\section{Canonical Local models}\label{sec;can_loc_model}
%%%%%%%%%%%%%%%%%%%%%%%%%%%%%%%%%%%%%%%%

In this section, we will omit the Novikov variables $q^\beta$ since all the summation will be convergent.

%%%%%%%%%%%%%%%%%%%%
\subsection{Geometric realizations}\label{subsec;geom_real}
%%%%%%%%%%%%%%%%%%%%

We shall recall realizations of del Pezzo threefolds $X_d$ (Appendix \ref{sec;appdP}), and the del Pezzo transition $Y_d \searrow X_d$ (Example \ref{ex:dPtrans}).

%For $1 \leq d \leq 8$, we denote $X_d$ with the polarization $\cO_{X_d}(1)$ by the del Pezzo threefold of degree $d$ appearing in Theorem \ref{thm;dP3claf}. Recall that when $d = 6$ there are two cases, \eqref{thm;dP3claf_6I} and \eqref{thm;dP3claf_6II}, which we denote $X_{6\I}$ and $X_{6\II}$, respectively. Therefore the zero locus $S_d$ defined by a geneal global section of $\cO_{X_d} (1)$ is a smooth del Pezzo surface of degree $d$.

For $1 \leq d \leq 8$, we denote $X_d$ with the polarization $\cO_{X_d}(1)$ by the del Pezzo threefold of degree $d$ appearing in Theorem \ref{thm;dP3claf}. Therefore the zero locus $S_d$ defined by a geneal global section of $\cO_{X_d} (1)$ is a smooth del Pezzo surface of degree $d$.

Using Theorem~\ref{thm;dP3claf}, we are going to define an ambient space $\T_d$ containing $X_d$, and associate to $X_d$ a vector bundles $\cV_d$. 
Recall that when $d = 6$ there are two cases, \eqref{thm;dP3claf_6I} and \eqref{thm;dP3claf_6II}, which we denote $X_{6\I}$ and $X_{6\II}$, respectively.

\begin{proposition}
There exist a smooth polarized variety $(\T_d, \cO_{\T_d}(1))$ and a vector bundle $\cV_d$ on $\T_d$ such that $X_d$ is the zero locus of a general global section of $\cV_d$ and $\cO_{X_d} (1) = \cO_{\T_d} (1)|_{X_d}$. All of the $\T_d$ and $\cV_d$ are given in Table \ref{tab;ambXd}. Here we define $\cO_{\T_8}(\frac12) \coloneqq \cO_{\bP^4}(1)$ and follow the notation of Remark \ref{rmk;OX7_1}. 
\end{proposition}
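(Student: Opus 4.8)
The plan is to read the statement off the classification of del Pezzo threefolds (Theorem~\ref{thm;dP3claf}): that theorem fixes each $X_d$ up to isomorphism, so it remains only to choose, for each admissible degree, a convenient ambient $\T_d$ with a bundle $\cV_d$ whose general section cuts out $X_d$, and then to verify the three asserted properties --- that $\T_d$ is smooth (along $X_d$), that $X_d = Z(s_d)$ is smooth of dimension three, and that $\cO_{\T_d}(1)|_{X_d} = \cO_{X_d}(1)$. I would record the resulting data in Table~\ref{tab;ambXd} and organize the verification by the geometric type of $X_d$.

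For the genuinely smooth ambient spaces I take $\T_3 = \bP^4$ with $\cV_3 = \cO(3)$; $\T_4 = \bP^5$ with $\cV_4 = \cO(2)^{\oplus 2}$; $\T_5 = \operatorname{Gr}(2,5)$ in its Pl\"ucker polarization with $\cV_5 = \cO(1)^{\oplus 3}$; and for $d = 6$ the two homogeneous models $\bP^1\times\bP^1\times\bP^1$ (taken as its own ambient space, $\cV$ trivial) and the flag threefold $\bP(T_{\bP^2})$ realized as a divisor of class $\cO(1,1)$ in $\bP^2\times\bP^2$, the two being distinguished as the cases $6\I$ and $6\II$ by Theorem~\ref{thm;dP3claf} (and by Euler characteristic, Table~\ref{tab;diffchiXY}). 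In each case a general $X_d = Z(s_d)$ is smooth of the expected dimension by Bertini, the polarization identity is built into the choice of $\cO_{\T_d}(1)$, and the del Pezzo property is the one uniform adjunction computation $-K_{X_d} = \bigl(-K_{\T_d} - c_1(\cV_d)\bigr)|_{X_d} = 2\,\cO_{\T_d}(1)|_{X_d}$, which in every line reduces to the numerical identity (Fano index of $\T_d$) minus (total degree of $\cV_d$) equal to $2$; the degree $\cO_{X_d}(1)^3 = d$ then follows by intersecting with the Chern classes of $\cV_d$.

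The delicate type, which I expect to be the main obstacle, is $d \in \{1, 2\}$. Here the natural models are the weighted hypersurfaces $\T_1 = \bP(1,1,1,2,3)$, $\cV_1 = \cO(6)$ and $\T_2 = \bP(1,1,1,1,2)$, $\cV_2 = \cO(4)$ (these are what the weighted mirror theorem of \cite{CCLT09,Prz07} is tailored to), but $\T_d$ is now singular along isolated cyclic-quotient points. The point to establish is that a general member of $|\cV_d|$ misses these singularities: one reads off from the monomial basis of $H^0(\T_d, \cV_d)$ that a general equation is a unit at each singular point --- for instance the weight-$6$ monomials $x_5^2$ and $x_4^3$ at the two singular points of $\bP(1,1,1,2,3)$, and $x_5^2$ at the singular point of $\bP(1,1,1,1,2)$, have generic nonzero coefficient --- so that $X_d$ lies entirely in the smooth locus, is itself smooth, and $\cO_{\T_d}(1)|_{X_d}$ is a genuine line bundle. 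This is exactly the hypothesis ``$Z$ does not meet the singular locus of $M$'' under which the weighted mirror theorem applies, so the verification is not optional bookkeeping but the input that legitimizes the ambient choice downstream; it is also the precise sense in which $\T_d$ is ``smooth polarized'' for our purposes. The adjunction numerics $(\text{total weight}) - (\deg \cV_d) = 8 - 6 = 2$ and $6 - 4 = 2$ again give $-K_{X_d} = 2\,\cO_{\T_d}(1)|_{X_d}$, and the weighted degree count $6\cdot\tfrac16 = 1$, $4\cdot\tfrac12 = 2$ recovers $d$.

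A second, lighter obstacle is that $d = 7$ and $d = 8$ fall outside the plain ``zero locus of a bundle'' pattern and force the conventions already announced in the statement. For $d = 8$, $X_8 = \bP^3$ carries the doubled polarization $\cO_{\bP^3}(2)$, which I realize as a hyperplane $\bP^3 \subset \T_8 = \bP^4$ with $\cV_8 = \cO(1)$; the equality $\cO_{\T_8}(1)|_{X_8} = \cO_{\bP^3}(2)$ is exactly why one sets $\cO_{\T_8}(\tfrac12) \coloneqq \cO_{\bP^4}(1)$. For $d = 7$, $X_7 = \Bl_{\pt}\bP^3$ has del Pezzo polarization $H = 2H_0 - E$ (with $H_0$ the pullback of $\cO_{\bP^3}(1)$ and $E$ the exceptional divisor), satisfying $H^3 = 7$ but not induced from a standard very ample generator; this half-integral/blow-up subtlety is absorbed into Remark~\ref{rmk;OX7_1}. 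Once both conventions are in place, every entry of Table~\ref{tab;ambXd} meets the three required properties and the proof is complete.
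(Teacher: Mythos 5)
Your proposal is correct and follows essentially the same route as the paper, which simply reads the ambient models off the classification of del Pezzo threefolds (Theorem~\ref{thm;dP3claf}) and records them in Table~\ref{tab;ambXd} without further argument. Your added verifications --- the adjunction identity $-K_{X_d} = (-K_{\T_d} - c_1(\cV_d))|_{X_d} = 2\,\cO_{\T_d}(1)|_{X_d}$, the degree counts, and in particular the check that a general member of $|\cO(6)|$ on $\bP(3,2,1,1,1)$ (resp.\ $|\cO(4)|$ on $\bP(2,1,1,1,1)$) avoids the cyclic quotient points because the monomials $x_1^2$, $x_2^3$ (resp.\ $x_1^2$) appear with generic nonzero coefficient --- are all accurate and supply exactly the details the paper leaves implicit.
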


\begin{table}[H]
    \centering
    \begin{tabular}{ccc}
        \toprule
        $d$ & $(\T_d, \cO_{\T_d}(1))$ & $\cV_d$ \\
        \midrule
        $1$ & $(\bP(3,2,1,1,1), \cO(1))$ & $\cO_{\T_1}(6)$ \\ 
        \midrule
        $2$ & $(\bP(2,1,1,1,1), \cO(1))$ & $\cO_{\T_2}(4)$ \\ 
        \midrule
        $3$ & $(\bP^4, \cO(1))$ & $\cO_{\T_3}(3)$ \\ 
        \midrule
        $4$ & $(\bP^5, \cO(1))$ & $\cO_{\T_4}(2)^{\oplus 2}$ \\ 
        \midrule
        $5$ & $(\operatorname{Gr}(2, 5), \cO(1))$ & $\cO_{\T_5}(1)^{\oplus 3}$  \\  
        \midrule
        $6\I$ & $(\bP^2 \times \bP^2, \cO(1,1))$ & $\cO_{\T_{6\I}}(1)$  \\ 
        \midrule
        $6\II$ & $(\bP^1 \times \bP^1 \times \bP^1, \cO(1,1,1))$ & $0$  \\
        \midrule
        $7$ & $(\Bl_{\pt} \bP^3, \cO(h_1 + h_2))$ & $0$  \\ 
        \midrule 
        $8$ & $(\bP^4, \cO(2))$ & $\cO_{\T_8}(\frac12)$ \\ 
        \bottomrule
    \end{tabular}
    \caption{Ambient spaces of del Pezzo threefolds.}
  \label{tab;ambXd}
\end{table}

\begin{remark}\label{rmk;dPWd}
Each del Pezzso surface $S_d$ is defined by a general global section of $\cV_d \oplus \cO_{\T_d} (1)$. We also have $\T_d = X_d$ for $d = 6\II$ and $7$. 
\end{remark}

Recall that $Y_d$ is the projective bundle $\bP_{S_d}(K_{S_d} \oplus \cO)$ with projection $\pi_d\colon Y_d \to S_d$.
Let $i_d\colon S_d \to X_d$ denote the inclusion and $j_d\colon S_d \to Y_d$ the infinity section. These maps fit into the following diagram: 
\[\begin{tikzcd}
    Y_d \ar[d, "\pi_d"] \\
    S_d \ar[r, hook, "i_d"] \ar[u, "j_d", bend left = 30] & X_d 
\end{tikzcd}\]
We denote by $h$ the restriction of $c_1(\cO_{\T_d}(1))$ on $X_d$. We denote by $E$ (resp.~$H$) the zero section $\bP_{S_d}(\cO)$ (resp.~the infinity section $\bP_{S_d}(K_{S_d}) = j_d(S_d)$) of $Y_d$. Then $F \coloneqq H - E$ is the pullback $\pi_d^*i^*_dh$. 
It is easily seen that the first Chern classes of $X_d$ and $Y_d$ are
\begin{align}\label{eqn:c1}
    c_1(X_d) = 2h \quad \text{and} \quad c_1(Y_d) = 2H,
\end{align}
since $X_d$ is del Pezzo and $Y_d = \bP_{S_d}(K_{S_d} \oplus \cO)$.

%%%%%%%%%%%%%%%%%%%%
\subsection{Gromov--Witten invariants for \texorpdfstring{$\bm{d \neq 7}$}{d neq 7}}\label{subsec;GW_neq_7}
%%%%%%%%%%%%%%%%%%%%

In this subsection, we aim to compute the symmetrized GW invariants (\S \ref{subsec;symGW}) of both $X_d$ and $Y_d$, and investigate how they are related. To do this, we shall introduce the groups acting on their cohomology.

For each $d \in \{1,2,3,4,5,8\}$, it is clear that $H(X_d) = \bC[h]/(h^4)$, so we set the group $G_{X_d}$ to be trivial. We define $G_{X_{6\I}} \coloneqq \fS_2$ and $G_{X_{6\II}} \coloneqq \fS_3$. The group action of $G_{X_{6\I}}$ (resp.~$G_{X_{6\II}}$) on $\T_{6\I} = \bP^2 \times \bP^2$ (resp.~$\T_{6\II} = \bP^1 \times \bP^1 \times \bP^1$) induces automorphisms on $H(X_d)$ such that 
\[H(X_d)^{G_{X_d}} = \bC[h]/(h^4) = \bC\cdot 1 \oplus \bC \cdot h \oplus \bC \cdot h^2 \oplus \bC \cdot \pt. \] 

Since $d \neq 7$, Proposition~\ref{prop:defonSd} tells us that there is a group action of $G_{S_d}$ on $H(S_d)$, and hence on $H(Y_d)$, such that 
\begin{align*}
    H(S_d)^{G_{S_d}} &= \bC[i_d^*h] / ((i_d^*h)^3) = \bC \cdot 1 \oplus \bC \cdot i_d^*h \oplus \bC\cdot \pt, \\
    H(Y_d)^{G_{S_d}} &= \bC[E, H]/(EH, F^3) = \bC \cdot 1 \oplus \bC \cdot E \oplus \bC \cdot H \oplus \bC \cdot H^2 \oplus \bC \cdot E^2 \oplus \bC \cdot \pt.
\end{align*} 

\begin{remark}
    The groups $G_{X_{6\I}}$ and $G_{X_{6\II}}$ are the Weyl group of $A_1$ and $A_2$, respectively---the two components of the reducible root system $E_3 = A_1 \times A_2$. 
\end{remark}

%From now on, we will work within the invariant subspaces, and all GW invariants are symmetrized (cf.~\eqref{eq:syminv}).

\begin{convention}\label{conv;omitG}
In the remainder of this section, we will omit the groups in the notation for simplicity. We will write 
\begin{itemize}
    \item $H(X_d)$, $H(S_d)$, and $H(Y_d)$ for $H(X_d)^{G_{X_d}}$, $H(S_d)^{G_{S_d}}$, and $H(Y_d)^{G_{S_d}}$, respectively; 
    \item $\operatorname{NE}(X_d)$ and $\operatorname{NE}(Y_d)$ for $\operatorname{NE}(X_d)/G_{X_d}$ and $\operatorname{NE}(Y_d)/G_{S_d}$, respectively; 
    \item $\langle\,\cdots\rangle^{X_d}$ and $\langle \,\cdots \rangle^{Y_d}$ for $\langle\, \cdots \rangle_{G_{X_d}}^{X_d}$ and $\langle \,\cdots \rangle_{G_{S_d}}^{Y_d}$, respectively. 
\end{itemize}
\end{convention}

Choose $\bar{\gamma} \in \operatorname{NE}(X_d)$ such that $(h, \bar{\gamma})_{X_d} = 1$. When $d \le 6$ (resp.~$d = 8$), $\bar{\gamma}$ (resp.~$2\bar{\gamma}$) generate the cone $\operatorname{NE}(X_d)$. 

Let $\gamma$ denote the fiber class of the projection $Y_d \to S_d$. Choose $\ell \in \operatorname{NE}(Y_d)$ such that $(H, \ell)_{Y_d} = 0$, $(F, \ell)_{Y_d} = 1$. When $d \le 6$ (resp.~$d = 8$), $\gamma$ and $\ell$ (resp.~$2\ell$) generate the cone $\operatorname{NE}(Y_d)$.

%%%%%%%%%%
%%%%%%%%%%
\subsubsection{Picard--Fuchs equations}\label{subsubsec;PF_neq_7}
%%%%%%%%%%
%%%%%%%%%%

From now on, we will work within the invariant subspaces, and all GW invariants are symmetrized (cf.~\eqref{eq:syminv}).

Write $\cW_d = \cV_d \oplus \cO_{\T_d}(1)$. It follows from Theorems~\ref{thr:I=J},~\ref{thr:wtI=J}~and~\ref{thm:Brown}, Lemma~\ref{lemma:brownI} and Remark~\ref{rmk;dPWd} that for each $d \le 6$,
\begin{align*}
    J^{X_d}(\tau, z^{-1}) = I^{X_d}(t, z) \coloneqq{}&  e^{t/z}\sum_{m\ge 0} \bigl(J_{m\bar{\gamma}}^{\T_d}(z^{-1}) \cdot \cV_d^{\overline{m\bar{\gamma}}}\bigr) P^{m\bar{\gamma}}, \\
    J^{Y_d}(\tau, z^{-1}) = I^{Y_d}(t, z) \coloneqq{}& e^{t/z}\sum_{m, n \ge 0} \Bigl(\pi_d^*i_d^*\bigl(J_{m\bar{\gamma}}^{\T_d}(z^{-1})\cdot \cW_d^{\overline{m\bar{\gamma}}}\bigr) \tfrac{1}{(E)^{\overline{n-m}}(H)^{\overline{n}}}\Bigr) P^{m\ell + n\gamma} \\
    ={}& e^{t/z} \sum_{m, n\ge 0} \Bigl(\pi_d^*i_d^* I_{m\bar{\gamma}}^{X_d}(z)\cdot \tfrac{(F)^{\overline{m}}}{(E)^{\overline{n-m}}(H)^{\overline{n}}}\Bigr) P^{m\ell + n\gamma}
\end{align*}
under some mirror transform; for $d = 8$, we have
\begin{align*}
    J^{X_8}(\tau, z^{-1}) = I^{X_8}(t, z) \coloneqq{}&  e^{t/z}\sum_{n\ge 0} \bigl(J_{m(2\bar{\gamma})}^{\T_8}(z^{-1}) \cdot \cV_d^{\overline{m(2\bar{\gamma})}}\bigr) P^{m(2\bar{\gamma})}, \\
    J^{Y_8}(\tau, z^{-1}) = I^{Y_8}(t, z) \coloneqq{}& e^{t/z}\sum_{m, n \ge 0} \Bigl(\pi_8^*i_8^*\bigl(J_{m(2\bar{\gamma})}^{\T_8}(z^{-1})\cdot \cW_8^{\overline{m(2\bar{\gamma})}}\bigr) \tfrac{1}{(E)^{\overline{n-2m}}(H)^{\overline{n}}}\Bigr) P^{m(2\ell) + n\gamma} \\
    ={}& e^{t/z} \sum_{m, n\ge 0} \Bigl(\pi_8^*i_8^* I_{m(2\bar{\gamma})}^{X_8}\cdot \tfrac{(\frac12 F)^{\overline{m}}}{(E)^{\overline{n-2m}}(H)^{\overline{n}}}\Bigr) P^{m(2\ell) + n\gamma} 
\end{align*}
under some mirror transform.

For a divisor $D$, we define its quantization operator $\hat{D}$ via  
\[\hat{D}(e^{t/z} P^\beta) = (D + (D, \beta)z) e^{t/z} P^\beta, \]
i.e., the derivative $z \pdv{}{t_D}$ along the $D$-direction. 
\begin{lemma}\label{lm:IXd=1234}
    For $d = 1$, $2$, $3$, $4$, let $(\kappa_d, \lambda_d) = (432, 60)$, $(64, 12)$, $(27, 6)$, $(16, 4)$, respectively. Then $I^{X_d}(t)$ satisfies the Picard--Fuchs equation $\square_{\bar{\gamma}}I^{X_d} = 0$, where  
    \[\square_{\bar{\gamma}} = \hat{h}^4 - P^{\bar{\gamma}} (\kappa_d \hat{h}^2 + \kappa_d z\hat{h} + \lambda_d z^2). \]
\end{lemma}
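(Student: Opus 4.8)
The plan is to convert the operator identity $\square_{\bar{\gamma}} I^{X_d} = 0$ into a recursion among the coefficients of $P^{m\bar{\gamma}}$. Write $I^{X_d} = e^{t/z}\sum_{m\ge 0} I_m P^{m\bar{\gamma}}$. On the $m$-th summand the quantization operator acts by $\hat{h}(e^{t/z} I_m P^{m\bar{\gamma}}) = (h+mz)\,I_m\, e^{t/z}P^{m\bar{\gamma}}$ --- the cup product by $h$ coming from $e^{t/z}$ together with the scalar $mz$ coming from $P^{m\bar{\gamma}}$, since $(h,\bar{\gamma})=1$ --- while left multiplication by $P^{\bar{\gamma}}$ shifts the index $m\mapsto m+1$. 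Extracting the coefficient of $e^{t/z}P^{m\bar{\gamma}}$ in $\square_{\bar{\gamma}}I^{X_d}$ therefore reduces the equation to
\[
(h+mz)^4 I_m = \bigl[\kappa_d (h+(m-1)z)^2 + \kappa_d z(h+(m-1)z) + \lambda_d z^2\bigr]\,I_{m-1},\qquad m\ge 1,
\]
together with the $m=0$ relation $h^4 I_0=0$; the latter holds automatically because $I_0=1$ and $h^4=0$ in $H(X_d)=\bC[h]/(h^4)$.

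It then remains to check the displayed recursion for $m\ge 1$, which I would verify as an identity of rational functions in $h$ and $z$. First I would write $I_m$ explicitly from the (weighted) projective mirror theorem: for instance $I_m = (6h)^{\overline{6m}}\big/\bigl[(3h)^{\overline{3m}}(2h)^{\overline{2m}}\bigl((h)^{\overline{m}}\bigr)^{3}\bigr]$ when $d=1$, and in general $I_m = \cV_d^{\overline{m\bar{\gamma}}}\cdot J^{\T_d}_{m\bar{\gamma}}(z^{-1})$ using the weight data of $\T_d$. Forming the ratio $I_m/I_{m-1}$ produces a quotient of products of linear forms $kh+\nu z$. Using the factorizations $kh+kmz = k(h+mz)$ (and the analogous ones at the intermediate indices) to peel off scalar constants, I expect every factor in the denominator and all but two factors in the numerator to cancel, leaving $I_m/I_{m-1} = c_d\,(\alpha h+\nu_1 z)(\alpha h+\nu_2 z)/(h+mz)^4$ for explicit $c_d,\alpha,\nu_i$ (for $d=4$ the two surviving factors coincide, giving a perfect square).

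The matching then becomes transparent after the substitution $a\coloneqq h+(m-1)z$: the two surviving numerator factors become linear in $a$ and $z$, and expanding $c_d(\alpha a + *z)(\alpha a + *z)$ yields exactly $\kappa_d a^2 + \kappa_d z\,a + \lambda_d z^2$, pinning down $(\kappa_d,\lambda_d)$ as $(432,60),(64,12),(27,6),(16,4)$ for $d=1,2,3,4$. I would carry out this one-line check in each of the four cases. The only genuine bookkeeping hazard --- and the step where off-by-one or stray-constant errors would creep in --- is the index shift in passing from the operator $\square_{\bar{\gamma}}$ to the recursion and the precise cancellation of the shifted linear factors; everything else is a routine product manipulation, and the degenerate leading term is disposed of once and for all by $h^4=0$.
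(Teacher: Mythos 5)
Your proposal is correct and follows essentially the same route as the paper: reduce $\square_{\bar{\gamma}}I^{X_d}=0$ to the coefficient recursion, compute the ratio $I_{m}/I_{m-1}$ from the explicit hypergeometric form of $I_{m\bar{\gamma}}^{X_d}$, and match the two surviving linear factors against $\kappa_d a^2+\kappa_d za+\lambda_d z^2$ case by case. The paper carries out exactly this cancellation (writing the constant as $n_d^{n_d-2}/\prod_{1<e\mid n_d}(n_d/e)^e$), so the only difference is that you state the verification as a plan rather than writing out all four cases.
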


\begin{proof}
    For $d \in \{1,2,3\}$, $X_d$ is a hypersurface of degree $n_d = \sum \alpha_i-1$ in $\bP(\alpha,1)$. By Theorem~\ref{thr:wtI=J},  
    \[I_{m\bar{\gamma}}^{X_d} = \frac{(n_d h)^{\overline{n_dm}}}{(h)^{\overline{m}}\prod_{i=0}^3 (\alpha_i h)^{\overline{\alpha_i m}}}. \] 
    It is straightforward to verify case by case that 
    \begin{align*}
        \frac{I_{(m+1)\bar{\gamma}}^{X_d}}{I_{m\bar{\gamma}}^{X_d}} &= \frac{n_d^{n_d-2}}{\prod_{1 < e\mid n_d} (n_d/e)^e}\frac{(n_dh + (n_dm+1)z)(n_dh + (n_dm+(n_d-1))z)}{(h + (m+1)z)^4} \\
        &= \kappa_d \cdot \frac{(h + (m+\frac{1}{n_d})z)(h + (m+\frac{n_d-1}{n_d})z)}{(h + (m+1)z)^4} = \frac{\kappa_d (h+mz)^2 + \kappa_d z(h+mz) + \lambda_d z^2}{(h+(m+1)z)^4}. 
    \end{align*}
    Consequently, $\square_{\bar{\gamma}} I^{X_d} = 0$. 

    In the case $d = 4$, it follows from Theorem~\ref{thr:I=J} that 
    \[I_{m\bar{\gamma}}^{X_d} = \frac{((2h)^{\overline{2m}})^2}{((h)^{\overline{m}})^6}. \]
    So 
    \[\frac{I_{(m+1)\bar{\gamma}}^{X_d}}{I_{m\bar{\gamma}}^{X_d}} = \frac{16(h+(m+\frac12)z)^2}{(h+(m+1)z)^4} = \frac{\kappa_4 (h+mz)^2 + \kappa_4 z(h+mz) + \lambda_4 z^2}{(h+(m+1)z)^4}, \]
    and hence, $\square_{\bar{\gamma}} I^{X_d} = 0$. 
\end{proof}

\begin{lemma}\label{lm:IXd=56}
    For $d = 5$, $6\I$, $6\II$, let $(\kappa_d, \lambda_d, \mu_d) = (11, 3, 1)$, $(7, 2, 8)$, $(10, 3, -9)$, respectively.  
    Then $I^{X_d}(t)$ satisfies the Picard--Fuchs equation $\square_{\bar{\gamma}}I^{X_d} = 0$, where  
    \[\square_{\bar{\gamma}} = \hat{h}^4 - P^{\bar{\gamma}} (\kappa_d \hat{h}^2 + \kappa_d z\hat{h} + \lambda_d z^2) - \mu_d P^{2\bar{\gamma}}. \]
\end{lemma}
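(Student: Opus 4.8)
The plan is to follow the same strategy as in the proof of Lemma~\ref{lm:IXd=1234}: compute the ratio $I_{(m+1)\bar{\gamma}}^{X_d}/I_{m\bar{\gamma}}^{X_d}$ explicitly for each of the three remaining cases $d = 5, 6\I, 6\II$ and read off the recursion it encodes. The essential difference from the previous lemma is that $\mu_d \ne 0$, so I expect the ratio to be a rational function whose numerator, after clearing the denominator $(h+(m+1)z)^4$, involves \emph{both} a quadratic polynomial in $(h+mz)$ (producing the $\kappa_d \hat h^2 + \kappa_d z\hat h + \lambda_d z^2$ terms) \emph{and} a second-order piece coming from $P^{2\bar\gamma}$ (producing the $-\mu_d$ term). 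Concretely, I would verify that the three-term recursion satisfied by the coefficients $I_{m\bar\gamma}^{X_d}$ is equivalent to the statement that $\square_{\bar\gamma} I^{X_d} = 0$, where $\square_{\bar\gamma}$ is the stated operator.

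First I would record the closed form of $I_{m\bar\gamma}^{X_d}$ in each case from the geometric data in Table~\ref{tab;ambXd}. For $d=5$, $X_5 = Z(s_5) \subseteq \operatorname{Gr}(2,5)$ cut out by $\cO(1)^{\oplus 3}$, so $I_{m\bar\gamma}^{X_5}$ is obtained from $J^{\operatorname{Gr}(2,5)}_{m\bar\gamma}$ multiplied by $(h)^{\overline{m}\,}$ raised appropriately (the Euler factor for $\cV_5 = \cO(1)^{\oplus 3}$). For $d=6\I$, $X_{6\I} = Z(s) \subseteq \bP^2\times\bP^2$ with $\cV_{6\I} = \cO(1,1)$, and after restricting to the invariant subspace $H(X_{6\I})^{G} = \bC[h]/(h^4)$ the bidegree collapses to a single hyperplane class $h$; likewise for $d = 6\II$, where $\T_{6\II} = X_{6\II} = \bP^1\times\bP^1\times\bP^1$ and $\cV = 0$, again restricted to $\bC[h]/(h^4)$ under $G_{X_{6\II}} = \fS_3$. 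In each case the resulting $I_{m\bar\gamma}^{X_d}$ is an explicit product of Pochhammer-type factors $(ah+\cdots)^{\overline{\cdot}}$ in the single variable $h$.

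The key computation is then to form the ratio $I_{(m+1)\bar\gamma}^{X_d}/I_{m\bar\gamma}^{X_d}$ and massage it into the form
\[
\frac{I_{(m+1)\bar\gamma}^{X_d}}{I_{m\bar\gamma}^{X_d}} = \frac{\kappa_d (h+mz)^2 + \kappa_d z(h+mz) + \lambda_d z^2 + \mu_d\, I_{(m-1)\bar\gamma}^{X_d}\big/I_{m\bar\gamma}^{X_d}\cdot(\cdots)}{(h+(m+1)z)^4},
\]
but more cleanly I would instead clear denominators and verify directly that the three coefficients $I_{(m-1)\bar\gamma}^{X_d}$, $I_{m\bar\gamma}^{X_d}$, $I_{(m+1)\bar\gamma}^{X_d}$ satisfy the linear three-term relation dictated by $\square_{\bar\gamma}$. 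Writing $\hat h$ as the operator with $\hat h (e^{t/z}P^{m\bar\gamma}) = (h+mz)e^{t/z}P^{m\bar\gamma}$ and recalling $P^{\bar\gamma}$ shifts $m \mapsto m+1$, the equation $\square_{\bar\gamma} I^{X_d}=0$ reads, coefficient by coefficient in $P^{m\bar\gamma}$,
\[
(h+mz)^4\, I_{m\bar\gamma}^{X_d} = \bigl(\kappa_d(h+(m-1)z)^2 + \kappa_d z(h+(m-1)z) + \lambda_d z^2\bigr) I_{(m-1)\bar\gamma}^{X_d} + \mu_d\, I_{(m-2)\bar\gamma}^{X_d}.
\]
I would confirm this identity by substituting the explicit product formulas and checking the telescoping of Pochhammer factors, exactly as in the $\mu=0$ cases but now tracking the extra two-step term. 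The main obstacle, and the place requiring genuine care rather than routine bookkeeping, is the $d=5$ Grassmannian case: $J^{\operatorname{Gr}(2,5)}_{m\bar\gamma}$ is not a simple hypergeometric product, so establishing the closed form for $I_{m\bar\gamma}^{X_5}$ and verifying the three-term recursion with the correct constants $(\kappa_5,\lambda_5,\mu_5)=(11,3,1)$ will demand either the Abelian/non-Abelian correspondence or an explicit evaluation of the relevant one-point descendants on $\operatorname{Gr}(2,5)$. Once the closed forms are in hand, the verification that $\square_{\bar\gamma} I^{X_d}=0$ in all three cases is a direct, if delicate, algebraic check.
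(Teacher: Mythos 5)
There is a genuine gap in your treatment of the cases $d = 6\I$ and $6\II$. Your plan rests on the claim that, after symmetrizing, $I_{m\bar\gamma}^{X_d}$ is ``an explicit product of Pochhammer-type factors $(ah+\cdots)^{\overline{\cdot}}$ in the single variable $h$,'' so that a three-term recursion can be checked by telescoping. This is false: the ambient spaces are $\bP^2\times\bP^2$ and $\bP^1\times\bP^1\times\bP^1$, so the coefficient of $P^{m\bar\gamma}$ on the diagonal $P^{\bar\gamma}=P^{\bar\gamma_1}=P^{\bar\gamma_2}(=P^{\bar\gamma_3})$ is a \emph{sum} over all multidegrees $(m_1,m_2)$ (resp.\ $(m_1,m_2,m_3)$) with $\sum m_i = m$, and each summand involves the individual classes $h_i$, not just $h=\sum h_i$. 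No termwise ratio identity of the form $I_{(m+1)\bar\gamma}/I_{m\bar\gamma} = (\text{quadratic})/(h+(m+1)z)^4 + \mu_d(\cdots)$ holds, and the direct verification you propose cannot get off the ground. The paper's proof instead works with the full multi-variable Picard--Fuchs ideal of the ambient product (the operators $\square_{\bar\gamma_1},\square_{\bar\gamma_2}$ together with the auxiliary relation $\square_{\mathrm{top}}$ for $6\I$; the three operators $\hat h_i^2 - P^{\bar\gamma_i}$ for $6\II$), eliminates the anti-invariant combinations $\hat k = \hat h_1 - \hat h_2$ (resp.\ $\hat k_\pm = \sum \omega^{\pm i}\hat h_i$) by computing modulo the left ideal, and only at the end restricts to the diagonal to extract the single fourth-order operator with the stated $(\kappa_d,\lambda_d,\mu_d)$. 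This elimination step is the essential content of the lemma for $d=6$, and it is absent from your proposal.

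For $d=5$ you correctly identify the obstacle but do not resolve it. The paper's route is to quote the known quantum differential operator for $\operatorname{Gr}(2,5)$ from Batyrev--Ciocan-Fontanine--Kim--van Straten, namely $\hat h^7(\hat h - z)^3 - P^{\bar\gamma}\hat h^3(11\hat h^2 + 11z\hat h + 3z^2) - P^{2\bar\gamma}$, and then use $I_{m\bar\gamma}^{X_5} = J_{m\bar\gamma}^{\operatorname{Gr}(2,5)}\bigl((h)^{\overline{m}}\bigr)^3$ to cancel the factor $\hat h^3(\hat h - z)^3$ against the Euler factors of $\cO(1)^{\oplus 3}$, which immediately yields the fourth-order equation with $(\kappa_5,\lambda_5,\mu_5)=(11,3,1)$. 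That is a short manipulation given the citation, but as written your proposal leaves the hardest-looking case as an unproved promissory note while mis-stating the structure of the case that actually requires a new idea.
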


\begin{proof}
    We prove the statement case by case. In the cases $d = 6\I$ and $6\II$, we will apply a symmetrization of the Picard--Fuchs equations.
    
    For $d = 5$, it follows from \cite[p.659]{BCKvS98} that $J^{\operatorname{Gr}(2, 5)}(t)$ satisfies the Picard--Fuchs equation $\square_{\bar{\gamma}}^{\operatorname{Gr}(2,5)} J^{\operatorname{Gr}(2, 5)} = 0$, where 
    \[\square_{\bar{\gamma}}^{\operatorname{Gr}(2,5)} = \hat{h}^7 (\hat{h} - z)^3 - P^{\bar{\gamma}} \hat{h}^3 (11 \hat{h}^2 + 11z\hat{h} + 3z^2) - P^{2\bar{\gamma}}, \]
    i.e., for each $m \ge 0$, 
    \begin{align*}
        &(h + (m+1)z)^7(h+mz)^3 J_{(m+1)\bar{\gamma}}^{\operatorname{Gr}(2, 5)} \\
        &\quad = (h+mz)^3(11(h+mz)^2 + 11z(h+mz) + 3z^2) J_{m\bar{\gamma}}^{\operatorname{Gr}(2, 5)} + J_{(m-1)\bar{\gamma}}^{\operatorname{Gr}(2, 5)}. 
    \end{align*}
    Together with $I_{m\bar{\gamma}}^{X_5} = J_{m\bar{\gamma}}^{\operatorname{Gr}(2, 5)}((h)^{\overline{m}})^3$, we get 
    \[(h+(m+1)z)^4 I_{(m+1)\bar{\gamma}}^{X_5} = (11(h+mz)^2 + 11z(h+mz) + 3z^2) I_{m\bar{\gamma}}^{X_5} + I_{(m-1)\bar{\gamma}}^{X_5}. \]
    Hence, $I^{X_5}$ satisfies the Picard--Fuchs equation $\square_{\bar{\gamma}}I^{X_5} = 0$. 
    
    For $d = 6\I$, let $h_1 = c_1(\cO(1, 0))$, $h_2 = c_1(\cO(0, 1))$ be the hyperplane classes of $\T_{6\I} = \bP^2\times \bP^2$, and let $\bar{\gamma}_1 = h_1h^2$, $\bar{\gamma}_2 = h_1^2h_2$ be the generators of $\operatorname{NE}(\bP^2\times \bP^2)$. Then $I^{X_d}$ satisfies the Picard--Fuchs equations $\square_{\bar{\gamma}_1} I^{X_d} = \square_{\bar{\gamma}_2} I^{X_d} = 0$, where 
    \begin{align*}
        \square_{\bar{\gamma}_1} &= \hat{h}_1^3 - (\hat{h}_1 + \hat{h}_2) P^{\bar{\gamma}_1}, \\
        \square_{\bar{\gamma}_2} &= \hat{h}_2^3 - (\hat{h}_1 + \hat{h}_2) P^{\bar{\gamma}_2}. 
    \end{align*}
    Define
    \[\square_{\mathrm{top}} = \hat{h}_1^2 - \hat{h}_1\hat{h}_2 + \hat{h}_2^2 - (P^{\bar{\gamma}_1} + P^{\bar{\gamma}_2}). \]
    Then $\square_{\bar{\gamma}_1} + \square_{\bar{\gamma}_2} = (\hat{h}_1 + \hat{h}_2)\square_{\mathrm{top}}$. One can also verify that $I^{X_d}$ is annihilated by $\square_{\mathrm{top}}$ as well. 
    
    We want to symmetrize the differential equations. Let $\hat{h} = \hat{h}_1 + \hat{h}_2$ and let $\hat{k} = \hat{h}_1 - \hat{h}_2$. Then  
    \begin{align*}
        \square_{\bar{\gamma}_1} &= \tfrac18 (\hat{h} + \hat{k})^3 - \hat{h} P^{\bar{\gamma}_1} = \tfrac18 \hat{h}^3 + \tfrac38 \hat{h}^2\hat{k} + \tfrac38 \hat{h}\hat{k}^2 + \tfrac18 \hat{k}^3 - \hat{h} P^{\bar{\gamma}_1}, \\
        \square_{\bar{\gamma}_2} &= \tfrac18 (\hat{h} - \hat{k})^3 - \hat{h} P^{\bar{\gamma}_2} = \tfrac18 \hat{h}^3 - \tfrac38 \hat{h}^2\hat{k} + \tfrac38 \hat{h}\hat{k}^2 - \tfrac18 \hat{k}^3 - \hat{h} P^{\bar{\gamma}_2}, \\
        \square_{\mathrm{top}} &= \tfrac14 ((\hat{h} + \hat{k})^2 - (\hat{h} + \hat{k})(\hat{h} - \hat{k}) + (\hat{h} - \hat{k})^2) - (P^{\bar{\gamma}_1} + P^{\bar{\gamma}_2}) \\
        &= \tfrac14 (\hat{h}^2 + 3\hat{k}^2) -  (P^{\bar{\gamma}_1} + P^{\bar{\gamma}_2}). 
    \end{align*}
    In order to cancel out the $\hat{k}^3$ term, we consider 
    \[3(\square_{\bar{\gamma}_1} - \square_{\bar{\gamma}_2}) - \hat{k}\square_{\mathrm{top}} = 2\hat{h}^2\hat{k} - 3\hat{h} (P^{\bar{\gamma}_1} - P^{\bar{\gamma}_2}) + \hat{k}(P^{\bar{\gamma}_1} + P^{\bar{\gamma}_2}). \]
    So, modulo the left ideal generated by $\square_{\bar{\gamma}_1}$, $\square_{\bar{\gamma}_2}$, and $\square_{\mathrm{top}}$, we get 
    \begin{align*}
        \hat{h}^4 &= \hat{h}^2(4(P^{\bar{\gamma}_1} + P^{\bar{\gamma}_2}) - 3\hat{k}^2) \\
        &= 4(P^{\bar{\gamma}_1} + P^{\bar{\gamma}_2})(\hat{h} + z)^2 - \tfrac{3}{2}\hat{k}(3 \hat{h} (P^{\bar{\gamma}_1} - P^{\bar{\gamma}_2}) - \hat{k}(P^{\bar{\gamma}_1} + P^{\bar{\gamma}_2})) \\
        &= 4(P^{\bar{\gamma}_1} + P^{\bar{\gamma}_2})(\hat{h} + z)^2 - \tfrac92 (P^{\bar{\gamma}_1} (\hat{k} + z) - P^{\bar{\gamma}_2} (\hat{k} - z))(\hat{h} + z) \\
        &\quad + \tfrac{3}{2}(P^{\bar{\gamma}_1}(\hat{k} + z)^2 + P^{\bar{\gamma}_2}(\hat{k} - z)^2) \\
        &= 4(P^{\bar{\gamma}_1} + P^{\bar{\gamma}_2})(\hat{h} + z)^2 - \tfrac92 (P^{\bar{\gamma}_1} - P^{\bar{\gamma}_2})\hat{k} (\hat{h} + z) - \tfrac92 (P^{\bar{\gamma}_1} + P^{\bar{\gamma}_2})z(\hat{h} + z) \\
        &\quad + \tfrac{1}{2}(P^{\bar{\gamma}_1} + P^{\bar{\gamma}_2})(4(P^{\bar{\gamma}_1} + P^{\bar{\gamma}_2})-\hat{h}^2) + \tfrac{3}{2}(P^{\bar{\gamma}_1} + P^{\bar{\gamma}_2})z^2. 
    \end{align*}
    Restrict this equation on the diagonal $P^{\bar{\gamma}} = P^{\bar{\gamma}_1} = P^{\bar{\gamma}_2}$, we get 
    \begin{align*}
        \hat{h}^4 &= 8P^{\bar{\gamma}}(\hat{h} + z)^2 - 9P^{\bar{\gamma}}z(\hat{h} + z) + P^{\bar{\gamma}}(8P^{\bar{\gamma}} - \hat{h}^2) + 3 P^{\bar{\gamma}} z^2 \\
        &= P^{\bar{\gamma}}(7\hat{h}^2 + 7z\hat{h} + 2z^2) + 8P^{2\bar{\gamma}}, 
    \end{align*}
    which is what we want. 

    For $d = 6\II$, let $h_1 = c_1(\cO(1, 0, 0))$, $h_2 = c_1(\cO(0, 1, 0))$, $h_3 = c_1(\cO(0, 0, 1))$ be the hyperplane classes of $\T_{6\II} = \bP^1\times \bP^1 \times \bP^1$, and let $\bar{\gamma}_1 = h_2h_3$, $\bar{\gamma}_2 = h_3h_1$, $\bar{\gamma}_3 = h_1h_2$ be the generators of $\operatorname{NE}(\bP^1\times \bP^1 \times \bP^1)$. Then $I^{X_d}$ satisfies the Picard--Fuchs equations $\square_{\bar{\gamma}_1} I^{X_d} = \square_{\bar{\gamma}_2} I^{X_d} = \square_{\bar{\gamma}_3} I^{X_d} = 0$, where
    \[\square_{\bar{\gamma}_i} = \hat{h}_i^2 - P^{\bar{\gamma}_i}. \]
    Let $\hat{h} = \hat{h}_1 + \hat{h}_2 + \hat{h}_3$, $\hat{k}_+ = \hat{h}_1 + \omega \hat{h}_2 + \omega^2 \hat{h}_3$, $\hat{k}_- = \hat{h}_1 + \omega^2 \hat{h}_2 + \omega \hat{h}_3$, where $\omega = \omega_3 = e^{2\pi \ii/3}$ is a third root unity. Then 
    \begin{align*}
        \square_{\bar{\gamma}_1} + \square_{\bar{\gamma}_2} + \square_{\bar{\gamma}_3} &= \tfrac13 \hat{h}^2 + \tfrac23\hat{k}_+ \hat{k}_- - (P^{\bar{\gamma}_1} + P^{\bar{\gamma}_2} + P^{\bar{\gamma}_3}), \\
        \square_{\bar{\gamma}_1} + \omega\square_{\bar{\gamma}_2} + \omega^2\square_{\bar{\gamma}_3} &= \tfrac13 \hat{k}_-^2 + \tfrac23\hat{h}\hat{k}_+ - (P^{\bar{\gamma}_1} + \omega P^{\bar{\gamma}_2} + \omega^2 P^{\bar{\gamma}_3}), \\
        \square_{\bar{\gamma}_1} + \omega^2\square_{\bar{\gamma}_2} + \omega \square_{\bar{\gamma}_3} &= \tfrac13 \hat{k}_+^2 + \tfrac23\hat{h} \hat{k}_- - (P^{\bar{\gamma}_1} + \omega^2 P^{\bar{\gamma}_2} + \omega P^{\bar{\gamma}_3}). 
    \end{align*}
    So, modulo the left ideal generated by $\square_{\bar{\gamma}_1}$, $\square_{\bar{\gamma}_2}$, and $\square_{\bar{\gamma}_3}$, we get 
    \begin{align}
        \hat{h}^4 &= \hat{h}^2(3(P^{\bar{\gamma}_1} + P^{\bar{\gamma}_2} + P^{\bar{\gamma}_3}) - 2 \hat{k}_+\hat{k}_-) \notag\\
        &= 3(P^{\bar{\gamma}_1} + P^{\bar{\gamma}_2} + P^{\bar{\gamma}_3}) (\hat{h} + z)^2 - \hat{h}\hat{k}_+ (- \hat{k}_+^2 + 3(P^{\bar{\gamma}_1} + \omega^2 P^{\bar{\gamma}_2} + \omega P^{\bar{\gamma}_3})) \notag\\
        &= 3(P^{\bar{\gamma}_1} + P^{\bar{\gamma}_2} + P^{\bar{\gamma}_3}) (\hat{h} + z)^2 + \tfrac12 \hat{k}_+^2 ( - \hat{k}_-^2 + 3(P^{\bar{\gamma}_1} + \omega P^{\bar{\gamma}_2} + \omega^2 P^{\bar{\gamma}_3})) \notag\\
        &\quad - 3(P^{\bar{\gamma}_1}(\hat{k}_+ + z) + \omega^2 P^{\bar{\gamma}_2}(\hat{k}_+ + \omega z) + \omega P^{\bar{\gamma}_3}(\hat{k}_+ + \omega^2 z)) (\hat{h} + z) \notag\\
        &= 3(P^{\bar{\gamma}_1} + P^{\bar{\gamma}_2} + P^{\bar{\gamma}_3}) (\hat{h} + z)^2 - \tfrac12 \hat{k}_+^2 \hat{k}_-^2  \notag\\
        &\quad + \tfrac32 (P^{\bar{\gamma}_1}(\hat{k}_+ + z)^2 + \omega  P^{\bar{\gamma}_2}(\hat{k}_+ + \omega z)^2 + \omega^2 P^{\bar{\gamma}_3}(\hat{k}_+ + \omega^2 z)^2) \notag\\
        &\quad - 3(P^{\bar{\gamma}_1} + \omega^2 P^{\bar{\gamma}_2} + \omega P^{\bar{\gamma}_3})\hat{k}_+(\hat{h}+z) - 3z(P^{\bar{\gamma}_1} + P^{\bar{\gamma}_2} + P^{\bar{\gamma}_3})(\hat{h}+z). \label{eqDmodX6II}
    \end{align}
    Since 
    \begin{align*}
        \hat{k}_+^2\hat{k}_-^2 &= \tfrac12 \hat{k}_+\hat{k}_- ( - \hat{h}^2 + 3(P^{\bar{\gamma}_1} + P^{\bar{\gamma}_2} + P^{\bar{\gamma}_3})) \\
        &= -\tfrac14 \hat{h}^2 ( - \hat{h}^2 + 3(P^{\bar{\gamma}_1} + P^{\bar{\gamma}_2} + P^{\bar{\gamma}_3})) \\
        &\quad + \tfrac32 (P^{\bar{\gamma}_1}(\hat{k}_+ + z)(\hat{k}_- + z) + P^{\bar{\gamma}_2}(\hat{k}_+ + \omega z)(\hat{k}_- + \omega^2 z) + P^{\bar{\gamma}_3}(\hat{k}_+ + \omega^2 z)(\hat{k}_- + \omega z))\\
        &= \tfrac14 \hat{h}^4 - \tfrac34 (P^{\bar{\gamma}_1} + P^{\bar{\gamma}_2} + P^{\bar{\gamma}_3}) (\hat{h} + z)^2 \\
        &\quad + \tfrac34 (P^{\bar{\gamma}_1} + P^{\bar{\gamma}_2} + P^{\bar{\gamma}_3}) ( - \hat{h}^2 + 3(P^{\bar{\gamma}_1} + P^{\bar{\gamma}_2} + P^{\bar{\gamma}_3})) \\
        &\quad + \tfrac32 (P^{\bar{\gamma}_1} + \omega^2 P^{\bar{\gamma}_2} + \omega P^{\bar{\gamma}_3}) z\hat{k}_+ + \tfrac32 (P^{\bar{\gamma}_1} + \omega P^{\bar{\gamma}_2} + \omega^2 P^{\bar{\gamma}_3}) z\hat{k}_- + \tfrac32  (P^{\bar{\gamma}_1} + P^{\bar{\gamma}_2} + P^{\bar{\gamma}_3}) z^2,  
    \end{align*}
    it follows that the restriction of \eqref{eqDmodX6II} on the diagonal $P^{\bar{\gamma}} = P^{\bar{\gamma}_1} = P^{\bar{\gamma}_2} = P^{\bar{\gamma}_3}$ is 
    \begin{align*}
        \hat{h}^4 &= 9 P^{\bar{\gamma}}(\hat{h}+z)^2 - \tfrac12 (\tfrac14 \hat{h}^4 - \tfrac94 P^{\bar{\gamma}}(\hat{h}+z)^2 - \tfrac94 P^{\bar{\gamma}} \hat{h}^2 + \tfrac {81}4 P^{2\bar{\gamma}} + \tfrac92 P^{\bar{\gamma}} z^2) \\
        &\quad + \tfrac 92 P^{\bar{\gamma}} z^2 - 9 P^{\bar{\gamma}} z(\hat{h} + z),  
    \end{align*}
    or, equivalently, 
    \[\hat{h}^4 = P^{\bar{\gamma}}(10 \hat{h}^2 + 10 z\hat{h} + 3z^2) - 9 P^{2\bar{\gamma}}. \qedhere\]
\end{proof}

\begin{lemma}\label{lm:IXdtoIYd}
    If $I^X(t)$ satisfies the Picard--Fuchs equation $\square_{\bar{\gamma}} I^X = 0$, where
    \[\square_{\bar{\gamma}} = \hat{h}^4 - P^{\bar{\gamma}}(\kappa \hat{h}^2 + \kappa z \hat{h} + \lambda z^2) - \mu P^{2\bar{\gamma}}, \]
    and $I_{m\ell + n\gamma}^Y = \pi^*i^* I_{m\bar{\gamma}}^X\cdot \frac{(F)^{\overline{m}}}{(E)^{\overline{n-m}}(H)^{\overline{n}}} $, then 
    \[I^Y(t) = e^{t/z} \sum_{m, n\ge 0} I_{m\ell + n\gamma}^Y P^{m\ell + n\gamma}\]
    satisfies the Picard--Fuchs equations $\square_\gamma I^Y = \square_\ell I^Y = 0$, where 
    \begin{align*}
        \square_\gamma &= \hat{E}\hat{H} - P^\gamma, \\
        \square_\ell &= \hat{F}^3 - P^\ell (\kappa \hat{F}^2 + \kappa z\hat{F} + \lambda z^2) \hat{E} - \mu P^{2\ell} (\hat{F} + z) \hat{E} (\hat{E} - z). 
    \end{align*}
\end{lemma}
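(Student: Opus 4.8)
The plan is to verify each of the two Picard--Fuchs operators coefficient-by-coefficient in the monomials $P^{m\ell+n\gamma}$, turning every operator identity into a numerical recursion among the classes $I^Y_{m\ell+n\gamma}\in H(Y)$, and then to collapse everything onto the single recursion already encoded in $\square_{\bar{\gamma}}I^X=0$. First I would record the intersection numbers coming from the $\bP^1$-bundle structure: since $E$, $H$ are the two sections and $\gamma$ is the fibre class, $(E,\gamma)=(H,\gamma)=1$ and $(F,\gamma)=0$, while the normalization of $\ell$ gives $(H,\ell)=0$, $(F,\ell)=1$, whence $(E,\ell)=-1$. Thus on the graded piece $P^{m\ell+n\gamma}$ the quantizations act as multiplication by $E+(n-m)z$, $H+nz$, $F+mz$ respectively. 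I would also use throughout that $F=\pi^*i^*h$, so $\pi^*i^*$ is the ring homomorphism $H(X)\to H(Y)$ sending $h\mapsto F$; crucially it factors through $H(S_d)=\bC[i^*h]/((i^*h)^3)$, so $h^3\mapsto 0$ automatically. The workhorse is the Pochhammer recursion $(D)^{\overline{k}}=(D)^{\overline{k-1}}(D+kz)$, valid for all integers $k$ under the extended definition of the Notation.

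The equation $\square_\gamma I^Y=0$ is the easy half. Comparing coefficients of $P^{m\ell+n\gamma}$ it reads $(E+(n-m)z)(H+nz)\,I^Y_{m\ell+n\gamma}=I^Y_{m\ell+(n-1)\gamma}$. Substituting the product formula and applying $(E+(n-m)z)/(E)^{\overline{n-m}}=1/(E)^{\overline{n-m-1}}$ together with $(H+nz)/(H)^{\overline{n}}=1/(H)^{\overline{n-1}}$ makes the two sides agree at once, the common factor $\pi^*i^*I^X_{m\bar{\gamma}}\,(F)^{\overline{m}}$ being untouched.

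The bulk of the work is $\square_\ell I^Y=0$. Extracting the coefficient of $P^{m\ell+n\gamma}$ from the three summands of $\square_\ell$ produces a recursion relating $I^Y_{m\ell+n\gamma}$, $I^Y_{(m-1)\ell+n\gamma}$ and $I^Y_{(m-2)\ell+n\gamma}$, in which the $\hat{E}$-factors appear as $(E+(n-m+1)z)$ from the $\kappa,\lambda$-term and as $(E+(n-m+2)z)(E+(n-m+1)z)$ from the $\mu$-term. After inserting the product formula these are exactly the factors that collapse $(E)^{\overline{n-m+1}}$ and $(E)^{\overline{n-m+2}}$ back to $(E)^{\overline{n-m}}$, so that $1/((E)^{\overline{n-m}}(H)^{\overline{n}})$ becomes common to all three terms and drops out. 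Factoring the remaining $(F+(m-1)z)(F)^{\overline{m-2}}$ out of the surviving $(F)^{\overline{\cdot}}$'s then reduces the identity to $\pi^*i^*$ of
\[
(h+mz)^4 I^X_{m\bar{\gamma}}=\big(\kappa(h+(m-1)z)^2+\kappa z(h+(m-1)z)+\lambda z^2\big)I^X_{(m-1)\bar{\gamma}}+\mu\,I^X_{(m-2)\bar{\gamma}},
\]
which is precisely the coefficient form of $\square_{\bar{\gamma}}I^X=0$ (with $h$ replaced by $F$). I would run the algebra in the direction $\square_{\bar{\gamma}}\Rightarrow\square_\ell$, multiplying this $X$-recursion by the cancelled factors rather than dividing by them, so that no invertibility is needed and the implication holds unconditionally.

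I expect the genuine obstacle to be purely organizational: keeping the quantization shifts and the three nested $\mu$-term factors straight, and dispatching the low-index boundary cases $m\in\{0,1\}$ and $n-m<0$, where the Pochhammer symbols pass to negative upper index and $(F+(m-1)z)$ degenerates to the non-invertible class $F$. These I would settle directly, using that $I^Y_{m'\ell+n\gamma}=0$ for $m'<0$ together with $I^X_0=1$, $I^X_{-1}=0$, so that the $\mu$-term and the spurious $F$-cancellation both vanish exactly when they should. No new geometric input beyond the mirror theorem used to define $I^Y$ is required; the content of the lemma is entirely the compatibility of the product formula with the two Pochhammer recursions.
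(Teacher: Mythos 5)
Your proposal is correct and follows essentially the same route as the paper's proof: both verify $\square_\gamma I^Y=0$ from the ratio of consecutive coefficients and reduce $\square_\ell I^Y=0$ to the coefficient form of $\square_{\bar\gamma}I^X=0$ by cancelling the Pochhammer factors $(F)^{\overline{\,\cdot\,}}$, $(E)^{\overline{\,\cdot\,}}$, $(H)^{\overline{\,\cdot\,}}$ against the quantization shifts. Your extra care about the direction of implication (multiplying rather than dividing by the cancelled factors) and the boundary indices is sensible but does not change the argument.
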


\begin{proof}
    It follows from 
    \[\frac{I_{m\ell + (n-1)\gamma}^Y}{I_{m\ell + n\gamma}^Y} = \frac{{(E)^{\overline{n-m}}(H)^{\overline{n}}}}{{(E)^{\overline{(n-1)-m}}(H)^{\overline{n-1}}}} = (E + (n-m)z)(H + nz) \]
    that $\square_\gamma I^Y = 0$. The equation $\square_{\bar{\gamma}} I^{X_d} = 0$ gives 
    \[(h+(n+1)z)^4 I_{(n+1)\bar{\gamma}}^X = (\kappa(h+nz)^2 + \kappa z(h+nz) + \lambda z^2) I_{n\bar{\gamma}}^X + \mu I_{(n-1)\bar{\gamma}}^X. \]
    This implies 
    \begin{align*}
        (F + (m+1)z)^3 I_{(m+1)\ell + n\gamma}^Y &= (\kappa (F+mz)^2 + \kappa z(F+mz) + \lambda z^2) (E + (n-m)z) I_{m\ell + n\gamma}^Y \\
        &\quad + \mu (F+mz)(E+(n-m)z)(E+(n-m+1)z) I_{(m-1)\ell + n\gamma}^Y,  
    \end{align*}
    or, equivalently, $\square_{\ell} I^{Y} = 0$. 
\end{proof}

\begin{lemma}\label{lm:IX8IY8}
    The $I$-function $I^{X_8}(t)$ satisfies the Picard--Fuchs equation $\square_{\bar{\gamma}}I^{X_8} = 0$, where  
    \[\square_{\bar{\gamma}} = \hat{h}^4 - 16 P^{2\bar{\gamma}}; \]
    $I^{Y_8}(t)$ satisfies the Picard--Fuchs equations $\square_\gamma I^{Y_8} = \square_\ell I^{Y_8} = 0$, where 
    \begin{align*}
        \square_\gamma &= \hat{E}\hat{H} - P^\gamma, \\
        \square_\ell &= \hat{F}^3 - 16 P^{2\ell} (\hat{F} + z) \hat{E} (\hat{E} - z). 
    \end{align*}
\end{lemma}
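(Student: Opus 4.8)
The plan is to establish each Picard--Fuchs equation by the same mechanism used in Lemmas~\ref{lm:IXd=56} and~\ref{lm:IXdtoIYd}: read off the coefficient of a monomial $P^\beta$ in $\square I = 0$ and verify the resulting recursion among the explicit $I$-function coefficients displayed above. The one structural novelty for $d = 8$ is that $\NE(X_8)$ and $\NE(Y_8)$ are generated by the \emph{even} classes $2\bar\gamma$ and $2\ell$, a reflection of $\T_8 = \bP^4$ being polarized by $\cO_{\T_8}(1) = \cO_{\bP^4}(2)$; consequently every sum runs over $m(2\bar\gamma)$, resp.\ $m(2\ell) + n\gamma$, and the recursions advance by two units of $\bar\gamma$-degree. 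In the language of Lemma~\ref{lm:IXdtoIYd} this is the degenerate case $(\kappa,\lambda,\mu) = (0,0,16)$, and the entire computation is powered by the single scaling identity $(F + 2mz)^4 = 16(\tfrac12 F + mz)^4$ (i.e.\ $2^4 = 16$), which is the algebraic trace of $\cO_{\T_8}(1) = \cO_{\bP^4}(2)$.

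I would first dispatch $I^{X_8}$. By Table~\ref{tab;ambXd} and the mirror theorem, $I_{m(2\bar\gamma)}^{X_8} = \prod_{\nu=1}^m(H_4 + \nu z)^{-4}$, where $H_4 = c_1(\cO_{\bP^4}(1))$ restricts to $h/2$ on $X_8$, so the coefficient equals $\prod_{\nu=1}^m(h/2 + \nu z)^{-4}$. The coefficient of $P^{m(2\bar\gamma)}$ in $\square_{\bar\gamma}I^{X_8}$ is $(h + 2mz)^4 I_{m(2\bar\gamma)}^{X_8} - 16\,I_{(m-1)(2\bar\gamma)}^{X_8}$, and forming the ratio of consecutive terms collapses this to the scaling identity. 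The $m = 0$ term is $h^4$, which vanishes since $h^4 = 16\,H_4^4|_{X_8} = 0$ on $X_8 \cong \bP^3$; this dimension relation is the only non-formal input.

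For $I^{Y_8}$, the equation $\square_\gamma I^{Y_8} = 0$ is immediate: the coefficient $I_{m(2\ell)+n\gamma}^{Y_8}$ carries the factor $\bigl((E)^{\overline{n-2m}}(H)^{\overline n}\bigr)^{-1}$, and because $(E, m(2\ell)+n\gamma) = n-2m$ and $(H, m(2\ell)+n\gamma) = n$, the ratio of the $(n-1)$- and $n$-coefficients is exactly $(E + (n-2m)z)(H + nz)$, the relation encoded by $\hat E\hat H - P^\gamma$. The substantive step is $\square_\ell I^{Y_8} = 0$. Extracting the coefficient of $P^{m(2\ell)+n\gamma}$, the term $\hat F^3$ produces $(F + 2mz)^3 I_{m(2\ell)+n\gamma}^{Y_8}$, while $16\,P^{2\ell}(\hat F + z)\hat E(\hat E - z)$ draws on the coefficient at $(m-1)(2\ell) + n\gamma$ to produce
\[
16\,(F + (2m-1)z)\,(E + (n-2m+2)z)\,(E + (n-2m+1)z)\,I_{(m-1)(2\ell)+n\gamma}^{Y_8}.
\]
Since $I_{m(2\ell)+n\gamma}^{Y_8}$ is an explicit product, the ratio of the $(m-1)$- and $m$-coefficients telescopes: the quadratic $\hat E(\hat E - z)$ is exactly the surplus of $(E)^{\overline{n-2m+2}}$ over $(E)^{\overline{n-2m}}$, the factor $(F + (2m-1)z)$ from $\hat F + z$ cancels the corresponding factor freed by the $F$-product, and the leftover $16/(F+2mz)$ recombines through $16(\tfrac12 F + mz)^4 = (F + 2mz)^4$ to give back $(F + 2mz)^3$. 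The boundary term at $m = 0$ is $F^3\,I_{n\gamma}^{Y_8}$, which vanishes by the relation $F^3 = 0$ in $H(Y_8)$.

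The step I expect to be the main obstacle is the bookkeeping inside the $\square_\ell$ recursion: keeping the doubled curve classes, the half-integral scaling ($\tfrac12 F$ versus $F$), and the two-step telescoping of the $(E)^{\overline{\,\cdot\,}}$ and $(F)^{\overline{\,\cdot\,}}$ products aligned with the factors emitted by $\hat F^3$ and $(\hat F + z)\hat E(\hat E - z)$, so that everything funnels into $(F+2mz)^4 = 16(\tfrac12 F + mz)^4$. Once that alignment is set up the verification is purely formal, needing no reconstruction and no geometric input beyond the explicit $I$-functions and the two relations $F^3 = 0$ and $EH = 0$.
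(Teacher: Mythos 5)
Your proposal is correct and follows essentially the same route as the paper: write out the explicit coefficients $I_{m(2\bar{\gamma})}^{X_8} = ((\tfrac12 h)^{\overline{m}})^{-4}$ and $I_{m(2\ell)+n\gamma}^{Y_8} = (F)^{\overline{2m}}/\bigl(((\tfrac12 F)^{\overline{m}})^4 (E)^{\overline{n-2m}}(H)^{\overline{n}}\bigr)$, form the ratios of consecutive coefficients, and collapse everything via the scaling identity $(F+2(m+1)z)^4 = 16(\tfrac12 F+(m+1)z)^4$. Your explicit treatment of the $m=0$ boundary terms ($h^4=0$ on the threefold $X_8$ and $F^3=0$ since $F$ is pulled back from the surface) is left implicit in the paper but is the right justification.
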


\begin{proof}
    Since $I_{n(2\bar{\gamma})}^{X_8} = \frac{1}{((\frac12 h)^{\overline{n}})^4}$, 
    \[\frac{I_{(n+1)(2\bar{\gamma})}^{X_8}}{I_{n(2\bar{\gamma})}^{X_8}} = \frac{1}{(\frac12 h+(n+1)z)^4} = \frac{16}{(h+2(n+1)z)^4}. \]
    This means that $\square_{\bar{\gamma}} I^{X_8} = 0$. Since $I_{m(2\ell) + n\gamma}^{Y_8} = \frac{(F)^{\overline{2m}}}{((\frac12 F)^{\overline{m}})^4 (E)^{\overline{n-2m}}(H)^{\overline{n}}}$, we have 
    \begin{align*}
        \frac{I_{m(2\ell) + (n-1)\gamma}^{Y_8}}{I_{m(2\ell) + n\gamma}^{Y_8}} &= (E + (n-2m)z)(H + nz) \\
        \frac{I_{(m+1)(2\ell) + n\gamma}^{Y_8}}{I_{m(2\ell) + n\gamma}^{Y_8}}  &= \frac{(F+(2m+2)z)(F+(2m+1)z)(E+(n-2m)z)(E+(n-2m-1)z)}{(\frac12 F+(m+1)z)^4} \\
        &= \frac{16((F+2mz)+z)(E+(n-2m)z)(E+(n-2m-1)z)}{(\frac12 F+(m+1)z)^3}, 
    \end{align*}
    or, equivalently, $\square_\gamma I^{Y_8} = \square_\ell I^{Y_8} = 0$. 
\end{proof}

Summarizing Lemmas~\ref{lm:IXd=1234}, \ref{lm:IXd=56}, \ref{lm:IXdtoIYd} and \ref{lm:IX8IY8}, we have:

\begin{proposition}\label{prop:PFI}
    For each $d \neq 7$, we have 
    \begin{align*}
        \square_{\bar{\gamma}} I^{X_d} = 0 \quad \text{and} \quad \square_\gamma I^{Y_d} = \square_\ell I^{Y_d} = 0.
    \end{align*}
    Here $\square_{\bar{\gamma}} = \hat{h}^4 - P^{\bar{\gamma}}(\kappa_d \hat{h}^2 + \kappa_d z\hat{h} + \lambda_d z^2) - \mu_d P^{2\bar{\gamma}}$ and 
    \begin{align*}
        \square_\gamma &= \hat{E}\hat{H} - P^\gamma, \\
        \square_\ell &= \hat{F}^3 - P^\ell (\kappa_d \hat{F}^2 + \kappa_d z\hat{F} + \lambda_d z^2) \hat{E} - \mu_d P^{2\ell} (\hat{F} + z) \hat{E} (\hat{E} - z), 
    \end{align*}
    where $\kappa_d$, $\lambda_d$ and $\mu_d$ are listed in Table \ref{table:klmd}.
\end{proposition}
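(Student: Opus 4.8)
The plan is to assemble the statement directly from the four preceding lemmas, treating the $X$-function equation and the $Y$-function equations separately and organizing the argument by the value of $d$. The whole content is a case-by-case invocation, since the substantive computations have already been carried out.

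First I would dispose of the equation $\square_{\bar{\gamma}} I^{X_d} = 0$. For $d \in \{1,2,3,4\}$ this is exactly Lemma~\ref{lm:IXd=1234}, with $\mu_d = 0$ and $(\kappa_d, \lambda_d)$ as tabulated; for $d \in \{5, 6\I, 6\II\}$ it is Lemma~\ref{lm:IXd=56}, now carrying the nonzero $\mu_d$ recorded in Table~\ref{table:klmd}; and for $d = 8$ it is the first half of Lemma~\ref{lm:IX8IY8}, which produces $\hat{h}^4 - 16 P^{2\bar{\gamma}}$, i.e.\ the general operator specialized to $\kappa_8 = \lambda_8 = 0$, $\mu_8 = 16$. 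At this stage one only has to read off that the constants emerging in each lemma agree with the entries of Table~\ref{table:klmd}, which is a line-by-line check.

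Next I would derive the $Y$-function equations. The operator $\square_\gamma = \hat{E}\hat{H} - P^\gamma$ is uniform in $d$ and expresses the fiberwise relation of the projective bundle $Y_d = \bP_{S_d}(K_{S_d} \oplus \cO)$; it follows from the ratio $I^Y_{m\ell + (n-1)\gamma}/I^Y_{m\ell+n\gamma} = (E + (n-m)z)(H+nz)$ exactly as in Lemma~\ref{lm:IXdtoIYd} (and its variant in Lemma~\ref{lm:IX8IY8}). For $\square_\ell$ and $d \neq 8$, the function $I^{Y_d}$ computed in~\S\ref{subsec;GW_neq_7} is precisely of the form $I^Y_{m\ell+n\gamma} = \pi^* i^* I^X_{m\bar{\gamma}}\cdot (F)^{\overline{m}}/((E)^{\overline{n-m}}(H)^{\overline{n}})$ demanded by the hypothesis of Lemma~\ref{lm:IXdtoIYd}; feeding in the $(\kappa_d,\lambda_d,\mu_d)$ just established yields $\square_\ell I^{Y_d} = 0$ in the stated shape. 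For $d = 8$ the parametrization differs (the generator is $2\bar{\gamma}$ on $X_8$ and the lattice is spanned by $2\ell, \gamma$ on $Y_8$, with an extra $(\tfrac12 F)^{\overline{m}}$ factor), so I would instead quote the second half of Lemma~\ref{lm:IX8IY8} directly; its output $\hat{F}^3 - 16 P^{2\ell}(\hat{F} + z)\hat{E}(\hat{E} - z)$ is once more the general $\square_\ell$ with $\kappa_8 = \lambda_8 = 0$, $\mu_8 = 16$.

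Since every case is covered by an already-established lemma, there is no genuine obstacle; the argument is pure bookkeeping. The only point demanding care is the $d = 8$ case, where the doubled generators and the $(\tfrac12 F)^{\overline{m}}$ normalization obstruct a naive application of Lemma~\ref{lm:IXdtoIYd}, so one must verify that the separately computed operators of Lemma~\ref{lm:IX8IY8} are indeed the specializations of the uniform operators $\square_{\bar{\gamma}}$, $\square_\gamma$, and $\square_\ell$ dictated by Table~\ref{table:klmd}. Once this compatibility is recorded, the proposition is exactly the conjunction of the four lemmas ranged over the nine values of $d$.
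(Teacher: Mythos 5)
Your proposal is correct and matches the paper exactly: the paper states Proposition~\ref{prop:PFI} as an immediate summary of Lemmas~\ref{lm:IXd=1234}, \ref{lm:IXd=56}, \ref{lm:IXdtoIYd} and \ref{lm:IX8IY8}, with no further argument. Your additional care about the $d=8$ case (the doubled generators and the $(\tfrac12 F)^{\overline{m}}$ normalization, handled by quoting Lemma~\ref{lm:IX8IY8} directly rather than Lemma~\ref{lm:IXdtoIYd}) is precisely how the paper organizes the lemmas as well.
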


%\begin{proposition}\label{prop:PFI}
%    For each $d \neq 7$, $\square_{\bar{\gamma}} I^{X_d} = 0$ and $\square_\gamma I^{Y_d} = \square_\ell I^{Y_d} = 0$, where 
%    \begin{align*}
%        \square_{\bar{\gamma}} &= \hat{h}^4 - P^{\bar{\gamma}}(\kappa_d \hat{h}^2 + \kappa_d z\hat{h} + \lambda_d z^2) - \mu_d P^{2\bar{\gamma}}, \\
%        \square_\gamma &= \hat{E}\hat{H} - P^\gamma, \\
%        \square_\ell &= \hat{F}^3 - P^\ell (\kappa_d \hat{F}^2 + \kappa_d z\hat{F} + \lambda_d z^2) \hat{E} - \mu_d P^{2\ell} (\hat{F} + z) \hat{E} (\hat{E} - z), 
%    \end{align*}
%    where $\kappa_d$, $\lambda_d$ and $\mu_d$ are listed in Table \ref{table:klmd}.
%\end{proposition}

\begin{remark}
The constants $(\kappa_d, \lambda_d, \mu_d)$ appeared in Zagier's list \cite{Zag09}, which was obtained by performing a computer search for integral sequences that satisfy a certain three-term recurrence relation. For example, when $d \in \{1,2,3,4,8\}$, the constants correspond to hypergeometric solutions (cf.~\cite[pp.352-353]{Zag09}). The case $d \in \{5, 6\I, 6\II\}$ correspond to the new label $\mathbf{D}$, $\mathbf{A}$, $\mathbf{C}$ in \cite[Table~2]{Zag09}, respectively. The solutions $\mathbf{A}$-$\mathbf{F}$ in \cite[Table~2]{Zag09} are now called Zagier's sporadic sequences (cf.~\cite[\S5.10]{Coo17}).
\end{remark}

\begin{rmk} \label{l:fail}
    In \cite{MS23}, it is shown that the quantum $D$-module of $\Yloc = Y_d$, after analytic continuation and restriction, recovers the quantum $D$-module of $\Xloc = X_d$, in the case of toric ambient spaces. We now compute the Picard--Fuchs ideal in our setting. Let $\tilde{\gamma} = \gamma + \ell$ be the lifting of $\gamma$ such that $(E, \tilde{\gamma}) = 0$. In the coordinates $(P^{\tilde{\gamma}}, P^{-\ell})$, we have
    \[\hat{F} = z\theta_{P^{\tilde{\gamma}}} - z\theta_{P^{-\ell}},\quad \hat{E} = z\theta_{P^{-\ell}},\quad \hat{H} = z\theta_{P^{\tilde{\gamma}}},  \]
    where $\theta_{P^{\tilde{\gamma}}} = P^{\tilde{\gamma}}\odv{}{P^{\tilde{\gamma}}}$, $\theta_{P^{-\ell}} = P^{-\ell}\odv{}{P^{-\ell}}$. In particular, $\hat F P^\gamma = P^\gamma \hat F$.
        
    Modulo the left ideal $\operatorname{PF}^{Y_d}$ generated by $\square_\gamma$ and $\square_\ell$, we have
    \begin{equation}\label{eq:FFFH}
    \begin{aligned}
        \hat{F}^3 \hat{H} &= P^\ell (\kappa_d \hat{F}^2 + \kappa_d z \hat{F} + \lambda_d z^2) \hat{E} \hat{H} + \mu_d P^{2\ell} (\hat{F} + z)(\hat{E} - z) \hat{E} \hat{H} \\
        &= P^{\tilde{\gamma}} (\kappa_d \hat{F}^2 + \kappa_d z \hat{F} + \lambda_d z^2) + \mu_d P^{\tilde{\gamma} + \ell} (\hat{F} + z) \hat{E}.
    \end{aligned}
    \end{equation}
    
    When $\mu_d = 0$, i.e., $d \in \{1,2,3,4\}$, the restriction of \eqref{eq:FFFH} to the locus $P^{-\ell} = 0$ is 
    \[\theta_{P^{\tilde{\gamma}}}^4 = z^2 P^{\tilde{\gamma}}(\kappa_d\theta_{P^{\tilde{\gamma}}}^2 + \kappa_d \theta_{P^{\tilde{\gamma}}} + \lambda_d)\]
    which is precisely the pullback of $\square_{\bar{\gamma}}$. However, when $\mu_d \neq 0$, i.e., $d \in \{5,6\I,6\II,8\}$, the term 
    \[\mu_d P^{\tilde{\gamma} + \ell} (\hat{F} + z) \hat{E} = z^2 \mu_d P^{\tilde{\gamma}} P^{\ell} (\theta_{P^{\tilde{\gamma}}} - \theta_{P^{-\ell}} + 1)\theta_{P^{-\ell}}\]
    fails to be specialized to $\square_{\bar{\gamma}}$. In fact it is not specializable on the locus $P^{-\ell} = 0$. 

    Nevertheless, the conclusion of \cite{MS23} still holds: there exists a two parameter extension $I^{\Xloc}(tH, y)$ of $I^{\Xloc}(th)$ which is a monodromy invariant solution of $\widetilde{\operatorname{PF}}{}^Y$ near $y = P^{-\ell} = 0$. 
\end{rmk}

\begin{lemma}\label{lm:mt}~
    \begin{enumerate}[(i)]
        \item The mirror transform $t \mapsto \tau$ for $J^{X_d}(\tau) = I^{X_d}(t)$ is trivial; that is, $\tau = t$. 
        \item The mirror transform $t \mapsto \tau$ for $J^{Y_d}(\tau) = I^{Y_d}(t)$ is given by $\tau = t - g_d(P^\ell)E$, for a unique power series $g_d(P^\ell) \in P^\ell \bQ\llbracket P^\ell\rrbracket$. Moreover, letting $\theta = P^\ell \odv{}{P^\ell} = - \frac1z \hat{E}$, the function $f_d \coloneqq 1 + \theta g_d$ satisfies the differential equation
        \begin{equation}\label{eqfODE}
            (1 + \kappa_d P^\ell - \mu_d P^{2\ell}) \theta^2 f_d + (\kappa_d P^\ell - 2 \mu_d P^{2\ell}) \theta f_d + (\lambda_d P^\ell - \mu_d P^{2\ell}) f_d = 0. 
        \end{equation}
    \end{enumerate} 
\end{lemma}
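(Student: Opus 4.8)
The plan is to read the mirror transform off the low-order $z$-expansion of the explicit $I$-functions, using that the $J$-function is characterized by the normal form $J = e^{\tau/z}(\mathbf 1 + O(z^{-2}))$, and then to produce the ODE \eqref{eqfODE} for $f_d$ directly from the three-term recursion already contained in $\square_{\bar{\gamma}}$ (Proposition~\ref{prop:PFI}), rather than re-deriving $\square_\ell$. For part (i), first I would note that by adjunction the top $z$-degree of $I_{m\bar{\gamma}}^{X_d}$ equals $(c_1(\cV_d),m\bar{\gamma}) - (c_1(\T_d),m\bar{\gamma}) = -(c_1(X_d),m\bar{\gamma}) = -2m$, since $c_1(X_d)=2h$. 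Hence $I_{m\bar{\gamma}}^{X_d}=O(z^{-2})$ for $m\ge 1$, so $I^{X_d}(t)=e^{t/z}(\mathbf 1 + O(z^{-2}))$ is already in $J$-normal form; this forces the mirror transform to be the identity, $\tau=t$ (for $d=8$ one replaces $\bar{\gamma}$ by $2\bar{\gamma}$).

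For the structural part of (ii), the same count gives top $z$-degree $-(c_1(Y_d),m\ell+n\gamma)=-2n$ because $c_1(Y_d)=2H$ with $(H,\ell)=0$, $(H,\gamma)=1$; thus every term with $n\ge 1$ is $O(z^{-2})$ and is invisible to the mirror transform, so only the pure-$\ell$ terms matter. Using $\tfrac{1}{(E)^{\overline{-m}}}=\prod_{j=0}^{m-1}(E-jz)$, which carries an explicit factor $E$ for $m\ge 1$, I would show from Lemma~\ref{lm:IXdtoIYd} that $I_{m\ell}^{Y_d}=\iota(I_{m\bar{\gamma}}^{X_d})\,(F)^{\overline m}\prod_{j=0}^{m-1}(E-jz)$ (where $\iota$ is the substitution $h\mapsto F=\pi^*i^*h$) has vanishing $z^0$-coefficient and $z^{-1}$-coefficient equal to $a_m E$, with $a_m=(-1)^{m-1}m!(m-1)!\,c_m$ and $c_m$ the leading ($z^{-2m}\mathbf 1$) coefficient of $I_{m\bar{\gamma}}^{X_d}$. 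Therefore $I^{Y_d}=e^{t/z}\bigl(\mathbf 1 + (\sum_{m\ge 1}a_m P^{m\ell})E\,z^{-1}+O(z^{-2})\bigr)$, and matching against $e^{\tau/z}(\mathbf 1+O(z^{-2}))$ forces $\tau-t=(\sum a_m P^{m\ell})E\in H^2(Y_d)$. This is exactly $\tau=t-g_d(P^\ell)E$ with $g_d=-\sum_{m\ge 1}a_m P^{m\ell}\in P^\ell\bQ\llbracket P^\ell\rrbracket$ (rationality and uniqueness being immediate); since $(E,\ell)=-1$ it reproduces $Q^\ell=P^\ell e^{g_d}$ of \eqref{e:QP}.

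For the ODE I would extract the Taylor coefficients of $f_d$ directly. Taking leading $z$-coefficients in the recursion equivalent to $\square_{\bar{\gamma}}I^{X_d}=0$ gives $(m+1)^4c_{m+1}=(\kappa_d m^2+\kappa_d m+\lambda_d)c_m+\mu_d c_{m-1}$, and converting through $a_m=(-1)^{m-1}m!(m-1)!\,c_m$ yields $m^3a_m=-(m-1)(\kappa_d m(m-1)+\lambda_d)a_{m-1}+\mu_d(m-1)^2(m-2)a_{m-2}$. On the other hand, writing $f_d=1+\theta g_d=\sum_{m\ge 0}\phi_m P^{m\ell}$ with $\phi_0=1$ and $\phi_m=-m a_m$, and plugging into \eqref{eqfODE}, I obtain the coefficient relation $m^2\phi_m+(\kappa_d m(m-1)+\lambda_d)\phi_{m-1}-\mu_d(m-1)^2\phi_{m-2}=0$ (using $(m-2)^2+2(m-2)+1=(m-1)^2$). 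The substitution $\phi_m=-m a_m$ turns this into precisely the previous recursion, and the base case $\phi_1=-\lambda_d=-a_1$ is checked from $c_1=\lambda_d$; hence $f_d$ solves \eqref{eqfODE} term by term.

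The hard part will be the bookkeeping in the second paragraph: one must verify carefully that the $z^{-1}$-coefficient of $I_{m\ell}^{Y_d}$ is purely along $E$, with no $E^2$ or $E^3$ contamination (those arise from cross terms such as $F\cdot E=-E^2$ and land in strictly lower $z$-order, namely $z^{-2}$), and that the $z^0$-coefficient genuinely vanishes even though the naive anticanonical bound $-(c_1(Y_d),m\ell)=0$ would permit it. The operator identity $\theta=-\tfrac1z\hat E$ is the conceptual shadow of this vanishing: in the coordinates $(P^{\tilde\gamma},P^{-\ell})$ of Remark~\ref{l:fail} it converts $\square_\ell$ into a scalar differential operator whose reduction on the monodromy-invariant sector is \eqref{eqfODE}, so one could alternatively derive the ODE by projecting $\square_\ell I^{Y_d}=0$ onto the $E$-component in the $n=0$ sector; I expect the explicit recursion above to be the most economical route. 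The case $d=8$ runs in parallel, with $\ell,\bar{\gamma}$ replaced by $2\ell,2\bar{\gamma}$ and $(\kappa_8,\lambda_8,\mu_8)=(0,0,16)$, so that $g_8\in P^\ell\bQ\llbracket P^{2\ell}\rrbracket$ and the same verification applies.
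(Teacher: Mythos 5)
Your overall strategy coincides with the paper's: part (i) and the structural part of (ii) rest on exactly the degree count $(c_1(Y_d),m\ell+n\gamma)=2n$ that confines the mirror map to the $n=0$ sector, and the ODE is obtained by matching a three-term recursion for the mirror-map coefficients against the coefficient recursion of \eqref{eqfODE}. The only genuine difference is where the recursion comes from: the paper runs an induction on the $Y$-side relation $\square_\ell I^{Y_d}=0$ restricted to $n=0$, obtaining $I_{m\ell}=-g_mEz^{-1}+O(z^{-2})$ and the $g_m$-recursion directly, whereas you read the $z^{-1}$ coefficient off the closed product $I^{Y_d}_{m\ell}=\iota(I^{X_d}_{m\bar{\gamma}})\,(F)^{\overline{m}}\prod_{j=0}^{m-1}(E-jz)$ and import the recursion from $\square_{\bar{\gamma}}$ via the leading scalars $c_m$. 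Your bookkeeping there is correct: homogeneity forces the $z^{-1}$ coefficient into $H^2(Y_d)$, the explicit factor $E$ from $j=0$ forces it onto $\bC E$, and the top $z$-degree of the triple product is $-2m+m+(m-1)=-1$, so the $z^0$ coefficient vanishes. This is a clean and essentially equivalent route.

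There is, however, one step that fails as written: the identification of the two recursions breaks at $m=2$ when $\mu_d\neq 0$. The $c$-recursion at that index is $16c_2=(2\kappa_d+\lambda_d)c_1+\mu_d c_0$ with $c_0=1$, and since $a_0$ is not defined by $a_m=(-1)^{m-1}m!(m-1)!\,c_m$, the term $\mu_dc_0$ does not convert into $\mu_d(m-1)^2(m-2)a_{m-2}$ --- that expression vanishes at $m=2$. So your displayed $a$-recursion, taken literally at $m=2$, gives $8a_2=-(2\kappa_d+\lambda_d)a_1$ and drops the $\mu_d$, while the ODE's coefficient relation at $m=2$ retains $-\mu_d\phi_0=-\mu_d$. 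The two recursions you claim are ``precisely'' the same therefore disagree at $m=2$, and checking only the $m=1$ base case does not close the induction for $d\in\{5,6\I,6\II,8\}$. The fix is routine: carry $\mu_dc_0=\mu_d$ through the conversion, which yields $-8a_2=(2\kappa_d+\lambda_d)\lambda_d+\mu_d$, i.e.\ $\phi_2=\bigl((2\kappa_d+\lambda_d)\lambda_d+\mu_d\bigr)/4$, exactly what \eqref{eqfODE} demands; but you must either treat $m=2$ as a second base case or keep the $\mu_dc_{m-2}$ term in its unconverted form. (Two small further points: your value of $\phi_2$ is the correct one --- the paper's display for $I_{2\ell}$ contains a typo --- and for $d=8$ the mirror map lies in $P^{2\ell}\bQ\llbracket P^{2\ell}\rrbracket$, not $P^{\ell}\bQ\llbracket P^{2\ell}\rrbracket$.)
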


\begin{proof}
    It follows from \eqref{eqn:c1} that 
    \[(c_1(X_d), n\bar{\gamma}) = (2h, \bar{\gamma}) = 2n, \quad (c_1(Y_d), m\ell + n\gamma) = (2H, m\ell + n\gamma) = 2n. \]
    For $X_d$, this shows that $I_{n\bar{\gamma}}^{X_d} = O(z^{-2n})$, and hence $I^{X_d}(t) = e^{t/z}(1 + O(z^{-2})) = 1 + tz^{-1} + O(z^{-2})$. Therefore, $\tau = t$.  

    For $Y_d$, this shows that $I_{m\ell + n\gamma}^{Y_d} = O(z^{-2n})$, and hence 
    \[I^{Y_d}(t) = e^{t/z}\Bigl(\sum_{m\ge 0} I_{m\ell}^{Y_d} P^{m\ell} + O(z^{-2})\Bigr). \]
    By $\square_\ell I^{Y_d} = 0$, we have 
    \begin{align*}
        (F + (m+1)z)^3 I_{(m+1)\ell}^{Y_d} &= (\kappa_d (F+mz)^2 + \kappa_d z(F+mz) + \lambda_d z^2) (E - mz) I_{m\ell}^{Y_d} \\
        &\quad + \mu_d(F+mz)(E-mz)(E-(m-1)z) I_{(m-1)\ell}^{Y_d}. 
    \end{align*}
    The initial condition $I_{-\ell} = 0$, $I_0 = 1$ gives 
    \begin{equation}\label{eq:IlandI2l}
        I_\ell = \lambda_d Ez^{-1} + O(z^{-2}), \quad I_{2\ell} = -\tfrac12 (2\kappa_d + \lambda_d + \mu_d)Ez^{-1} + O(z^{-2}).
    \end{equation}
    By induction on $m$, one obtains $I_{m\ell} = -\rg_m E z^{-1} + O(z^{-2})$ and the sequence $\{\rg_m\}_{m\ge 1}$ satisfies the following recursion relation 
    \[(m+1)^3 \rg_{m+1} + (\kappa_d (m^2 + m) + \lambda_d) m \rg_m - \mu_d m^2(m-1) \rg_{m-1} = 0. \]
    It follows that
    \[I^{Y_d}(t) = 1 + \Bigl(t - \sum_{m\ge 1} \rg_m P^{m\ell} \cdot E\Bigr)z^{-1} + O(z^{-2}). \] 
    Set $g_d(P^\ell) = \sum_{m\ge 1} \rg_m P^{m\ell}$. Then $f_d(P^\ell) = \sum_{m\ge 0} \rf_m P^{m\ell} \coloneqq 1 + \theta g_d$ satisfies 
    \[(m+1)^2 \rf_{m+1} + (\kappa_d(m^2 + m) + \lambda_d) \rf_m - \mu_d m^2 \rf_{m-1} = 0. \]
    Note that this also holds for $m = 1$, as follows from \eqref{eq:IlandI2l}. Therefore $f_d$ satisfies the differential equation 
    \[\theta^2 f + P^\ell (\kappa_d (\theta + 1)\theta + \lambda_d) f - \mu_d P^{2\ell}(\theta + 1)^2 = 0, \]
    which coincides with equation~\eqref{eqfODE}. 
\end{proof}

%Since $Q^\beta = e^{(\tau, \beta)}$, $P^\beta = e^{(t, \beta)}$, we obtain the following:
By $Q^\beta = e^{(\tau, \beta)}$, $P^\beta = e^{(t, \beta)}$ and Lemma \ref{lm:mt}, we obtain the following:
\begin{cor}\label{cor:QandP}
    %We have the identities
    We have the following change of variables
    \[Q^{\bar{\gamma}} = P^{\bar{\gamma}}, \quad Q^\gamma = P^\gamma e^{-g_d (P^\ell)}, \quad Q^\ell = P^\ell e^{g_d (P^\ell)}, \]
    and thus $Q^{\gamma + \ell} = P^{\gamma + \ell}$. In particular, $\delta^E = -f^{-1} \theta$. 
\end{cor}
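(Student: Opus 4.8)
The plan is to split the statement into two parts: the three monomial identities, which follow essentially formally from Lemma~\ref{lm:mt} once two intersection numbers on the bundle are recorded, and the operator identity $\delta^E = -f^{-1}\theta$, which requires a mild change of coordinates to make precise. Since $Q^\beta = e^{(\tau,\beta)}$ and $P^\beta = e^{(t,\beta)}$, and the $X_d$-transform is trivial ($\tau = t$ by Lemma~\ref{lm:mt}(i)), I get $Q^{\bar{\gamma}} = P^{\bar{\gamma}}$ at once. For $Y_d$ the transform is $\tau = t - g_d(P^\ell)E$, so for any class $\beta$ one has $Q^\beta = P^\beta e^{-g_d(P^\ell)(E,\beta)}$, and it remains only to compute $(E,\beta)$ in the extremal and fiber directions. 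Here $E$ and $H$ are the two (disjoint) sections of the $\bP^1$-bundle $Y_d = \bP_{S_d}(K_{S_d}\oplus\cO)$, so $E$ meets the fiber once, giving $(E,\gamma) = 1$; and $(E,\ell) = (H - F,\ell) = 0 - 1 = -1$ by the defining relations $(H,\ell) = 0$, $(F,\ell) = 1$. Substituting yields $Q^\gamma = P^\gamma e^{-g}$ and $Q^\ell = P^\ell e^{g}$.

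\textbf{The operator identity.} For $\delta^E = -f^{-1}\theta$ the cleanest route is to pass to the canonical lifting coordinate $\tilde{\gamma} = \gamma + \ell$ of Remark~\ref{l:fail}, for which $(E,\tilde{\gamma}) = (E,\gamma) + (E,\ell) = 0$. Multiplying the two identities just proved gives $Q^{\tilde{\gamma}} = Q^\gamma Q^\ell = P^\gamma P^\ell = P^{\tilde{\gamma}}$, so in the coordinates $(Q^{\tilde{\gamma}}, Q^\ell)$ the power operator annihilates $Q^{\tilde{\gamma}}$ and sends $Q^\ell \mapsto (E,\ell)Q^\ell = -Q^\ell$. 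Hence, with $Q^{\tilde{\gamma}} = P^{\tilde{\gamma}}$ held fixed, $\delta^E = -\,Q^\ell\partial_{Q^\ell}$. To convert this back to $\theta = P^\ell\partial_{P^\ell}$ I write $\log Q^\ell = \log P^\ell + g_d(P^\ell)$, so that $\theta\log Q^\ell = 1 + \theta g_d = f_d$; the chain rule in the single $\ell$-direction then gives $Q^\ell\partial_{Q^\ell} = \partial_{\log Q^\ell} = f_d^{-1}\partial_{\log P^\ell} = f_d^{-1}\theta$. Combining, $\delta^E = -f_d^{-1}\theta$.

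\textbf{Main obstacle.} The only genuine subtlety, which I would flag explicitly, is that $\delta^E$ is a priori a derivation on the full Novikov ring and does \emph{not} see $P^\ell$ alone, because $(E,\gamma) = 1 \neq 0$ would otherwise contribute a cross term. This is exactly what the passage to $\tilde{\gamma}$ eliminates: in the $(\tilde{\gamma},\ell)$ basis the entire $E$-degree is carried by the $\ell$-exponent, so $\delta^E$ becomes a one-variable logarithmic derivative and the scalar $f_d$ arising from $\theta\log Q^\ell$ factors out unambiguously. As a consistency check, this matches the identification $\theta = -\tfrac1z\hat{E}$ used in Lemma~\ref{lm:mt}, since $\hat{E}$ also counts $(E,\ell) = -1$ per unit of $\ell$. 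Everything beyond the coordinate choice is a direct substitution.
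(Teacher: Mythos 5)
Your proposal is correct and follows essentially the same route as the paper: the three identities are read off from $Q^\beta = P^\beta e^{-g(E,\beta)}$ using $(E,\gamma)=1$, $(E,\ell)=-1$, and the operator identity is verified on the two generators (the $E$-trivial direction $\tilde{\gamma}=\gamma+\ell$ and the $\ell$-direction) via the chain rule with $\odv{Q^\ell}{P^\ell}=e^{g}f$. The paper phrases the second step as a direct computation of $\delta^E P^\ell = -Q^\ell\left(\odv{Q^\ell}{P^\ell}\right)^{-1}$ rather than your logarithmic-derivative bookkeeping, but the content is identical.
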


\begin{proof}
    The identities follow directly from the definitions of the variables involved. For the final statement, since 
    \[\delta^E Q^{\gamma + \ell} = 0 = -f^{-1} \theta P^{\gamma + \ell} = -f^{-1} \theta Q^{\gamma + \ell}, \] 
    it suffices to prove that $\delta^E P^\ell = -f^{-1} \theta P^\ell$. Observe that
    \[\delta^E P^\ell = - Q^\ell \odv{P^\ell}{Q^\ell}  = - Q^\ell \left(\odv{Q^\ell}{P^\ell}\right)^{-1}. \]
    %Since $Q^\ell = P^\ell e^{g_d (P^\ell)}$, we compute
    From $Q^\ell = P^\ell e^{g_d (P^\ell)}$, it follows that
    \[\odv{Q^\ell}{P^\ell} = e^{g_d (P^\ell)}\cdot (1 + \theta g_d (P^\ell)). \]
    Hence, 
    \[\delta^E P^\ell = - Q^\ell (e^{g_d (P^\ell)}\cdot (1 + \theta g_d (P^\ell)))^{-1} = - f^{-1} P^\ell, \]
    as claimed. 
\end{proof}

\begin{proposition}\label{prop:fdisHG}
    Let $d = 1$, $2$, $3$, $4$, and let $n_d = 6$, $4$, $3$, $2$, respectively. Then the function $f_d(P^\ell)$ is given by the hypergeometric function 
    \[{}_2\rF_1\bigl(\tfrac{1}{n_d}, 1 - \tfrac{1}{n_d}; 1; -\kappa_d P^\ell\bigr). \]
    Additionally, when $d = 8$, we have 
    \[f_8(P^\ell) = {}_2\rF_1\bigl(\tfrac12, \tfrac12; 1; 16P^{2\ell}) = f_4(-P^{2\ell}). \]
\end{proposition}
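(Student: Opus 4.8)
The plan is to recognize the differential equation~\eqref{eqfODE} as a hypergeometric equation after a linear change of the variable, and then to pin down the power series $f_d$ as the holomorphic hypergeometric solution by a uniqueness argument. Recall that ${}_2\rF_1(a, b; c; x)$ is the unique solution, holomorphic at $x = 0$ with value $1$, of
\[
\bigl[\vartheta(\vartheta + c - 1) - x(\vartheta + a)(\vartheta + b)\bigr] y = 0, \qquad \vartheta \coloneqq x \frac{d}{dx},
\]
which for $c = 1$ simplifies to $(1 - x)\vartheta^2 y - x(a + b)\vartheta y - x\,ab\,y = 0$. So the whole proof reduces to producing, case by case, a variable $x$ under which $\theta$ becomes (a multiple of) $\vartheta$ and \eqref{eqfODE} becomes this $c=1$ equation with the advertised $a,b$.

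First I would treat $d \in \{1, 2, 3, 4\}$, where $\mu_d = 0$ and \eqref{eqfODE} reads $(1 + \kappa_d P^\ell)\theta^2 f_d + \kappa_d P^\ell\, \theta f_d + \lambda_d P^\ell f_d = 0$. Setting $x = -\kappa_d P^\ell$ gives $\theta x = x$, hence $\theta = \vartheta$ after substitution, together with $1 + \kappa_d P^\ell = 1 - x$ and $\kappa_d P^\ell = -x$. The equation becomes $(1-x)\vartheta^2 f_d - x\,\vartheta f_d - (\lambda_d/\kappa_d)\,x f_d = 0$, which is exactly the $c = 1$ hypergeometric equation with $a + b = 1$ and $ab = \lambda_d/\kappa_d$. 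Taking $a = \tfrac{1}{n_d}$, $b = 1 - \tfrac1{n_d}$ makes $a+b = 1$ automatic, and the product condition $ab = \tfrac{1}{n_d}\bigl(1 - \tfrac1{n_d}\bigr) = \tfrac{n_d-1}{n_d^2}$ is verified directly against Table~\ref{table:klmd}: $\tfrac{60}{432} = \tfrac{5}{36}$ for $n_1 = 6$, $\tfrac{12}{64} = \tfrac{3}{16}$ for $n_2 = 4$, $\tfrac{6}{27} = \tfrac{2}{9}$ for $n_3 = 3$, and $\tfrac{4}{16} = \tfrac14$ for $n_4 = 2$.

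Having matched the equations I would invoke uniqueness. Since $c = 1$, the indicial equation at $x = 0$ is $s^2 = 0$, so the space of formal power-series solutions is one-dimensional; equivalently, the recursion $(m+1)^2 f_{m+1} + (\kappa_d(m^2+m)+\lambda_d) f_m = 0$ recorded in Lemma~\ref{lm:mt} (with $\mu_d = 0$) determines every coefficient from $f_0$. As $f_d = 1 + \theta g_d$ satisfies $f_d(0) = 1$ and ${}_2\rF_1(a,b;1;0) = 1$, the two normalized series coincide, proving the first claim. For $d = 8$ I would run the same argument with $x = 16 P^{2\ell}$; here $\theta x = 2x$, so $\theta = 2\vartheta$ and $\theta^2 = 4\vartheta^2$, and \eqref{eqfODE} with $(\kappa_8, \lambda_8, \mu_8) = (0, 0, 16)$ becomes $(1-x)\,4\vartheta^2 f_8 - 2x\cdot 2\vartheta f_8 - x f_8 = 0$, i.e.\ after dividing by $4$, the $c = 1$ hypergeometric equation with $a = b = \tfrac12$. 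Uniqueness then gives $f_8(P^\ell) = {}_2\rF_1(\tfrac12, \tfrac12; 1; 16P^{2\ell})$, and substituting $P^\ell \mapsto -P^{2\ell}$ into the $d = 4$ formula $f_4 = {}_2\rF_1(\tfrac12,\tfrac12;1;-16P^\ell)$ yields $f_8(P^\ell) = f_4(-P^{2\ell})$. The computation is elementary throughout; the only point demanding care is the factor $\theta = 2\vartheta$ in the $d = 8$ case, arising because $x = 16P^{2\ell}$ is quadratic in $P^\ell$, which must be tracked so that the three coefficients land precisely on the hypergeometric values $a + b = 1$, $ab = \tfrac14$.
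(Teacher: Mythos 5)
Your proof is correct and takes the same route as the paper, which simply asserts that the identity "follows immediately from the differential equation~\eqref{eqfODE} and the initial condition $f_d(P^\ell) = 1 + O(P^\ell)$"; you have merely spelled out the coefficient matching ($a+b=1$, $ab=\lambda_d/\kappa_d$, and the factor $\theta = 2\vartheta$ for $d=8$) and the uniqueness of the normalized holomorphic solution that the authors leave implicit.
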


\begin{proof}
    This follows immediately from the differential equation~\eqref{eqfODE} and the initial condition $f_d(P^\ell) = 1 + O(P^\ell)$. 
\end{proof}

%%%%%%%%%%
%%%%%%%%%%
\subsubsection{Reconstructions on local models}\label{subsubsec;Reconst_neq_7}
%%%%%%%%%%
%%%%%%%%%%

In Proposition \ref{prop:PFI}, we proved that $X_d$ and $Y_d$ satisfy the Picard--Fuchs equations. In this subsection, we compute the $1$-point and $2$-point invariants using these equations.

%First we compute the $1$-point and $2$-point invariants of $X_d$ explicitly. %: 
\begin{proposition}\label{propGWXloc}
    Suppose $I^X(t)$ satisfies the Picard--Fuchs equation $\square_{\bar{\gamma}} I^X = 0$, where 
    \begin{align*}
        \square_{\bar{\gamma}} &= \hat{h}^4 - P^{\bar{\gamma}}\bigl(\kappa\hat{h}^2 + \kappa z\hat{h} + \lambda z^2\bigr) - \mu P^{2\bar{\gamma}}. 
    \end{align*}
    Let $d = h^3$. Then  
    \begin{enumerate}[(i)]
        \item\label{propGWXloc_1} $\langle \pt\rangle^X = \lambda Q^{\bar{\gamma}}$; 
        \item\label{propGWXloc_2} $\langle h^2, h^2\rangle^X = d(\kappa - 2\lambda) Q^{\bar{\gamma}}$; 
        \item\label{propGWXloc_3} $\langle \pt, \pt\rangle^X = \tfrac{\lambda^2 + \mu}{2d}Q^{2\bar{\gamma}}$.  
    \end{enumerate}
\end{proposition}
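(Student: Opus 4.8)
Since $X=X_d$ is Fano we have $J^{X}=I^{X}$ with trivial mirror map (Lemma~\ref{lm:mt}(i)), so by Corollary~\ref{cor:QandP} we may identify $Q^{\bar{\gamma}}=P^{\bar{\gamma}}$ and simply read the genus-zero one-point descendant invariants off the components of $I^{X}$. Concretely, stripping the prefactor gives $e^{-t/z}I^{X}=\sum_{m\ge 0}I^{X}_{m\bar{\gamma}}P^{m\bar{\gamma}}$, and matching against the defining expansion of the $J$-function yields $I^{X}_{m\bar{\gamma}}=\sum_{k\ge 0}z^{-k-2}\langle \psi^{k}T_\mu\rangle_{m\bar{\gamma}}T^{\mu}$. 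The plan is to record the Poincaré data $h^{3}=d[\pt]$ and $(h,\bar{\gamma})=1$, together with the decisive vanishing $I^{X}_{m\bar{\gamma}}=O(z^{-2m})$ forced by $c_1(X_d)=2h$ in \eqref{eqn:c1}; this last fact concentrates all low-order invariants in the classes $\bar{\gamma}$ and $2\bar{\gamma}$. I would then extract $I^{X}_{\bar{\gamma}}=(\kappa h^{2}+\kappa zh+\lambda z^{2})/(h+z)^{4}$ from the $m=0$ case of the Picard--Fuchs recursion of Proposition~\ref{prop:PFI}, and expand in $z^{-1}$.

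\textbf{Proof of (i) and the descendant input.}
The $z^{-2}$-coefficient of $I^{X}_{\bar{\gamma}}$ is the identity class times $\lambda$, so $\langle[\pt]\rangle_{\bar{\gamma}}=\lambda$ and all other one-point primaries in class $\bar{\gamma}$ vanish; since $I^{X}_{m\bar{\gamma}}=O(z^{-4})$ for $m\ge 2$ carries no $z^{-2}$ term, statement (i) follows at once: $\langle \pt\rangle^{X}=\lambda Q^{\bar{\gamma}}$. The same expansion reads off the descendant values $\langle \psi h^{2}\rangle_{\bar{\gamma}}=(\kappa-4\lambda)d$ and $\langle \psi^{2}h\rangle_{\bar{\gamma}}=(10\lambda-3\kappa)d$, which I will need below.

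\textbf{Proof of (ii).}
I would apply the reconstruction formula Theorem~\ref{thr:rc}(a) with $D=h$ and $(a_1,a_2)=(h,h^{2})$, so that the left side is $\langle h^{2},h^{2}\rangle$. The point is that every term on the right collapses to already-known data: $\langle h,h^{3}\rangle$ reduces by the divisor equation to $d\langle\pt\rangle_{\bar{\gamma}}$, the descendant two-point $\langle h,\psi h^{2}\rangle$ reduces by the divisor equation to the one-point (descendant) invariants $\langle\psi h^{2}\rangle_{\bar{\gamma}}$ and $\langle h^{3}\rangle_{\bar{\gamma}}$ of the previous paragraph, and the quadratic term vanishes because the only surviving factor pairs $h$ with $T_3=[\pt]$, forcing the second factor to be $\langle 1,h^{2}\rangle=0$ by the string equation. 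Collecting these gives $\langle h^{2},h^{2}\rangle=d(\kappa-2\lambda)Q^{\bar{\gamma}}$, which is (ii).

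\textbf{Proof of (iii) and the main obstacle.}
Here the divisor equation no longer suffices, since after one reduction of $\pt=\tfrac1d h^{3}$ no divisor factor remains; genuine degree-$2\bar{\gamma}$ information (where the coefficient $\mu$ lives) must enter. My plan is to package this through the small quantum product by $h$. Using (i), (ii) and the divisor equation one computes the multiplication table $h*1=h$, $h*h=h^{2}+\lambda Q^{\bar{\gamma}}$, $h*h^{2}=h^{3}+(\kappa-2\lambda)Q^{\bar{\gamma}}h$, and $h*h^{3}=\lambda Q^{\bar{\gamma}}h^{2}+2d\,\langle\pt,\pt\rangle_{2\bar{\gamma}}Q^{2\bar{\gamma}}$, so that $\langle\pt,\pt\rangle_{2\bar{\gamma}}$ is the sole unknown. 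I would then invoke the quantum cohomology relation $h^{*4}=\kappa\,Q^{\bar{\gamma}}h^{*2}+\mu\,Q^{2\bar{\gamma}}$, obtained as the semiclassical ($z\to0$) reduction of the Picard--Fuchs operator $\square_{\bar{\gamma}}$ of Proposition~\ref{prop:PFI}, legitimate because $\hat{h}=z\partial_{t_h}$ realizes quantum multiplication by $h$ on $J=I$. Applying the relation to $1$ and comparing $Q^{2\bar{\gamma}}$-coefficients against the table yields $2d\,\langle\pt,\pt\rangle_{2\bar{\gamma}}+\lambda(\kappa-\lambda)=\kappa\lambda+\mu$, hence $\langle\pt,\pt\rangle=\tfrac{\lambda^{2}+\mu}{2d}Q^{2\bar{\gamma}}$. \emph{The hard part is exactly the transfer of the $\mu P^{2\bar{\gamma}}$ datum into the ring structure}: one must justify that the $z\to0$ limit of $\square_{\bar{\gamma}}$ reproduces the quantum relation with the correct $\mu$ (the subleading $z\hat h$, $z^{2}$ terms being responsible only for the $\lambda$-corrections already recorded in the lower products). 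Equivalently, if one prefers to stay inside reconstruction, the obstacle reappears as the computation of the non-divisorial descendant $\langle h^{2},\psi h^{3}\rangle_{2\bar{\gamma}}$ from $I^{X}_{2\bar{\gamma}}$ via a two-point topological recursion, which I would cross-check against the quantum-product answer.
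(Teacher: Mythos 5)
Your parts (i) and (ii) follow the paper's proof essentially verbatim: read $\langle\pt\rangle_{\bar\gamma}=\lambda$ and $\langle\psi h^2\rangle_{\bar\gamma}=d(\kappa-4\lambda)$ off $I^X_{\bar\gamma}$, then run the reconstruction formula with $D=h$ and kill the quadratic term by the dimension count you give. For (iii) you take a genuinely different route. The paper stays entirely inside the reconstruction framework: it extracts the \emph{one-point} descendant $\langle\psi^2\pt\rangle_{2\bar\gamma}=\tfrac{(2\kappa+\lambda)\lambda+\mu}{16}$ from the $z^{-4}$-coefficient of $I^X_{2\bar\gamma}$ (this is where $\mu$ enters), and then applies the reconstruction formula twice with $D=h$ to get $d\langle\pt,\pt\rangle=8\langle\psi^2\pt\rangle-2\langle\pt\rangle\langle1,\psi\pt\rangle-\tfrac1d\langle h^2,h^2\rangle\langle h,\pt\rangle$; no quantum ring structure is needed. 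Your route instead builds the small quantum multiplication table (your entries for $h*h^{k}$ are all correct) and closes the system with the relation $h^{*4}=\kappa Q^{\bar\gamma}h^{*2}+\mu Q^{2\bar\gamma}$; the resulting arithmetic does yield $\langle\pt,\pt\rangle_{2\bar\gamma}=\tfrac{\lambda^2+\mu}{2d}$.

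The one soft spot is the step you yourself flag: deducing the quantum ring relation from $\square_{\bar{\gamma}}J=0$ by ``setting $z=0$.'' The identity $z\partial_{t_h}J=h*J$ alone does not give this directly: iterating it produces commutator terms of the form $z\,\partial_{t_h}(h*)$, and the explicit $\kappa z\hat{h}$ and $\lambda z^2$ terms of $\square_{\bar{\gamma}}$ contribute to the $z^0$-coefficient through the subleading parts $J_1$, $J_2$ of $J$, so one must argue that all these corrections cancel. This is the standard statement that the $z\to0$ symbol of an operator annihilating $J$ gives a relation in $QH^*(X)$ when $H^*(X)$ is generated by divisor classes, and it is true here, but it is a theorem requiring proof (or a precise reference), not an immediate consequence of the quantum differential equation. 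Since the paper's argument needs only the reconstruction formula it has already established plus one coefficient of $I^X_{2\bar\gamma}$, I would either supply that justification or switch to the paper's double application of reconstruction --- note in particular that you do not need the two-point descendant $\langle h^2,\psi h^3\rangle_{2\bar\gamma}$ you propose as a fallback; the one-point descendant $\langle\psi^2\pt\rangle_{2\bar\gamma}$, directly visible in $I^X_{2\bar\gamma}$, suffices.
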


\begin{proof}
    It follows from the Picard--Fuchs equations that for $\beta = n\bar{\gamma}$, 
    \[(h + nz)^4 I_\beta^X = (\kappa h^2 + \kappa (2n-1)zh + (\kappa n^2 - \kappa n + \lambda)z^2) I_{\beta - \bar{\gamma}}^X + \mu I_{\beta - 2\bar{\gamma}}. \]
    Together with the initial condition $I_0 = 1$, we get 
    \begin{equation}\label{eqbargamma}
        \begin{split}
        I_{\bar{\gamma}}^X &= \frac{\kappa h^2 + \kappa zh + \lambda z^2}{((h)^{\overline{1}})^4}, \\
        \quad I_{2\bar{\gamma}}^X &= \frac{(2\kappa + \lambda)\lambda + \mu}{16z^4} + O(z^{-5}). 
        \end{split}
    \end{equation}
    These coefficients can also be computed directly from the $I$-functions. 
    
    Since 
    \[e^{\tau/z} \biggl(1 + \sum_{k \ge 0} \frac{1}{z^{k+2}} \langle \psi^k T_\mu\rangle^X T^\mu \biggr) = J^X(\tau) = I^X(t) = e^{t/z} \sum_{\beta} I_\beta^X P^\beta \]
    and $\tau = t$, 
    \[\langle \psi^k T\rangle^X = [z^{-(k+2)}] \int_T \sum_\beta I_\beta^X Q^\beta, \]
    where $[z^{-k}]F$ denotes the coefficient of $z^{-k}$ in $F$. So for \eqref{propGWXloc_1}, it follows from \eqref{eqbargamma} that 
    \[\langle \pt \rangle^X = [z^{-2}]\int_\pt \sum_\beta I_\beta^X Q^\beta = [z^{-2}]\int_\pt I_{\bar{\gamma}}^X Q^{\bar{\gamma}} = \lambda Q^{\bar{\gamma}}. \]
    By Proposition~\ref{propambrec} and divisor equation, 
    \begin{align*}
        \langle h^2, h^2\rangle^X &= \delta^{h} \langle h^3\rangle^X + \delta^{h} \langle h, \psi h^2\rangle^X - \delta^{h} \langle h, T_\mu\rangle^X \langle T^\mu, h^2\rangle^X \\
        &= \langle h^3\rangle^X + \langle h^3\rangle^X + \langle \psi h^2\rangle^X \\
        &= 2d\langle \pt\rangle^X + [z^{-3}]\int_{h^2} I_{\bar{\gamma}}^X Q^{\bar{\gamma}} \\
        &= 2d \cdot \lambda Q^{\bar{\gamma}} + d(\kappa - 4\lambda) Q^{\bar{\gamma}} = d(\kappa - 2\lambda) Q^{\bar{\gamma}}, 
    \end{align*} 
    which is \eqref{propGWXloc_2}.  

    Finally, it follows from Proposition~\ref{propambrec}, \eqref{propGWXloc_1}, \eqref{propGWXloc_2}, and \eqref{eqbargamma} that
    \begin{align*}
        d\langle \pt, \pt\rangle^X &= \delta^h\langle h^2, \psi\pt\rangle^X - \delta^h\langle h^2, T_\mu\rangle^X \langle T^\mu, \pt\rangle^X \\
        &= 2(\delta^h\langle h, \psi^2\pt\rangle^X - \delta^h\langle h, T_\mu\rangle^X \langle T^\mu, \psi\pt\rangle^X) - \tfrac{1}{d}\langle h^2, h^2\rangle^X \langle h, \pt\rangle^X \\
        &= 8\langle \psi^2 \pt\rangle^X - 2\langle \pt\rangle^X \langle 1, \psi\pt\rangle^X - (\kappa - 2\lambda)Q^{\bar{\gamma}}\cdot \lambda Q^{\bar{\gamma}} \\
        &= 8\cdot \tfrac{(2\kappa + \lambda)\lambda + \mu}{16} Q^{2\bar{\gamma}} - 2(\lambda Q^{\bar{\gamma}})^2 - (\kappa - 2\lambda)\lambda Q^{2\bar{\gamma}} \\
        &= \tfrac{\lambda^2 + \mu}{2} Q^{2\bar{\gamma}}. 
    \end{align*}
    which is \eqref{propGWXloc_3}. 
\end{proof}

Next, we compute the $1$-point and $2$-point invariants of $Y_d$ explicitly. As a consequence we are able to determine the 3-point function $\langle E, E, E\rangle^Y$ (and hence the extremal function $\rE$).

\begin{prop}\label{propGWYloc}
    Suppose $I^Y(t)$ satisfies the Picard--Fuchs equations $\square_\gamma I^Y = \square_\ell I^Y = 0$, where
    \begin{align*}
        \square_\gamma &= \hat{E} \hat{H} - P^\gamma, \\
        \square_\ell &= \hat{F}^3 - P^\ell\bigl(\kappa\hat{F}^2 + \kappa z\hat{F} + \lambda z^2\bigr)\hat{E} - \mu P^{2\ell}(\hat{F} + z)\hat{E}(\hat{E}-z). 
    \end{align*}
    Let $\tilde{\gamma} = \gamma + \ell$ be the lifting of $\bar{\gamma}$ such that $(E, \tilde{\gamma}) = 0$. Let $f(P^\ell) \in 1 + \bC\llbracket P^\ell\rrbracket$ be the (unique) solution of the differential equation
    \[ (1 + \kappa P^\ell - \mu P^{2\ell})\theta^2 f + (\kappa P^\ell - 2\mu P^{2\ell})\theta f + (\lambda P^\ell - \mu P^{2\ell}) f = 0,\quad \theta = P^\ell \odv{}{P^\ell}, \]
    and let $u = u(P^\ell) = 1 + \kappa P^\ell - \mu P^{2\ell}$. Let $d = E^3 = H^3$. Then  
    \begin{enumerate}[(i)]
        \item\label{propGWYloc_1} $\langle \pt\rangle^Y = P^\gamma + \lambda Q^{\tilde{\gamma}}$; 
        \item\label{propGWYloc_2} $\langle H^2, H^2\rangle^Y = d(P^\gamma + (\kappa - 2\lambda) Q^{\tilde{\gamma}})$; 
        \item\label{propGWYloc_3} $\langle \pt, \pt\rangle^Y = \frac1d(\tfrac{\lambda^2 - \mu}{2}Q^{2\tilde{\gamma}} + \lambda P^{2\gamma + \ell} + u\frac{\theta f}{f}P^{2\gamma})$;  
        \item\label{propGWYloc_4} $\langle H^2, E^2\rangle^Y = -du(f + \theta f)P^\gamma$; 
        \item\label{propGWYloc_5} $\langle E, E, E\rangle^Y = \frac{d}{uf^3}$. 
    \end{enumerate}
\end{prop}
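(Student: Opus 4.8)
The plan is to read the one- and two-point invariants directly off the $I$-function via the mirror theorem $J^{Y_d}=I^{Y_d}$ (Lemma~\ref{lm:mt}), and then obtain the extremal three-point function $\langle E,E,E\rangle^Y$ from them using the divisor equation. Throughout I would work in the invariant ring $H(Y_d)=\bC[E,H]/(EH,F^3)$, whose classical pairings give $\int_Y E^3=\int_Y H^3=d$, $\int_Y E^2F=-d$, and $(E,E^2)=(H,H^2)=d$, and I would pass between the $P$- and $Q$-variables by Corollary~\ref{cor:QandP}; note in particular that $Q^{\tilde\gamma}=Q^\gamma Q^\ell=P^\gamma e^{-g}P^\ell e^{g}=P^{\tilde\gamma}$ because $(E,\tilde\gamma)=0$, which is why the answers are naturally written in mixed variables. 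For (i), the virtual dimension forces $\langle\pt\rangle_\beta\ne 0$ only for $\beta=\gamma+m\ell$, so I read off the coefficient of the point class at order $z^{-2}$ in $\sum_\beta I_\beta^{Y}P^\beta$, exactly as for $\langle\pt\rangle^X$ in Proposition~\ref{propGWXloc}, generating the $I^Y_{\gamma+m\ell}$ from the recursion attached to $\square_\ell$; after converting to $Q$ the series resums to $P^\gamma+\lambda Q^{\tilde\gamma}$.

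Items (ii), (iii), (iv) are two-point functions, which I would obtain by feeding (i) together with the low-degree coefficients $I^Y_{\gamma}$, $I^Y_{2\gamma}$, $I^Y_{\ell+2\gamma}$ into the symmetrized reconstruction formula (Proposition~\ref{propambrec}) and the divisor equation, in direct analogy with the computation of $\langle h^2,h^2\rangle^X$ and $\langle\pt,\pt\rangle^X$ in Proposition~\ref{propGWXloc}. The function $u=1+\kappa P^\ell-\mu P^{2\ell}$ and the combinations $\theta f/f$ and $f+\theta f$ appear precisely because the relevant fiber-direction generating series are governed by the ODE~\eqref{eqfODE}; recognizing these closed forms is exactly where \eqref{eqfODE} enters.

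For (v), a dimension count shows $\langle E,E,E\rangle_\beta\ne0$ only for $\beta=m\ell$, and since $(E,m\ell)=-m$ the divisor equation applied twice gives
\[
\langle E,E,E\rangle^Y = \int_Y E^3 + (\delta^E)^2\langle E\rangle^Y = d+(\delta^E)^2\langle E\rangle^Y .
\]
By Corollary~\ref{cor:QandP} one has $\delta^E=-f^{-1}\theta$, so this reduces the problem to computing the extremal one-point function $\langle E\rangle^Y=\sum_{m\ge1}\langle E\rangle_{m\ell}Q^{m\ell}$ and then applying $-f^{-1}\theta$ twice. I would extract $\langle E\rangle^Y$ from the fiber-direction part of $I^Y$, pairing the $z^{-2}$ coefficient of $I^Y_{m\ell}$ with $E$ and subtracting the contribution of the mirror map $\tau=t-g(P^\ell)E$, and then invoke~\eqref{eqfODE} to collapse the answer to $\langle E,E,E\rangle^Y=d/(uf^3)$, equivalently $\rE=uf^3$.

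The hard part will be this final closed-form evaluation. Two features make it delicate: the mirror map is nontrivial in the $E$-direction, so the passage between $P^{m\ell}$ and $Q^{m\ell}$ mixes orders through $Q^\ell=P^\ell e^{g}$ and must be tracked carefully when resumming $\langle E\rangle^Y$; and the collapse to the single factor $uf^3$ is the \emph{mysterious cancellation via $\rE$} rather than a formal identity, depending on the exact coefficients of~\eqref{eqfODE}. A cleaner alternative I would also attempt is to bypass $\langle E\rangle^Y$ altogether by reading the extremal quantum product $E\ast E$ off the reduction of $\square_\ell$ modulo the left ideal generated by $\square_\gamma,\square_\ell$ (expressing $\hat F^3$ through $\hat E$), since the ODE for $f$ is precisely this reduction in the extremal direction, so that the structure constant $\langle E,E,E\rangle^Y$ emerges directly as $d/(uf^3)$. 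Either way, checking the cancellation against~\eqref{eqfODE} is the crux.
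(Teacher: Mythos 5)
Your plan for (i)--(iv) is essentially the paper's: read the one-point invariants off the $I$-function using the mirror map $\tau=t-g(P^\ell)E$ and the Picard--Fuchs recursion for the coefficients $I^Y_\gamma, I^Y_{\ell+\gamma}, I^Y_{2\gamma},\dots$, then get the two-point functions from Proposition~\ref{propambrec} and the divisor equation. One detail worth making explicit for (iv): the paper obtains it from the single relation $0=\langle HE,E^2\rangle^Y=\langle E^3\rangle^Y+\delta^E\langle\psi E^2\rangle^Y$, then \emph{antidifferentiates} with respect to $\delta^E=-f^{-1}\theta$; the closed form $-du(f+\theta f)P^\gamma$ is verified to be a $\delta^E$-antiderivative using \eqref{eqfODE}, and the integration constant is fixed by comparing the $Q^{\tilde\gamma}$-coefficient. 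That is where $u$ and $f+\theta f$ actually come from, not from a separately resummed ``fiber-direction generating series.''

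For (v) your route diverges from the paper's and has a genuine gap. Your identity $\langle E,E,E\rangle^Y=d+(\delta^E)^2\langle E\rangle^Y$ is correct, but it trades the unknown three-point function for the unknown one-point function $\langle E\rangle^Y=\sum_m\langle E\rangle_{m\ell}Q^{m\ell}$, and extracting that from the $I$-function requires the $H^4$-part of the $z^{-2}$ coefficients of $I^Y_{m\ell}$ --- data that Lemma~\ref{lm:mt} does not provide (it only computes $I_{m\ell}$ to order $z^{-1}$, which is exactly what determines $g$ and $f$). You would have to push the $\square_\ell$-recursion one order further and then prove the nontrivial identity $(\delta^E)^2\langle E\rangle^Y=d\bigl(\tfrac{1}{uf^3}-1\bigr)$; you correctly flag this ``collapse'' as the crux but do not supply it, and it is not a formal consequence of \eqref{eqfODE}. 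The paper avoids all of this: it applies the reconstruction formula to $\langle E^2,H^2\rangle^Y$, in which the splitting term $\delta^E\langle E,T_\mu\rangle^Y\langle T^\mu,H^2\rangle^Y$ produces the factor $\bigl(\tfrac1d\langle E,E,E\rangle^Y-1\bigr)\langle E^2,H^2\rangle^Y$, yielding the linear relation $\tfrac1d\langle E,E,E\rangle^Y\langle E^2,H^2\rangle^Y=(\delta^E)^2(dP^\gamma)=-d\,\tfrac{f+\theta f}{f^2}P^\gamma$; dividing by the value of $\langle E^2,H^2\rangle^Y$ from (iv) gives $\langle E,E,E\rangle^Y=d/(uf^3)$ using only $z^{-1}$-level data. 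Your alternative of reading $E\ast E$ off the reduction of $\square_\ell$ modulo the left ideal would also need justification (an identification of the $I$-function D-module with the quantum D-module after the mirror map), which the paper does not establish and sidesteps entirely. To complete your write-up, either carry out the second-order $I$-function computation and the cancellation, or switch to the paper's WDVV argument deducing (v) from (iv).
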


\begin{proof}
We will prove the statements in the order \eqref{propGWYloc_1}, \eqref{propGWYloc_2}, \eqref{propGWYloc_4}, \eqref{propGWYloc_5} and finally \eqref{propGWYloc_3}.

    It follows from the Picard--Fuchs equations that for $\beta = m\ell + n\gamma$, 
    \begin{align*}
        0 &= (E + (n-m)z)(H + nz) I_\beta^Y - I_{\beta - \gamma}^Y, \\
        0 &= (F + mz)^3 I_\beta^Y \\
        &\quad - (\kappa F^2 + \kappa (2m-1)zF + (\kappa m^2 - \kappa m + \lambda)z^2)(E + (n-m+1)z) I_{\beta - \ell}^Y \\
        &\quad - \mu (F + (m-1)z)(E + (n-m+2)z)(E + (n-m+1)z) I_{\beta - 2\ell}^Y.  
    \end{align*}
    Together with the initial condition $I_0 = 1$, we get 
    \begin{equation}\label{eqgamma}
    \begin{split}
        I_\gamma^Y &= \frac{1}{(E)^{\overline{1}}(H)^{\overline{1}}}, \\
         I_{\ell + \gamma}^Y &= \frac{\kappa F^2 + \kappa z F + \lambda z^2}{((F)^{\overline{1}})^3(H)^{\overline{1}}}
         \end{split}
    \end{equation}
    and $I_{m\ell + \gamma}^Y$ is a multiple of $E$ for $m\ge 2$; 
    \begin{equation}\label{eq2gamma}
        \begin{split} 
        I_{2\gamma}^Y &= \frac{1}{(E)^{\overline{2}}(H)^{\overline{2}}}, \\
        \quad I_{\ell + 2\gamma}^Y &= \frac{\kappa F^2 + \kappa z F + \lambda z^2}{((F)^{\overline{1}})^3(E)^{\overline{1}}(H)^{\overline{2}}}, \\
        \quad I_{2\ell + 2\gamma}^Y &= \frac{(2\kappa + \lambda)\lambda + \mu}{16z^4} + O(z^{-5})
        \end{split} 
    \end{equation}
    and $I_{m\ell + 2\gamma}^Y$ is a multiple of $E$ for $m\ge 3$. These coefficients can also be computed directly from the $I$-functions. 
    
    Since 
    \[e^{\tau/z} \biggl(1 + \sum_{k \ge 0} \frac{1}{z^{k+2}} \langle \psi^k T_\mu\rangle^Y T^\mu \biggr) = J^Y(\tau) = I^Y(t) = e^{t/z} \sum_{\beta} I_\beta^Y P^\beta \]
    and $\tau = t - g E$, we have
    \[\langle \psi^k T\rangle^Y = [z^{-(k+2)}] \int_T e^{g E/z} \sum_\beta I_\beta^Y P^\beta. \] 
    So for \eqref{propGWYloc_1}, it follows from \eqref{eqgamma} that 
    \[\langle \pt \rangle^Y = [z^{-2}]\int_\pt e^{gE/z} \sum_\beta I_\beta^Y P^\beta = [z^{-2}]\int_\pt I_\gamma^Y P^\gamma + I_{\ell + \gamma}^YP^{\ell + \gamma} = P^\gamma + \lambda Q^{\tilde{\gamma}}. \]
    
    Let $a \in H^4(Y)$ and $D \in H^2(Y)$. Then, by Proposition~\ref{propambrec}, 
    \[\langle HD, a\rangle^Y = \delta^{D} \langle Ha\rangle^Y + \delta^{H} \langle D, \psi a\rangle^Y - \delta^{H} \langle D, T_\mu\rangle^Y \langle T^\mu, a\rangle^Y = \delta^{D}\langle Ha\rangle^Y + \langle Da\rangle^Y + \delta^{D}\langle \psi a\rangle^Y. \]
    If we take $a = H^2$ and $D = H$, then
    \begin{align*}
        \langle H^2, H^2\rangle^Y &= 2d\langle \pt\rangle^Y + [z^{-3}]\int_{H^2} e^{gE/z}\sum_\beta I_\beta^Y P^\beta \\
        &= 2d(P^\gamma + \lambda Q^{\tilde{\gamma}}) + [z^{-3}]\int_{H^2} I_\gamma^Y P^\gamma + I_{\ell+\gamma}^Y P^{\ell+\gamma} \\
        &= 2d(P^\gamma + \lambda Q^{\tilde{\gamma}}) + (-d)P^\gamma + d(\kappa - 4\lambda)Q^{\tilde{\gamma}} = d(P^\gamma + (\kappa - 2\lambda)Q^{\tilde{\gamma}}),  
    \end{align*}
    which is \eqref{propGWYloc_2}. 
    
    If we take $a = E^2$ and $D = E$, then
    \[0 = \langle HE, a\rangle^Y = \langle E^3\rangle^Y + \delta^E\langle \psi E^2\rangle^Y. \]
    This implies
    \[\delta^E \langle \psi E^2\rangle^Y = f\cdot d(P^\gamma + \lambda Q^{\tilde{\gamma}}) = \delta^E (-du(f+\theta f)P^\gamma), \]
    i.e., $\langle \psi E^2\rangle^Y = -du(f + \theta f)P^\gamma + cQ^{\tilde{\gamma}}$ for some constant $c$. Comparing the $Q^{\tilde{\gamma}}$-coefficients of both sides we get $c = 0$. Hence, we get \eqref{propGWYloc_4}: 
    \[\langle H^2, E^2\rangle^Y = \langle \psi E^2\rangle^Y = -du(f + \theta f)P^\gamma. \]
    On the other hand, 
    \[\langle E^2, H^2\rangle^Y = \delta^E \langle E, \psi H^2\rangle^Y - \delta^E \langle E, T_\mu\rangle^Y \langle T^\mu, H^2\rangle^Y = (\delta^E)^2 \langle \psi H^2\rangle^Y - \delta^E \langle E, T_\mu\rangle^Y \langle T^\mu, H^2\rangle^Y\]
    implies 
    \[\tfrac{1}{d}\langle E, E, E\rangle^Y \langle E^2, H^2\rangle^Y = (\delta^E)^2 (dP^\gamma) = -d\,\tfrac{f + \theta f}{f^2}\,P^\gamma. \]
    Together with \eqref{propGWYloc_4}, this gives \eqref{propGWYloc_5}. 

    Finally, it follows from Proposition~\ref{propambrec}, \eqref{propGWYloc_1}, \eqref{propGWYloc_2}, \eqref{propGWYloc_4} and \eqref{eq2gamma} that
    \begin{align*}
        d\langle \pt, \pt\rangle^Y &= \delta^H\langle H^2, \psi\pt\rangle^Y - \delta^H\langle H^2, T_\mu\rangle^Y \langle T^\mu, \pt\rangle^Y \\
        &= 2(\delta^H\langle H, \psi^2\pt\rangle^Y - \delta^H\langle H, T_\mu\rangle^Y \langle T^\mu, \psi\pt\rangle^Y) \\
        &\quad - \tfrac{1}{d}(\langle H^2, H^2\rangle^Y \langle H, \pt\rangle^Y + \langle H^2, E^2\rangle^Y \langle E, \pt\rangle^Y) \\
        &= 8\langle \psi^2 \pt\rangle^Y - 2\langle \pt\rangle^Y \langle 1, \psi\pt\rangle^Y \\
        &\quad - \bigl((P^\gamma + (\kappa - 2\lambda)Q^{\tilde{\gamma}})(P^\gamma + \lambda Q^{\tilde{\gamma}}) - u(f + \theta f)P^\gamma(\tfrac{P^\gamma}{f})\bigr) \\
        &= 8\bigl(\tfrac14 P^{2\gamma} + \tfrac{\lambda}{2} P^{\ell + 2\gamma} + \tfrac{(2\kappa + \lambda)\lambda + \mu}{16} P^{2\ell + 2\gamma}\bigr) - 2(P^\gamma + \lambda Q^{\tilde{\gamma}})^2 \\
        &\quad - (P^\gamma + (\kappa - 2\lambda)Q^{\tilde{\gamma}})(P^\gamma + \lambda Q^{\tilde{\gamma}}) + (P^{2\gamma} + \kappa P^{\ell + 2\gamma} - \mu P^{2\ell + 2\gamma}) + u\tfrac{\theta f}{f}P^{2\gamma} \\
        &= \tfrac{\lambda^2 - \mu}{2} Q^{2\tilde{\gamma}} + \lambda P^{\ell + 2\gamma} + u\tfrac{\theta f}{f}P^{2\gamma},
    \end{align*}
    which is \eqref{propGWYloc_3}. 
\end{proof}

%%%%%%%%%%
%%%%%%%%%%
\subsubsection{Rings arising from logarithmic derivatives}\label{subsubsec;log-derring_neq_7}
%%%%%%%%%%
%%%%%%%%%%

%\[\begin{tikzcd}[row sep = 0pt]
%    \bC\llbracket Q^{\operatorname{NE}(X_d)}\rrbracket \ar[r, "\phi^*"] & \bC\llbracket Q^{\operatorname{NE}(Y_d)}\rrbracket \\
%    Q^\gamma \ar[r, mapsto] & Q^{\tilde{\gamma}} = P^\gamma P^\ell.  
%\end{tikzcd}\]

For $X = X_d$ and $Y = Y_d$, consider the lifting 
\[\begin{tikzcd}[row sep = 0pt]
    \bC\llbracket Q^{\operatorname{NE}(X)}\rrbracket \ar[r, "\phi^*"] & \bC\llbracket Q^{\operatorname{NE}(Y)}\rrbracket \\
    Q^\gamma \ar[r, mapsto] & Q^{\tilde{\gamma}} = P^\gamma P^\ell.  
\end{tikzcd}\]
Comparing Proposition~\ref{propGWXloc} and Proposition~\ref{propGWYloc}, we observe that
\begin{align*}
    \langle \pt \rangle^Y - \phi^*\langle \pt \rangle^X &= P^\gamma = P^{-\ell} Q^{\tilde{\gamma}}, \\
    \langle H^2, H^2 \rangle^Y - \phi^*\langle h^2, h^2 \rangle^X &=  dP^\gamma = dP^{-\ell} Q^{\tilde{\gamma}}, \\
    \langle \pt, \pt \rangle^Y - \phi^*\langle \pt, \pt \rangle^X &= \tfrac1d\bigl(- \mu  + \lambda P^{-\ell} + u\tfrac{\theta f}{f} P^{-2\ell}\bigr)Q^{2\tilde{\gamma}} \\
    &= \tfrac1d \bigl(-\mu (1 + \tfrac{\theta f}{f}) + (\lambda + \kappa \tfrac{\theta f}{f})P^{-\ell} + \tfrac{\theta f}{f} P^{-2\ell}\bigr)Q^{2\tilde{\gamma}}. 
\end{align*}

For each $d$, denote by $\operatorname{Sing}_{f_d}$ the set of regular singular points of the differential equation \eqref{eqfODE}. At each singularity, the corresponding pair of local exponents can be computed explicitly. These are summarized in Table \ref{tab:Singfd}. 
\begin{table}[H]
    \centering
    \begin{tabular}{ccc}
        \toprule
        $d$ & $\operatorname{Sing}_{f_d}$ & local exponents \\
        \midrule
        $1$ & $\{0, \infty, -\frac{1}{432}\}$ & $(0,0)$, $(\frac16, \frac56)$, $(0, 0)$ \\
        \midrule
        $2$ & $\{0, \infty, -\frac{1}{64}\}$ & $(0,0)$, $(\frac14, \frac34)$, $(0, 0)$ \\
        \midrule
        $3$ & $\{0, \infty, -\frac{1}{27}\}$ & $(0,0)$, $(\frac13, \frac23)$, $(0, 0)$ \\
        \midrule
        $4$ & $\{0, \infty, -\frac{1}{16}\}$ & $(0,0)$, $(\frac12, \frac12)$, $(0, 0)$ \\
        \midrule
        $5$ & $\{0, \infty, \frac{11 - 5 \sqrt{5}}{2}, \frac{11 + 5 \sqrt{5}}{2}\}$ & $(0,0)$, $(1,1)$, $(0,0)$, $(0,0)$ \\
        \midrule
        $6 \I$ & $\{0, \infty, -\frac{1}{8}, 1\}$ & $(0,0)$, $(1,1)$, $(0,0)$, $(0,0)$ \\
        \midrule
        $6 \II$ & $\{0, \infty, -\frac{1}{9}, -1\}$ & $(0,0)$, $(1,1)$, $(0,0)$, $(0,0)$ \\
        \midrule
        $8$ & $\{0, \infty, -\frac{1}{4}, \frac14\}$ & $(0,0)$, $(1,1)$, $(0,0)$, $(0,0)$ \\
        \bottomrule
    \end{tabular}
    \caption{Regular singular points of $f_d$ and their corresponding local exponents.}
    \label{tab:Singfd}
\end{table}

Let $y = P^{-\ell}$. Then, near $y = 0$, we can rewrite \eqref{eqfODE} as 
\begin{equation}\label{eqfODEiny}
    (y^2 + \kappa_d y - \mu_d) (y \odv[style-frac=\tfrac]{}{y})^2 f_d - (\kappa_d y - 2 \mu_d) (y \odv[style-frac=\tfrac]{}{y}) f_d + (\lambda_d y - \mu_d) f_d = 0. 
\end{equation}

When $\mu_d = 0$, i.e., $d \in \{1,2,3,4\}$, the local exponent at $y = 0$ is $(\frac1{n_d}, \frac{n_d-1}{n_d})$, where $n_d = 6$, $4$, $3$, $2$ for $d = 1$, $2$, $3$, $4$, respectively, just as in the proof of Lemma~\ref{lm:IXd=1234}. This means that $f_d \in y^{1/n_d} \bC\PSR{y} \oplus y^{1-1/n_d} \bC\PSR{y}$ for $d \le 3$ and $f_d \in y^{1/2}\bC\PSR{y} \oplus y^{1/2}\log y\, \bC\PSR{y}$ for $d = 4$. 

When $\mu_d \neq 0$, i.e., $d \in \{5,6\I,6\II,8\}$, the local exponent at $y = 0$ is $(1, 1)$. This means that $f_d \in y\,\bC\PSR{y} \oplus y\log y \,\bC\PSR{y}$. Let $f_d^\reg \in y\,\bC\PSR{y}$ be a nonzero monodromy invariant solution (unique up to a scalar) of \eqref{eqfODEiny}. Then
\[\frac{\theta f_d^\reg}{f_d^\reg} = \frac{- y\odv{}{y}f_d^\reg}{f_d^\reg} = -1 + O(y). \]

By this observation, we found a suitable ring $\sR_d$ and a suitable ideal $\sI_d$ such that for $\vec{b} = (\pt)$, $(h^2, h^2)$, $(\pt, \pt)$, the invariant $\langle \phi^*\vec{b}\rangle^{Y_d} \in \sR_d[Q^{\tilde{\gamma}}]$ and 
\begin{equation}\label{eq:loccomp12pt}
    \langle \phi^*\vec{b}\rangle^{Y_d} - \phi^*\langle \vec{b} \rangle^{X_d} \in \sI_d[Q^{\tilde{\gamma}}].
\end{equation}

\begin{definition}\label{defRI}
    Let $y = P^{-\ell}$ and let $v_d = \frac{\theta f_d}{f_d}$. Consider the ring $\sS_d = \bC(y)[v_d] \subseteq \bC\Laurent{y}[v_d]$. We define the subring $\sR_d \subseteq \sS_d$ and its ideal $\sI_d$ as follows: 
    \begin{itemize}\itemindent=-1.5em
        \item for $d \in \{1,2,3,4\}$, 
        \[\sR_d = \sS_d\cap (\bC + y\cdot \bC\PSR{y}[v_d]),\quad \sI_d = \sS_d \cap (y\cdot \bC\PSR{y}[v_d]); \]
        \item for $d \in \{5,6\I,6\II,8\}$, let $v_d^\reg = \tfrac{\theta f_d^\reg}{f_d^\reg} \in \bC\PSR{y}$, $w_d = v_d - v_d^\reg$, 
        \[\sR_d = \sS_d\cap (\bC\PSR{y} + w_d\cdot \bC\Laurent{y}[w_d]), \quad \sI_d = \sS_d\cap (y\cdot \bC\PSR{y} + w_d \cdot \bC\Laurent{y}[w_d]). \]
    \end{itemize}
\end{definition}

\begin{lemma}\label{lm:thetaRdinId}
    The operator $\theta = P^{\ell} \odv{}{P^\ell}$ sends elements in $\sR_d$ to $\sI_d$. 
\end{lemma}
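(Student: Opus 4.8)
The plan is to exploit that $\theta = P^\ell\,\odv{}{P^\ell} = -y\,\odv{}{y}$ turns $\sS_d = \bC(y)[v_d]$ into a differential ring: since $\theta$ is a derivation, it suffices to control $\theta$ on the two generators $y$ and $v_d$ and then propagate through products by the Leibniz rule. On rational functions $\theta(y^k) = -k y^k$, so $\theta$ sends $\bC\PSR{y}$ into $y\,\bC\PSR{y}$ and kills constants. For the generator $v_d = \theta f_d/f_d$, differentiating once more and substituting the defining ODE~\eqref{eqfODEiny} yields the Riccati relation
\[
\theta v_d = \frac{\theta^2 f_d}{f_d} - v_d^2 = -\frac{(\kappa_d y - 2\mu_d)\,v_d + (\lambda_d y - \mu_d)}{y^2 + \kappa_d y - \mu_d} - v_d^2 ,
\]
which already shows $\theta v_d\in\sS_d$, hence $\theta(\sS_d)\subseteq\sS_d$. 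The whole argument then reduces to reading off, in each regime of $\mu_d$, the order of vanishing at $y=0$ of the coefficients appearing here.

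For $d\in\{1,2,3,4\}$ I would use $\mu_d = 0$: the denominator factors as $y(y+\kappa_d)$ and the Riccati relation collapses to $\theta v_d = -\tfrac{\kappa_d v_d + \lambda_d}{y+\kappa_d} - v_d^2$, whose coefficients are holomorphic at $y=0$ because $\kappa_d\neq 0$. By Definition~\ref{defRI} an element $F = \sum_i r_i(y)\,v_d^i\in\sR_d$ has all $r_i$ holomorphic with $r_i(0)=0$ for $i\ge 1$. Applying Leibniz, $\theta F = \sum_i \theta(r_i)\,v_d^i + \sum_{i\ge 1} i\,r_i\,v_d^{i-1}\,\theta v_d$: the first sum lies in $y\,\bC\PSR{y}[v_d]$ since $\theta(r_i)\in y\,\bC\PSR{y}$, while in the second sum each $r_i$ with $i\ge 1$ already lies in $y\,\bC\PSR{y}$ and $\theta v_d$ has holomorphic coefficients. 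Thus $\theta F\in\sS_d\cap y\,\bC\PSR{y}[v_d] = \sI_d$.

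For $d\in\{5,6\I,6\II,8\}$ the point is $\mu_d\neq 0$, so the leading coefficient $y^2 + \kappa_d y - \mu_d$ is a unit in $\bC\PSR{y}$ and the exponents at $y=0$ are $(1,1)$. Here I would pass to $w_d = v_d - v_d^\reg$; since $v_d^\reg\in\bC\PSR{y}$ one has $\bC\Laurent{y}[w_d] = \bC\Laurent{y}[v_d]$, so the ambient ring in Definition~\ref{defRI} is unchanged. Subtracting the Riccati relations for $v_d$ and $v_d^\reg$ (both solve the same ODE) gives the crucial factorization
\[
\theta w_d = -w_d\Bigl(\frac{\kappa_d y - 2\mu_d}{y^2 + \kappa_d y - \mu_d} + 2 v_d^\reg + w_d\Bigr),
\]
whose parenthesis is holomorphic at $y=0$, so $\theta w_d\in w_d\cdot\bC\PSR{y}[w_d]$. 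Writing $F = a_0(y) + \sum_{i\ge 1} b_i(y) w_d^i\in\sR_d$ with $a_0\in\bC\PSR{y}$ and $b_i\in\bC\Laurent{y}$, Leibniz gives $\theta F = \theta(a_0) + \sum_{i\ge 1}\bigl(\theta(b_i)w_d^i + i\,b_i\,w_d^{i-1}\theta w_d\bigr)$. The term $\theta(a_0)$ lies in $y\,\bC\PSR{y}$, and every other term is divisible by $w_d$ because $w_d^{i-1}\theta w_d\in w_d^{i}\cdot\bC\PSR{y}[w_d]$; hence $\theta F\in y\,\bC\PSR{y} + w_d\cdot\bC\Laurent{y}[w_d]$, i.e.\ $\theta F\in\sI_d$.

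The only genuinely delicate point is the logarithmic regime $\mu_d\neq 0$: one must move to the variable $w_d$ and verify that $\theta w_d$ is divisible by $w_d$ with a \emph{holomorphic} cofactor, since this is exactly what forces the $w_d$-free part of $\theta F$ to come solely from $\theta(a_0)$ and therefore to vanish at $y=0$. The non-vanishing of the ODE's leading coefficient at $y=0$ (when $\mu_d\ne 0$) and the cancellation of the spurious factor $y$ in the denominator (when $\mu_d = 0$) are the two inputs that keep all coefficients holomorphic; the rest is routine Leibniz bookkeeping, and the fact that $\theta$ preserves $\sS_d$ has already been recorded from the Riccati relation.
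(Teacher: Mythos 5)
Your proof is correct and follows essentially the same route as the paper's: derive the Riccati relation for $\theta v_d$ from the ODE to see that $\theta$ preserves $\sS_d$, then in the case $\mu_d=0$ track the order of vanishing of the (holomorphic, after cancelling the factor $y$) coefficients, and in the case $\mu_d\neq 0$ subtract the two Riccati relations to get $\theta w_d \in w_d\cdot\bC\PSR{y}[w_d]$ and finish by Leibniz. The only cosmetic discrepancy is an overall sign in $\theta w_d$ relative to the paper's display (the paper writes $\theta w_d=\theta v_d^{\reg}-\theta v_d$ for $w_d=v_d-v_d^{\reg}$), which affects neither argument.
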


\begin{proof}
    For $p(v_d) = \sum_n p_n(y) v_d^n \in \bC(y)[v_d] = \sS_d$, 
    \[\theta p(v_d) = \sum (\theta p_n(y))v_d^n + \sum p_n(y) n v_d^{n-1} (\theta v_d). \]
    To prove that $\theta p(v_d) \in \sS_d$, it suffices to prove that $\theta v_d \in \sS_d$. By \eqref{eqfODE} (or \eqref{eqfODEiny}), 
    \begin{align}
        \theta v_d &= \frac{\theta^2 f_d}{f_d} - \frac{(\theta f_d)^2}{f_d^2} \notag \\
        &= -\frac{\kappa_d y - 2\mu_d}{y^2 + \kappa_d y - \mu_d} v_d - \frac{\lambda_d y - \mu_d}{y^2 + \kappa_d y - \mu_d} - v_d^2 \in \sS_d \cap \bC\PSR{y}[v_d]. \label{eqthetavd} 
    \end{align}
    
    When $d = 1$, $2$, $3$, $4$, if $p = p_0 + y\sum_n p_n(y) v_d^n \in \bC + y\cdot \bC\PSR{y}[v_d]$, then 
    \begin{align*}
        \theta p &= y\sum p_n(y) v_d^n + y\sum (\theta p_n(y)) v_d^n + y\sum p_n(y) nv_d^{n-1} (\theta v_d) \in y\cdot \bC\PSR{y}[v_d]
    \end{align*}
    by \eqref{eqthetavd}. Hence, $\theta \sR_d \subseteq \sI_d$. 

    When $d = 5$, $6\I$, $6\II$, $8$, if $p = p_0(y) + \sum_{n>0} p_n(y) w_d^n \in \bC\PSR{y} + w_d\cdot \bC\Laurent{y}[w_d]$, then 
    \begin{align*}
        \theta p &= \theta p_0(y) + \sum_{n>0} (\theta p_n(y)) w_d^n + \sum_{n>0} (\theta p_n(y)) nw_d^{n-1} (\theta w_d). 
    \end{align*}
    To show that $\theta p \in \sI_d$, it suffices to show that $\theta w_d \in y\cdot\bC\PSR{y} + w_d \cdot \bC\Laurent{y}[w_d]$. Indeed, by \eqref{eqthetavd}, which also holds for $v_d^\reg$, 
    \begin{align*}
        \theta w_d &= \theta v_d^\reg - \theta v_d \\
        &= -\frac{\kappa_d y - 2\mu_d}{y^2 + \kappa_d y - \mu_d} (v_d^\reg - v_d) - ((v_d^\reg)^2 - v_d^2) \\
        &= \frac{\kappa_d y - 2\mu_d}{y^2 + \kappa_d y - \mu_d} w_d + w_d(w_d + 2v_d^\reg) \in w_d \cdot \bC\Laurent{y}[w_d]. 
    \end{align*}
    Hence, $\theta \sR_d \subseteq \sI_d$. 
\end{proof}

We now extend \eqref{eq:loccomp12pt} to any $n$-point GW invariants: 
\begin{proposition}\label{proplocalmodI}
    Fix $d \neq 7$. Consider the ring $\Rloc = \sR_d[Q^{\tilde{\gamma}}]$ and its ideal $\Iloc = \sI_d[Q^{\tilde{\gamma}}]$. Then for $n\ge 1$, $\vec{a} \in H(S_d)^{\otimes n}$, the invariant $\langle j_{d*}\vec{a}\rangle^{Y_d}$ lies in $\Rloc$ and 
    \[\langle j_{d*}\vec{a}\rangle^{Y_d}  \equiv \phi^*\langle i_{d*}\vec{a}\rangle^{X_d} \pmod{\Iloc}. \]
\end{proposition}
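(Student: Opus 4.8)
The plan is to induct on the number of insertions $n$, using the symmetrized reconstruction formula (Proposition~\ref{propambrec}) to express an $n$-point invariant through invariants with fewer marked points, while organizing the bookkeeping by a conserved \emph{$f$-weight}. The base cases $n\le 2$ are exactly \eqref{eq:loccomp12pt}, supplied by the explicit formulas of Propositions~\ref{propGWXloc} and~\ref{propGWYloc}. The first step is to note that the two sections are compatible: on $H(S_d)=\bC\cdot 1\oplus\bC\cdot i_d^*h\oplus\bC\cdot\pt$ one has $j_{d*}=\phi^*\circ i_{d*}$, since $j_{d*}(1)=H$, $j_{d*}(i_d^*h)=H^2$, $j_{d*}(\pt)=\tfrac1d H^3=\pt_{Y_d}$ (using $EH=0$ and $E^3=H^3=d$), while $i_{d*}(1)=h$, $i_{d*}(i_d^*h)=h^2$, $i_{d*}(\pt)=\tfrac1d h^3$, and $\phi^*$ sends $h^k\mapsto H^k$. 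Thus it suffices to treat insertions that are powers of the divisor $H$ on $Y_d$ (resp.\ $h$ on $X_d$); under $\phi^*$ these are identified, as are the operators $\delta^H$ and $\delta^h$ (both acting as the degree operator $Q^{\tilde{\gamma}}\partial_{Q^{\tilde{\gamma}}}$ resp.\ $Q^{\bar{\gamma}}\partial_{Q^{\bar{\gamma}}}$) and the Poincar\'e pairings.

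For the inductive step I would use the \emph{primary} form Proposition~\ref{propambrec}(b), which reduces a primary $n$-point invariant to products of strictly lower ones: peeling a factor of $H$ and redistributing divisor powers gives relations $\langle H^{k_1},H^{k_2},\dots\rangle^{Y_d}-\langle H^{k_1-1},H^{k_2+1},\dots\rangle^{Y_d}=(\text{lower products})$, and since $H^4=0$ on $Y_d$ (and $h^4=0$ on $X_d$) one may push powers until some insertion is annihilated, expressing the invariant through lower products alone — the identical combinatorics running on both sides. The gluing runs over the basis $\{1,H,H^2,\pt,E,E^2\}$ with dual pairs $1\leftrightarrow\pt$, $H\leftrightarrow\tfrac1d H^2$, $E\leftrightarrow\tfrac1d E^2$. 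The \emph{horizontal} classes $\{1,H,H^2,\pt\}$ reproduce term by term, under $\phi^*$, the reconstruction of $\langle i_{d*}\vec a\rangle^{X_d}$ (same pairings, same $\delta^H\!\leftrightarrow\!\delta^h$); as $\Iloc$ is an ideal of $\Rloc$ and $\phi^*$ a ring map, the inductive hypothesis forces these contributions to agree with $\phi^*\langle i_{d*}\vec a\rangle^{X_d}$ modulo $\Iloc$. Everything then reduces to the \emph{exceptional} gluing terms, those with $T_\mu\in\{E,E^2\}$, which have no $X_d$-counterpart and must be shown to lie in $\Iloc$.

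This is where the main difficulty lies. Individually the exceptional invariants contain the bare transcendental solution $f=f_d$ of \eqref{eqfODE} rather than just its logarithmic derivative $v_d=\theta f/f$: Proposition~\ref{propGWYloc} gives $\langle H^2,E^2\rangle^{Y_d}=-du(f+\theta f)P^\gamma$ and $\langle E,E,E\rangle^{Y_d}=d/(uf^3)$, and $f\notin\sS_d=\bC(y)[v_d]$. The key is a weight count: assign to $\langle b_1,\dots,b_n\rangle^{Y_d}$ the \emph{$f$-weight} $w=-\#\{i:b_i=E\}+\#\{i:b_i=E^2\}$. Using $\delta^E=-f^{-1}\theta$ (Corollary~\ref{cor:QandP}), the divisor equation, and the fact that $\theta$ preserves $f$-weight (because $\theta f=v_d f$ with $v_d$ of weight $0$), one shows by an auxiliary induction over \emph{all} basis insertions that each such invariant lies in $f^{w}\cdot\sS_d[Q^{\tilde{\gamma}}]$, the reduced part $f^{-w}\langle\cdots\rangle$ closing under reconstruction inside $\sS_d[Q^{\tilde{\gamma}}]$ thanks to $\theta\,\sS_d\subseteq\sS_d$ (see \eqref{eqthetavd}). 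Crucially, $w$ is conserved by reconstruction since the two inserted dual classes always contribute weights $(-1,+1)$ (for $E\otimes\tfrac1d E^2$) or $(+1,-1)$ (for $E^2\otimes\tfrac1d E$), summing to $0$. Hence on a weight-$0$ invariant — such as the pure-$H$ invariant being computed — each exceptional gluing pairs a weight $-1$ factor with a weight $+1$ factor, the powers of $f$ cancel (this \emph{is} the cancellation via $\rE=uf^3=E^3/\langle E,E,E\rangle^{Y_d}$), and the product returns to $\sS_d[Q^{\tilde{\gamma}}]$. I expect verifying this weight conservation and the consequent $f$-cancellation to be the heart of the argument.

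Finally I would upgrade membership in $\sS_d[Q^{\tilde{\gamma}}]$ to the two precise assertions. Lemma~\ref{lm:thetaRdinId} gives $\theta\,\sR_d\subseteq\sI_d$, and $\delta^H$ visibly preserves both $\Rloc$ and $\Iloc$; together with the base cases and the ring structure this forces the pure-$H$ invariants into $\Rloc$. For the exceptional contributions, the geometric limit $P^\ell\to\infty$ (i.e.\ $y\to0$) recovers $X_d$, so after the $f$-cancellation these terms vanish at $y=0$ — already visible in the base comparison $\langle\pt\rangle^{Y_d}-\phi^*\langle\pt\rangle^{X_d}=y\,Q^{\tilde{\gamma}}$ — placing them in $\sI_d[Q^{\tilde{\gamma}}]=\Iloc$. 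I anticipate the only delicate endgame, for $d\in\{5,6\I,6\II,8\}$ where $\mu_d\neq0$, to be rewriting these $y$-vanishing weight-$0$ expressions in the variables $w_d=v_d-v_d^\reg$ of Definition~\ref{defRI} (using $v_d^\reg=-1+O(y)$) so as to land them in $y\cdot\bC\PSR{y}+w_d\cdot\bC\Laurent{y}[w_d]$, tracking the $y$-orders against the poles of $u$. Combining the horizontal match (congruent to $\phi^*\langle i_{d*}\vec a\rangle^{X_d}$ modulo $\Iloc$) with the exceptional terms (in $\Iloc$) yields both conclusions of the proposition.
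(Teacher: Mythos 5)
Your proposal is correct and follows essentially the same route as the paper's proof: induction on the number of insertions and the curve degree via the reconstruction formula, base cases supplied by the explicit $1$- and $2$-point computations of Propositions~\ref{propGWXloc} and~\ref{propGWYloc}, the horizontal gluing terms matching the $X_d$-side reconstruction term by term, and the exceptional $E$/$E^2$ gluing terms controlled by the cancellation of $f$-powers coming from $\delta^E=-f^{-1}\theta$ and $\langle E,E,E\rangle^{Y_d}=d/(uf^3)$ together with Lemma~\ref{lm:thetaRdinId}. Your ``$f$-weight'' is a clean repackaging of the explicit memberships $\langle\vec{c},E\rangle^{Y}\in f^{-1}\Iloc$ and $\langle E^2,\vec{c}\rangle^{Y}\in f\Rloc[v]$ that the paper tracks case by case.
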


\begin{proof}
    Since $H(S_d)$ is generated by $i_d^*h^e$, $e = 0$, $1$, $2$, $i_{d*}H(S_d)$ is generated by $h^{e+1} = i_{d*}i_d^*h^e$ and $j_{d*}H(S_d)$ is generated by $H^{e+1} = j_{d*}i_d^*h^e$, $e = 0$, $1$, $2$. 
    
    Let $\vec{a} = a_1 \otimes \dots \otimes a_n$, $b_\nu = i_{d*}a_\nu$, $c_\nu = j_{d*}a_\nu$. By divisor equation, we may assume that each $a_\nu = i_d^*h$ or $i_d^*h^2$. We prove the result by induction on $n$ and the curve degree $\frac12 \sum \deg a_\nu$. For simplicity, we will omit ``$d$'' in the subscripts. 

    For $n \le 2$, the result follows from the fundamental class axiom, the divisor equation, Proposition~\ref{propGWXloc} and  Proposition~\ref{propGWYloc}. 

    If $i^*h$ appears in $\{a_1, \dots, a_n\}$, say $a_1 = i^*h$, then by the reconstruction formula Theorem~\ref{thr:rc} (or  Proposition~\ref{propambrec}), 
    \begin{align*}
        \langle b_1, \dots, b_n\rangle^X &= \langle h^2, b_2, b_3, \dots, b_n\rangle^X \\
        &= \langle h, hb_2, b_3, \dots, b_n \rangle^X \\
        &\quad + \sum_{3\in I_L, 2\in I_R, \mu} \langle h, \vec{b}_{I_L}, \bar{T}_\mu\rangle^X \cdot \delta^h \langle \bar{T}^\mu, \vec{b}_{I_R}\rangle^X - \sum_{2, 3\in I_R, \mu} \delta^h\langle h, \vec{b}_{I_L}, \bar{T}_\mu\rangle^X \cdot \langle \bar{T}^\mu, \vec{b}_{I_R}\rangle^X, \\
        \langle c_1, \dots, c_n\rangle^Y &= \langle H^2, c_2, c_3, \dots, c_n\rangle^Y \\
        &= \langle H, Hc_2, c_3, \dots, c_n \rangle^Y \\
        &\quad + \sum_{3\in I_L, 2\in I_R, \mu} \langle H, \vec{c}_{I_L}, T_\mu\rangle^Y \cdot \delta^H \langle T^\mu, \vec{c}_{I_R}\rangle^Y - \sum_{2, 3\in I_R, \mu} \delta^H\langle H, \vec{c}_{I_L}, T_\mu\rangle^Y \cdot \langle T^\mu, \vec{c}_{I_R}\rangle^Y. 
    \end{align*}
    Hence, to show that 
    \[\langle c_1, \dots, c_n\rangle^Y \equiv \phi^*\langle b_1, \dots, b_n\rangle^X \pmod{\Iloc}, \]
    it suffices to show that: 
    \begin{enumerate}[(i)]
        \item\label{proplocalmodI_1} $\langle H, Hc_2, c_3, \dots, c_n \rangle^Y \equiv \phi^*\langle h, hb_2, b_3, \dots, b_n \rangle^X \pmod{\Iloc}$; 
        \item\label{proplocalmodI_2} $\langle \vec{c}_I, T\rangle^Y \equiv \phi^*\langle \vec{b}_I, \bar{T}\rangle^X \pmod{\Iloc}$ for $(\bar{T}, T) = (h^e, H^e)$ with lower curve degree; 
        \item\label{proplocalmodI_3} $\langle \vec{c}_{I_L}, E\rangle^Y \langle E^2, \vec{c}_{I_R}\rangle^Y \in \Iloc$. 
    \end{enumerate}
    The \eqref{proplocalmodI_1} follows from divisor equation and the induction hypothesis: 
    \[\langle Hc_2, c_3, \dots, c_n \rangle^Y \equiv \phi^*\langle hb_2, b_3, \dots, b_n \rangle  \pmod{\Iloc}. \]
    The \eqref{proplocalmodI_2} follows from the induction hypothesis, the fundamental class axiom and the divisor equation (when $e = 0$, $1$). For \eqref{proplocalmodI_3}, we show that $\langle \vec{c}, E\rangle^Y \in f^{-1}\Iloc$ and $\langle E^2, \vec{c}\rangle^Y \in f\Rloc[v]$ for $\vec{c}\in j_*H(S)^{\otimes m}$, $1\le m < n$, so that 
    \[\langle \vec{c}_{I_L}, E\rangle^Y \langle E^2, \vec{c}_{I_R}\rangle^Y \in f^{-1}\Iloc\cdot f\Rloc[v] \subseteq \Iloc. \]
    Indeed, by divisor equation, $E\cup j_*H(S_d) = 0$, Lemma~\ref{lm:thetaRdinId} and the induction hypothesis $\langle \vec{c}\rangle^Y \in \Rloc$, we have
    \[\langle \vec{c}, E\rangle^Y = \delta^E \langle \vec{c}\rangle^Y \in f^{-1} \theta \Rloc \subseteq f^{-1}\Iloc. \]
    If $m = 1$, then $\langle E^2, \vec{c}\rangle \in f\Rloc[v]$ by Proposition~\ref{propGWYloc} \eqref{propGWYloc_4}. If $m \ge 2$, apply the reconstruction formula, we get the equation 
    \begin{align*}
        \langle E^2, \vec{c}\rangle^Y &= \sum_{1\in I_L, 2\in I_R, \mu} \langle E, \vec{c}_{I_L}, T_\mu\rangle^Y \cdot \delta^E \langle T^\mu, \vec{c}_{I_R}\rangle^Y - \sum_{1, 2\in I_R, \mu} \delta^E\langle E, \vec{c}_{I_L}, T_\mu\rangle^Y \cdot \langle T^\mu, \vec{c}_{I_R}\rangle^Y. 
    \end{align*}
    When $I_L = \varnothing$, it follows from Proposition~\ref{propGWYloc} \eqref{propGWYloc_5} that
    \begin{align*}
        \delta^E \langle E, \vec{c}_{I_L}, T_\mu\rangle^Y \cdot \langle T^\mu, \vec{c}_{I_R}\rangle^Y &= (\delta^E)^2 \langle T_\mu\rangle^Y \cdot \langle T^\mu, \vec{c}\rangle^Y \\
        &= \bigl(\tfrac{1}{d}\langle E, E, E\rangle - 1\bigr) \langle E^2, \vec{c}\rangle^Y = \bigl(\tfrac{1}{uf^3} - 1\bigr) \langle E^2, \vec{c}\rangle^Y.
    \end{align*}
    So we get
    \begin{align}
        \frac{1}{uf^3}\langle E^2, \vec{c}\rangle^Y &= \sum_{1\in I_L, 2\in I_R, \mu} \delta^E \langle \vec{c}_{I_L}, T_\mu\rangle^Y \cdot \delta^E \langle T^\mu, \vec{c}_{I_R}\rangle^Y \notag\\
        &\quad - \sum_{I_L\neq \varnothing, 1, 2\in I_R, \mu} (\delta^E)^2\langle \vec{c}_{I_L}, T_\mu\rangle^Y\cdot \langle T^\mu, \vec{c}_{I_R}\rangle^Y. \label{eqreconEE}
    \end{align}

    For $d = 1$, $2$, $3$, $4$, each term in the right hand side lies in one of the followings (by induction on $m$): 
    \begin{align*}
        \delta^E \Rloc \cdot \delta^E \Rloc &\subseteq f^{-1}\Iloc\cdot f^{-1} \Iloc = f^{-2} \Iloc^2, \\
        (\delta^E)^2 \Rloc\cdot \Rloc &\subseteq f^{-2}\Iloc\cdot \Rloc = f^{-2}\Iloc, \\
        \delta^E(f^{-1}\Iloc)\cdot \delta^E(f\Rloc[v]) &\subseteq f^{-2}\Iloc \cdot \Rloc[v] = f^{-2} \Iloc, \\
        (\delta^E)^2(f^{-1}\Iloc)\cdot f\Rloc[v] &\subseteq f^{-3}\Iloc \cdot f\Rloc[v] = f^{-2} \Iloc, \\
        f^{-1}\Iloc\cdot (\delta^E)^2(f\Rloc[v]) &\subseteq f^{-1}\Iloc\cdot f^{-1}\Rloc[v] = f^{-2}\Iloc. 
    \end{align*}
    Therefore, the right hand side of \eqref{eqreconEE} lies in $f^{-2}\Iloc$, and hence
    \[\langle E^2, c\rangle^Y \in uf\Iloc \subseteq f\Rloc[v]. \]
    
    For $d = 5$, $6\I$, $6\II$, $8$, each term in the right hand side lies in one of the followings (by induction on $m$): 
    \begin{align*}
        \delta^E \Rloc \cdot \delta^E \Rloc &\subseteq f^{-1}\Iloc\cdot f^{-1} \Iloc = f^{-2} \Iloc^2, \\
        (\delta^E)^2 \Rloc\cdot \Rloc &\subseteq f^{-2}\Iloc^2\cdot \Rloc = f^{-2}\Iloc^2, \\
        \delta^E(f^{-1}\Iloc)\cdot \delta^E(f\Rloc[v]) &\subseteq f^{-2}\Iloc^2 \cdot \Rloc[v] = f^{-2}\Iloc^2, \\
        (\delta^E)^2(f^{-1}\Iloc)\cdot f\Rloc[v] &\subseteq f^{-3}\Iloc^3 \cdot f\Rloc[v] = f^{-2} \Iloc^3, \\
        f^{-1}\Iloc\cdot (\delta^E)^2(f\Rloc[v]) &\subseteq f^{-1}\Iloc\cdot f^{-1}\Iloc = f^{-2}\Iloc^2. 
    \end{align*}
    Therefore, the right hand side of \eqref{eqreconEE} lies in $f^{-2}\Iloc^2$, and hence
    \[\langle E^2, \vec{c}\rangle^Y \in uf\Iloc^2 \subseteq f\Rloc. \]

    If $a_\nu = i_d^*h^2$ for all $\nu$, then, by the reconstruction formula and \eqref{proplocalmodI_1}, \eqref{proplocalmodI_2}, \eqref{proplocalmodI_3} above, 
    \begin{align*}
        \langle c_1, \dots, c_n\rangle^Y &= \sum \langle H^2, \vec{c}_{I_L}, T_\mu\rangle^Y \cdot \delta^H \langle T^\mu, \vec{c}_{I_R}\rangle^Y - \sum \delta^H\langle H^2, \vec{c}_{I_L}, T_\mu\rangle^Y \cdot \langle T^\mu, \vec{c}_{I_R}\rangle^Y \\
        &\equiv \phi^*\Bigl(\sum \langle h^2, \vec{b}_{I_L}, \bar{T}_\mu\rangle^X \cdot \delta^h \langle \bar{T}^\mu, \vec{b}_{I_R}\rangle^X - \sum \delta^h\langle h^2, \vec{b}_{I_L}, \bar{T}_\mu\rangle^X \cdot \langle \bar{T}^\mu, \vec{b}_{I_R}\rangle^X \Bigr) \\
        &= \phi^*\langle b_1, \dots, b_n\rangle^X \pmod{\Iloc}, 
    \end{align*}
    as desired. 
\end{proof}

%%%%%%%%%%%%%%%%%%%%
\subsection{Gromov--Witten invariants for \texorpdfstring{$\bm{d = 7}$}{d = 7}}\label{subsec;GW=7}
%%%%%%%%%%%%%%%%%%%%

The case of $d=7$ is slightly different from other cases since we are unable to identify the curve classes of $X_7 = \operatorname{Bl}_\pt \bP^3$ only by its degree. We will follow the same approach as in the case $d \neq 7$ and Convention \ref{conv;omitG}. 

%%%%%%%%%%
%%%%%%%%%%
\subsubsection{Invariants of $X_7$}\label{subsubsec;Reconst_X7}
%%%%%%%%%%
%%%%%%%%%%

Since $X_7$ is a toric Fano variety, we can write down its $J$-function completely. Recall that $h_1$ is the pullback, via the blow-up $X_7 \to \bP^3$, of
the hyperplane class on $\bP^3$, $e$ is the exceptional divisor in $X_7$, $h_2 = h_1 - e$, and $h = h_1 + h_2 = c_1(\cO_{X_7}(1))$ (Remark \ref{rmk;OX7_1}). Then $H(X_7) = \bC[h_1, h_2] / (h_1(h_1 - h_2), h_2^3)$. Let $\bar{\gamma}_1 = h_2^2$ and $\bar{\gamma}_2 = h_2(h_1 - h_2)$. Then $(h_i, \bar{\gamma}_j)_{X_7} = \delta_{ij}$ and the Mori cone $\operatorname{NE}(X_7)$ of $X_7$ is generated by $\bar{\gamma}_1$ and $\bar{\gamma}_2$. The $J$-function of $X_7$ is then
\[J^{X_7}(t, z^{-1}) = e^{t/z}\sum_{m_1, m_2 \ge 0} \frac{1}{(h_1)^{\overline{m_1}}(h_1 - h_2)^{\overline{m_1 - m_2}}((h_2)^{\overline{m_2}})^3} Q^{m_1\bar{\gamma}_1 + m_2\bar{\gamma}_2}. \]

We first compute the $1$-point and $2$-point invariants of $X_7$. 
\begin{proposition}\label{propGWX7} 
    We have 
    \begin{enumerate}[(i)]
        \item\label{propGWX7_1} $\langle \pt \rangle^{X_7} = Q^{\bar{\gamma}_1}$; 
        \item\label{propGWX7_2} for divisors $D_1$ and $D_2 \in H^2(X_7)$, 
        \[\langle hD_1, hD_2\rangle^{X_7} = (hD_1D_2,1) Q^{\bar{\gamma}_1} + 2(D_1, \bar{\gamma}_1)(D_2, \bar{\gamma}_1) Q^{\bar{\gamma}_1} + (D_1, \bar{\gamma}_2)(D_2, \bar{\gamma}_2) Q^{\bar{\gamma}_2}; \]
        \item\label{propGWX7_3} $\langle \pt, \pt\rangle^{X_7} = Q^{\bar{\gamma}_1 + \bar{\gamma}_2}$. 
    \end{enumerate}
\end{proposition}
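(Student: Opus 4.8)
The plan is to follow the template of Propositions~\ref{propGWXloc} and~\ref{propGWYloc}: since $X_7$ is a toric Fano threefold its $J$-function is the explicit expression displayed above and the mirror transform is trivial ($\tau = t$), so every descendant one-point invariant can be read off as the coefficient
\[\langle\psi^k T\rangle^{X_7} = [z^{-(k+2)}]\int_T\sum_\beta J_\beta^{X_7}Q^\beta,\]
and every two-point invariant is then assembled from these by the reconstruction formula (Theorem~\ref{thr:rc}) together with the divisor and fundamental-class axioms. First I would record the dimension constraint $\operatorname{vdim}_\beta = 2(h,\beta) + n$ (using $c_1(X_7) = 2h$ and $\dim X_7 = 3$) and the pairings $(h_i,\bar{\gamma}_j) = \delta_{ij}$, $(h,\bar{\gamma}_i) = 1$, which force $(h,\beta) = 1$ in \eqref{propGWX7_1}--\eqref{propGWX7_2} and $(h,\beta) = 2$ in \eqref{propGWX7_3}. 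Hence only $\beta \in \{\bar{\gamma}_1,\bar{\gamma}_2\}$ (resp.\ $\beta \in \{2\bar{\gamma}_1,\bar{\gamma}_1+\bar{\gamma}_2,2\bar{\gamma}_2\}$) can contribute, reducing each statement to a finite computation.

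For \eqref{propGWX7_1} I would expand the two coefficients
\[J_{\bar{\gamma}_1}^{X_7} = \frac{1}{(h_1+z)(h_1-h_2+z)},\qquad J_{\bar{\gamma}_2}^{X_7} = \frac{h_1-h_2}{(h_2+z)^3}\]
in powers of $z^{-1}$ and extract the $H^0$-component at order $z^{-2}$: the first contributes $1$, while the second has no fundamental-class part (its lowest cohomological degree is $2$), so $\langle\pt\rangle^{X_7} = Q^{\bar{\gamma}_1}$. Geometrically this reflects that $\bar{\gamma}_2$ is a line in the exceptional divisor $e = \bP^2$, so no such curve meets a general point.

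For \eqref{propGWX7_2} I would apply Theorem~\ref{thr:rc}(a) to peel a divisor off one of the two $H^4$-insertions, reducing $\langle hD_1, hD_2\rangle^{X_7}$ to the one-point descendants already extracted from $J_{\bar{\gamma}_1}^{X_7}$, $J_{\bar{\gamma}_2}^{X_7}$ together with two-point invariants of the form $\langle h, \pt\rangle^{X_7}$ and $\langle D,\pt\rangle^{X_7}$ that the divisor equation evaluates via \eqref{propGWX7_1}. The classical intersection term $(hD_1D_2,1)Q^{\bar{\gamma}_1}$ emerges from the $\langle h, D_1hD_2\rangle$-type contribution, while the two remaining terms $2(D_1,\bar{\gamma}_1)(D_2,\bar{\gamma}_1)Q^{\bar{\gamma}_1}$ and $(D_1,\bar{\gamma}_2)(D_2,\bar{\gamma}_2)Q^{\bar{\gamma}_2}$ are the genuinely quantum contributions of the two extremal classes, the asymmetric weights $2$ versus $1$ being dictated by the shapes of $J_{\bar{\gamma}_1}^{X_7}$ (a product of two linear factors in $z$) and $J_{\bar{\gamma}_2}^{X_7}$. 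Finally \eqref{propGWX7_3} is obtained by the same reconstruction applied to $\langle\pt,\pt\rangle^{X_7}$, feeding in \eqref{propGWX7_1}, \eqref{propGWX7_2} and the order-$z^{-4}$ data of $J_{2\bar{\gamma}_1}^{X_7}$, $J_{\bar{\gamma}_1+\bar{\gamma}_2}^{X_7}$, $J_{2\bar{\gamma}_2}^{X_7}$; I expect the $2\bar{\gamma}_1$- and $2\bar{\gamma}_2$-contributions to drop out, leaving $Q^{\bar{\gamma}_1+\bar{\gamma}_2}$.

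The main obstacle I anticipate is the bookkeeping in the reconstruction for \eqref{propGWX7_2} and \eqref{propGWX7_3}: unlike the $d\neq 7$ case there is no symmetrization collapsing $H(X_7)$ to $\bC[h]/(h^4)$, so I must run the WDVV recursion with the full four-dimensional basis and its Poincar\'e-dual $H^2 \leftrightarrow H^4$ pairing, carefully tracking the descendant two-point terms (such as $\langle h, \psi hD_2\rangle^{X_7}$) that the formula generates and separating the contributions of $\bar{\gamma}_1$ from those of $\bar{\gamma}_2$. Verifying the vanishing of the $2\bar{\gamma}_1$- and $2\bar{\gamma}_2$-contributions in \eqref{propGWX7_3}---either directly from the explicit $J$-coefficients or from the observation that $\bar{\gamma}_2$-curves are confined to the exceptional $\bP^2$---is the delicate endpoint of the argument.
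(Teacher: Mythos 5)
Your proposal is correct and follows essentially the same route as the paper: read off the one-point descendant invariants $\langle\psi^k T\rangle^{X_7}$ directly from the explicit toric $J$-function (the mirror map being trivial), then assemble the two-point invariants via the reconstruction formula of Theorem~\ref{thr:rc} together with the divisor equation, with the $Q^{2\bar{\gamma}_1}$-terms cancelling only at the final WDVV stage exactly as you anticipate (the paper's $\langle\psi^2\pt\rangle^{X_7}=\tfrac14 Q^{2\bar{\gamma}_1}+Q^{\bar{\gamma}_1+\bar{\gamma}_2}$ is itself nonzero in degree $2\bar{\gamma}_1$). The only quibble is that the relevant basis of $H(X_7)$ is six-dimensional rather than four-dimensional, but this does not affect the argument.
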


\begin{proof}
    As before, 
    \[\langle \psi^k T\rangle^{X_7} = [z^{-(k+2)}] \int_T \sum_\beta J_\beta^{X_7} Q^\beta. \]
    So, for \eqref{propGWX7_1}, 
    \[\langle \pt \rangle^{X_7} = [z^{-2}] \int_\pt \sum_\beta J_\beta^{X_7} Q^\beta = [z^{-2}] \int_\pt  J_{\bar{\gamma}_1}^{X_7} Q^{\bar{\gamma}_1} + J_{\bar{\gamma}_2}^{X_7} Q^{\bar{\gamma}_2} = Q^{\bar{\gamma}_1}. \]

    For further use, we also compute that 
    \begin{equation}\label{eq:psi2pt}
        \langle \psi^2 \pt \rangle = [z^{-4}] \int_{\pt} J_{2\bar{\gamma}_1}^{X_7} Q^{2\bar{\gamma}_1} + J_{\bar{\gamma}_1 + \bar{\gamma}_2}^{X_7} Q^{\bar{\gamma}_1 + \bar{\gamma}_2} + J_{2\bar{\gamma}_2}^{X_7} Q^{2\bar{\gamma}_2} = \tfrac14 Q^{2\bar{\gamma}_1} + Q^{\bar{\gamma}_1 + \bar{\gamma}_2},
    \end{equation}
    and for $D \in H^2(X_7)$,  
    \begin{equation}\label{eq:psihD}
        \langle \psi hD \rangle = [z^{-3}] \int_{HD} J_{\bar{\gamma}_1}^{X_7} Q^{\bar{\gamma}_1} + J_{\bar{\gamma}_2}^{X_7} Q^{\bar{\gamma}_2} = (hD, -2h_1 + h_2) Q^{\bar{\gamma}_1} + (hD, h_1 - h_2) Q^{\bar{\gamma}_2}. 
    \end{equation}

    For \eqref{propGWX7_2}, by reconstruction formula, 
    \[\langle hD_1, hD_2\rangle = \langle h, hD_1D_2\rangle + \delta^{D_1}\langle h, \psi hD_2\rangle - \sum \delta^{D_1} \langle h, T_\mu\rangle \langle T^\mu, hD_2\rangle. \]
    It is easy to check that the sum is $0$ by degree argument. So, by divisor equation, \eqref{propGWX7_1} and \eqref{eq:psihD}, 
    \begin{align*}
        \langle hD_1, hD_2\rangle &= \langle hD_1D_2\rangle + \delta^{D_1}\langle h^2D_2\rangle + \delta^{D_1}\delta^h\langle \psi hD_2\rangle \\
        &= (hD_1D_2, 1) \langle \pt\rangle + (D_2, h^2) \delta^{D_1} \langle \pt\rangle \\
        &\quad + \delta^{D_1}((hD_2, -2h_1 + h_2) Q^{\bar{\gamma}_1} + (hD_2, h_1 - h_2) Q^{\bar{\gamma}_2}) \\
        &= (hD_1D_2,1) Q^{\bar{\gamma}_1} + (D_2, h^2 + h(-2h_1 + h_2)) (D_1, \bar{\gamma}_1) Q^{\bar{\gamma}_1} \\
        &\quad + (D_2, h(h_1 - h_2)) (D_1, \bar{\gamma}_2) Q^{\bar{\gamma}_2} \\
        &= (hD_1D_2,1) Q^{\bar{\gamma}_1} + 2(D_1, \bar{\gamma}_1)(D_2, \bar{\gamma}_1) Q^{\bar{\gamma}_1} + (D_1, \bar{\gamma}_2)(D_2, \bar{\gamma}_2) Q^{\bar{\gamma}_2}. 
    \end{align*}

    Finally, by reconstruction formula, \eqref{propGWX7_1}, \eqref{propGWX7_2} and \eqref{eq:psi2pt}, 
    \begin{align*}
        7\langle \pt, \pt\rangle &= \langle h^3, \pt\rangle = \delta^h \langle h^2, \psi \pt\rangle - \delta^h\langle h^2, T_\mu\rangle\langle T^\mu, \pt\rangle \\
        &= 2(\delta^h \langle h, \psi^2 \pt\rangle - \delta^h\langle h, T_\mu\rangle \langle T^\mu, \psi \pt\rangle) - \langle h^2, \bar{\gamma}_1\rangle \langle h_1, \pt\rangle - \langle h^2, \bar{\gamma}_2\rangle \langle h_2, \pt\rangle \\
        &= 8 \langle \psi^2\pt\rangle - 2(\delta^h)^2 \langle \pt\rangle \langle 1,\psi \pt\rangle - \langle h^2, h(-\tfrac12 h_1 + h_2)\rangle \delta^{h_1}\langle \pt\rangle \\
        &= 2 Q^{2\bar{\gamma}} + 8Q^{\bar{\gamma}_1 + \bar{\gamma}_2} - 2Q^{2\bar{\gamma}_1} - (h^2(-\tfrac12 h_1 + h_2),1) Q^{2\bar{\gamma}_1} \\
        &\quad - 2(h, \bar{\gamma}_1)((-\tfrac12 h_1 + h_2), \bar{\gamma}_1) Q^{2\bar{\gamma}_1} - (h, \bar{\gamma}_2)((-\tfrac12 h_1 + h_2), \bar{\gamma}_2) Q^{\bar{\gamma}_1 + \bar{\gamma}_2} \\
        &= 2 Q^{2\bar{\gamma}} + 8Q^{\bar{\gamma}_1 + \bar{\gamma}_2} - 2Q^{2\bar{\gamma}} - Q^{2\bar{\gamma}_1} + Q^{2\bar{\gamma}_1} - Q^{\bar{\gamma}_1 + \bar{\gamma}_2} = 7Q^{\bar{\gamma}_1 + \bar{\gamma}_2}, 
    \end{align*}
    which is \eqref{propGWX7_3}. 
\end{proof}

%%%%%%%%%%
%%%%%%%%%%
\subsubsection{Asymptotic behavior of mirror transform}\label{subsubsec;log7}
%%%%%%%%%%
%%%%%%%%%%

As in \S \ref{subsubsec;PF_neq_7}, since $S_7$ is a zero locus of a general global section of the bundle $\cO_{\T_7}(1) = \cO(h_1 + h_2)$ over $X_7$, we can write down the $I$-function of $Y_7$: 
\begin{align}
    I^{Y_7}(t) &= e^{t/z}\sum_{n, m_1, m_2 \ge 0} \pi_7^*i_7^* J_{m_1\bar{\gamma}_1 + m_2\bar{\gamma}_2}^{X_7}\cdot \frac{(F)^{\overline{m_1 + m_2}}}{(E)^{\overline{n-m_1-m_2}} (H)^{\overline{n}}} P^{m_1 \ell_1 + m_2 \ell_2 + n\gamma} 
    \notag \\
    &= e^{t/z}\sum_{n, m_1, m_2 \ge 0} \frac{(F_1 + F_2)^{\overline{m_1 + m_2}}}{(F_1)^{\overline{m_1}}(F_1 - F_2)^{\overline{m_1 - m_2}}((F_2)^{\overline{m_2}})^3(E)^{\overline{n-m_1-m_2}} (H)^{\overline{n}}} P^{m_1 \ell_1 + m_2 \ell_2 + n\gamma} \label{eq:IY7}
\end{align}
where, as before, $F_1 = \pi_7^*i_7^*h_1$, $F_2 = \pi_7^*i_7^*h_2$, $F = F_1 + F_2 = \pi_7^*i_7^*h$, $\gamma$ is the fiber class of the projective bundle $Y_7 \to S_7$, and $\ell_1$, $\ell_2 \in H_2(Y_7)^{G_{S_7}}$ are chosen such that $(H, \ell_i) = 0$, $(F_i, \ell_j) = \delta_{ij}$. Then $\gamma$, $\ell_1$, and $\ell_2$ generate the cone $\operatorname{NE}(Y_7)/G_{S_7}$. 
We will, again, omit the group $G_{S_7} \cong \fS_2$ in the notation for simplicity. 

Before computing the GW invariants of $Y_7$, we first analyze the mirror transform and the function $f_7$ which will be defined below.  

It follows from \eqref{eq:IY7} that the mirror transform is $\tau = t - g_7E$, where 
\begin{equation}\label{eq:g7}
    g_7 = g_7(P^{\ell_1}, P^{\ell_2}) \coloneqq \sum_{\substack{m_1 \ge m_2 \\ m_1 + m_2 > 0}} \frac{(-1)^{m_1 + m_2}}{m_1+m_2}\frac{(m_1 + m_2)!^2}{m_1!(m_1-m_2)!m_2!^3} P^{m_1\ell_1 + m_2\ell_2}. 
\end{equation}
This gives the following change of variables: 
\[Q^\gamma = P^\gamma e^{-g_7}, \quad Q^{\ell_1} = P^{\ell_1} e^{g_7},\quad Q^{\ell_2} = P^{\ell_2} e^{g_7}. \]

As before, define $\theta = -\frac1z \hat{E} = P^{\ell_1} \odv{}{P^{\ell_1}} + P^{\ell_2} \odv{}{P^{\ell_2}}$ and 
\[f_7(P^{\ell_1}, P^{\ell_2}) = 1 + \theta g_7(P^{\ell_1}, P^{\ell_2}) = \sum_{m_1 \ge m_2} \frac{(-1)^{m_1 + m_2}(m_1 + m_2)!^2}{m_1!(m_1-m_2)!m_2!^3} P^{m_1\ell_1 + m_2\ell_2}. \]
Let $\theta_1 = P^{\ell_1} \odv{}{P^{\ell_1}}$, $\theta_2 = P^{\ell_2} \odv{}{P^{\ell_2}}$. Then $f_7$ satisfies the following differential equations: 
\begin{equation}\label{eqf7ODE} 
    \begin{aligned}
    (\theta_1 (\theta_1 - \theta_2) + P^{\ell_1} (\theta_1 + \theta_2 + 1)^2) f &= 0, \\
    (\theta_2^3 + P^{\ell_2} (\theta_1 + \theta_2 + 1)^2 (\theta_1 - \theta_2)) f &= 0. 
    \end{aligned}
\end{equation}

Let $y = P^{-\ell_1}$ and let $r = P^{\ell_2 - \ell_1} = Q^{\ell_2 - \ell_1}$. Then $\theta_1 = -\theta_y - \theta_r$, $\theta_2 = \theta_r$, where we denote $\theta_y \coloneqq y \odv{}{y}$, $\theta_r \coloneqq r \odv{}{r}$. With this change of variables, the differential equation \eqref{eqf7ODE} becomes
\begin{align}
    ((\theta_y-1)^2  + y(\theta_y + \theta_r)(\theta_y + 2\theta_r))f_7 &= 0, \label{eqf7ODE1}\\
    ((3+3y-4r)\theta_y + (7y-2-8r)\theta_r - 3)f_7 &= 0. \label{eqf7ODE2}
\end{align}
Plugging \eqref{eqf7ODE2} into \eqref{eqf7ODE1}, we obtain a second-order differential equation of the form
\begin{equation}\label{eqf7ODEABC}
    (A_2(y,r)\theta_y^2 + A_1(y,r)\theta_y + A_0(y,r))f_7 = 0, 
\end{equation}
where the coefficients are given by
\begin{align*}
    A_2(y,r) &= -2(1+4r)^3 + y + 72yr + 272yr^2 + 15y^2 - 162y^2r + 8y^2r^2 + 19y^3 - 15y^3r + 7y^4, \\
    A_1(y,r) &= 4(1+4r)^3 - 15y - 184yr - 496yr^2 - 42y^2 + 280y^2r - 23y^3 - y^3r, \\
    A_0(y,r) &= -2(1+4r)^3 + 12y + 108yr + 240yr^2 + 21y^2 - 126y^2r + 7y^3. 
\end{align*}
This shows that, for $r \neq -\frac14$, $\infty$, the function $f_7|_r$ has local exponents $(1,1)$ at $y = 0$. Therefore, we may assume that 
\[f_7 = \sum_{k\ge 1} (a_k(r)\log y + b_k(r)) y^k,\quad r\neq -\tfrac14, \infty.  \]
Substituting this into~\eqref{eqf7ODE2}, we see that 
\[\bigl((-4r)a_1(r) + (-2-8r) \theta_r a_1(r)\bigr) y\log y + O(y) = 0, \]
or equivalently, 
\[a_1(r) = \operatorname{exp} \Bigl(-\int \frac{2}{1+4r}\,\odif{r}\Bigr) = \frac{c_0}{(1+4r)^{1/2}}\] 
for some constant $c_0$. To determine $c_0$, observe that when $r = 0$, 
\[f_7|_{r = 0} = \sum_{m \ge 0} (-1)^{m} P^{m\ell_1} = \frac{1}{1 + P^{\ell_1}} = \frac{y}{1 + y}. \]
This yields $c_0 = a_1(0) = 0$, and hence $a_1(r) = 0$ for all $r$. Since the solutions to \eqref{eqf7ODEABC} have local exponents $(1,1)$, it follows that $a_k(r) \equiv 0$ for all $k$. Moreover, \eqref{eqf7ODE2} together with the boundary condition again tell us that
\begin{equation}\label{eqf7expan}
    f_7 = \sum_{k\ge 1}b_k(r)y^k = \frac{1}{(1+4r)^{1/2}}y - \frac{1-2r}{(1+4r)^{5/2}}y^2 + O(y^3),
\end{equation}
where $\{b_k(r)\}$ satisfies the recurrence relation 
\begin{equation}\label{eq:bk(r)}
    k^2b_{k+1}(r) + (k^2 + 3k \theta_r b_k(r) + 2\theta_r^2 b_k) = 0,\quad k\ge 1. 
\end{equation}

Since $f_7 = 1 - \theta_y g_7$, we have
\[g_7 = \log y -  \sum_{k\ge 1} \frac{b_k(r)}{k} y^k - c(r)\]
for some function $c(r)$. Substituting $f_7 = 1 - \theta_y g_7$ into \eqref{eqf7ODE1}, we get 
\begin{align*}
    0 &= (\theta_y^2y^{-1} + (\theta_y+\theta_r)(\theta_y+2\theta_r))(1 - \theta_y g_7) \\
    &= -\theta_y (y^{-1} + (\theta_y y^{-1}\theta_y + (\theta_y+\theta_r)(\theta_y+2\theta_r))g_7). 
\end{align*}
A direct computation using \eqref{eq:g7} then gives 
\[y^{-1} + (\delta_y y^{-1}\delta_y + (\delta_y+\delta_r)(\delta_y+2\delta_r))g_7 = 0. \]
This implies that $\delta_r^2 c(r) = 0$, i.e., $c(r) = c_1 \log r + c_2$ for some $c_1$, $c_2$. To determine these constants, note that 
\[\left.g_7\right|_{r = 0} = \sum_{m \ge 1} \frac{(-1)^m}{m} P^{m\ell_1} = - \log(1 + P^{\ell_1}) = \log \Bigl(\frac{y}{1+y}\Bigr), \]
which shows that $c(r) = 0$ for all $r$. Hence, 
\begin{align}
    g_7 &= \log y - \sum_{k\ge 1} \frac{b_k(r)}{k} y^k, \label{eq:g7ex} \\
    g_{7,1} \coloneqq \theta_{P^{\ell_1}} g_7 &= -1 + \sum_{k\ge 1} \left(b_k(r) + \frac{\theta_rb_k(r)}{k}\right)y^k, \label{eq:g71}\\
    g_{7,2} \coloneqq \theta_{P^{\ell_2}} g_7 &= -\sum_{k\ge 1} \frac{\theta_rb_k(r)}{k}y^k. \label{eq:g72}
\end{align}
In particular, both $\frac{g_{7,1} + 1}{f}$ and $\frac{g_{7,2}}{f}$ are bounded as $y \to 0$. This suggests us to consider the divisors $\tilde{F}_1 = F_1 + E$, $\tilde{F}_2 = F_2$, for which 
\begin{equation}\label{eq:dtF_1F_2}
    \delta^{\tilde{F}_1} = \theta_{\tilde{\gamma}_1} - \frac{g_{7,2}}{f}\theta_y - \theta_r, \quad \delta^{\tilde{F}_2} = \frac{g_{7,2}}{f}\theta_y + \theta_r 
\end{equation}
has coefficients that remain bounded near $y = 0$, where $\theta_{\tilde{\gamma}_1} = P^{\tilde{\gamma}_1} \odv{}{P^{\tilde{\gamma}_1}} = Q^{\tilde{\gamma}_1} \odv{}{Q^{\tilde{\gamma}_1}}$. %These estimates are crucial and will play a central role in \S \ref{subsubsec;Reconst_Y7}. 

The asymptotic behaviors made in this subsubsection are crucial and will play a central role in \S \ref{subsubsec;Reconst_Y7}.

%%%%%%%%%%
%%%%%%%%%%
\subsubsection{Reconstructions on $Y_7$ and formal ring $\sR_7$}\label{subsubsec;Reconst_Y7}
%%%%%%%%%%
%%%%%%%%%%

The main result in this subsubsection is Proposition~\ref{proplocal7modI}, which is an analogy of Proposition~\ref{proplocalmodI} in the case $d \neq 7$. Recall that $y = P^{-\ell_1}$ and $r = P^{\ell_2 - \ell_1} = Q^{\ell_2 - \ell_1}$.

\begin{proposition}\label{propGWY7}
    Let $\tilde{\gamma}_1 = \ell_1 + \gamma$ be the lifting of $\bar{\gamma}_1$ such that $(E, \tilde{\gamma}_1) = 0$. Take $F_0 = \frac12 F_1 - F_2$ so that $(EF_0,\tilde{F}_1) = (EF_0, \tilde{F}_2) = 0$ and $(EF_0, E) = 1$. 
    
    When $r \neq -\frac14$, $\frac12$, $\infty$, we have: 
    \begin{enumerate}[(i)]
        \item\label{propGWY7_1} $\langle \pt \rangle^{Y_7} = (1 + y)Q^{\tilde{\gamma}_1}$; 
        \item\label{propGWY7_2} for divisors $D_1$, $D_2 \in H^2(Y_7)$, 
        \[\langle HD_1, HD_2\rangle^{Y_7} = ((HD_1D_2,1) (1+y) + 2(D_1, \tilde{\gamma}_1)(D_2, \tilde{\gamma}_1) + (D_1, \tilde{\gamma}_2)(D_2, \tilde{\gamma}_2) r)Q^{\tilde{\gamma}_1}; \]
        \item\label{propGWY7_3} $\langle \pt, \pt\rangle^{Y_7} = r Q^{2\tilde{\gamma}_1} + O(y)$; 
        \item\label{propGWY7_4} for divisor $D\in H^2(Y_7)$, $\langle HD, EF_0\rangle^{Y_7} \in O(y)$; 
        \item\label{propGWY7_5} near $y = 0$,  
        \begin{align*}
            \langle E, E, F_0\rangle^{Y_7} &= -(1+4r)^{1/2} y^{-1} + O(1) \\
            \langle E, F_0, F_2\rangle^{Y_7} &= -\frac32\cdot\frac{1-(1+4r)^{1/2}}{1+4r} + O(y)
        \end{align*}
    \end{enumerate}
    and all these invariants lies in the ring $\bC[Q^{\tilde{\gamma}_1}, r, (1+4r)^{-1/2},(1-2r)^{-1}]\Laurent{y}$. 
\end{proposition}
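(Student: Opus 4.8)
The plan is to follow exactly the template already used for the cases $d \neq 7$ in Propositions~\ref{propGWXloc} and \ref{propGWYloc}, and for $X_7$ in Proposition~\ref{propGWX7}. First I would read off the low-order coefficients $I_\beta^{Y_7}$ from the $I$-function \eqref{eq:IY7}, then convert descendant $1$-point invariants through the mirror map $\tau = t - g_7 E$ via
\[\langle \psi^k T\rangle^{Y_7} = [z^{-(k+2)}] \int_T e^{g_7 E} \sum_\beta I_\beta^{Y_7} P^\beta,\]
and finally bootstrap all higher invariants using the reconstruction formula (Theorem~\ref{thr:rc}, Proposition~\ref{propambrec}) together with the divisor equation. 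The algebraic input that replaces the relation $\delta^E = -f^{-1}\theta$ of Corollary~\ref{cor:QandP} is the pair of operators $\delta^{\tilde{F}_1}, \delta^{\tilde{F}_2}$ in \eqref{eq:dtF_1F_2}, whose coefficients are \emph{bounded} near $y = 0$ precisely because $\tfrac{g_{7,2}}{f}$ is bounded by \eqref{eq:g72} and the expansion \eqref{eqf7expan}.

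I would compute the items in the order \eqref{propGWY7_1}, \eqref{propGWY7_2}, \eqref{propGWY7_4}, \eqref{propGWY7_5}, and lastly \eqref{propGWY7_3}. Since $c_1(Y_7) = 2H$ and $(H, \ell_i) = 0$, $(H, \gamma) = 1$, the virtual-dimension constraint forces $I_{m_1\ell_1 + m_2\ell_2 + n\gamma}^{Y_7} = O(z^{-2n})$, so only the $n \le 1$ coefficients feed the $1$-point functions; inserting the $e^{g_7 E}$ factor and using \eqref{eq:g71}, \eqref{eq:g72} yields \eqref{propGWY7_1}, and the divisor/reconstruction argument of Proposition~\ref{propGWYloc}\eqref{propGWYloc_2}, upgraded to arbitrary divisors $D_1, D_2$, gives \eqref{propGWY7_2}. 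For \eqref{propGWY7_4} I would apply reconstruction with $H$ and the relation $E \cup j_{7*}H(S_7) = 0$, reducing $\langle HD, EF_0\rangle^{Y_7}$ to a $\delta^{\tilde{F}_i}$ of lower invariants, whose bounded coefficients force the result into $O(y)$. Part \eqref{propGWY7_5} comes from reconstruction applied to $\langle E, E, F_0\rangle^{Y_7}$ and $\langle E, F_0, F_2\rangle^{Y_7}$, with the leading $y^{-1}$ term and the explicit rational-in-$r$ values extracted directly from the expansion \eqref{eqf7expan}, in exact parallel to how $\langle E,E,E\rangle^Y = d/(uf^3)$ arose in Proposition~\ref{propGWYloc}\eqref{propGWYloc_5}. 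Finally \eqref{propGWY7_3} combines \eqref{propGWY7_1}, \eqref{propGWY7_2}, \eqref{propGWY7_4} with the coefficient $I_{2\ell_1 + 2\gamma}^{Y_7}$, mirroring the derivation of Proposition~\ref{propGWYloc}\eqref{propGWYloc_3}.

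For the ring-membership assertion, the key observation is that every quantity produced by this recursion is a polynomial in $Q^{\tilde{\gamma}_1}$ whose coefficients are assembled from $f_7$, its $\theta$-derivatives, and the series $g_{7,1}, g_{7,2}$. By \eqref{eqf7expan} and the recurrence \eqref{eq:bk(r)}, each $y$-expansion coefficient $b_k(r)$ lies in $\bC[r, (1+4r)^{-1/2}]$; after the inversions of $f_7$ (whose leading coefficient is $(1+4r)^{-1/2}$) and the divisions by $1 - 2r$ that enter the reconstruction, all invariants land in $\bC[Q^{\tilde{\gamma}_1}, r, (1+4r)^{-1/2}, (1-2r)^{-1}]\Laurent{y}$. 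The excluded values $r = -\tfrac14, \tfrac12, \infty$ are exactly the loci where these inversions become illegitimate (the regular singular points visible in the coefficients $A_i(y,r)$ of \eqref{eqf7ODEABC}).

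I expect the main obstacle to be part \eqref{propGWY7_5}. Unlike the cases $d \neq 7$, there are two independent extremal directions $\ell_1, \ell_2$, so $f_7$ solves the coupled system \eqref{eqf7ODE1}--\eqref{eqf7ODE2} rather than a single ODE, and the $3$-point functions must be disentangled along both directions simultaneously. The delicate step is to control the leading singular behavior of $\langle E, E, F_0\rangle^{Y_7}$, which blows up like $(1+4r)^{1/2}\,y^{-1}$ as the inversion of $f_7 \sim (1+4r)^{-1/2}y$ is performed, while at the same time verifying that the companion invariants remain in the prescribed ring and that the $O(y)$ bookkeeping stays consistent across every reconstruction relation.
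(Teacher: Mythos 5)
Your overall architecture matches the paper's proof: same ordering (i), (ii), (iv), (v), (iii), same use of the mirror map $\tau = t - g_7 E$ to convert descendant $1$-point invariants, the same reconstruction/divisor bootstrap, and the same reliance on the bounded operators $\delta^{\tilde F_1}, \delta^{\tilde F_2}$ from \eqref{eq:dtF_1F_2}. The ring-membership discussion via \eqref{eq:bk(r)} and the identification of the excluded values of $r$ are also in line with the paper.

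There is, however, a genuine gap in your treatment of part \eqref{propGWY7_4}, and it propagates into \eqref{propGWY7_5}. Writing $\langle \psi EF_0\rangle^{Y_7} = G(y,r)\,Q^{\tilde\gamma_1}$, reconstruction gives
\[
\langle HD, EF_0\rangle^{Y_7} = \delta^{D}\bigl(G\,Q^{\tilde\gamma_1}\bigr) + (D, EF_0)(1+y)Q^{\tilde\gamma_1},
\]
and taking $D = E$ yields only the differential relation $\theta_y G = (1+y)f_7$. This determines $G$ merely up to an additive function $G_0(r)$ of $r$ alone, since $\theta_y$ annihilates the $y$-independent part. Your claim that the bounded coefficients of $\delta^{\tilde F_i}$ ``force the result into $O(y)$'' is not sufficient: $\delta^{\tilde F_1} = \theta_{\tilde\gamma_1} - \tfrac{g_{7,2}}{f_7}\theta_y - \theta_r$ contains the terms $\theta_{\tilde\gamma_1}$ and $\theta_r$, which do \emph{not} raise the $y$-order, so $\delta^{\tilde F_1}(G_0(r)Q^{\tilde\gamma_1})$ contributes $(G_0(r) - \theta_r G_0(r))Q^{\tilde\gamma_1}$ at order $y^0$ unless $G_0 \equiv 0$. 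The vanishing of $G_0(r)$ is exactly the hard content of the paper's proof of (iv): it is established by a separate direct computation of $G$ from the $I$-function \eqref{eq:IY7}, showing $G|_{r=0} = -y$ and $\partial_r^n G|_{r=0} = yV_n(y)/(1+y)^{3n-1}$ via a linear transformation formula for ${}_2\rF_1(2n-1,2n;n+1;-P^{\ell_1})$. Without this input you cannot conclude (iv), and you also lose the precise leading behavior $G = -b_1(r)y + O(y^2) = -(1+4r)^{-1/2}y + O(y^2)$ that the paper feeds into the $2\times 2$ linear system (obtained by taking $D = \tilde F_1, \tilde F_2$) from which the explicit asymptotics of $\langle E,E,F_0\rangle^{Y_7}$ and $\langle E,F_0,F_2\rangle^{Y_7}$ in \eqref{propGWY7_5} are extracted. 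You correctly sensed that (v) is the delicate point, but the actual bottleneck sits one step earlier, in pinning down the constant of integration of $G$.
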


\begin{proof}
    As before, 
    \[\langle \psi^k T\rangle^{Y_7} = [z^{-(k+2)}] \int_T e^{gE/z}\sum_\beta I_\beta^{Y_7} P^\beta. \]
    So, for \eqref{propGWY7_1}, 
    \[\langle \pt \rangle^{Y_7} = [z^{-2}] \int_\pt e^{gE/z}\sum_\beta I_\beta^{Y_7} Q^\beta = [z^{-2}] \int_\pt  I_{\gamma}^{Y_7} P^{\gamma} + I_{\ell_1 + \gamma}^{Y_7} P^{\ell_1 + \gamma} = P^{\gamma} + P^{\ell_1 + \gamma} = (1+y)Q^{\tilde{\gamma}_1}. \]

    For further use, we also compute that 
    \begin{align}
        \langle \psi^2 \pt \rangle^{Y_7} = [z^{-4}] \int_{\pt} e^{gE/z}\sum_\beta I_\beta^{Y_7} Q^\beta &= \tfrac14 P^{2\gamma} + \tfrac12 P^{2\gamma + \ell_1} + Q^{\ell_1 + \ell_2 + 2\gamma} + \tfrac14 Q^{2\ell_1 + 2\gamma} \notag\\
        &= (\tfrac14 + \tfrac12 y + \tfrac14 y^2 + r) Q^{2\tilde{\gamma}_1},\label{eq:psi2ptY7}
    \end{align}
    and for $D \in H^2(X_7)$,  
    \begin{align}
        \langle \psi HD \rangle^{Y_7} &= [z^{-3}] \int_{HD} e^{gE/z}\sum_\beta I_\beta^{Y_7} Q^\beta \notag \\
        &= (HD, -2F_1 + F_2)P^{\ell_1 + \gamma} + (HD, F_1 - F_2)P^{\ell_2 + \gamma} - (HD,H)P^\gamma \notag \\
        &= ((HD, -2F_1 + F_2) - (HD,H) y + (HD, F_1 - F_2) r) Q^{\tilde{\gamma}_1}. \label{eq:psihDY7}
    \end{align}

    For \eqref{propGWY7_2}, by reconstruction formula, 
    \[\langle HD_1, HD_2\rangle^{Y_7} = \langle H, HD_1D_2\rangle^{Y_7} + \delta^{D_1}\langle H, \psi hD_2\rangle^{Y_7} - \sum \delta^{D_1} \langle H, T_\mu\rangle^{Y_7} \langle T^\mu, HD_2\rangle^{Y_7}. \]
    It is easy to check that the sum is $0$ by degree argument. So, by divisor equation, \eqref{propGWY7_1} and \eqref{eq:psihDY7}, 
    \begin{align*}
        \langle HD_1, HD_2\rangle^{Y_7} &= \langle HD_1D_2\rangle^{Y_7} + \delta^{D_1}\langle H^2D_2\rangle^{Y_7} + \delta^{D_1}\delta^H\langle \psi HD_2\rangle^{Y_7} \\
        &= (HD_1D_2, 1) \langle \pt\rangle^{Y_7} + (D_2, H^2) \delta^{D_1} \langle \pt\rangle^{Y_7} \\
        &\quad + \delta^{D_1}(((HD_2, -2F_1 + F_2) - (HD_2,H) y + (HD_2, F_1 - F_2) r) Q^{\tilde{\gamma}_1}) \\
        &= (HD_1D_2,1)(1+y)Q^{\tilde{\gamma}_1} + (D_1, H^2 + H(-2F_1 + F_2)) Q^{\tilde{\gamma}_1} \\
        &\quad + (D_2, H(F_1 - F_2)) (D_1, \tilde{\gamma}_2) Q^{\tilde{\gamma}_2} \\
        &= ((HD_1D_2,1) (1+y) + 2(D_1, \tilde{\gamma}_1)(D_2, \tilde{\gamma}_1) + (D_1, \tilde{\gamma}_2)(D_2, \tilde{\gamma}_2) r)Q^{\tilde{\gamma}_1}. 
    \end{align*}

    Write $\langle \psi E F_0\rangle^{Y_7} = G(y, r) Q^{\tilde{\gamma}_1}$. Then for any divisor $D$, 
    \begin{align*}
        \langle HD, EF_0\rangle^{Y_7} &= \delta^D \langle \psi EF_0\rangle^{Y_7} + \langle H, EF_0D\rangle^{Y_7} - \sum \delta^D \langle H, T_\mu\rangle^{Y_7} \langle T^\mu EF_0\rangle^{Y_7} \\
        &= \delta^D(G Q^{\tilde{\gamma}_1}) + (D, EF_0) (1+y)Q^{\tilde{\gamma}_1}. 
    \end{align*}
    In particular, take $D = E$, we have 
    \[0 = \delta^E(GQ^{\tilde{\gamma}_1}) + (E,EF_0)(1+y)Q^{\tilde{\gamma}_1},  \]
    or, equivalently, $\theta_y G = f_7(1+y)$. By the expansion \eqref{eqf7expan} of $f_7$, we get 
    \[G = \sum_{k\ge 0} G_k(r)y^k, \quad G_k(r) = -\frac{b_k(r) + b_{k-1}(r)}{k}\quad \text{for}\quad k\ge 1, \]
    where $b_0(r)$ is defined to be $0$. 
    
    \begin{claim*}
        We have $G|_{r = 0} = -y$, and for each $n \ge 1$, 
        \[\left.\pdv[order={n}]{G}{r}\right|_{r = 0} = \frac{yV_n(y)}{(1+y)^{3n-1}}\]
        for some polynomial $V_n(y)$ of at most degree $n$. 
    \end{claim*}

    \begin{proof}[Proof of Claim]\renewcommand{\qedsymbol}{$\blacksquare$}
        It follows from 
        \[GQ^{\tilde{\gamma}_1} = \langle \psi E F_0\rangle^{Y_7} = [z^{-3}]\int_{EF_0} \sum_\beta I_\beta^{Y_7} P^\beta = -P^\gamma - Q^{\ell_2 + \gamma} + \cdots\]
        and the identity
        \[\theta_y G = (1 + y)f = \sum_{m_1\ge m_2}\frac{(-1)^{m_1+m_2}(m_1+m_2)!^2}{m_1!(m_1-m_2)!m_2!^3}(P^{m_1\ell_1 + m_2\ell_2} + P^{(m_1-1)\ell_1 + m_2\ell_2})\]
        that 
        \begin{align*}
            G &= -P^{-\ell_1} - P^{-\ell_1 + \ell_2} + \sum_{\substack{m_1\ge m_2 \\ m_1 + m_2\ge 2}} \frac{(-1)^{m_1+m_2}(m_1+m_2)!^2}{m_1!(m_1-m_2)!m_2!^3}\left(\frac{P^{m_1\ell_1 + m_2\ell_2}}{m_1+m_2} + \frac{P^{(m_1-1)\ell_1 + m_2\ell_2}}{m_1+m_2-1}\right) \\
            &= - y - r + \sum_{\substack{m_1\ge m_2 \\ m_1 + m_2\ge 2}} \frac{(-1)^{m_1+m_2}(m_1+m_2)!^2}{m_1!(m_1-m_2)!m_2!^3}\left(\frac{P^{(m_1+m_2)\ell_1}}{m_1+m_2} + \frac{P^{(m_1+m_2-1)\ell_1}}{m_1+m_2-1}\right)r^{m_2} \\
            &= - y - \Bigl(\sum_{m\ge 0} (-1)^m (3m+1)P^{m\ell_1}\Bigr)r \\
            &\quad + \sum_{m_2\ge 2} \frac{(-r)^{m_2}}{m_2!^3}\Biggl(\sum_{m_1\ge m_2}\frac{(-1)^{m_1}(m_1+m_2)!^2}{m_1!(m_1-m_2)!(m_1+m_2)} P^{(m_1+m_2)\ell_1} \\
            &\hphantom{\quad + \sum_{m_2\ge 2} \frac{(-r)^{m_2}}{m_2!^3}} \quad - \sum_{m_1\ge m_2-1}\frac{(-1)^{m_1}(m_1+m_2+1)!^2}{(m_1+1)!(m_1+1-m_2)!(m_1+m_2)} P^{(m_1+m_2)\ell_1}\Biggr). 
        \end{align*}
        In particular, 
        \begin{align*}
            G|_{r = 0} &= -y, \\
            \left.\pdv{G}{r}\right|_{r = 0}& = -\sum_{m\ge 0} (-1)^m (3m+1)P^{m\ell_1} = -\frac{1-3P^{\ell_1}}{(1+P^{\ell_1})^2} = \frac{y(3-y)}{(1+y)^2}. 
        \end{align*}
    
        For $n\ge 2$, we compute
        \begin{align*}
            \left.\pdv[order={n}]{G}{r}\right|_{r = 0} &= \frac{(-1)^{n}}{n!^2}\sum_{m\ge n-1}\left(\frac{(-1)^m(m+n)!(m+n-1)!}{(m+1)!(m+1-n)!}(-n)(n+3m+3)\right) P^{(m+n)\ell_1} \\
            &= \frac{nP^{(2n-1)\ell_1}}{n!^2}\sum_{m\ge 0}\frac{(m+2n-1)!(m+2n-2)!}{(m+n)!m!}(4n+3m) (-P^{\ell_1})^m \\
            &= \frac{nP^{(2n-1)\ell_1}}{n!^2} (4n + 3\theta_{P^{\ell_1}}){}_2\rF_1(2n-1, 2n; n+1; -P^{\ell_1}). 
        \end{align*}
        By the linear transformation formula for hypergeometric function, 
        \begin{align*}
            {}_2\rF_1(2n-1, 2n; n+1; -P^{\ell_1}) &= (1 + P^{\ell_1})^{(n+1)-(2n-1)-(2n)} {}_2\rF_1(2-n, 1-n; n+1; -P^{\ell_1}) \\
            &= \frac{U_n(P^{\ell_1})}{(1 + P^{\ell_1})^{3n-2}}, 
        \end{align*}
        where $U_n(P^{\ell_1}) = {}_2\rF_1(2-n, 1-n; n+1; -P^{\ell_1})$ is a polynomial of degree $n-2$. Substituting $y = P^{-\ell_1}$, we obtain
        \begin{align*}
            \left.\pdv[order={n}]{G}{r}\right|_{r = 0} &= \frac{ny^{-(2n-1)}}{n!^2} (4n - 3\theta_{y})\left(\frac{y^{3n-2}U_n(y^{-1})}{(1 + y)^{3n-2}}\right) = \frac{yV_n(y)}{(1 + y)^{3n-1}}
        \end{align*}
        for some polynomial $V_n$ of degree at most $n$, completing the proof. 
    \end{proof}

    It follows from the claim that $G_0(r) = 0$ for all $r$, and hence $G = - b_1(r)y + O(y^2)$. Thus, near $y = 0$, by \eqref{eq:dtF_1F_2} and the boundedness of $\frac{g_{7,2}}{f}$, 
    \begin{align*}
        \langle H\tilde{F}_1, EF_0\rangle^{Y_7} &= \delta^{\tilde{F}_1}(G Q^{\tilde{\gamma}_1}) + (\tilde{F}_1, EF_0) (1+y)Q^{\tilde{\gamma}_1} = \delta^{\tilde{F}_1}(G Q^{\tilde{\gamma}_1}) = O(y), \\
        \langle H\tilde{F}_2, EF_0\rangle^{Y_7} &= \delta^{\tilde{F}_2}(G Q^{\tilde{\gamma}_1}) + (\tilde{F}_2, EF_0) (1+y)Q^{\tilde{\gamma}_1} = \delta^{\tilde{F}_2}(G Q^{\tilde{\gamma}_1}) = O(y), \\
        \langle HE, EF_0\rangle^{Y_7} &= 0. 
    \end{align*}
    Since $D$ is a linear combination of $\tilde{F}_1$, $\tilde{F}_2$, $E$, we have $\langle HD, EF_0\rangle^{Y_7} = O(y)$. 

    On the other hand, by reconstruction formula, 
    \[\langle HD, EF_0\rangle^{Y_7} = \delta^E (\delta^{F_0} \langle \psi HD\rangle^{Y_7} + \langle F_0HD\rangle^{Y_7}) - \delta^E\langle F_0, T_\mu\rangle^{Y_7} \langle T^\mu, HD\rangle^{Y_7}. \]
    This shows that 
    \begin{align*}
        &\langle E, E, F_0\rangle^{Y_7}\langle HD, EF_0\rangle^{Y_7} + \langle E, F_0, F_2\rangle^{Y_7} \langle H(\tfrac32 F_1 - 2F_2)), HD\rangle^{Y_7} \\
        &\quad = -(H^2D)\delta^{F_0}\tfrac{y Q^{\tilde{\gamma}_1}}{f} + (F_0HD)\tfrac{y Q^{\tilde{\gamma}_1}}{f}. 
    \end{align*}
    So we can write $\langle E, E, F_0\rangle^{Y_7}$ and $\langle E, F_0, F_2\rangle^{Y_7}$ in terms of $G$ by taking $D = \tilde{F}_1$, $\tilde{F}_2$: 
    \begin{align*}
        \langle E, E, F_0\rangle^{Y_7} &= \frac{(1+y-2r)Q^{\tilde{\gamma}_1}(-4\delta^{F_0}\tfrac{y Q^{\tilde{\gamma}_1}}{f_7} + \tfrac{y Q^{\tilde{\gamma}_1}}{f_7}) - (2-y)Q^{\tilde{\gamma}_1}(-3\delta^{F_0}\tfrac{y Q^{\tilde{\gamma}_1}}{f_7})}{(1+y-2r)\delta^{\tilde{F}_1}(GQ^{\tilde{\gamma}_1}) - (2-y)Q^{\tilde{\gamma}_1}\delta^{F_2}(GQ^{\tilde{\gamma}_1})}, \\
        \langle E, F_0, F_2\rangle^{Y_7} &= \frac{\delta^{\tilde{F}_1}(GQ^{\tilde{\gamma}_1})(-3\delta^{F_0}\tfrac{y Q^{\tilde{\gamma}_1}}{f_7}) - \delta^{F_2}(GQ^{\tilde{\gamma}_1})(-4\delta^{F_0}\tfrac{y Q^{\tilde{\gamma}_1}}{f_7} + \tfrac{y Q^{\tilde{\gamma}_1}}{f_7})}{(1+y-2r)\delta^{\tilde{F}_1}(GQ^{\tilde{\gamma}_1}) - (2-y)Q^{\tilde{\gamma}_1}\delta^{F_2}(GQ^{\tilde{\gamma}_1})}. 
    \end{align*}    
    Since 
    \begin{align*}
        \delta^{\tilde{F}_1}(GQ^{\tilde{\gamma}_1}) &= (\theta_{\tilde{\gamma}_1} - \tfrac{g_{7,2}}{f_7}\theta_y - \theta_r)\bigl(\bigl(-\tfrac{y}{(1+4r)^{1/2}} + O(y^2)\bigr)Q^{\tilde{\gamma}_1}\bigr) = \bigl(-\tfrac{y}{(1+4r)^{1/2}} + O(y^2)\bigr)Q^{\tilde{\gamma}_1}, \\
        \delta^{F_2}(GQ^{\tilde{\gamma}_1}) &= (\tfrac{g_{7,2}}{f_7}\theta_y + \theta_r)\bigl(\bigl(-\tfrac{y}{(1+4r)^{1/2}} + O(y^2)\bigr)Q^{\tilde{\gamma}_1}\bigr) = O(y^2)Q^{\tilde{\gamma}_1}, \\
        \tfrac{y Q^{\tilde{\gamma}_1}}{f_7} &= \bigl((1+4r)^{1/2} + O(y)\bigr)Q^{\tilde{\gamma}_1}, \\
        \delta^{F_0} \tfrac{y Q^{\tilde{\gamma}_1}}{f_7} &= (\tfrac{1-2r}{2}(\tfrac{1-(1+4r)^{1/2}}{1+4r}) + O(y))Q^{\tilde{\gamma}_1},
    \end{align*}
    we have the following asymptotic expansion when $r \neq \frac12$: 
    \begin{align*}
        \langle E, E, F_0\rangle^{Y_7} &= -(1+4r)^{1/2} y^{-1} + O(1), \\
        \langle E, F_0, F_2\rangle^{Y_7} &= -\frac32\cdot\frac{1-(1+4r)^{1/2}}{1+4r} + O(y). 
    \end{align*}
    Also, it follows from (\ref{eq:bk(r)}) and the initial condition $b_1(r) = (1+4r)^{-1/2}$, that the coefficients of $f_7$, $f_7^{-1}$, and hence $g_{7,2}$, all lies in $\bC[r, (1+4r)^{-1/2}]$. This means that the invariants 
    \[\langle H\tilde{F}_1, EF_0\rangle^{Y_7},\ \langle H\tilde{F}_2, EF_0\rangle^{Y_7},\ \langle E, E, F_0\rangle^{Y_7},\ \langle E, F_0, F_2\rangle^{Y_7}\]
    all lies in the ring $\bC[Q^{\tilde{\gamma}_1}, r, (1+4r)^{-1/2},(1-2r)^{-1}]\Laurent{y}$. 

    Denote $\tilde{F}^1 = - \frac{1}{2} F_1 + F_2$, $\tilde{F}^2 = F_1 - F_2$ so that $(H\tilde{F}^i, F_j) = \delta^i_j$. 
    
    Finally, by reconstruction formula, \eqref{propGWY7_1}, \eqref{propGWY7_2}, \eqref{propGWY7_4} and \eqref{eq:psi2ptY7}, 
    \begin{align*}
        7\langle \pt, \pt\rangle^{Y_7} &= \langle H^3, \pt\rangle^{Y_7} = \delta^H \langle H^2, \psi \pt\rangle^{Y_7} - \delta^H\langle H^2, T_\mu\rangle^{Y_7}\langle T^\mu, \pt\rangle^{Y_7} \\
        &= 2(\delta^H \langle H, \psi^2 \pt\rangle^{Y_7} - \delta^H\langle H, T_\mu\rangle^{Y_7} \langle T^\mu, \psi \pt\rangle^{Y_7}) \\
        &\quad - \langle H^2, H\tilde{F}^1)\rangle^{Y_7} \langle \tilde{F}_1, \pt\rangle^{Y_7} - \langle H^2, H\tilde{F}^2\rangle^{Y_7} \langle \tilde{F}_2, \pt\rangle^{Y_7} - \langle H^2, EF_0\rangle^{Y_7} \langle E,\pt\rangle^{Y_7}  \\
        &= 8 \langle \psi^2\pt\rangle^{Y_7} - 2\langle \pt\rangle \langle 1,\psi \pt\rangle^{Y_7} - \langle H^2, H\tilde{F}^1\rangle^{Y_7} \delta^{\tilde{F}_1}\langle \pt\rangle^{Y_7} - \langle H^2, EF_0\rangle^{Y_7} \tfrac{y}{f_7} Q^{\tilde{\gamma}_1}\\
        &= 8(\tfrac14 + r) Q^{2\tilde{\gamma}_1} - 2Q^{2\tilde{\gamma}_1} - (H^2\tilde{F}^1,1)Q^{2\tilde{\gamma}_1} \\
        &\quad - 2(H, \tilde{\gamma}_1)(\tilde{F}^1, \tilde{\gamma}_1) Q^{2\tilde{\gamma}_1} - (H, \tilde{\gamma}_2)(\tilde{F}^2, \tilde{\gamma}_2) rQ^{2\tilde{\gamma}_1} + O(y)\\
        &= 7r Q^{2\tilde{\gamma}_1} + O(y), 
    \end{align*}
    which is (iii), and lies in the ring $\bC[Q^{\tilde{\gamma}_1}, r, (1+4r)^{-1/2},(1-2r)^{-1}]\Laurent{y}$. 
\end{proof}

\begin{remark}
    The generating function $\langle E, E, E\rangle$ is still of the form $\frac{7}{u_7f_7^3}$. In fact, one can show that 
    \[u_7(P^{\ell_1}, P^{\ell_2}) = \frac{(1 + P^{\ell_1})^3 - P^{\ell_2} - 20P^{\ell_1}P^{\ell_2} + 8P^{2\ell_1}P^{\ell_2} + 16P^{\ell_1}P^{2\ell_2}}{1 - \frac{2}{7}P^{\ell_1} - \frac{8}{7}P^{\ell_2}}, \]
    which is equal to 
    \[\frac{1 + 2P^\ell - 17P^{2\ell} + 25P^{3\ell}}{1 - \frac{10}{7}P^\ell}\]
    when restricting on the diagonal $P^\ell = P^{\ell_1} = P^{\ell_2}$. 
\end{remark}

\begin{definition}\label{defRI7}
    We define the ring 
    \[\sR_7 = \bC[Q^{\tilde{\gamma}_1}, r, (1+4r)^{-1/2},(1-2r)^{-1}]\PSR{y}\]
    and its ideal $\sI_7 = y\cdot \sR_7$. 
\end{definition}

It is easy to see that the operators $\theta_{\tilde{\gamma}_1}$ and $\theta_r$ sends elements in $\sR_7$ to $\sR_7$, and $\theta_y$ send elements in $\sR_7$ to $\sI_7$. 

\begin{proposition}\label{proplocal7modI}
    Consider the ring $\Rloc = \sR_d[Q^{\tilde{\gamma}_1}]$ and its ideal $\Iloc = \sI_d[Q^{\tilde{\gamma}_1}]$. Then for $n\ge 1$, $\vec{a} \in H(S_7)^{\otimes n}$, %$\vec{a} \in (H(S_7)^{G_{Y_7}})^{\otimes n}$, 
    the invariant $\langle j_{7*}a\rangle^{Y_7}$ lies in $\Rloc$ and 
    \[\langle j_{7*}\vec{a}\rangle^{Y_7} \equiv \phi_7^*\langle i_{7*}\vec{a}\rangle^{X_7} \pmod{\Iloc}. \] 
\end{proposition}

\begin{proof}
    Since $H(S_7)$ is generated by $1$, $i_7^*h_1$, $i_7^*h_2$, $i_7^*h^2$, $i_{7*}H(S_7)$ is generated by $h$, $hh_1$, $hh_2$, $h^3$ and $j_{7*}H(S_7)$ is generated by $H$, $H\tilde{F}_1$, $H\tilde{F}_2$, $H^3$. 

    Let $\vec{a} = a_1 \otimes \dots \otimes a_n$, $b_\nu = i_{7*}a_\nu$ and $c_\nu = j_{7*}a_\nu$. By divisor equation, we may assume that each $a_\nu = i_7^*h_1$, $i_7^*h_2$ or $h^2$. We prove the result by induction on $n$ and the curve degree $\frac12 \sum \deg a_\nu$. 
    
    For $n \le 2$, the result follows from the fundamental class axiom, the divisor equation, Proposition~\ref{propGWX7} and  Proposition~\ref{propGWY7}. 

    As in the proof of Proposition~\ref{proplocalmodI}, by divisor equation and reconstruction formula, it suffices to show that: 
    \begin{enumerate}[(i)]
        \item\label{proplocal7modI_1} $\langle \tilde{F}_i, Hc_2, c_3, \dots, c_n \rangle^Y \equiv \phi^*\langle h_i, hb_2, b_3, \dots, b_n \rangle^X \pmod{\Iloc}$; 
        \item\label{proplocal7modI_2} $\langle \vec{c}_I, T\rangle^Y \equiv \phi^*\langle \vec{b}_I, \bar{T}\rangle^X \pmod{\Iloc}$ for $(\bar{T}, T) = (1,1)$, $(h_1,\tilde{F}_1)$, $(h_2, \tilde{F}_2)$, $(hh_1,H\tilde{F}_1)$, $(hh_2,H\tilde{F}_2)$, $(\pt,\pt)$ with lower curve degree; 
        \item\label{proplocal7modI_3} $\langle \vec{c}_{I_L}, E\rangle^Y \langle EF_0, \vec{c}_{I_R}\rangle^Y \in \Iloc$. 
    \end{enumerate}

    We note that if $B \in \Rloc$ and $A \in \bC[Q^{\bar{\gamma}_1}, Q^{\bar{\gamma}_2}]$ satisfies $B \equiv \phi^* A \pmod{\Iloc}$, then 
    \begin{align*}
        \delta^{\tilde{F}_1} B &= \theta_{\tilde{\gamma}_1} B - \tfrac{g_{7,2}}{f_7} \theta_y B - \theta_r B \equiv \phi^*((\theta_{\bar{\gamma}_1} + \theta_{\bar{\gamma}_2})A - \theta_{\bar{\gamma}_2}A) = \phi^*(\delta^{h_1} A), \\
        \delta^{\tilde{F}_1} B &=  \tfrac{g_{7,2}}{f_7} \theta_y B + \theta_r B \equiv \phi^*(\theta_{\bar{\gamma}_2} A) = \phi^*(\delta^{h_2} A). 
    \end{align*}
    This, together with induction hypothesis, the fundamental class axiom and the divisor equation, gives \eqref{proplocal7modI_1} and \eqref{proplocal7modI_2}. 

    For \eqref{proplocal7modI_3}, we see that 
    \[\langle \vec{c}, E\rangle^Y = \delta^E \langle \vec{c}\rangle^Y = \delta^E \langle \vec{c}\rangle^Y \in f^{-1} \theta_y \Rloc \subseteq \Rloc. \]
    If $m = 1$, then $\langle EF_0, \vec{c}\rangle^Y \in \Iloc$ by Proposition~\ref{propGWY7} \eqref{propGWY7_4}. If $m \ge 2$, apply the reconstruction formula, we get the equation 
    \begin{align*}
        \langle EF_0, \vec{c}\rangle^Y &= \sum_{1\in I_L, 2\in I_R, \mu} \langle F_0, \vec{c}_{I_L}, T_\mu\rangle^Y \cdot \delta^E \langle T^\mu, \vec{c}_{I_R}\rangle^Y - \sum_{1, 2\in I_R, \mu} \delta^E\langle F_0, \vec{c}_{I_L}, T_\mu\rangle^Y \cdot \langle T^\mu, \vec{c}_{I_R}\rangle^Y. 
    \end{align*}
    When $I_L = \varnothing$, it follows from 
    \[\langle E, F_0, \tilde{F}_1\rangle^Y + \langle E, F_0, \tilde{F}_2\rangle^Y = \langle E, F_0, H\rangle^Y = 0\]
    that 
    \begin{align*}
        &\delta^E \langle F_0, \vec{c}_{I_L}, T_\mu\rangle^Y \cdot \langle T^\mu, \vec{c}_{I_R}\rangle^Y \\
        &\quad = \delta^E \delta^{F_0} \langle E\rangle^Y \cdot \langle EF_0, \vec{c}\rangle^Y + \delta^E \delta^{F_0} \langle \tilde{F}_1\rangle^Y \cdot \langle H\tilde{F}^1, \vec{c}\rangle^Y + \delta^E \delta^{F_0} \langle \tilde{F}_2\rangle^Y \cdot \langle H\tilde{F}^2, \vec{c}\rangle^Y\\
        &\quad = \bigl(\langle E, E, F_0\rangle^Y - 1\bigr) \langle EF_0, \vec{c}\rangle^Y + \langle E, F_0, F_2\rangle (\langle H\tilde{F}^2, \vec{c}\rangle^Y - \langle H\tilde{F}^1, \vec{c}\rangle^Y). 
    \end{align*}
    So we get
    \begin{align}
       \langle E, E, F_0\rangle^Y\cdot \langle E^2, \vec{c}\rangle^Y &= \sum_{1\in I_L, 2\in I_R, \mu} \delta^E \langle \vec{c}_{I_L}, T_\mu\rangle^Y \cdot \delta^E \langle T^\mu, \vec{c}_{I_R}\rangle^Y \notag\\
        &\quad - \sum_{I_L\neq \varnothing, 1, 2\in I_R, \mu} (\delta^E)^2\langle \vec{c}_{I_L}, T_\mu\rangle^Y\cdot \langle T^\mu, \vec{c}_{I_R}\rangle^Y \notag \\
        &\quad + \langle E, F_0, F_2\rangle (\langle H\tilde{F}^2, \vec{c}\rangle^Y - \langle H\tilde{F}^1, \vec{c}\rangle^Y). \label{eqreconEF0}
    \end{align}
    Since $\delta^E = f^{-1}\theta_y$ sends elements in $\Rloc$ to $\Rloc$, this, together with Proposition~\ref{propGWY7} \eqref{propGWY7_5} implies that 
    \[\langle E^2, \vec{c}\rangle^Y \in \frac{1}{\langle E, E, F_0\rangle^Y} \Rloc + \frac{\langle E, F_0, F_2\rangle^Y}{\langle E, E, F_0\rangle^Y} \Rloc \subseteq \Iloc. \]
    We conclude that $\langle \vec{c}_{I_L}, E\rangle^Y\cdot \langle EF_0, \vec{c}_{I_R}\rangle^Y \in \Rloc \cdot \Iloc \subseteq \Iloc$. 
\end{proof}

%%%%%%%%%%%%%%%%%%%%%%%%%%%%%%%%%%%%%%%%
\section{Proof of main results in the global cases}
%%%%%%%%%%%%%%%%%%%%%%%%%%%%%%%%%%%%%%%%

\subsection{Two (symplectic) semistable degenerations}\label{subsec;ss_deg}
%%%%%%%%%%%%%%%%%%%%
To apply the degeneration analysis in the next subsection, we shall factor a Type II transition $\Xres \searrow \Xsm$ as a composition of two semistable (symplectic) degenerations $\cX \to \Delta$ and $\cY \to \Delta$. As before, let $\crpcon \colon \Xres \to \Xsing$ be the crepant contraction of $\Xres \searrow \Xsm$, $E = \Exc (\crpcon)$ and $\crpcon (E) = \{ p \}$.

As a simple demonstration, we first assume that $\Xres \searrow \Xsm$ is a del Pezzo transition of degree $d$, that is, $E$ is a smooth del Pezzo surface of degree $d$. In this case $\cX$ and $\cY$ are semistable \emph{projective} degenerations over the disk $\Delta$. 

The \emph{K\"ahler degeneration}
\begin{align*}
    \cY \to \Delta
\end{align*}
is the deformation to the normal cone $\cY = \Bl_{E \times \{0\}}( \Xres \times \Delta)$. Since $E$ has codimension one in $\Xres$ and $E|_E = K_E$, the special fiber $\cY_0 = \Xres \cup Y_d$ is a simple normal crossing divisor with $Y_d = \bP_E (K_E \oplus \cO)$. The intersection $E = \Xres \cap Y_d$ is understood as the infinity divisor (or relative hyperplane section) of $Y_d \to E$.

The \emph{complex degeneration} 
\begin{align*}
    \cX \to \Delta
\end{align*}
is the semistable reduction of the smoothing $\fX \to \Delta$ constructed in Proposition \ref{prop;locDef_sm}. Recall that, in the notation of the proof of Proposition \ref{prop;locDef_sm}, $\cX \to \Delta$ is obtained by a degree $n_d$ base change $\cX' \to \Delta$ allowed by the weighted blow-up at $p \in \cX'$ with weight $(\alpha, 1)$. The special fiber $\cX_0 = \Xres \cup X_d$ is a simple normal crossing divisor with $X_d$ being a smooth del Pezzo threefold of degree $d$. The intersection $E = \Xres \cap X_d$ is now understood as a general member of the linear system $|{-K_{X_d}}|$. Notice that the local model $Y_d \searrow X_d$ (Example \ref{ex:dPtrans}) appears in the special fibers $\cX_0$ and $\cY_0$.

In general, let $\Xres \searrow \Xsm$ be a Type II extremal transition of degree $d$. If we work in the category of symplectic manifolds, we can still construct similar semistable degenerations to relate the local model $Y_d \searrow X_d$ as above.

To begin with, the smoothing $\fX \to \Delta$ with $\fX_t = X$ ($t \neq 0$) induces a holomorphic map
\begin{align}\label{eqn;local_smO}
    \upsigma \colon \Delta \to \Def (\Xsing) \to \Def(\Xsing, p).    
\end{align}
We may take two good representatives $\ocU \to \Delta$ and $\tcU \to \Delta$ of the deformation corresponding to $\upsigma$ such that $\ocU_t \supseteq \tcU_t$ are open subsets of $X$ and $\ocU_0 \supseteq \tcU_0$ are open neighborhoods of $p$. Set $V \coloneqq \ocU_t$ and $\oU \coloneqq \ocU_0$.

Next, by Proposition \ref{prop;locDef_res}, there exists an open neighborhood $U$ of $E$ in $ \Xres$ and a holomorphic deformation of the complex structure on $U$ such that $E$ deforms to a smooth del Pezzo surface $E'$. So $p \in \oU$ can be deformed to $p' \in \oU'$ such that $\crpcon' \colon U' \to \oU'$ is also the weighted blow-up at $p'$ with weight $\alpha$:  
\begin{equation}\label{eqn;localdef_U}
    \begin{tikzcd}
       \mathllap{E \subseteq{}} U \ar[d, "\crpcon"] \ar[d, shift right=5ex] \ar[r, rightsquigarrow]& U' \mathrlap{{}\supseteq E'} \ar[d, "\crpcon'", swap] \ar[d, shift left=5ex]\\
      \mathllap{p \,\in{}\,} \oU \ar[r, rightsquigarrow] & \oU' \mathrlap{{}\ni\, p' \,.}
    \end{tikzcd}
\end{equation}
Let $\uptau \colon \Delta \to \Def (\Xsing, p)$ be the corresponding map of the deformation of $\oU$. Then there are a holomorphic map 
\begin{align}\label{eqn;local_smN}
    \upsigma' \colon \Delta \to \Delta \times \Delta \xrightarrow{\upsigma \times \uptau} \Def (\Xsing, p)
\end{align}
and a good representative $\fV \to \Delta$ of the deformation corresponding to $\upsigma'$ such that $\fV_0 = \oU'$ and $\fV_t = V$. Therefore we get a local del Pezzo transition $U' \searrow V$ such that the exceptional divisor $E'$ of $U' \to \oU'$ is smooth. As before, applying the deformation to the normal cone and Proposition \ref{prop;locDef_sm} to $U' \searrow V$, we get two semistable (projective) degenerations $\cU' \to \Delta$ and $\cV \to \Delta$ with the special fibers $\cU'_0 = U' \cup Y_d$ and $\cV_0 = U' \cup X_d$. Moreover, $\cU' \setminus \cU'_0$ is the trivial family $U' \times (\Delta \setminus\{0\})$ and $\cV_t = V$.

Now we will use the trick in \cite{LR01, Wilson97} to construct semistable (symplectic) degenerations. Let $J_\Xres$ be the (integrable) complex structure of $\Xres$. We fix an ample class of $(\Xres, J_\Xres)$ and denote $\omega_\Xres$ by a corresponding K\"ahler form. Note that $U'$ can be viewed as a deformed complex structure on $U$ \eqref{eqn;localdef_U}. We can patch this deformed complex structure on $U$ together (in a $C^\infty$ way by a partition of unity argument) with the original complex structure $J_\Xres$ to produce an almost complex structure $J'_\Xres$ on $\Xres$. The almost complex structure $J'_\Xres$ is integrable in a neighborhood of $E$. Furthermore, it is $\omega_\Xres$-tamed because the taming condition is open (cf.\ \cite[p.153]{MS17}). For abbreviation, we let $Y'$ stand for $(\Xres, J'_\Xres)$. Notice that $Y$ and $Y'$ are symplectic deformation equivalent of each other.

Choose a suitable open neighborhood $U''$ of $E$ in $\Xres$ with $U'' \subseteq U$. Then we get a semistable (symplectic) degeneration, which we also denote by $\cY \to \Delta$, obtained by gluing the trivial family $(Y \setminus U'') \times \Delta$ and $\cU'$. The family $\cY|_{\Delta \setminus \{0\}}$ is the trivial family with the fiber $\Xres'$. The special fiber $\cY_0$ consists of $(\Xres, J'_\Xres)$ and $Y_d$ which meet transversely and
\[
    Y' \cap Y_d = U' \cap Y_d = E'.
\]
Here $E'$ is the smooth del Pezzo surface of degree $d$. We also denote this degeneration by $Y' \rightsquigarrow Y' \cup_{E'} Y_d$.

Similarly, we can glue $\fX \setminus \tcU$ and $\cV$ to get a semistable (symplectic) degeneration, also denoted by $\cX \to \Delta$, with the fiber $\cX_t = X$. The special fiber $\cX_0$ consists of $Y'$ and $X_d$ which meet transversely, and 
\[
    Y' \cap X_d = U' \cap Y_d = E'.
\]
The $\cX$ is also denoted by $X \rightsquigarrow Y' \cup_{E'} X_d$. We remark that, in contrast to the degeneration $\cY$ with the general fiber $Y' = (\Xres, J_\Xres')$, the general fiber of $\cX$ is the original smooth projective threefold $X$ because $\cV_t = \fV_t = V = \ocU_t$ is the open subset of $X$ by \eqref{eqn;local_smO} and \eqref{eqn;local_smN}.

Now we summarize the above construction in the following proposition.

\begin{theorem}\label{thm;ssdeg}
Let $\typeII$ be a Type II extremal transition of degree $d$. Then there exist two semistable (symplectic) degeneration $\cY$ and $\cX$, i.e.,
\begin{align}\label{eqn;ssdeg}
    Y' \rightsquigarrow Y' \cup_{E'} Y_d \quad \text{and} \quad X \rightsquigarrow Y' \cup_{E'} X_d,
\end{align}
such that
\begin{enumerate}
    \item $Y$ and $Y'$ are symplectic deformation equivalent of each other;

    \item $E'$ is a smooth del Pezzo surface of degree $d$ and $Y_d  = \bP_{E'} (K_{E'} \oplus \cO)$;

    \item $X_d$ is a smooth del Pezzo threefold of degree $d$.
\end{enumerate}
Moreover, up to a (local) deformation of the singularity $\Sing (\Xsing) = \{p\}$, we have:
\begin{enumerate}[(i)]
    \item If $d = 1$, the $\cX$ is obtained by a degree six base change $\cX' \to \Delta$ allowed by the weighted blow-up at $p \in \cX'$ with weight $(3,2,1,1,1)$.

    \item If $d = 2$, the $\cX$ is obtained by a degree four base change $\cX' \to \Delta$ allowed by the weighted blow-up at $p \in \cX'$ with weight $(2,1,1,1,1)$.
    
    \item If $d = 3$ (resp.\ $d \geq 4$), the $\cX$ is obtained by a degree three (resp.\ two) base change $\cX' \to \Delta$ allowed by the blow-up at $p \in \cX'$.
\end{enumerate}
\end{theorem}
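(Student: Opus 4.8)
The plan is to assemble both degenerations from the local constructions already established, thereby reducing the global statement to the explicit local model $Y_d \searrow X_d$ of Example~\ref{ex:dPtrans}. The first step is to dispose of the possible singularity or non-normality of the exceptional divisor $E = \Exc(\crpcon)$ recorded in Proposition~\ref{prop:birExc}: by Proposition~\ref{prop;locDef_res} there is a neighborhood $U$ of $E$ in $\Xres$ carrying a holomorphic deformation of its complex structure under which $E$ degenerates to a \emph{smooth} del Pezzo surface $E'$ of degree $d$, while $p \in \oU$ deforms to $p' \in \oU'$ with $\crpcon' \colon U' \to \oU'$ again the weighted blow-up at $p'$ of weight $\alpha$. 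This yields the local del Pezzo transition $U' \searrow V$ with smooth exceptional divisor $E'$, to which all subsequent constructions apply, giving conclusion~(2) for $E'$.

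For the K\"ahler side I would take the deformation to the normal cone $\cY = \Bl_{E' \times \{0\}}(Y' \times \Delta)$. Since $\crpcon$ is crepant, adjunction gives $E'|_{E'} = K_{E'}$, so the normal bundle of $E'$ is $K_{E'}$ and the exceptional component of the blow-up is exactly $Y_d = \bP_{E'}(K_{E'} \oplus \cO)$; the special fiber $\cY_0 = Y' \cup Y_d$ meets along $E'$, realized as the relative hyperplane section of $Y_d \to E'$, and is manifestly simple normal crossing of double point type. For the complex side I would apply Proposition~\ref{prop;locDef_sm} to $U' \searrow V$: after the degree $n_d$ base change $\cV' \to \Delta$ (with $n_d = 6, 4, 3$ for $d = 1, 2, 3$ and $n_d = 2$ for $d \ge 4$) followed by the weighted blow-up at $p$ of weight $(\alpha, 1)$ --- an ordinary blow-up once $d \ge 3$ --- the special fiber becomes $U' \cup X_d$ with $X_d$ a smooth del Pezzo threefold of degree $d$ and $U' \cap X_d = E'$. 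Reading off the weights $\alpha$ from \eqref{eqn;wt13} and \eqref{eqn;wt48} then gives conclusion~(3) and the base-change assertions (i)--(iii).

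The remaining and most delicate work is to globalize these local families as genuinely \emph{symplectic} degenerations. Following \cite{LR01, Wilson97}, I would fix a K\"ahler form $\omega_\Xres$ on $(\Xres, J_\Xres)$ and patch the deformed complex structure on $U$ with $J_\Xres$ via a partition of unity, producing an almost complex structure $J'_\Xres$ that is integrable near $E$ and, by the openness of the taming condition (cf.~\cite[p.153]{MS17}), still $\omega_\Xres$-tamed; this exhibits $Y' = (\Xres, J'_\Xres)$ as symplectically deformation equivalent to $Y$, yielding conclusion~(1). One then glues the trivial families $(Y \setminus U'') \times \Delta$ and $\fX \setminus \tcU$ to the local degenerations $\cU'$ and $\cV$ along suitable neighborhoods to obtain $\cY$ and $\cX$ with special fibers $Y' \cup_{E'} Y_d$ and $Y' \cup_{E'} X_d$. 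I expect the main obstacle to lie precisely in this gluing: one must check that the patched total space is smooth, that the $C^\infty$ interpolation near the gluing locus respects the fibration over $\Delta$, and that the special fiber is a normal crossing configuration whose two components have inverse normal bundles along $E'$ --- the double point condition needed for the GW degeneration formula. The non-normal case $d = 7$, where $E$ becomes smooth only after the reduction of Proposition~\ref{prop;locDef_res}, will require particular care.
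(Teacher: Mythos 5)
Your proposal follows the paper's own proof essentially verbatim: smoothing $E$ to $E'$ via Proposition~\ref{prop;locDef_res}, deformation to the normal cone for the K\"ahler side, Proposition~\ref{prop;locDef_sm} for the complex side, and the Li--Ruan/Wilson patching-and-gluing trick to globalize both families symplectically. The only step you gesture at rather than carry out --- and which the paper makes explicit via the two-parameter family $\upsigma' \colon \Delta \to \Delta\times\Delta \to \Def(\Xsing,p)$ combining the original smoothing direction with the deformation of $(\Xsing,p)$ to the cone over $E'$ --- is the identification of the general fiber of $\cX$ with the original $X$ (i.e.\ $\cV_t = \fV_t = V = \ocU_t \subseteq X$), which is precisely what makes the gluing of $\fX \setminus \tcU$ with $\cV$ well-defined.
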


\begin{remark}
For compact symplectic manifolds $(M_i, \omega_{M_i})$ for $i = 0$, $\infty$, we have the symplectic sum construction. 
Indeed, assume that $M_0$ and $M_\infty$ have a common compact symplectic divisor $D$ and there exists an isomorphism
\begin{align*} 
    \Phi \colon N_{D/M_0} \otimes N_{D/M_\infty} \cong \cO_D
\end{align*}
of \emph{complex} line bundles. The symplectic sum is a symplectic manifold $(X, \omega_Z)$ obtained from $M_0$ and $M_\infty$ by gluing the complements of tubular neighborhoods of $D$ in $M_0$ and $M_\infty$ along their common boundary as directed by $\Phi$. It produces a symplectic fibration $\cZ \to \Delta$ with special fiber $\cZ_0 = M_0 \cup_D M_\infty$, and it satisfies some conditions (see for example \cite[\S 1.2]{FTZ20} and references therein).

One can apply the symplectic sum construction to $Y'$ and $X_d$ with a common divisor $E'$, and similarly for $Y' \cup_{E'} Y_d$. However, since we assume that the global smoothing $\fX \to \Delta$ exists, we can construct the degeneration $X \rightsquigarrow Y' \cup_{E'} X_d$ directly as above. 
\end{remark}

%%%%%%%%%%%%%%%%%%%%
\subsection{Degeneration analysis revisited}
%%%%%%%%%%%%%%%%%%%%

To compare the (genus zero) GW invariants of $\Xres \searrow \Xsm$ for curve classes, we will use the degeneration formula to reduce the problem to local models $Y_d \searrow X_d$ as in \cite{LLW10}.

Let $(M, \omega_{M})$ be a compact symplectic manifold with a compact symplectic divisor $D$. For example, $M$ is a smooth polarized variety and $D$ is a smooth divisor. For a relative pair $(M, D)$, we let $\Gamma = (n, \beta, \rho, \mu)$ with $\mu = (\mu_1, \dots, \mu_\rho) \in \bN^\rho$ a partition of the intersection number $(D, \beta) = |\mu| \coloneqq \sum_{i = 1}^\rho \mu_i$. For $A \in H(M)^{\otimes n}$ and $\varepsilon \in H(D)^{\otimes \rho}$, the relative invariant of stable maps with topological type $\Gamma$ (i.e., with contact order $\mu_i$ in $D$ at the $i$-th contact point) is
\begin{align*}
    \langle A \mid \varepsilon, \mu\rangle_\Gamma^{(M, D)} \coloneqq \int_{[\overline{\mathcal{M}}_\Gamma(M, D)]^{\vir}} e_M^*A \cup e_D^*\varepsilon
\end{align*}
where $e_M \colon \oM_{\Gamma} (M, D) \to M^n$ and $e_D \colon \oM_{\Gamma} (M, D) \to D^\rho$ are evaluation maps on marked points and contact points, respectively. The virtual dimension of $[\overline{\mathcal{M}}_\Gamma(M, D)]^{\vir}$ is 
\begin{align*}
    \operatorname{vdim}_\Gamma \coloneqq (c_1 (M), \beta) + \rho - |\mu| +n + (\dim M - 3).
\end{align*}
Here the virtual class $[\overline{\mathcal{M}}_\Gamma(M, D)]^{\vir}$ is constructed in \cite{IP03, LR01} in the symplectic case and in \cite{Li01} in the algebraic case. In the symplectic setting, we refer the reader to \cite{IP04, FTZ21} for more details.

If $\Gamma = \bigsqcup_{\pi} \Gamma^\pi$, then the relative invariant with disconnected domain curve
is defined by the product rule:
\begin{align*}
    \langle A \mid \varepsilon, \mu\rangle_\Gamma^{\bullet(M, D)} \coloneqq \prod \langle A \mid \varepsilon, \mu\rangle_{\Gamma^\pi}^{(M, D)}.
\end{align*}

We apply the degeneration formula to the semistable (symplectic) degenerations \eqref{eqn;ssdeg} given in Theorem \ref{thm;ssdeg}. For simplicity of notation, we denote these two degenerations by 
\[
    \cW \to \Delta.
\] 
It has a smooth fiber $W \coloneqq \cW_t$ ($t \neq 0$), a special fiber $\cW_0 = M_0 \cup_D M_\infty$ and $D = M_0 \cap M_\infty$ a smooth divisor.

All cohomology classes $\vec{a} \in H(W)^{\otimes n}$ have global liftings and the restriction $\vec{a}(s)$ on $\cW_s$ is defined for all $s \in \Delta$, because $Y' \rightsquigarrow Y' \cup_{E'} Y_d$ is a degeneration of a trivial family and we apply Proposition \ref{prop;coho_seq} \eqref{prop;coho_seq_1} to $X \rightsquigarrow Y' \cup_{E'} X_d$ (see \eqref{eqn;H_inv}). Let $j_0 \colon M_0 \hookrightarrow \cW_0$ and $j_\infty \colon M_\infty \hookrightarrow \cW_0$ be the inclusion maps. Let $\{e_i\}$ be a basis of $H(D)$, with $\{e^i\}$ its dual basis. Then $\{e_I\}$ forms a basis of $H (D^\rho)$ with dual basis $\{e^I\}$, where $|I| = \rho$ and $e_I = e_{i_1}\otimes \cdots \otimes e_{i_\rho}$. The \emph{degeneration formula} expresses the absolute invariants of $W$ in terms of the relative invariants of the two smooth pairs $(M_0, D)$ and $(M_\infty, D)$ \cite{LR01, Li02, IP04, LY04, FTZ20}:
\begin{align}\label{eqn;degen_fomula}
    \langle \vec{a}\rangle_{\beta}^{W} = \sum_I \sum_{\eta \in \Omega_\beta} C_\eta \langle j_0^*\vec{a}(0) \mid e_I, \mu\rangle_{\Gamma_0}^{\bullet (M_0, D)}\langle j_\infty^*\vec{a}(0) \mid e^I, \mu\rangle_{\Gamma_\infty}^{\bullet (M_\infty, D)}.
\end{align}
Here $\eta = (\Gamma_0, \Gamma_\infty, I_\rho)$ is an \emph{admissible triple} which consists of (possibly disconnected) topological types 
\begin{align*}
    \Gamma_i = \bigsqcup\nolimits_{\pi = 1}^{|\Gamma_i|} \Gamma_i^{\pi}
\end{align*}
with the same partition $\mu$ of the contact order under the identification $I_\rho$ of the contact points. The gluing $\Gamma_0 +_{I_\rho} \Gamma_\infty$ have type $(n, \beta)$ and is connected. In particular, $\rho = 0$  if and only if one of the $\Gamma_i$ is empty. The total number of marked points $n_i$ and the total degree $\beta_i \in \NE (M_i)$ satisfy the splitting relations $n = n_0 + n_\infty$ and
\begin{align*}
    \beta = \beta_0 + \beta_\infty,
\end{align*}
where we omit the obvious pushforwards in the degree splitting in $\NE(\cW)$, and $|\Gamma_0| + |\Gamma_\infty| = \rho + 1$. The constants $C_\eta = m (\mu) / |{\Aut \eta}|$, where $m(\mu) = \prod \mu_i$ and $\Aut \eta = \{\sigma \in \fS_\rho \mid \eta^\sigma = \eta\}$. We denote by $\Omega_\beta$ the set of equivalence classes of admissible triples with fixed degree $\beta$.

%%%%%%%%%%%%%%%%%%%%
\subsection{From local to global}
%%%%%%%%%%%%%%%%%%%%

Now, we can prove the main results of this section. Given a Type II extremal transition $\typeII$ of degree $d$, we let $\sR_Y = \sR_d\PSR{Q^{\phi^* \operatorname{NE}(X)}}$ and $\sI_Y = \sI_d\PSR{Q^{\phi^* \operatorname{NE}(X)}}$ (see Definition~\ref{defRI} and Definition~\ref{defRI7}).

%Let $\typeII$ be a Type II extremal transition of degree $d$. Let $\sR_Y = \sR_d\PSR{Q^{\phi^* \operatorname{NE}(X)}}$ and let $\sI_Y = \sI_d\PSR{Q^{\phi^* \operatorname{NE}(X)}}$ (see Definition~\ref{defRI} and Definition~\ref{defRI7}). 

\begin{thr}\label{thrGWmodI}
    Let $n\ge 0$, $\vec{b} \in H(X)^{\otimes n}$ and $0 \neq \bar{\beta} \in \operatorname{NE}(X)$. Consider the generating function 
    \[\langle \phi^*\vec{b} \rangle_{\bar{\beta}}^{\Xres \searrow \Xsm} = \sum_{\beta\mapsto \bar{\beta}} \langle \phi^*\vec{b}\rangle_\beta^Y Q^\beta. \]
    Then $\langle \phi^*\vec{b} \rangle_{\bar{\beta}}^{\Xres \searrow \Xsm}$ lies in $\sR_Y$ and 
    \begin{align}\label{thrGWmodI_eqn}
            \langle \phi^*\vec{b} \rangle_{\bar{\beta}}^{\Xres \searrow \Xsm} \equiv \langle \vec{b}\rangle_{\bar{\beta}}^X Q^{\tilde{\beta}} \pmod{\sI_Y},
    \end{align}
    where $\tilde{\beta}$ is a lifting of $\bar{\beta}$ such that $(E, \tilde{\beta}) = 0$. 
\end{thr}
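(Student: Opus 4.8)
The plan is to globalize the local comparison of Proposition~\ref{proplocalmodI} (and Proposition~\ref{proplocal7modI} when $d=7$) by feeding the two semistable symplectic degenerations of Theorem~\ref{thm;ssdeg} into the degeneration formula~\eqref{eqn;degen_fomula}. Write $\cY \to \Delta$ for the K\"ahler degeneration $Y' \rightsquigarrow Y' \cup_{E'} Y_d$ and $\cX \to \Delta$ for the complex degeneration $X \rightsquigarrow Y' \cup_{E'} X_d$. Both special fibers are glued along the \emph{same} smooth divisor $E' \cong S_d$ and share the \emph{same} component $Y'$, which by Theorem~\ref{thm;ssdeg} is symplectic deformation equivalent to $\Xres$; hence $\langle\,\cdot\,\rangle^{Y'}=\langle\,\cdot\,\rangle^{\Xres}$ and it suffices to compare the two degeneration-formula expansions. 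First I would expand both $\langle \phi^*\vec{b}\rangle^{Y'}_\beta$ and $\langle \vec{b}\rangle^{\Xsm}_{\bar\beta}$ by~\eqref{eqn;degen_fomula}.

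The decisive simplification is that the insertions carry no information on the local component. By~\eqref{eqn;H_inv} and Proposition~\ref{prop;coho_seq}, each $\vec b\in H(\Xsm)$ lifts to a class pulled back from $\Xsing$ through $\phi$; since $\phi$ contracts $E$ (hence its deformation $E'$) to a point, every positive-degree $\phi^*\vec b$ restricts to zero on $E'$ and on the local piece. Thus all absolute insertions load onto $Y'$, and the expansions read
\[
\langle \phi^*\vec b\rangle^{Y'}_\beta=\sum_{I,\eta}C_\eta\,\langle\,\cdot\mid e_I,\mu\rangle^{\bullet(Y',E')}_{\Gamma_0}\,\langle\varnothing\mid e^I,\mu\rangle^{\bullet(Y_d,E')}_{\Gamma_\infty},
\]
\[
\langle \vec b\rangle^{\Xsm}_{\bar\beta}=\sum_{I,\eta}C_\eta\,\langle\,\cdot\mid e_I,\mu\rangle^{\bullet(Y',E')}_{\Gamma_0}\,\langle\varnothing\mid e^I,\mu\rangle^{\bullet(X_d,E')}_{\Gamma_\infty},
\]
with \emph{identical} relative factors over $(Y',E')$ (the $Y'$–insertion is $\phi^*\vec b$ in both). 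Summing over all liftings $\beta=\tilde\beta+m\ell$ of $\bar\beta$ with weight $Q^\beta=Q^{\tilde\beta}Q^{m\ell}$, the whole comparison collapses to a $\bC\PSR{Q^{\phi^*\NE(\Xsm)}}$–linear pairing of the common $(Y',E')$ data against the difference of the local relative theories of $(Y_d,E')$ and $(X_d,E')$.

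Next I would bridge these local relative invariants to the absolute invariants controlled by Proposition~\ref{proplocalmodI}/\ref{proplocal7modI}. Here one uses that $E'=S_d$ sits inside both local models with the \emph{same} normal bundle: $N_{E'/Y_d}\cong N_{E'/X_d}\cong -K_{S_d}$ (for $Y_d=\bP_{E'}(K_{E'}\oplus\cO)$ this is forced by the matching condition $N_{E'/Y'}\otimes N_{E'/Y_d}\cong\cO$ with $N_{E'/Y'}=K_{E'}$, and for $X_d$ it is $\cO_{X_d}(1)|_{E'}$ by adjunction). Degenerating each local model to the normal cone of $E'$ therefore produces one and the \emph{same} auxiliary bundle $\bP_{E'}(-K_{E'}\oplus\cO)$, so the absolute invariants $\langle j_{d*}\vec a\rangle^{Y_d}$ and $\langle i_{d*}\vec a\rangle^{X_d}$ are obtained from the corresponding relative invariants by the same linear rubber transform. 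Inverting this transform, the congruence $\langle j_{d*}\vec a\rangle^{Y_d}\equiv \phi^*\langle i_{d*}\vec a\rangle^{X_d}\pmod{\Iloc}$ of Proposition~\ref{proplocalmodI} passes to the relative invariants, and assembling in the extremal ($\ell$–)direction places the $Y_d$–series in $\sR_d$ and makes it agree with its $X_d$–counterpart modulo $\sI_d$. Pairing against the common $(Y',E')$ factors (power series in $Q^{\phi^*\NE(\Xsm)}$) then gives membership in $\sR_Y$ and the congruence~\eqref{thrGWmodI_eqn}, the leading ($\rho=0$/minimal-contact) term reproducing $\langle\vec b\rangle^{\Xsm}_{\bar\beta}Q^{\tilde\beta}$; the case $d=7$ is identical with the two-parameter $(\ell_1,\ell_2)$ bookkeeping of Proposition~\ref{proplocal7modI}.

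The hard part will be the bridge in the last paragraph: transferring the \emph{absolute} comparison of Proposition~\ref{proplocalmodI}/\ref{proplocal7modI} to the \emph{relative} invariants that actually occur in~\eqref{eqn;degen_fomula}, and verifying that the congruence modulo the ideal is preserved both under the infinite summation in the $\ell$–direction and under the bilinear pairing with the common $(Y',E')$ factors. This rests on the stability of $\sR_d$ under the operations induced by the relative/rubber calculus (notably the $\theta$–action of Lemma~\ref{lm:thetaRdinId}, so that $\theta\sR_d\subseteq\sI_d$) and on a careful matching of contact data and curve-class splittings $\beta=\beta_0+\beta_\infty$ between the two degenerations, ensuring that every nonleading admissible triple contributes only to $\sI_Y$.
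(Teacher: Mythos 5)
Your architecture is the paper's: apply the degeneration formula to both semistable degenerations of Theorem~\ref{thm;ssdeg}, choose the insertions supported away from $p'$ so that they load entirely onto the common component $Y'$, and reduce the global congruence to a comparison of the relative theories of the two local ends. The place where you stop — the ``bridge'' from the absolute congruence of Proposition~\ref{proplocalmodI}/\ref{proplocal7modI} to the relative invariants occurring in \eqref{eqn;degen_fomula} — is exactly where the paper does its real work, and your sketch of it is missing the one ingredient that makes it tractable. Namely, the paper first runs a virtual dimension count: $\operatorname{vdim}_{\Gamma_\infty}=\rho+|\mu|\ge 2\rho$ forces the only nonvanishing local relative invariants to have all contact orders equal to $1$, with weights $(e_I,\mu)=(1,1)^\rho$ on the $(Y',E')$ side and $(e^I,\mu)=(\pt,1)^\rho$ on the local side. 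This collapses the ``difference of the local relative theories'' — a priori an infinite family of contact configurations and cohomology weightings — to a single sequence $[\rho]^{Y_\loc}$, $[\rho]^{X_\loc}$ indexed by $\rho\ge 1$. Without this reduction, your plan of ``inverting the rubber transform compatibly with $\sI_d$'' has to be carried out for all contact data simultaneously, which is a much harder and not obviously well-posed problem.

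The second difference is in how the inversion is implemented. Rather than degenerating each local model to the normal cone of $E'$ and appealing to an abstract rubber calculus, the paper applies the degeneration formula a second time to the local transition $Y_\loc\searrow X_\loc$ itself, with insertions $\pt^{\otimes\rho}$ in classes $\rho\bar\gamma$ (resp.\ $\rho\tilde\gamma+m\ell$). Both resulting expansions pair the unknowns $[\rho']^{\bullet X_\loc}$, $[\rho']^{\bullet Y_\loc}$ against the \emph{same} relative factors over $(Y_\loc,E')$; the coefficient of the top term $[\rho]$ is $\bigl(\langle\pt\mid(1,1)\rangle_\gamma^{(Y_\loc,E')}\bigr)^\rho\neq 0$, and every other admissible triple involves either $\rho'<\rho$ or a disconnected configuration. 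Induction on $\rho$ then yields $[\rho]^{Y_\loc}\in\sR_d$ and $[\rho]^{Y_\loc}\equiv[\rho]^{X_\loc}\pmod{\sI_d}$, which is precisely \eqref{eqn;GW_loc_cong}. Your observation that $N_{E'/Y_d}\cong N_{E'/X_d}\cong -K_{S_d}$ is correct and is the geometric reason the two transforms agree, but by itself it does not substitute for the dimension count and the explicit identification of the invertible leading coefficient; you should supply both to close the argument.
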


\begin{proof}
    The proof is mainly based on the argument in \cite[\S4.4]{LLW10}, using the method developed in \cite{MP06}. 
        
    We are going to apply the degeneration formula \eqref{eqn;degen_fomula} to two semistable (symplectic) degenerations 
    \begin{align*}
        Y' \rightsquigarrow Y' \cup_{E'} Y_d \quad \text{and} \quad X \rightsquigarrow Y' \cup_{E'} X_d
    \end{align*} 
    given in Theorem \ref{thm;ssdeg}. Let $\Xsm_\loc = X_d$ and $\Xres_\loc = Y_d$ (see \S\ref{subsec;geom_real}). For $\vec{b} = b_1 \otimes \dots \otimes b_n$, without loss of generality we can assume $b_k \notin H^0 (\Xsm)$ by the fundamental class axiom and $n\ge 1$ by the divisor equation. We choose the support of $b_k$ outside of $\crpcon' (E') = \{p'\}$, where $\crpcon'$ is the weighted blow-up in \eqref{eqn;localdef_U}. Since we construct semistable degenerations via weighted blow-ups at $p'$ (see \S\ref{subsec;ss_deg}), the restriction of global lifting of $b_k$ to $\Xsm_\loc$ is $0$, and similarly for $\Xres_\loc$. Denote $c_k = \phi^*b_k$ and $\vec{c} = c_1 \otimes \dots \otimes c_n$. Since  $Y$ and $Y'$ are symplectic deformation equivalent of each other, we know that $\langle\vec{c}\rangle^{Y}_\beta =\langle\vec{c}\rangle^{Y'}_\beta$ \cite{LT99,Siebert99}. By the degeneration formula \eqref{eqn;degen_fomula}, we obtain
    \begin{equation}\label{eqn;GW_local2}
    \begin{aligned}
        \langle \vec{b}\rangle_{\bar{\beta}}^X &= \sum C_\eta \langle \vec{c} \mid e_I, \mu \rangle_{\bar{\Gamma}_0}^{\bullet (Y', E')} \cdot \langle \varnothing \mid e^I, \mu\rangle_{\bar{\Gamma}_\infty}^{\bullet (X_{\text{loc}}, h)}, \\
        \langle \vec{c}\rangle_\beta^Y &= \sum C_\eta \langle \vec{c} \mid e_I, \mu \rangle_{\Gamma_0}^{\bullet(Y', E')} \cdot \langle \varnothing \mid e^I, \mu\rangle_{\Gamma_\infty}^{\bullet(Y_{\text{loc}}, H)}, 
    \end{aligned}
    \end{equation}
    where $\bar{\Gamma}_0 = \Gamma_0 = (\beta_0, \rho, \mu)$, $\bar{\Gamma}_\infty = (\bar{\beta}_\infty, \rho, \mu)$, $\Gamma_\infty = (\beta_\infty, \rho, \mu)$ with
    \[
        \bar{\beta} = \phi_* \beta_0,\quad \beta = \beta_0 + \pi_*\beta_\infty,\quad (E', \beta_0) = (h, \bar{\beta}_\infty) = (H, \beta_\infty) = |\mu|. 
    \]
    Here, $\pi_*\colon H_2(\Yloc) \to H_2(E') \to H_2(Y')$ is the natural pushforward. 
    When $|\mu| = 0$, since $\bar{\beta} \neq 0$, we must have $\beta_\infty = 0$ and $(E',\beta) = (E',\beta_0) = 0$, and this implies $\beta = \tilde{\beta}$.
    
    The virtual dimensions satisfy
    \begin{align*}
        \operatorname{vdim}_{\bar{\Gamma}_\infty} &= \operatorname{vdim}_{\bar{\beta}_\infty} + \rho - |\mu| = \rho + |\mu|\ge 2\rho, \\
        \operatorname{vdim}_{\Gamma_\infty} &= \operatorname{vdim}_{\beta_\infty} + \rho - |\mu| = \rho + |\mu|\ge 2\rho, 
    \end{align*}
    which implies that the only non-vanishing contribution arises when $(e_I, \mu) = (1, 1)^{\rho}$, $(e^I, \mu) = (\pt, 1)^{\rho}$. This also gives 
    \[\beta_0 = \tilde{\beta} - \rho \ell, \quad \bar{\beta}_\infty = \rho \bar{\gamma}, \quad \beta_\infty = \rho \tilde{\gamma} + m\ell, \]
    where $m = -(E', \beta)$ satisfies $\beta = \tilde{\beta} + m\ell$. 

    We write $\Gamma_\infty \mapsto \bar{\Gamma}_\infty$ if $|\Gamma_\infty| \to |\bar{\Gamma}_\infty|$ is an isomorphism of graphs and $\beta_\infty\mapsto \bar{\beta}_\infty$ and define 
    \[
        \langle \varnothing \mid (\pt, 1)^\rho\rangle_{\bar{\Gamma}_\infty}^{\bullet (\Yloc , H) \searrow (\Xloc, h)} \coloneqq \sum_{\Gamma_\infty \mapsto \bar{\Gamma}_\infty} \langle \varnothing \mid (\pt, 1)^\rho\rangle_{\Gamma_\infty}^{\bullet (\Yloc , H)} Q^{\beta_\infty - \rho \tilde{\gamma}}. 
    \]    
    Let $\beta_\mathrm{min} \in \operatorname{NE}(Y)$ be the minimal lifting of $\bar{\beta}$, and let $e = (E',\beta_\mathrm{min})$. Then \eqref{eqn;GW_local2} induces
    \begin{align*}
        \langle \vec{b}\rangle_{\bar{\beta}}^X Q^{\tilde{\beta}} &= \sum_{\rho = 1}^e C_\eta \langle \vec{c} \mid (1, 1)^\rho \rangle_{\bar{\Gamma}_0}^{\bullet(Y', E')}Q^{\tilde{\beta}}\langle \varnothing \mid (\pt, 1)^\rho\rangle_{\bar{\Gamma}_\infty}^{\bullet(\Xloc, h)} + \langle \vec{c} \mid \varnothing\rangle_{\tilde{\beta}}^{(Y', E')}Q^{\tilde{\beta}} \\
        \sum_{\beta\mapsto \bar{\beta}}\langle \vec{c}\rangle_{\beta}^Y Q^\beta &= \sum_{m \ge -e}\langle \vec{c}\rangle_{\tilde{\beta} + m\ell}^Y Q^{\tilde{\beta} + m\ell} \\
        &= \sum_{\rho = 1}^e C_\eta \langle \vec{c} \mid (1, 1)^\rho \rangle_{\Gamma_0}^{\bullet(Y', E')}Q^{\tilde{\beta}} \langle \varnothing \mid (\pt, 1)^\rho\rangle_{\bar{\Gamma}_\infty}^{\bullet(Y_\loc, H)\searrow (\Xloc, h)} + \langle \vec{c} \mid \varnothing \rangle_{\tilde{\beta}}^{(Y',E')} Q^{\tilde{\beta}}
    \end{align*}

    Define
    \begin{align*}
        [\rho]^{\Yloc} &\coloneqq \sum_{m \ge -\rho}\langle \varnothing \mid (\pt, 1)^\rho\rangle_{\rho \tilde{\gamma} + m\ell}^{(\Yloc, H)}Q^{m\ell} \\
        [\rho]^{\Xloc} &\coloneqq \langle \varnothing \mid (\pt, 1)^\rho\rangle_{\rho \bar{\gamma}}^{(\Xloc,h)}
    \end{align*}
    By decomposing each $\bar{\Gamma}_\infty$ into its connected component, it suffices to prove that for each $\rho > 0$, 
    \begin{align}\label{eqn;GW_loc_cong}
        [\rho]^{\Yloc} \in \sR_d \quad \text{and} \quad [\rho]^{\Yloc} \equiv [\rho]^{\Xloc} \pmod{\sI_d}.
    \end{align}
        
    Denote $[\rho]_\Gamma^{\bullet \Yloc} = \prod [\rho_i]^{\Yloc}$ and  $[\rho]_\Gamma^{\bullet \Xloc} = \prod [\rho_i]^{\Xloc}$. Applying the degeneration formula again \eqref{eqn;degen_fomula}, we get 
    \begin{align*}
        \langle \pt^{\otimes \rho}\rangle_{\rho\bar{\gamma}}^{\Xloc} Q^{\rho\tilde{\gamma}} &= \sum_{\rho' = 1}^\rho C_\eta \langle \pt^{\otimes \rho} \mid (1, 1)^{\rho'} \rangle_{\bar{\Gamma}_0}^{\bullet(\Yloc, E')}Q^{\rho\tilde{\gamma}} \cdot [\rho']_{\bar{\Gamma}_\infty}^{\bullet \Xloc} + \langle \pt^{\otimes \rho} \mid \varnothing\rangle_{\rho\tilde{\gamma}}^{(\Yloc, E')}Q^{\rho\tilde{\gamma}}, \\
        \sum_{m \ge -\rho}\langle \pt^{\otimes \rho}\rangle_{\rho\tilde{\gamma} + m\ell}^{\Yloc} Q^{\rho\tilde{\gamma} + m\ell} &= \sum_{\rho' = 1}^\rho C_\eta \langle \pt^{\otimes \rho} \mid (1, 1)^{\rho'} \rangle_{\Gamma_0}^{\bullet(\Yloc, E')}Q^{\rho\tilde{\gamma}} \cdot [\rho']_{\bar{\Gamma}_\infty}^{\bullet \Yloc} + \langle \pt^{\otimes \rho} \mid \varnothing \rangle_{\rho\tilde{\gamma}}^{(\Yloc,E')} Q^{\rho\tilde{\gamma}}, 
    \end{align*} 
    where $\bar{\Gamma}_0 = \Gamma_0 = (\rho\tilde{\gamma} - \rho'\ell, \rho', 1^{\rho'})$ and  $\bar{\Gamma}_\infty = (\rho'\bar{\gamma}, \rho', 1^{\rho'})$.  
    
    Now, by Proposition~\ref{proplocalmodI} (if $d\neq 7$) or Proposition~\ref{proplocal7modI} (if $d = 7$), we know that 
    \[\langle \pt^{\otimes \rho}\rangle_{\rho\bar{\gamma}}^{\Xloc} Q^{\rho\tilde{\gamma}} \equiv \sum_{m \ge -\rho}\langle \pt^{\otimes \rho}\rangle_{\rho\tilde{\gamma} + m\ell}^{\Yloc} Q^{\rho\tilde{\gamma} + m\ell} \pmod{\Iloc}. \]
    Moreover, among the relative invariants appearing in the degeneration formula, the only term contributing to the coefficient of $[\rho]^{\Xloc}$ (and $[\rho]^{\Yloc}$) is 
    \[\langle \pt^{\otimes \rho} \mid (1, 1)^\rho \rangle_{\rho\tilde{\gamma} - \rho \ell}^{\bullet(\Yloc, E')} = \langle \pt^{\otimes \rho} \mid (1, 1)^\rho \rangle_{\rho \gamma}^{\bullet(\Yloc, E')} = \left(\langle \pt \mid (1, 1)\rangle_{\gamma}^{(\Yloc, E')}\right)^\rho, \]
    which is nonzero (cf.~\cite[\S 1.2]{MP06}, or using the identity $\langle \pt \mid (1, 1)\rangle_{\gamma}^{(\Yloc, E')}[1]_{\gamma}^{(\Yloc, H)} = \langle \pt \rangle_\gamma = 1$). All other terms $[\rho']_{\bar{\Gamma}_\infty}^{\bullet\Xloc}$ and $[\rho']_{\Gamma_\infty}^{\bullet\Yloc}$ satisfy either $\rho' < \rho$, or has at least two connected components. 
    Therefore \eqref{eqn;GW_loc_cong} follows by induction on $\rho$. 
\end{proof}

\begin{cor}\label{cor:QHsubquot}
    The big quantum cohomology $QH(X)$ is a sub-quotient of $QH(Y)$. More precisely, for $b_1$, $b_2$, $\tau \in H(X)$, 
    \[\phi^*b_1 \star_{\phi^*\tau} \phi^*b_2 \equiv \phi^*(b_1 \star_\tau b_2) \pmod{\sI_Y^q H(Y) + \bC\PSR{q^{\operatorname{NE}(Y)}} E + \bC\PSR{q^{\operatorname{NE}(Y)}} \ell}, \]
    where $\sI_Y^q \subseteq \bC\PSR{q^{\operatorname{NE}(Y)}}$ is the $q$-analogue of $\sI_Y \subseteq \bC\PSR{Q^{\operatorname{NE}(Y)}}$, obtained by replacing the variables $Q^\beta$ with $q^\beta$. 
\end{cor}

\begin{proof}
    Let $\{\bar{T}_1, \dots, \bar{T}_N\}$ be a basis of $H(X)$, and let $\{\bar{T}^1, \dots, \bar{T}^N\}$ be the dual basis. Then $\{T_\mu = \phi^*\bar{T}_\mu, T_{N+1} = E, T_{N+2} = \ell\}$ forms a basis of $H(Y)$, with dual basis $\{T^\mu = \phi^*\bar{T}^\mu, T^{N+1} = -\ell, T^{N+2} = -E\}$. For $b_1$, $b_2$ and $\tau \in H(X)$, we recall from \eqref{eq:bigquantumprod} that the quantum products are given by 
    \begin{align*}
        b_1 \ast_\tau b_2 &= \sum_{n, \bar{\beta}, \mu} \frac{q^\beta}{n!}\langle b_1, b_2, \bar{T}_\mu, \tau^{\otimes n}\rangle_\beta^X \bar{T}^\mu, \\
        \phi^*b_1 \ast_{\phi^*\tau} \phi^*b_2 &= \sum_{n, \beta, \mu} \frac{q^\beta}{n!}\langle \phi^*b_1, \phi^*b_2, T_\mu, (\phi^*\tau)^{\otimes n}\rangle_\beta^Y T^\mu \\
        &= \sum_{n, \beta, \mu} \frac{q^\beta}{n!}\langle \phi^*b_1, \phi^*b_2, \phi^*\bar{T}_\mu, (\phi^*\tau)^{\otimes n}\rangle^Y \phi^*\bar{T}^\mu \\
        &\quad - \sum_{n} \frac{q^\beta}{n!}\langle \phi^*b_1, \phi^*b_2, E, (\phi^*\tau)^{\otimes n}\rangle_\beta^Y \ell - \sum_{n} \frac{1}{n!}\langle \phi^*b_1, \phi^*b_2, \ell, (\phi^*\tau)^{\otimes n}\rangle_\beta^Y E. 
    \end{align*}
    By Theorem~\ref{thrGWmodI} (replacing $Q^\beta$ with $q^\beta$), we get
    \[\sum_\beta \langle \phi^*b_1, \phi^*b_2, \phi^*\bar{T}_\mu, (\phi^*\tau)^{\otimes n}\rangle_\beta^Y q^\beta \equiv \phi^*\sum_{\bar{\beta}}\langle b_1, b_2, \bar{T}_\mu, \tau^{\otimes n}\rangle_{\bar{\beta}}^X q^{\bar{\beta}}\pmod{\sI_Y^q}. \]
    This yields the desired result. 
\end{proof}

For $d \in \{1,2,3,4,7\}$, we can immediately take the limit as $P^\ell \to \infty$ in Theorem \ref{thrGWmodI}.

\begin{cor}\label{cor:GWlimitd=12347}
    Suppose $d \in \{1,2,3,4,7\}$. Let $n\ge 0$, $\vec{b} \in H(X)^{\otimes n}$ and $0 \neq \bar{\beta} \in \operatorname{NE}(X)$. Then $\langle \phi^*\vec{b} \rangle_{\bar{\beta}}^{\Xres \searrow \Xsm} Q^{-\tilde{\beta}}$ is a multi-valued analytic function of $P^\ell \in \bP^1$, and 
    \begin{equation}\label{eq:lim12347}
        \lim_{P^\ell \to \infty}\langle \phi^*\vec{b} \rangle_{\bar{\beta}}^{\Xres \searrow \Xsm}Q^{-\tilde{\beta}} = \langle \vec{b}\rangle_{\bar{\beta}}^X, 
    \end{equation}
    where $\tilde{\beta}$ is a lifting of $\bar{\beta}$ such that $(E, \tilde{\beta}) = 0$. 
\end{cor}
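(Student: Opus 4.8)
The plan is to read off the corollary from Theorem~\ref{thrGWmodI} once the congruence there is reinterpreted as the statement that the difference of the two sides vanishes at the transition point $P^\ell=\infty$. Since the liftings of $\bar\beta$ to $\NE(Y)$ are exactly $\tilde\beta+m\ell$ (for $d\neq 7$), we have
\[
\langle\phi^*\vec{b}\rangle_{\bar\beta}^{\Xres\searrow\Xsm}Q^{-\tilde\beta}=\sum_m\langle\phi^*\vec{b}\rangle^Y_{\tilde\beta+m\ell}\,Q^{m\ell},
\]
which is precisely the $Q^{\tilde\beta}$-coefficient (an element of $\sR_d$) of $\langle\phi^*\vec{b}\rangle_{\bar\beta}^{\Xres\searrow\Xsm}\in\sR_Y=\sR_d\PSR{Q^{\phi^*\NE(X)}}$. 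Extracting this coefficient from Theorem~\ref{thrGWmodI} gives both $\langle\phi^*\vec{b}\rangle_{\bar\beta}^{\Xres\searrow\Xsm}Q^{-\tilde\beta}\in\sR_d$ and, by comparing the $Q^{\tilde\beta}$-coefficients in \eqref{thrGWmodI_eqn},
\[
\langle\phi^*\vec{b}\rangle_{\bar\beta}^{\Xres\searrow\Xsm}Q^{-\tilde\beta}\equiv\langle\vec{b}\rangle^X_{\bar\beta}\pmod{\sI_d}.
\]
For $d=7$ the same holds verbatim with $\sR_7$, $\sI_7$ of Definition~\ref{defRI7}, using Proposition~\ref{proplocal7modI} in place of Proposition~\ref{proplocalmodI}.

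Next I would upgrade the formal membership to an analytic one. Because $\sR_d\subseteq\sS_d=\bC(y)[v_d]$ with $y=P^{-\ell}$ and $v_d=\theta f_d/f_d$, every element of $\sR_d$ is a polynomial in $v_d$ with coefficients rational in $y$; and $v_d$ is the logarithmic derivative of the hypergeometric function $f_d={}_2\rF_1(\tfrac1{n_d},1-\tfrac1{n_d};1;-\kappa_dP^\ell)$ of Proposition~\ref{prop:fdisHG}. Thus each such element is a multi-valued analytic function of $P^\ell$ on $\bP^1$ minus $\operatorname{Sing}_{f_d}$ and the zeros of $f_d$. Since the only finite singularity of $f_d$ lies at $P^\ell=-1/\kappa_d<0$, the power-series object at $P^\ell=0$ continues analytically along the whole ray $[0,\infty)$, on which $f_d$ has no zero, up to the transition point $P^\ell=\infty$; this identifies $\langle\phi^*\vec{b}\rangle_{\bar\beta}^{\Xres\searrow\Xsm}Q^{-\tilde\beta}$ with a genuine function there.

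Finally I would take the limit, which amounts to showing that every element of $\sI_d$ tends to $0$ as $y\to 0$. An element of $\sI_d$ is a finite sum $\sum_j a_j(y)\,v_d^{\,j}$ whose coefficients $a_j\in\bC(y)$ are regular and vanishing at $y=0$; hence it suffices to know that the actual function $v_d(y)$ stays bounded as $y\to0$. This is where $d\in\{1,2,3,4\}$ enters: the local exponents of $f_d$ at $y=0$ are $(\tfrac1{n_d},\tfrac{n_d-1}{n_d})$ for $d\le3$ and $(\tfrac12,\tfrac12)$ for $d=4$ (Table~\ref{tab:Singfd} and the indicial equation of \eqref{eqfODEiny}), from which one reads off $v_d=\theta f_d/f_d\to-\tfrac1{n_d}$ as $y\to0$ (with the logarithm in the $d=4$ solution cancelling in the ratio). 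Consequently $\sum_j a_j(y)v_d(y)^j\to0$, and combined with the congruence of the first paragraph this yields \eqref{eq:lim12347}. For $d=7$ one restricts to the diagonal $P^{\ell_1}=P^{\ell_2}$, i.e.\ $r=1$, where the coefficient ring $\bC[Q^{\tilde\gamma_1},r,(1+4r)^{-1/2},(1-2r)^{-1}]$ is regular, so the factor $y$ in $\sI_7=y\,\sR_7$ again forces vanishing.

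The main obstacle --- and the reason the hypothesis $d\in\{1,2,3,4,7\}$ is exactly right --- is precisely the boundedness of $v_d$ at the transition point. For $d\in\{5,6\I,6\II,8\}$ the exponents at $y=0$ degenerate to $(1,1)$, the logarithmic solution of \eqref{eqfODEiny} no longer cancels, and $v_d\to-1$ only like $-1+O(1/\log P^\ell)$; then elements of $\sI_d$ need not vanish and a holomorphic regularization (replacing $v_d$ by $v_d^{\reg}$) is forced, which is the content of the companion Corollary~\ref{cor:GWlimitd=568}. Verifying the absence of such a logarithmic obstruction in the present range, together with the bookkeeping identifying the $Q^{\tilde\beta}$-coefficient, is the only non-formal input; the rest is a direct consequence of Theorem~\ref{thrGWmodI}.
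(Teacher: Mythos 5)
Your proposal is correct and follows essentially the same route as the paper: apply Theorem~\ref{thrGWmodI} and observe that elements of $\sI_d$ tend to $0$ as $P^\ell\to\infty$ because $v_d=\theta f_d/f_d$ stays bounded there (which is exactly where the hypothesis $d\in\{1,2,3,4,7\}$ is used). You supply details the paper leaves implicit --- the precise limiting value of $v_d$ from the local exponents, the analytic continuation along $[0,\infty)$, and the separate bookkeeping for $d=7$ via $\sI_7=y\cdot\sR_7$ --- but these are elaborations of the same argument, not a different one.
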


\begin{proof}
    For $d \in \{1, 2, 3, 4\}$, since $v_d = \frac{\theta f_d}{f_d}$ is bounded near $y = P^{-\ell} = 0$, every element of $\sI_d = \sS_d \cap y \cdot \bC \llbracket y \rrbracket [v_d]$ tends to $0$ as $P^\ell \to \infty$. For $d = 7$, the elements of $\sI_7 = y\cdot \sR_7$ also tend to $0$ as $P^\ell \to \infty$. Therefore \eqref{eq:lim12347} follows from \eqref{thrGWmodI_eqn}. 
\end{proof}

For the remaining $d$, we need to take the regularization before passing to the limit. Recall that $f_d^\reg$ is a nonzero holomorphic solution of \eqref{eqfODEiny} near $P^\ell = \infty$ and $v_d^\reg = \tfrac{\theta f_d^\reg}{f_d^\reg}$ (see Definition~\ref{defRI}). 

\begin{definition}\label{df:regmap}
    For $d \in \{5,6\I,6\II,8\}$, we define the \emph{regularization map} 
    \[\begin{tikzcd}[row sep = 0pt]
        \sS_d = \bC(y)[v_d] \ar[r, "(-)^\reg"] & \bC(y)[v_d^\reg] \equalscolon \sS_d^{\reg} \subseteq \bC\Laurent{y} \\
        \hphantom{\sS_d =}v_d \ar[r, mapsto] & v_d^\reg,\hphantom{\equalscolon \sS_d^{\reg}\subseteq \bC\Laurent{y}}
    \end{tikzcd}\]
    and extend it to $\sS_d \llbracket Q^{\phi^* \operatorname{NE}(X)}\rrbracket \to \sS_d^\reg \llbracket Q^{\phi^* \operatorname{NE}(X)}\rrbracket$, which we also denote by $(-)^\reg$. 
\end{definition}

\begin{remark}\label{rmk:regmap}
    Write $f_d = C(\log y)f_d^\reg + f_{d}^{(1)}$ for some constant $C \neq 0$ and holomorphic function $f_d^{(1)}$ near $y = 0$. Then 
    \begin{align*}
        v_d &= \frac{\theta f_d}{f_d} = \frac{C(\log y)\theta f_d^\reg + (\theta f_{d}^{(1)} -Cf_d^\reg)}{C(\log y)f_d^\reg + f_{d}^{(1)}} \\
        &\in \frac{\theta f_d^\reg + (\log y)^{-1}\bC\PSR{y}}{f_d^\reg + (\log y)^{-1} \bC\PSR{y}} \subseteq v_d^\reg + (\log y)^{-1} \bC\PSR{y, (\log y)^{-1}}. 
    \end{align*}
    This means that the regularization map is the restriction of the $\bC\Laurent{y}$-linear map 
    \[\begin{tikzcd}[row sep = 0pt]
        \bC \Laurent{y}\PSR{(\log y)^{-1}} \ar[r] & \bC \Laurent{y} \\
        (\log y)^{-1} \ar[r, mapsto] & 0
    \end{tikzcd}\]
    to the subset $\sS_d = \bC(y)[v_d] \subseteq \bC \Laurent{y}\PSR{(\log y)^{-1}}$. 
\end{remark}

\begin{remark}
    When $d = 4$, the local exponents of the differential equation \eqref{eqfODE} at $P^\ell = \infty$ are $(\frac12, \frac12)$. Consequently, one may define the regularization $f^{\reg}$ of $f$ to be the solution in $\bC\PSR{y^{1/2}}$ (unique up to scalar). In particular, the regularization map is also defined when $d = 4$. 
\end{remark}

The following corollary follows by a similar argument to Corollary~\ref{cor:GWlimitd=12347}. 

\begin{cor}\label{cor:GWlimitd=568}
    Suppose $d \in \{5,6\I,6\II,8\}$. Let $n\ge 0$, $\vec{b} \in H(X)^{\otimes n}$ and $0 \neq \bar{\beta} \in \operatorname{NE}(X)$. Then the regularization $\left(\langle \phi^*\vec{b} \rangle_{\bar{\beta}}^{\Xres \searrow \Xsm} Q^{-\tilde{\beta}}\right)^\reg$ is holomorphic near $P^\ell = \infty$, and 
    \[\lim_{P^\ell \to \infty} \Bigl(\langle\phi^*\vec{b} \rangle_{\bar{\beta}}^{\Xres \searrow \Xsm}Q^{-\tilde{\beta}}\Bigr)^{\reg} = \langle \vec{b}\rangle_{\bar{\beta}}^X. \]
\end{cor}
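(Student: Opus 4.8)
The plan is to run the proof of Corollary~\ref{cor:GWlimitd=12347} essentially verbatim, inserting the regularization map $(-)^\reg$ of Definition~\ref{df:regmap} as the one new ingredient, applied \emph{before} the limit is taken. The reason it is needed is structural: for $d \in \{5, 6\I, 6\II, 8\}$ one has $\mu_d \neq 0$, so the local exponents of \eqref{eqfODEiny} at $y = P^{-\ell} = 0$ are $(1,1)$; consequently $f_d$ lies in $y\,\bC\PSR{y} \oplus y\log y\,\bC\PSR{y}$ and $v_d = \theta f_d/f_d$ carries a logarithmic singularity at $y = 0$. Hence $v_d$ is unbounded as $P^\ell \to \infty$, and the naive passage to the limit used in the $d \in \{1,2,3,4,7\}$ case is unavailable.

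First I would invoke Theorem~\ref{thrGWmodI} for the fixed class $\bar{\beta}$: it places $\langle \phi^*\vec{b}\rangle_{\bar{\beta}}^{\Xres \searrow \Xsm} Q^{-\tilde{\beta}}$ in $\sR_d$ and yields the congruence
\[
\langle \phi^*\vec{b}\rangle_{\bar{\beta}}^{\Xres \searrow \Xsm} Q^{-\tilde{\beta}} \equiv \langle \vec{b}\rangle_{\bar{\beta}}^X \pmod{\sI_d}.
\]
Next I would record the two elementary properties of $(-)^\reg$ that drive everything. Since it is the ring homomorphism $\sS_d = \bC(y)[v_d] \to \bC(y)[v_d^\reg]$ determined by $v_d \mapsto v_d^\reg$ and the identity on $\bC(y)$, we have $w_d^\reg = (v_d - v_d^\reg)^\reg = 0$, so every element of $w_d\cdot \bC\Laurent{y}[w_d]$ is annihilated. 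Feeding this into the defining descriptions
\[
\sR_d = \sS_d \cap \bigl(\bC\PSR{y} + w_d\cdot \bC\Laurent{y}[w_d]\bigr), \qquad \sI_d = \sS_d \cap \bigl(y\cdot \bC\PSR{y} + w_d\cdot \bC\Laurent{y}[w_d]\bigr),
\]
I obtain $\sR_d^\reg \subseteq \bC\PSR{y}$ and $\sI_d^\reg \subseteq y\cdot \bC\PSR{y}$. Moreover, because $v_d^\reg = -1 + O(y)$ comes from the genuine monodromy-invariant solution $f_d^\reg \in y\,\bC\PSR{y}$ of \eqref{eqfODEiny} with nonzero leading coefficient, the series in $\sR_d^\reg$ are honest convergent holomorphic functions of $y$ near $y = 0$, which is exactly the claimed holomorphy near $P^\ell = \infty$.

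To conclude, I would apply $(-)^\reg$ to the congruence above. Since $\langle \vec{b}\rangle_{\bar{\beta}}^X \in \bC$ is a constant and hence fixed by the regularization, this gives
\[
\Bigl(\langle \phi^*\vec{b}\rangle_{\bar{\beta}}^{\Xres \searrow \Xsm} Q^{-\tilde{\beta}}\Bigr)^\reg = \langle \vec{b}\rangle_{\bar{\beta}}^X + r(y), \qquad r(y) \in y\cdot \bC\PSR{y},
\]
whose left side is holomorphic near $P^\ell = \infty$ and whose error $r(y)$ vanishes as $y \to 0$, i.e.\ as $P^\ell \to \infty$; letting $P^\ell \to \infty$ then yields the assertion. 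The only genuinely substantive point, and the step I expect to demand the most care, is the compatibility of $(-)^\reg$ with the ring/ideal filtration ($\sR_d \to \bC\PSR{y}$ and $\sI_d \to y\,\bC\PSR{y}$), being a well-defined ring homomorphism, together with the convergence (not merely formal power-series) of the regularized output. Everything else is inherited unchanged from Corollary~\ref{cor:GWlimitd=12347}, and this compatibility is precisely the payoff of the careful construction of $\sR_d$, $\sI_d$, and $f_d^\reg$ in Definition~\ref{defRI}.
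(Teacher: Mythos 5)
Your proposal is correct and is essentially the argument the paper intends: the paper proves this corollary by citing Theorem~\ref{thrGWmodI} and "a similar argument to Corollary~\ref{cor:GWlimitd=12347}," with the regularization map killing $w_d$ so that $\sR_d^\reg \subseteq \bC\PSR{y}$ and $\sI_d^\reg \subseteq y\,\bC\PSR{y}$. You have simply written out the details the paper leaves implicit, and your identification of the key point (compatibility of $(-)^\reg$ with the $\sR_d/\sI_d$ filtration, via uniqueness of the $w_d$-expansion) is exactly where the content lies.
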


%%%%%%%%%%%%%%%%%%%%%%%%%%%%%%%%%%%%%%%%
\section{\texorpdfstring{The limit of $Q^\ell$}{Limit of Ql}}
%%%%%%%%%%%%%%%%%%%%%%%%%%%%%%%%%%%%%%%% 

Recall that we have the mirror transform $\tau = t - g_d(P^\ell) E$ so that $Q^\ell = P^\ell e^{g_d(P^\ell)}$ (see Corollary~\ref{cor:QandP}) and a function $f_d = 1 + \theta g_d$ that satisfies the differential equation \eqref{eqfODE} with the set $\operatorname{Sing}_{f_d}$ of regular singular points (see Table~\ref{tab:Singfd}).

As an analytic function, $f_d$ admits an analytic continuation to a multi-valued function on $\bP^1 \setminus \operatorname{Sing}_{f_d}$. We can take a path $p\colon [0,1] \to \bP^1$ such that $p(0) = 0$, $p(1) = \infty$, and $p(s) \in \bP^1 \setminus \operatorname{Sing}_{f_d}$ for $s \in (0, 1)$. Then, using $f_d = 1 + \theta g_d$,
\begin{align*}
    Q^\ell(s) \coloneqq p(s) e^{g_d(p(s))} &= p(s) \exp \Bigl(\int_0^s (f_d(p(s')) - 1) \frac{\odif{p(s')}}{p(s')} \Bigr) \\
    &= \lim_{\varepsilon \to 0^+}p(\varepsilon)\exp \Bigl(\int_\varepsilon^s f_d(p(s')) \frac{\odif{p(s')}}{p(s')}\Bigr)
\end{align*}
is well-defined for $s \in [0, 1)$ and we can define the limit of $Q^\ell$ along the path $p$ to be 
\[Q_p^\ell \coloneqq \lim_{s \to 1^-} Q^\ell(s) = \lim_{\varepsilon \to 0^+} p(\varepsilon)\exp\Bigl( \int_\varepsilon^1 f_d(p(s)) \frac{\odif{p(s)}}{p(s)}\Bigr). \]
The goal of this section is to calculate $Q_p^\ell$ along some path $p$. 

\subsection{The ordinary case}

We first consider the simplest cases, namely $d \in \{1, 2, 3, 4\}$, for which $f_d$ is a hypergeometric function. 

\begin{proposition}\label{prop:Qlimd=1234}
    %For $d \in \{1,2,3,4\}$, the limit of $Q^\ell$ along the positive real line $(0, \infty)$ is $1$.
    For $d \in \{1,2,3,4\}$, the limit of $Q^\ell$ along $[0, \infty)$ is $1$.
\end{proposition}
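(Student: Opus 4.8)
The plan is to compute the limit
\[
Q_p^\ell = \lim_{\varepsilon \to 0^+} p(\varepsilon)\exp\Bigl(\int_\varepsilon^1 f_d(p(s))\,\frac{\odif{p(s)}}{p(s)}\Bigr)
\]
directly for the path $p(s)$ tracing the positive real axis $P^\ell \in (0,\infty)$, using the closed form $f_d(P^\ell) = {}_2\rF_1\bigl(\tfrac{1}{n_d}, 1 - \tfrac{1}{n_d}; 1; -\kappa_d P^\ell\bigr)$ from Proposition~\ref{prop:fdisHG}. Writing $P = P^\ell$, the integral becomes $\int_0^\infty \bigl(f_d(P) - 1\bigr)\,\frac{\odif P}{P}$ (the subtraction of $1$ absorbing the $p(\varepsilon)$ prefactor into a convergent integral near $P = 0$, since $f_d = 1 + O(P)$). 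So the task reduces to showing
\[
\exp\Bigl(\int_0^\infty \bigl({}_2\rF_1(\tfrac1{n_d}, 1-\tfrac1{n_d}; 1; -\kappa_d P) - 1\bigr)\frac{\odif P}{P}\Bigr) = 1,
\]
i.e.\ that the improper integral vanishes. The key inputs are Euler's integral representation ${}_2\rF_1(a,b;1;z) = \frac{1}{\Gamma(b)\Gamma(1-b)}\int_0^1 t^{b-1}(1-t)^{-b}(1-zt)^{-a}\,\odif t$ (valid for $0 < b < 1$, which holds since $b = 1 - \tfrac1{n_d} \in (0,1)$), together with the local exponent data at $P = \infty$ recorded in Table~\ref{tab:Singfd}: the exponents there are $(\tfrac{1}{n_d}, \tfrac{n_d-1}{n_d})$ for $d \le 3$ and $(\tfrac12,\tfrac12)$ for $d=4$.

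\textbf{Key steps.} First I would justify convergence of $\int_0^\infty (f_d - 1)\,\frac{\odif P}{P}$: near $P=0$ the integrand is $O(1)$ since $f_d - 1 = O(P)$, while near $P = \infty$ the local-exponent analysis shows $f_d(P) = O(P^{-1/n_d})$ along the principal branch on the positive axis (the exponent $\tfrac1{n_d}$ governs the decaying solution, and one checks the hypergeometric series continues to this decaying branch). Thus $(f_d-1)/P \sim -1/P + O(P^{-1-1/n_d})$; the $-1/P$ tail is exactly cancelled by the lower-limit regularization $p(\varepsilon)$, which is why the combined limit $p(\varepsilon)\exp(\int_\varepsilon^\cdot)$ is finite. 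Second, I would interchange the $P$-integral with Euler's $t$-integral (justified by Tonelli/Fubini after the regularization), reducing the computation to an elementary integral in $P$ of the form $\int_0^\infty \bigl((1+\kappa_d P t)^{-1/n_d} - 1\bigr)\frac{\odif P}{P}$, which evaluates via a Beta- or logarithmic-type integral. Third, I would assemble these pieces and show the resulting expression in $t$ integrates against the Euler kernel to give exactly $0$, so the exponential is $1$.

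\textbf{Alternative cleaner route.} A more robust approach — and probably the one I would actually write — is to use the modular parametrization foreshadowed in the introduction rather than brute-force the Euler integral. Since $\odv{\log P}{\log q} = u f^2$ and $\odv{\log Q}{\log q} = \rE_d = u f^3$ with $q = -P + O(P^2) = -Q + O(Q^2)$, the limit $P \to \infty$ corresponds to $q$ approaching a specific cusp or elliptic point, and $\log Q_p^\ell = \lim \int \rE_d\,\odif{\log q}$. For $d \in \{1,2,3,4\}$ the relevant point is an elliptic point (monodromy $(\tfrac1{n_d},\tfrac{n_d-1}{n_d})$ or $(\tfrac12,\tfrac12)$ is \emph{finite}, not logarithmic), and one shows by a symmetry/reality argument that the period integral of $\rE_d$ from $q=0$ to the transition point is purely imaginary and in fact an integer multiple of $2\pi\ii$, so that $Q_p^\ell = 1$. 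Concretely, I would use Euler's representation to recognize $f_d(P)$ as $\Eis_4^{1/4}$-type or analogous weight-one forms (as in the $d=1$ formula $f = \Eis_4^{1/4}$ given in the introduction) and evaluate the period directly.

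\textbf{Main obstacle.} The hard part will be handling the boundary behavior at $P = \infty$ rigorously: the principal branch of ${}_2\rF_1$ must be analytically continued from $0$ to $\infty$ along the positive axis, and I must confirm that it lands on the \emph{decaying} solution with exponent $\tfrac1{n_d}$ (rather than picking up a logarithmic or constant contribution, especially delicate in the $d=4$ case where the exponents coincide at $(\tfrac12,\tfrac12)$ and a logarithm could in principle appear). Controlling this — and showing the finite part of the integral vanishes exactly rather than merely converging — is where the real content lies; the interchange of integrals and the elementary $P$-integration are routine by comparison.
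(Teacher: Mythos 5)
Your primary route is essentially the paper's (Euler's integral representation for $f_d$, interchange of integrals, explicit evaluation), but as written it contains a false reduction and leaves out the decisive computation. The reformulation ``it suffices to show $\int_0^\infty (f_d(P)-1)\,\tfrac{\odif{P}}{P} = 0$'' is incorrect: since $f_d(P)\to 0$ as $P\to+\infty$ (the local exponents at $\infty$ are positive), the integrand behaves like $-1/P$ at the \emph{upper} limit and the integral diverges to $-\infty$; taken literally your reduction would give $Q_p^\ell=0$. The prefactor $p(\varepsilon)\to 0$ cancels the logarithmic divergence at the \emph{lower} limit of $\int_\varepsilon^\infty f_d\,\tfrac{\odif{P}}{P}$, not a tail at infinity, so the correct finite quantity is $\lim_{\varepsilon\to 0^+}\bigl(\log\varepsilon+\int_\varepsilon^\infty f_d\,\tfrac{\odif{P}}{P}\bigr)$, equivalently $\int_0^1(f_d-1)\tfrac{\odif{P}}{P}+\int_1^\infty f_d\,\tfrac{\odif{P}}{P}$. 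The same slip recurs in your inner integral $\int_0^\infty\bigl((1+\kappa_d Pt)^{-1/n_d}-1\bigr)\tfrac{\odif{P}}{P}$, which also diverges at $P=\infty$. Moreover, the step you call routine is where the content lies: after substituting $x=(1+\kappa_d Ps)^{1/n}$, doing partial fractions over the $n$-th roots of unity, and evaluating the $\log s$ piece via the derivative of the Beta function and Gauss's digamma formula, the regularized limit equals
\[
-\log\Bigl(\frac{\kappa_d}{n}\Bigr)-\sum_{j=1}^{n-1}\bigl(\omega_n^{j}+\omega_n^{-j}-1\bigr)\log(1-\omega_n^{j}),
\]
and its vanishing is a nontrivial identity holding precisely because of the specific pairs $(\kappa_d,n_d)=(432,6),(64,4),(27,3),(16,2)$; it must be checked case by case, not deduced from general principles. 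Your worry about which branch of ${}_2\rF_1$ one lands on at $P=\infty$ is, by contrast, a non-issue in this organization: Euler's representation is valid on all of $(0,\infty)$ and no analytic continuation across a singular point is needed.

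Your ``alternative cleaner route'' via period integrals of $\rE_d$ is not available with the paper's tools for $d\in\{1,2,3\}$: there the transition point is an elliptic point rather than a cusp (see Table~\ref{tab:GammadSingfd}), the Eisenstein-period formula of Proposition~\ref{prop:Yang04ext} is proved only for integration between the cusps $\ii\infty$ and $0$, and Proposition~\ref{prop:EdinEis} does not express $\rE_1$ or $\rE_2$ as Eisenstein series at all. The symmetry argument you sketch would show at best that the limit is real and positive, not that it equals $1$.
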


\begin{proof}
    We need to show that 
    \[\lim_{\varepsilon \to 0^+}\varepsilon \exp \Bigl(\int_\varepsilon^\infty f_d(p) \frac{\odif{p}}{p}\Bigr) = 1. \]
    This is equivalent to 
    \[\lim_{\varepsilon \to 0^+} \Bigl(\log \varepsilon + \int_\varepsilon^\infty f_d(p) \frac{\odif{p}}{p} \Bigr) = 0. \]
    
    Recall from Proposition~\ref{prop:fdisHG} that 
    \[f_d(p) = {}_2\rF_1\bigl(\tfrac{1}{n}, 1 - \tfrac{1}{n}; 1; -\kappa_d p\bigr), \]
    where $n = n_d = 6$, $4$, $3$, $2$ for $d = 1$, $2$, $3$, $4$, respectively. By the Euler integral representation of the hypergeometric function, we have
    \begin{equation}\label{eq:intrepoffd}
        f_d(p) = \frac{1}{\operatorname{B}(1 - \tfrac1{n}, \tfrac1{n})} \int_0^1 \frac{\odif{s}}{s^{1/n}(1-s)^{1-1/n}(1 + \kappa_d p s)^{1/n}}, 
    \end{equation}
    where $\operatorname{B}(-,-)$ is the Beta function. Substituting this into the integral, we obtain
    \begin{align}
        \int_\varepsilon^\infty f_d(p)\,\frac{\odif{p}}{p} &= \frac{1}{\operatorname{B}(1 - \tfrac1{n}, \tfrac1{n})}\int_\varepsilon^\infty \int_0^1 \frac{\odif{s,p}}{s^{1/{n}}(1-s)^{1-1/{n}}(1 + \kappa_d ps)^{1/n} p} \notag \\
        &= \frac{1}{\operatorname{B}(1 - \tfrac1{n}, \tfrac1{n})}\int_0^1\int_\varepsilon^\infty \frac{\odif{p}}{(1 + \kappa_d ps)^{1/{n}} p}\frac{\odif{s}}{s^{1/n}(1-s)^{1-1/n}}. \label{eq:intfd=1234} 
    \end{align}
    Make the substitution $x = (1 + \kappa_d ps)^{1/n}$, so that $p = \frac{x^n-1}{\kappa_d s}$ and $\odif{p} = \frac{nx^{n-1}}{\kappa_d s}\odif{x}$. Then 
    \begin{align*}
        \int_\varepsilon^\infty  \frac{\odif{p}}{(1 + \kappa_d ps)^{1/n} p} &= \int_{(1+\kappa_d\varepsilon s)^{1/n}}^{\infty} \frac{n x^{n - 2}\,\odif{x}}{x^n-1}. 
    \end{align*}
    Denote by $\omega_n$ the $n$-th root of unity $e^{2\pi \ii/n}$. Then, for $R$ large, 
    \begin{align*}
        \int_{(1+\kappa_d\varepsilon s)^{1/n}}^R \frac{n x^{n - 2}\odif{x}}{x^n-1} &= \sum_{j = 0}^{n-1} \int_{(1+\kappa_d\varepsilon s)^{1/n}}^{R} \frac{\omega_n^{-j} \odif{x}}{x - \omega_n^j} \\
        &= \int_{(1 + \kappa_d\varepsilon s)^{1/n}}^R \frac{\odif{x}}{x-1} + \sum_{j = 1}^{n-1} \int_{1}^{R} \frac{\omega_n^{-j} \odif{x}}{x - \omega_n^j} + O(\varepsilon) \\
        &= \Bigl(\log R + \sum_{j=1}^{n-1} \omega_n^{-j} \log (R - \omega_n^j)\Bigr) \\
        &\quad - \log \Bigl(\frac{\kappa_d\varepsilon s}{n}\Bigr) - \sum_{j = 1}^{n-1} \omega_n^{-j}\log(1 - \omega_n^j) + O(\varepsilon). 
    \end{align*}
    As $R \to \infty$, we have 
    \[\log R + \sum_{j=1}^{n-1} \omega_n^{-j} \log (R - \omega_n^j) = \Bigl(1 + \sum_{j=1}^{n-1} \omega_n^{-j}\Bigr) \log R + o(1) \to 0. \]
    Thus, 
    \[\int_{(1+\kappa_d\varepsilon s)^{1/n}}^\infty \frac{n x^{n - 2} \odif{x}}{x^n-1} = - \log \Bigl(\frac{\kappa_d\varepsilon s}{n}\Bigr) - \sum_{j = 1}^{n-1} \omega_n^{-j}\log(1 - \omega_n^j) + O(\varepsilon). \]
    Putting this back into the double integral \eqref{eq:intfd=1234}, we obtain
    \begin{align}
        \lim_{\varepsilon \to 0^+}\left(\log \varepsilon + \int_\varepsilon^\infty f_d(p)\,\frac{\odif{p}}{p}\right) &= -\log \Bigl(\frac{\kappa_d}{n}\Bigr) - \sum_{j = 1}^{n-1} \omega_n^{-j}\log(1 - \omega_n^j) \notag \\
        &\quad - \frac{1}{\operatorname{B}(1 - \tfrac1n, \tfrac1n)} \int_0^1 \frac{\log s \odif{s}}{s^{1/n}(1 - s)^{1-1/n}}. \label{eq:intfd=1234-2}
    \end{align}
    
    It remains to evaluate the final integral. We have
    \begin{align*}
        \int_0^1 \frac{\log s\odif{s}}{s^{1/n}(1 - s)^{1-1/n}} &= \left.\odv{}{\alpha}\right|_{\alpha = 0}\int_0^1 \frac{s^\alpha \odif{s}}{s^{1/n}(1 - s)^{1-1/n}} \\
        &= \left.\odv{}{\alpha}\right|_{\alpha = 0} \operatorname{B}(1 - \tfrac1n + \alpha, \tfrac1n). 
    \end{align*}
    Using the derivative of the Beta function 
    \[\left.\odv{}{\alpha}\right|_{\alpha = 0} \operatorname{B}(a + \alpha,b) = \left.\odv{}{\alpha}\right|_{\alpha = 0} \frac{\Gamma(a + \alpha)\Gamma(b)}{\Gamma(a+b+\alpha)} = \operatorname{B}(a, b)(\psi^{(0)}(a) - \psi^{(0)}(a+b)), \]
    where $\psi^{(0)} = \Gamma'/\Gamma$ is the digamma function, we get
    \[\frac{1}{\operatorname{B}(1 - \tfrac1n,\tfrac1n)} \int_0^1 \frac{\log s \odif{s}}{s^{1/n}(1 - s)^{1-1/n}} = \psi^{(0)}(1 - \tfrac1n) -\psi^{(0)}(1). \]
    By Gauss' digamma formula, we get
    \[\psi^{(0)}(1 - \tfrac1n) -\psi^{(0)}(1) = \sum_{m=1}^\infty \left(\frac{1}{m} - \frac{1}{m - \frac1n}\right) = - \sum_{j=1}^{n-1} (1 - \omega_n^j) \log (1 - \omega_n^j). \]
    Substituting this back into \eqref{eq:intfd=1234-2}, we find 
    \[\lim_{\varepsilon \to 0^+} \left( \log \varepsilon + \int_\varepsilon^\infty f_d(p) \, \frac{\odif{p}}{p} \right) = -\log \Bigl(\frac{\kappa_d}{n}\Bigr) - \sum_{j = 1}^{n - 1} (\omega_n^{j} + \omega_n^{-j} - 1) \log (1 - \omega_n^j). \]
    Finally, one verifies case by case that the right hand side is exactly $0$ for $d \in \{1,2,3,4\}$. 
\end{proof}

We can now rephrase Theorem~\ref{thrGWmodI} as follows: 
\begin{cor}
    Let $\Xres \searrow \Xsm$ be a Type II transition of degree $d \in \{1,2,3,4,7\}$ and let $\bar{\beta} \neq 0$. Then the analytic function 
    \[\langle \phi^*\vec{b} \rangle_{\bar{\beta}}^{\Xres \searrow \Xsm} = \sum_{\beta \to \bar{\beta}}\langle \phi^*\vec{b}\rangle_\beta^Y Q^\beta \]
    has an analytic continuation along $Q^\ell \in [0, 1)$ and %$Q^\ell = [0, 1)$ and
    \[\lim_{Q^\ell \to 1} \langle \phi^*\vec{b} \rangle_{\bar{\beta}}^{\Xres \searrow \Xsm} = \langle \vec{b}\rangle_{\bar{\beta}}^X Q^{\tilde{\beta}}, \]
    where $\tilde{\beta}$ is any lifting of $\bar{\beta}$ under $\operatorname{NE}(Y) \to \operatorname{NE}(X)$. 
\end{cor}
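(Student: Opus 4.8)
The plan is to deduce the statement from Corollary~\ref{cor:GWlimitd=12347} and Proposition~\ref{prop:Qlimd=1234} by upgrading the formal identity $Q^\ell = P^\ell e^{g_d(P^\ell)}$ of Corollary~\ref{cor:QandP} into a genuine real-analytic reparametrization of the interval. First I would record that, writing $\theta = P^\ell\,\odv{}{P^\ell}$, one has $\odv{\log Q^\ell}{\log P^\ell} = 1 + \theta g_d = f_d$. For $d \in \{1,2,3,4\}$, Proposition~\ref{prop:fdisHG} identifies $f_d$ with the hypergeometric function ${}_2\rF_1\bigl(\tfrac1{n_d}, 1-\tfrac1{n_d}; 1; -\kappa_d P^\ell\bigr)$, and its Euler integral representation \eqref{eq:intrepoffd} has a strictly positive integrand for every $P^\ell \in (0,\infty)$. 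Hence $f_d(P^\ell) > 0$ on the whole positive real axis, so $\log Q^\ell$ is a strictly increasing function of $\log P^\ell$. Combining this monotonicity with $Q^\ell \to 0$ as $P^\ell \to 0$ (since $g_d(0)=0$) and $Q^\ell \to 1$ as $P^\ell \to \infty$ (Proposition~\ref{prop:Qlimd=1234}) shows that $P^\ell \mapsto Q^\ell$ is a strictly increasing real-analytic bijection of $(0,\infty)$ onto $(0,1)$; as $f_d$ is holomorphic and nonvanishing near the segment, it extends to a biholomorphism between neighborhoods of the two intervals.

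Next I would transport the analytic continuation across this reparametrization. By Corollary~\ref{cor:GWlimitd=12347} the function $\langle \phi^*\vec{b}\rangle_{\bar{\beta}}^{\Xres \searrow \Xsm} Q^{-\tilde{\beta}}$ is a multi-valued analytic function of $P^\ell$; it is single-valued along $[0,\infty)$ because $\operatorname{Sing}_{f_d} = \{0, \infty, -\tfrac1{\kappa_d}\}$ meets neither the open segment nor a neighborhood of it (the finite singular point is negative). Pulling back along the inverse map $Q^\ell \mapsto P^\ell$ then exhibits $\langle \phi^*\vec{b}\rangle_{\bar{\beta}}^{\Xres \searrow \Xsm}$ as an analytic function admitting continuation along $Q^\ell \in [0,1)$. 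Since $Q^\ell \to 1^-$ corresponds exactly to $P^\ell \to \infty$, Corollary~\ref{cor:GWlimitd=12347} gives $\lim_{Q^\ell \to 1}\langle \phi^*\vec{b}\rangle_{\bar{\beta}}^{\Xres \searrow \Xsm} Q^{-\tilde{\beta}} = \langle \vec{b}\rangle_{\bar{\beta}}^X$.

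Finally I would reinstate the factor $Q^{\tilde{\beta}}$ and address the freedom in the lifting. Because $(E,\tilde{\beta}) = 0$, the monomial $Q^{\tilde{\beta}}$ carries no dependence on $Q^\ell$ in the extremal direction, so multiplying the previous limit by $Q^{\tilde{\beta}}$ yields $\lim_{Q^\ell \to 1}\langle \phi^*\vec{b}\rangle_{\bar{\beta}}^{\Xres \searrow \Xsm} = \langle \vec{b}\rangle_{\bar{\beta}}^X Q^{\tilde{\beta}}$. Two liftings of $\bar{\beta}$ differ by a multiple $m\ell$, and $Q^{m\ell} \to 1$ as $Q^\ell \to 1$, so the right-hand side is independent of the choice of lifting—this is exactly the ``any lifting'' clause in the statement. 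For $d = 7$ the identical scheme applies, with the one-variable positivity argument replaced by the analogous (and, as remarked after Proposition~\ref{prop:Qlimd=1234}, easier) computation that $Q^\ell \to 1$ along the corresponding real path; the $P^\ell \to \infty$ limit is already furnished by Corollary~\ref{cor:GWlimitd=12347} in this case as well.

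I expect the main obstacle to lie in the first paragraph: establishing that $P^\ell \mapsto Q^\ell$ is a bona fide (bi)analytic reparametrization of the segment rather than a mere formal substitution. The crux is the global non-vanishing of $f_d$ on all of $(0,\infty)$—which the Euler integral supplies cleanly—together with the verification that no critical point of the change of variables intervenes and that the chosen branch of the multi-valued invariant is consistent along the real path. Once monotonicity and single-valuedness along $[0,\infty)$ are secured, the passage to the $Q^\ell$-limit is a formal consequence of Corollary~\ref{cor:GWlimitd=12347}.
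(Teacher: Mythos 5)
Your proposal is correct and follows essentially the same route as the paper: deduce $Q^\ell \to 1$ from Proposition~\ref{prop:Qlimd=1234} (resp.\ the explicit expansion of $g_7$ for $d=7$), invoke the $P^\ell \to \infty$ limit already established in Corollary~\ref{cor:GWlimitd=12347}, and note that any two liftings of $\bar{\beta}$ differ by a multiple of $\ell$. The only difference is that you make explicit, via positivity of the Euler integrand, that $P^\ell \mapsto Q^\ell$ is a strictly increasing bijection of $(0,\infty)$ onto $(0,1)$ — a point the paper leaves implicit — which is a welcome but minor addition.
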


\begin{proof}
    For $d \in \{1,2,3,4\}$,  
    \[Q^\ell = \lim_{\varepsilon \to 0^+} \varepsilon \exp \Bigl(\int_\varepsilon^{P^\ell} f_d(p) \frac{\odif{p}}{p}\Bigr)\]
    is real-valued on the interval $(0, \infty)$, and its limit as $P^\ell \to \infty$ is equal to $1$, as shown in Proposition~\ref{prop:Qlimd=1234}. Since $v_d = \frac{\theta f_d}{f_d}$ is bounded near $P^\ell = \infty$, elements of $\sI_Y$ tend to $0$ as $P^\ell \to \infty$. Hence, 
    \begin{equation}\label{eqlimQellto1}
        \lim_{Q^\ell \to 1} \langle \phi^*\vec{b} \rangle_{\bar{\beta}}^{\Xres \searrow \Xsm} = \langle \vec{b}\rangle_{\bar{\beta}}^X Q^{\tilde{\beta}}.
    \end{equation}

    For $d = 7$, it follows from \eqref{eq:g7ex} that, as $y \to 0$,  
    \[Q^\ell = P^\ell e^{g(P^\ell)} = \exp\left(- \sum \frac{b_k(1)}{k}y^k \right) \to 1. \]
    Since elements of $\sI_Y$ tend to $0$ as $y \to 0$, we also obtain \eqref{eqlimQellto1}. 
    
    Finally, note that we may choose any lifting $\tilde{\beta}$ of $\bar{\beta}$, since any two liftings differ by a multiple of $\ell$, and $Q^\ell$ tends to $1$.  
\end{proof}

The following is the first nontrivial case where the limit differs from $1$, though, still a root of unity. 

\begin{corollary}
    For $d = 8$, the limit of $Q^\ell$ along $[0, \ii\infty)$ is $\ii$.
\end{corollary}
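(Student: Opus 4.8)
The plan is to reduce the computation to the already-established $d=4$ case by exploiting the identity $f_8(P^\ell) = f_4(-P^{2\ell})$ from Proposition~\ref{prop:fdisHG}. First I would parametrize the positive imaginary line by $P^\ell = \ii t$ with $t \in (0,\infty)$. Since $\operatorname{Sing}_{f_8} = \{0, \infty, -\tfrac14, \tfrac14\}$ (Table~\ref{tab:Singfd}) consists of two real points together with the two endpoints $0$ and $\infty$, this path lies in $\bP^1 \setminus \operatorname{Sing}_{f_8}$ on its interior, so $Q^\ell$ is well-defined along it. Substituting $P^\ell = \ii t$ into the defining formula for $Q^\ell_p$, and using $f_8(\ii t) = f_4(-(\ii t)^2) = f_4(t^2)$ together with $\frac{\odif{(\ii t)}}{\ii t} = \frac{\odif{t}}{t}$, the boundary factor becomes $p(\varepsilon) = \ii\varepsilon$ and I would obtain
\[
\log Q^\ell_p = \log \ii + \lim_{\varepsilon \to 0^+}\Bigl(\log \varepsilon + \int_\varepsilon^\infty f_4(t^2)\,\frac{\odif{t}}{t}\Bigr).
\]

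Next I would make the quadratic substitution $w = t^2$, under which $\frac{\odif{t}}{t} = \tfrac12 \frac{\odif{w}}{w}$ and $\log\varepsilon = \tfrac12 \log(\varepsilon^2)$, so that with $\delta = \varepsilon^2$ the bracketed expression becomes exactly $\tfrac12\bigl(\log\delta + \int_\delta^\infty f_4(w)\,\frac{\odif{w}}{w}\bigr)$. The crucial observation is that for $w = t^2 > 0$ the argument $-16 w$ of the hypergeometric function $f_4(w) = {}_2\rF_1(\tfrac12,\tfrac12;1;-16w)$ stays negative, hence bounded away from the singular point $w = -\tfrac{1}{16}$ of $f_4$; thus $f_4(w)$ here is precisely the single-valued real-analytic branch integrated along the positive real axis in Proposition~\ref{prop:Qlimd=1234}. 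That proposition gives $\lim_{\delta \to 0^+}\bigl(\log\delta + \int_\delta^\infty f_4(w)\,\frac{\odif{w}}{w}\bigr) = 0$, and therefore $\log Q^\ell_p = \log \ii$, i.e.\ $Q^\ell_p = \ii$.

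The argument is essentially bookkeeping, and the only points requiring care are that the substitution $w = t^2$ correctly matches the contour and branch used for $d=4$ — the factor $\tfrac12$ coming from $\odif{w} = 2t\,\odif{t}$ is exactly compensated by $\log\varepsilon = \tfrac12\log(\varepsilon^2)$, so the two regularized limits agree — and that the analytic continuation of $f_8$ along the imaginary axis is the continuation of $f_4$ along the positive real axis, which holds because $f_8 = f_4(-P^{2\ell})$ is an identity of germs at $0$ propagated along singularity-free paths. I do not anticipate a genuine obstacle: the corollary is a direct pullback of the $d=4$ computation under the double cover $P^\ell \mapsto -P^{2\ell}$, and the appearance of $\ii = e^{\pi\ii/2}$ rather than $1$ is accounted for entirely by the boundary factor $p(\varepsilon) = \ii\varepsilon$, consistent with the path-dependence of $\lim Q^\ell$ noted after Theorem~\ref{t:modver}.
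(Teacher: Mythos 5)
Your proposal is correct and follows essentially the same route as the paper: both rest on the identity $f_8(P^\ell) = f_4(-P^{2\ell})$ from Proposition~\ref{prop:fdisHG} and reduce the limit to Proposition~\ref{prop:Qlimd=1234} via the double cover $P^\ell \mapsto -P^{2\ell}$, the paper phrasing this as $g_8(P^\ell) = \tfrac12 g_4(-P^{2\ell})$ while you carry out the equivalent substitution $w = t^2$ directly in the regularized integral. The bookkeeping of the factor $\tfrac12$ against $\log\varepsilon = \tfrac12\log(\varepsilon^2)$ and the boundary factor $\ii\varepsilon$ producing $\ii$ are both right.
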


\begin{proof}
    It follows from Proposition~\ref{prop:fdisHG} that $f_8(P^\ell) = f_4(-P^{2\ell})$. Therefore, $g_8(P^\ell) = \frac12 g_4(-P^{2\ell})$, and hence $Q^\ell = P^\ell e^{g_8(P^\ell)} = P^\ell e^{\frac12 g_4(-P^{2\ell})}$. The result then follows easily from  Proposition~\ref{prop:Qlimd=1234}. 
\end{proof}

For $d \in \{5, 6\I, 6\II\}$, we will compute the corresponding integral using modular forms in the next subsection. 

\subsection{Relation to modular curves}

Motivated by \cite{ASYZ14, FRZZ19}, we establish a relation between meromorphic quasi-modular forms and the GW invariants of $\Yloc$, and consequently those of $Y$. 

We briefly recall the definitions of modular and quasi-modular forms. More details could be found in \cite{DiaShu, Zag08}. Let $\bH = \{\tauq \in \bC \mid \operatorname{Im} \tauq > 0\}$ denote the upper half plane, $\obH = \bH \cup \bQ \cup \{\ii\infty\}$ denote the extended upper half plane, and set $q = e^{2\pi \ii \tauq}$. The group $\operatorname{SL}(2, \bZ)$ acts on $\bH$ via 
\[A\cdot \tauq = \mx {a_A}{b_A}{c_A}{d_A}\cdot \tauq = \frac{a_A \tauq + b_A}{c_A \tauq + d_A}. \]
The automorphy factor $\rj\colon \operatorname{SL}(2, \bZ) \times \bH \to \bC$ is defined by 
\[\rj(A, \tauq) = c_A\tauq + d_A. \]

A subgroup $\Gamma \subseteq \operatorname{SL}(2, \bZ)$ is called a \emph{congruence subgroup} if $\Gamma \supseteq \Gamma(N)$ for some $N \ge 1$, where 
\[\Gamma(N) = \left\{A \in \operatorname{SL}(2, \bZ) \,\middle|\, A \equiv \mx 1001\ (\operatorname{mod}N)\right\}. \]
A cusp of $\Gamma$ is a $\Gamma$-equivalence class in $\bQ \cup \{\ii\infty\}$. A point $\tau \in \bH$ is an elliptic point of $\Gamma$ if the isotropy group $\Gamma_\tau$ is nontrivial in the sense that $\{\pm I\}\Gamma_\tau \neq \{\pm I\}$.
We will use the following congruence subgroups: 
\begin{align*}
    \Gamma_0(N) &= \left\{A \in \operatorname{SL}(2, \bZ) \,\middle|\, A \equiv \mx **0*\ (\operatorname{mod}N)\right\}, \\
    \Gamma_1(N) &= \left\{A \in \operatorname{SL}(2, \bZ) \,\middle|\, A \equiv \mx 1*01\ (\operatorname{mod}N)\right\}.
\end{align*} 

A meromorphic function $F\colon \bH \to \bP^1$ is \emph{weakly modular of weight $k$ on $\Gamma$} if for each $A \in \Gamma$,  
\[F|_k^A(\tauq) \coloneqq \frac{F(A\cdot \tauq)}{\rj(A, \tauq)^k} = F(\tauq). \]

A holomorphic function $F\colon \bH \to \bC$ is a \emph{modular form of weight $k$ on $\Gamma$} if $F$ is weakly modular and $F|_k^A(\tauq)$ is holomorphic at $\ii\infty$ for all $A \in \operatorname{SL}(2, \bZ)$. The value of $F$ at $A\cdot \ii\infty$ is defined to be $F|_k^A(\ii\infty)$. 

The notion of quasi-modular forms was introduced by Kaneko and Zagier in \cite{KZ95} (see also \cite{Dijkgraaf95}), where they are defined as the ``constant term'' of almost-holomorphic modular forms. In this paper, we adopt the following direct definition (see e.g., \cite[p.58]{Zag08}), allowing meromorphicity: 
\begin{definition} \label{d:QMF}
    For $A = \smx {a_A}{b_A}{c_A}{d_A} \in \operatorname{SL}(2, \bZ)$, define 
    \[\rk(A, \tauq) = \frac{c_A}{\rj(A, \tauq)} = \frac{c_A}{c_A \tauq + d_A}. \]
    A meromorphic function $F\colon \bH \to \bP^1$ is a \emph{meromorphic quasi-modular form} of weight $k$ and depth $\le s$ on $\Gamma$ if 
    \begin{enumerate}[(1)]
        \item there exists meromorphic functions $F_0$, $F_1$, $\dots$, $F_s$ on $\bH$, meromorphic at $\ii\infty$, such that for each $A \in \Gamma$, 
        \begin{equation}\label{eq:q-mf}
            F|_k^A(\tauq) = F_0(\tauq) + \rk(A, \tauq) F_1(\tauq) + \dots + \rk(A, \tauq)^s F_s(\tauq); 
        \end{equation}
        \item for each $A \in \operatorname{SL}(2, \bZ)$, there exists meromorphic functions $F_0^A$, $F_1^A$, $\dots$, $F_s^A$ on $\bH$, meromorphic at $\ii\infty$, such that 
        \begin{equation}\label{eq:q-mfA}
            F|_k^A(\tauq) = F_0^A(\tauq) + \rk(A, \tauq)F_1^A(\tauq) + \dots + \rk(A, \tauq)^s F_s^A(\tauq). 
        \end{equation}
    \end{enumerate}
    The value of $F$ at $A\cdot \ii\infty$ is defined to be $F_0^A(\ii\infty)$. 
\end{definition}

Applying $D_q = q\odv{}{q} = \frac{1}{2\pi \ii} \odv{}{\tauq}$ to the identity \eqref{eq:q-mf}, we get 
\[(D_qF)|_{k+2}^A(\tauq) = \sum_{n = 0}^s \rk(A, \tauq)^n D_qF_n + \rk(A, \tauq)^{n+1} \frac{k - n}{2\pi \ii} F_n. \]
It follows that $D_q$ maps a meromorphic quasi-modular form of weight $k$ and depth $\le s$ to a meromorphic quasi-modular form of weight $k+2$ and depth $\le s+1$. 

Recall that for $d \neq 7$, the function $f_d(P^\ell)$ satisfies the differential equation \eqref{eqfODE}: 
\[\theta(u_d\cdot \theta f_d) + (\lambda_d P^\ell - \mu_d P^{2\ell}) \cdot f_d = u_d \cdot \theta^2 f_d + \theta u_d\cdot \theta f_d + (\lambda_d P^\ell - \mu_d P^{2\ell}) \cdot f_d = 0\]
where $u_d = 1 + \kappa_d P^\ell - \mu_d P^{2\ell}$, $\theta = P^\ell \odv{}{P^\ell}$, and the triple $(\kappa_d, \lambda_d, \mu_d)$ is listed in Table~\ref{table:klmd}. 

For $d = 1$, consider the modular function (with root and monodromy) 
\[P_1(\tauq) = \frac{1}{864}\left(\frac{\Eis_6(\tauq)}{\Eis_4(\tauq)^{3/2}} - 1\right),\] defined near $q = 0$, where 
\[\Eis_4(\tauq) = 1 + 240 \sum_{m\ge 1} \frac{m^3q^m}{1 - q^m},\quad \Eis_6(\tauq) = 1 - 504 \sum_{m\ge 1} \frac{m^5q^m}{1 - q^m}\]
are the Eisenstein series. Then, near $q = 0$, we have (see \cite[p.52]{Maier09}, \cite[\S3]{ASYZ14}, \cite[Theorem~4.2]{Coo17})
\begin{equation}\label{eq:f14=e4}
    f_1(P_1(\tauq)) = {}_2\rF_1\bigl(\tfrac16, \tfrac56; 1; -432P_1(\tauq)\bigr) = \Eis_4(\tauq)^{1/4}. 
\end{equation}
Since the $j$-invariant $j(\tauq)$ parametrizes the modular curve $\rX(\Gamma(1)) = \Gamma(1)\backslash \obH$ and
\[
j(\tauq)^{-1} = -P_1(1 + 432P_1), 
\]
the multi-valued function $P_1$ parametrizes a double cover $\widetilde{\rX}(\Gamma(1))$ of $\rX(\Gamma(1))$, ramified at $j(\tauq)^{-1} = 1728^{-1}$ and $\infty$, which correspond to the elliptic points $[\omega_4]$ and $[\omega_6]$, respectively. In Figure~\ref{fig:fund_dom}, a fundamental domain $\mathcal{D}_1$ of $\widetilde{\rX}(\Gamma(1))$ (view as a double cover of a fundamental domain of $\rX(\Gamma(1))$) contains two cusps $[\ii\infty]$, $[0]$, which correspond to $P_1 = 0$, $-\frac{1}{432}$, respectively, and an elliptic point $[\omega_6]$, which corresponds to $P_1 = \infty$ (cf.~\cite[Figure~1]{ASYZ14}). 

We now summarize the $d = 1$ case together with the cases $d \ge 2$. 
\begin{proposition}[\cite{Maier09, Zag09}] \label{p:MZ}
    For $d = 1$, there is a meromorphic modular function $P_1(\tauq) = -q + O(q^2)$ of weight $0$ on $\Gamma(1)$ (with monodromy) such that $f_1(P_1(\tauq))^4$ is the weight $4$ modular form $\Eis_4(\tauq)$ on the group $\Gamma(1)$. 
    
    For each $d \in \{2,3,4,5,6\I,6\II,8\}$, there is a congruence subgroup $\Gamma_d$ and a meromorphic modular function $P_d(\tauq) = -q + O(q^2)$ of weight $0$ on $\Gamma_d$, such that 
    \begin{enumerate}[(i)]
        \item if $d = 2$, then $f_d(P_d(\tauq))^2$ is the weight $2$ modular form $\Theta_{D_4}(\tauq)$ on the group $\Gamma(2)$; 
        \item if $d \in \{3,4,5,6\I,6\II\}$, then $f_d(P_d(\tauq))$ is a weight $1$ modular form on $\Gamma_d$; 
        \item if $d = 8$, then $f_d(P_d(\tauq))$ is a weight $1$ modular form on $\Gamma_1(8)$. 
    \end{enumerate}
    Moreover, the induced map $P_d\colon \rX(\Gamma_d) = \Gamma_d \backslash \obH \to \bP^1$ is an isomorphism between Riemann surfaces. The points which are mapped to $\operatorname{Sing}_{f_d}$ by $P_d$ correspond to cusps or elliptic points in $\rX(\Gamma_d)$, depending on whether the monodromy is infinite or finite, as listed in Table~\ref{tab:GammadSingfd}.  
    \begin{table}[H]
    \centering
    \begin{tabular}{cccc}
        \toprule
        $d$ & $\Gamma_d$ & $\operatorname{Sing}_{f_d}$ & \textup{corresponding cusps or elliptic points} \\
        \midrule
        $1$ & - & $\{0, \infty, -\frac{1}{432}\}$ & $[\ii\infty]$, $[\omega_6]$, $[0]$ \\
        \midrule
        $2$ & $\Gamma(2)$ & $\{0, \infty, -\frac{1}{64}\}$ & $[\ii\infty]$, $[\frac{1+\ii}{2}]$, $[0]$  \\
        \midrule
        $3$ & $\Gamma_1(3)$ & $\{0, \infty, -\frac{1}{27}\}$ & $[\ii\infty]$, $[\frac{1 + \omega_6}{3}]$, $[0]$ \\
        \midrule
        $4$ & $\Gamma_1(4)$ & $\{0, \infty, -\frac{1}{16}\}$ & $[\ii\infty]$, $[\frac12]$, $[0]$ \\
        \midrule
        $5$ & $\Gamma_1(5)$ & $\{0, \infty, \frac{11 - 5 \sqrt{5}}{2}, \frac{11 + 5 \sqrt{5}}{2}\}$ & $[\ii\infty]$, $[\frac25]$, $[0]$, $[\frac12]$ \\
        \midrule
        $6 \I$ & $\Gamma_1(6)$ & $\{0, \infty, -\frac{1}{8}, 1\}$ & $[\ii\infty]$, $[\frac13]$, $[0]$, $[\frac12]$  \\
        \midrule
        $6 \II$ & $\Gamma_1(6)$ & $\{0, \infty, -\frac{1}{9}, -1\}$ & $[\ii\infty]$, $[\frac12]$, $[0]$, $[\frac13]$ \\
        \midrule
        $8$ & $\Gamma_0(8)$ & $\{0, \infty, -\frac{1}{4}, \frac14\}$ & $[\ii\infty]$, $[\frac14]$, $[0]$, $[\frac12]$ \\
        \bottomrule
    \end{tabular}
    \caption{Congruence subgroups $\Gamma_d$, singular points $\operatorname{Sing}_{f_d}$, and corresponding points in $\rX(\Gamma_d)$.}
    \label{tab:GammadSingfd}
    \end{table}
\end{proposition}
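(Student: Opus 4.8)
The plan is to deduce the result from Zagier's classification of Apéry-like recurrences \cite{Zag09} and Maier's explicit modular equations \cite{Maier09}, organized around the change of variable \eqref{e:q-Q}. By Lemma~\ref{lm:mt} the Taylor coefficients $\{f_m\}$ of $f_d$ obey the three-term recurrence
\[(m+1)^2 f_{m+1} + \bigl(\kappa_d(m^2+m) + \lambda_d\bigr) f_m - \mu_d m^2 f_{m-1} = 0,\]
which for the values $(\kappa_d, \lambda_d, \mu_d)$ of Table~\ref{table:klmd} is precisely one of the hypergeometric or sporadic sequences on Zagier's list. First I would introduce the uniformizing variable $\tauq$ through \eqref{e:q-Q}, i.e.\ by integrating $\odv{\log P^\ell}{\log q} = u_d f_d^2$ with $u_d = 1 + \kappa_d P^\ell - \mu_d P^{2\ell}$ and normalizing so that $q = -P^\ell + O(P^{2\ell})$. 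This presents $P^\ell$ as an a priori multivalued function $P_d(\tauq)$, holomorphic near the cusp $q = 0$, which is the candidate weight-$0$ modular function.

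The crux is to identify the monodromy of the pair consisting of the second-order equation \eqref{eqfODE} and the period $\tauq$ with the congruence group $\Gamma_d$ of Table~\ref{tab:GammadSingfd}. The projective monodromy of \eqref{eqfODE} is generated by the local monodromies at the three or four points of $\operatorname{Sing}_{f_d}$, and these are determined by the exponent differences recorded in Table~\ref{tab:Singfd}: a difference in $\bZ$ yields a unipotent (parabolic) generator, while a difference $\tfrac1e$ yields one of finite order $e$. For the $d$ on our list these data force the monodromy group to be $\Gamma_d$ after the above normalization; this identification is exactly the arithmetic input supplied by Maier and Zagier, obtained either through the connection coefficients of the underlying hypergeometric/Heun equation or by matching enough coefficients of the $q$-expansion of $P_d$. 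Granting it, $P_d$ becomes single-valued and $\Gamma_d$-invariant, and $F_d \coloneqq f_d(P_d(\tauq))$ acquires the weight-one automorphy factor and is holomorphic on $\bH$ and at the cusps, hence a weight-one modular form on $\Gamma_d$, giving (ii) and (iii). In the case $d = 2$ the exponent difference $\tfrac12$ at $P^\ell = \infty$ means that $f_2$ itself carries an order-two monodromy, so only the single-valued square $F_2^2 = \Theta_{D_4}$ descends to a (weight-two) modular form on $\Gamma(2)$, which is assertion (i).

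For the final assertions: since $P_d$ is by construction a generator of the function field of the genus-zero curve $\rX(\Gamma_d)$, the induced morphism $P_d\colon \rX(\Gamma_d) \to \bP^1$ has degree one and is therefore an isomorphism of Riemann surfaces. The last column of Table~\ref{tab:GammadSingfd} is then read off by transporting the local analysis of \eqref{eqfODE} along this isomorphism: a point of $\operatorname{Sing}_{f_d}$ with integer exponent difference has logarithmic (infinite) local monodromy and hence lies over a cusp of $\Gamma_d$, whereas a difference $\tfrac1e$ with $e \ge 2$ gives finite monodromy and lands on an elliptic point of order $e$. Comparing the exponent differences of Table~\ref{tab:Singfd} with the cusp and elliptic structure of each $\Gamma_d$ produces the stated correspondence.

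The step I expect to be the main obstacle is the monodromy/group identification itself: computing the projective monodromy representation of a Heun-type equation is delicate and does not follow from a purely formal argument, so it is exactly here that the arithmetic results of \cite{Maier09, Zag09} must be invoked rather than rederived. A secondary point demanding care is the half-integral bookkeeping for $d = 2$, where $f_2$ is only a square root of a modular form; the order-two ambiguity at $P^\ell = \infty$ and the precise behavior of $P_2$ at the corresponding point $\tfrac{1+\ii}{2}$ must be tracked consistently through the identity $f_2(P_2)^2 = \Theta_{D_4}$.
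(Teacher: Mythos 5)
Your proposal is correct in outline but follows a genuinely different route from the paper. The paper's proof is essentially a citation-and-verification: it writes down the classical explicit formulas for $f_d(P_d(\tauq))$ (as theta series, Eisenstein series and eta quotients, Table~\ref{table:fdPd}) and for the Hauptmoduln $P_d$, then expresses $P_d$ in terms of the canonical Hauptmodul $t_d$ of $\rX(\Gamma_0(d))$ (Table~\ref{tab:tdPd}) and reads the cusp/elliptic-point correspondence directly off Maier's tables. You instead run the uniformization argument: define $\tauq$ as the (normalized) period ratio of \eqref{eqfODE}, identify the projective monodromy with $\Gamma_d$, and deduce modularity of $P_d$ and $f_d(P_d)$ abstractly. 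Your approach buys a conceptual explanation of \emph{why} the statement is true and makes transparent which part is formal (the deduction of modularity from the monodromy identification) and which part is arithmetic; the paper's approach buys the explicit $q$-expansions that are actually used later (e.g.\ in Proposition~\ref{prop:EdinEis}). Both proofs ultimately defer the same hard input to \cite{Maier09, Zag09}, which is appropriate since the proposition is attributed to them.

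Two caveats on your final dictionary. First, for the Heun cases $d\in\{5,6\I,6\II,8\}$ (four singular points) the local exponent data do \emph{not} force the monodromy group: the accessory parameter intervenes, and discreteness/arithmeticity of the monodromy is special to Zagier's sporadic solutions. You acknowledge this, but the phrase ``these data force the monodromy group to be $\Gamma_d$'' overstates what the signature gives you. Second, the rule ``exponent difference $\tfrac1e$ $\Rightarrow$ elliptic point of order $e$'' does not match all entries of Table~\ref{tab:Singfd}: for $d=4$ the exponents at $P^\ell=\infty$ are $(\tfrac12,\tfrac12)$, so the monodromy there is logarithmic and the point is the \emph{cusp} $[\tfrac12]$, not an elliptic point; for $d=2$ the difference is $\tfrac12$ but $\Gamma(2)$ is torsion-free, so $[\tfrac{1+\ii}{2}]$ is an ordinary point of $\rX(\Gamma(2))$ at which $f_2$ has a square-root branch point --- which is exactly why only $f_2^2=\Theta_{D_4}$ is modular; and for $d=1$ the difference at $\infty$ is $\tfrac23$, reflecting the double cover of $\rX(\Gamma(1))$ rather than an order-$\tfrac32$ elliptic point. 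These do not break your argument, but the last step should be stated as ``logarithmic monodromy $\Leftrightarrow$ cusp, finite monodromy $\Leftrightarrow$ ramification point of the uniformization (an elliptic point or a branch point of a root),'' with the precise order read off case by case.
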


\begin{proof}
    This is a classical result, the functions $f_d(P_d(\tauq))$ are known as Ramanujan's theta functions, see \cite[\S4-\S6]{Coo17}, \cite{Maier09}, \cite{Zag09}, and \cite[Table~1.2]{Zhou14} for $d = 2$, $3$, $4$. We summarize the expressions below for the reader's convenience. 
    
    As listed in \cite{Zag09} (with $P_d = -t$) and \eqref{eq:f14=e4} for $d = 1$, the modular functions $f_d$ and their corresponding $P_d$ can be expressed in Table~\ref{table:fdPd}.
    \begin{table}[H]
    \centering
    \begin{tabular}{cccc}
        \toprule
        $d$ & index & $f_d(P_d(\tauq))$ & $P_d(\tauq)$ \\
        \midrule
        $1$ & - & $\Theta_{E_8}^{1/4} = \Eis_4(\tauq)^{1/4}$ & $-\frac{\Eis_4(\tauq)^{3/2} - \Eis_6(\tauq)}{864 \Eis_4(\tauq)^{3/2}}$ \\ 
        \midrule
        $2$ & \#20 & $\Theta_{D_4}^{1/2} = (2\Eis_2(2\tauq) - \Eis_2(\tauq))^{1/2}$ & $-\frac{\eta(2\tauq)^{24}}{\eta(\tauq)^{24} + 64 \eta(2\tauq)^{24}}$ \\ 
        \midrule
        $3$ & \#14 & $\Theta_{A_2} = \vartheta_3(\tauq)$ & $-\frac{\eta(3\tauq)^{12}}{\eta(\tauq)^{12} + 27 \eta(3\tauq)^{12}}$  \\ 
        \midrule
        $4$ & \#11 & $\Theta_{\bZ_2} = \vartheta_4(\tauq)$ & $-\frac{\eta(\tauq)^{8}\eta(4\tauq)^{16}}{\eta(2\tauq)^{24}}$ \\ 
        \midrule
        $5$ & \#9 $= \mathbf{D}$  & $f_5(\tauq)$ & $P_5(\tauq)$ \\  
        \midrule
        $6\I$ & \#5 $= \mathbf{A}$ & $\frac13 \vartheta_3(\tauq) + \frac23 \vartheta_3(2\tauq)$ & $-\frac{\eta(\tauq)^{3}\eta(6\tauq)^{9}}{\eta(2\tauq)^{3}\eta(3\tauq)^{9}}$ \\ 
        \midrule
        $6\II$ & \#8 $= \mathbf{C}$ & $\frac12 \vartheta_3(\tauq) + \frac12 \vartheta_3(2\tauq)$ & $-\frac{\eta(\tauq)^{4}\eta(6\tauq)^{8}}{\eta(2\tauq)^{8}\eta(3\tauq)^{4}}$ \\
        \midrule 
        $8$ & \#2 $= \mathbf{G}$ & $\vartheta_4(2\tauq)$ & $-\frac{\eta(2\tauq)^{4}\eta(8\tauq)^{8}}{\eta(4\tauq)^{12}}$ \\ 
        \bottomrule
    \end{tabular}
    \caption{Ramanujan's theta functions $f_d$ and their associated Hauptmoduln $P_d$.}
    \label{table:fdPd}
    \end{table}
    \noindent Here,
    \begin{align*}
        \Eis_2(\tauq) = 1 - 24 \sum_{m\ge 1} \frac{mq^m}{1 - q^m}
    \end{align*}
    is the Eisenstein series, 
    \[\eta(\tauq) = q^{1/24} \prod_{m=1}^\infty (1 - q^n)\]
    is the Dedekind eta function, and 
    \[\vartheta_3(\tauq) = 1 + 6 \sum_{m\ge 1} \chi_{3,2}(m) \frac{q^m}{1 - q^m}, \quad  \vartheta_4(\tauq) = 1 + 4 \sum_{m\ge 1} \chi_{4,2}(m) \frac{q^m}{1 - q^m}\]
    are the theta series, with $\chi_{3,2} = \left(\frac{-3}{\cdot}\right)$ and $\chi_{4,2} = \left(\frac{-4}{\cdot}\right)$ denoting the unique nontrivial Dirichlet characters modulo $3$ and $4$, respectively. 
    
    For $d = 5$, 
    \[f_5(\tauq) = 1 + \sum_{m\ge 1} \left(\tfrac{3-\ii}{2}\chi_{5,2}(m) + \tfrac{3+\ii}{2}\bar{\chi}_{5,2}(m)\right) \frac{q^m}{1 - q^m}, \quad P_5(\tauq) = -q \prod_{m=1}^\infty (1 - q^m)^{5 \chi_{5,2}^2(m)}\]
    where $\chi_{5,2}$ is the Dirichlet character modulo $5$ with $\chi_{5,2}(2) = \ii$. 
    
    Using the explicit construction of $P_d$, we obtain in Table~\ref{tab:tdPd} the relation between $P_d(\tauq)$ and the canonical Hauptmodul $t_d(\tauq)$ that parametrizes the modular curve $\rX(\Gamma_0(d))$ (cf.~\cite[Table 2]{Maier09}).%: 
    \begin{table}[H]
    \centering
    \begin{tabular}{cccccccccc}
        \toprule
        $d$ & $2$ & $3$ & $4$ & $5$ & $6$  & $8$ \\
        \midrule
        $t_d(\tauq)$ & $- \frac{64^2 P_2}{1 + 64P_2}$ & $- \frac{27^2 P_3}{1 + 27P_3}$ & $- \frac{16^2 P_4}{1 + 16P_4}$ & $-\frac{125P_5}{1 + 11P_5 - P_5^2}$ & $- \frac{72 P_{6\I}}{1 + 8P_{6\I}} = - \frac{72 P_{6\II}}{1 + 9P_{6\II}}$ & $- \frac{32 P_8}{1 + 4P_8}$ \\
        \bottomrule
    \end{tabular}
    \caption{Relations between $P_d(\tauq)$ and the canonical Hauptmodul $t_d(\tauq)$.}
    \label{tab:tdPd}
    \end{table}
    \noindent These identities shows that $P_d$ parametrize the modular curve $\rX(\Gamma_d)$, and moreover, by the values of $t_d$ at cusps and elliptic points listed in \cite[Table 2]{Maier09}, each regular singular point corresponds to the desired cusp or elliptic point as listed in Table~\ref{tab:GammadSingfd}.  
\end{proof}

\begin{definition}\label{def;tran_pt}
    The point corresponding to $P^\ell = \infty$ in Table~\ref{tab:GammadSingfd} (and shown in Figure~\ref{fig:fund_dom}) is called the \emph{transition point} of degree $d$, denoted by $\trpt{d}$ ($d \neq 7$). 
\end{definition}

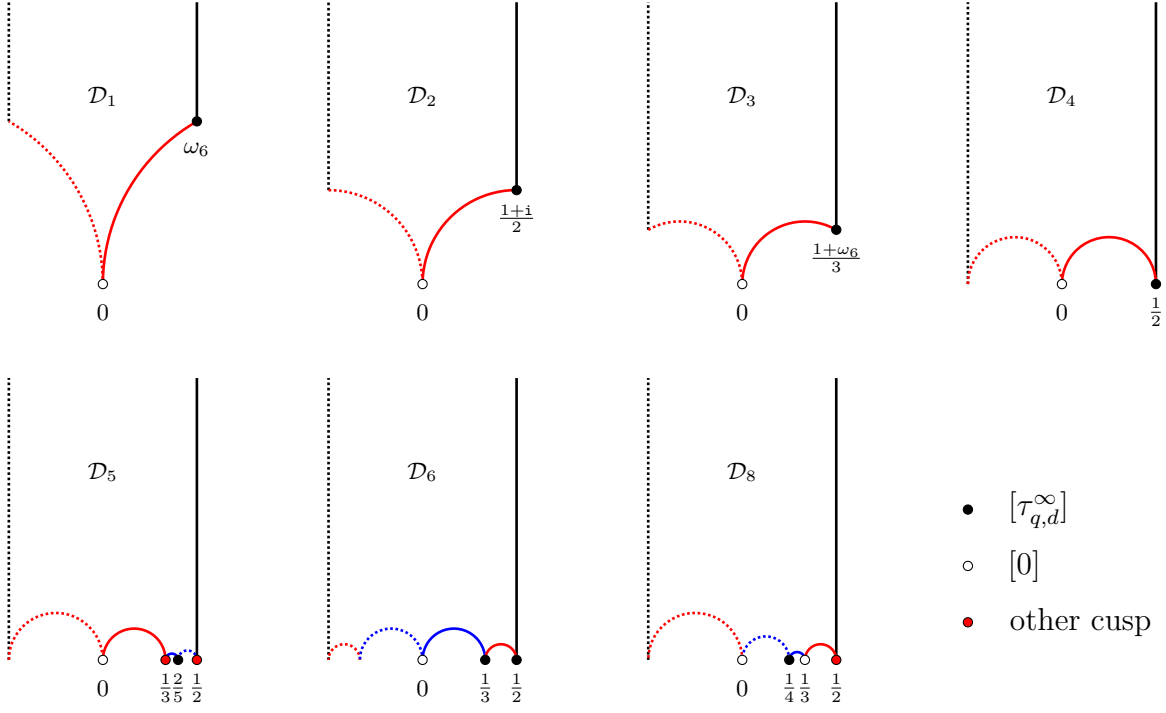
\begin{figure}[htbp!]
    \centering
    \begin{tikzpicture}[scale = 2.5]
    \begin{scope}[shift = {(0,0)}] % d = 1
        \draw [line width = 1, densely dotted] (-0.5,0.8660254037844388) -- (-0.5, 1.5); 
        \draw [line width = 1] (0.5,0.8660254037844388) -- (0.5, 1.5); 
        \draw [shift={(-1.,0.)},line width=1pt, color = red, densely dotted]  plot[domain=0.:1.0471975511965976,variable=\t]({1.*1.*cos(\t r)+0.*1.*sin(\t r)},{0.*1.*cos(\t r)+1.*1.*sin(\t r)});
        \draw [shift={(1.,0.)},line width=1pt, color = red]  plot[domain=2.0943951023931953:3.141592653589793,variable=\t]({1.*1.*cos(\t r)+0.*1.*sin(\t r)},{0.*1.*cos(\t r)+1.*1.*sin(\t r)});
        \begin{scriptsize}
        \draw [fill=white] (0,0) circle (0.7pt);   
        \draw (0,-0.15) node {$0$};
        \draw [fill=black] (0.5,0.8660254037844388) circle (0.7pt);
        \draw (0.5,0.7160254037844388) node {$\omega_6$};
        \draw (0,1) node {$\mathcal{D}_1$}; 
        \end{scriptsize}
    \end{scope}
    
    \begin{scope}[shift = {(1.7,0)}] % d = 2
        \draw [line width = 1, densely dotted] (-0.5,0.5) -- (-0.5, 1.5); 
        \draw [line width = 1] (0.5,0.5) -- (0.5, 1.5); 
        \draw [shift={(-0.5,0.)},line width=1pt, densely dotted, color = red]  plot[domain=1.5714378619914093:3.141592653589793,variable=\t]({-1.*0.5*cos(\t r)+0.*0.5*sin(\t r)},{0.*0.5*cos(\t r)+1.*0.5*sin(\t r)});
        \draw [shift={(0.5005555858538224,0.)},line width=1pt, color = red]  plot[domain=1.5714378619914093:3.141592653589793,variable=\t]({1.*0.5005555858538224*cos(\t r)+0.*0.5005555858538224*sin(\t r)},{0.*0.5005555858538224*cos(\t r)+1.*0.5005555858538224*sin(\t r)});
        \begin{scriptsize}
        \draw [fill=white] (0,0) circle (0.7pt);
        \draw (0,-0.15) node {$0$}; 
        \draw [fill=black] (0.5,0.5) circle (0.7pt);
        \draw (0.5,0.35) node {$\tfrac{1 + \ii}{2}$};
        \draw (0,1) node {$\mathcal{D}_2$}; 
        \end{scriptsize}
    \end{scope}
    
    \begin{scope}[shift = {(3.4,0)}] % d = 3
        \draw [line width = 1, densely dotted] (-0.5,0.28910290390264715) -- (-0.5, 1.5); 
        \draw [line width = 1] (0.5,0.28910290390264715) -- (0.5, 1.5); 
        \draw [shift={(-0.33333,0.)},line width=1pt, densely dotted, color = red]  plot[domain=1.0478392052023793:3.141592653589793,variable=\t]({-1.*0.333333*cos(\t r)+0.*0.333333*sin(\t r)},{0.*0.333333*cos(\t r)+1.*0.333333*sin(\t r)});
        \draw [shift={(0.33333,0.)},line width=1pt, color = red]  plot[domain=1.0478392052023793:3.141592653589793,variable=\t]({1.*0.333333*cos(\t r)+0.*0.333333*sin(\t r)},{0.*0.333333*cos(\t r)+1.*0.333333*sin(\t r)});
        \begin{scriptsize}
        \draw [fill=white] (0,0) circle (0.7pt);
        \draw (0,-0.15) node {$0$}; 
        \draw [fill=black] (0.5,0.28910290390264715) circle (0.7pt);
        \draw (0.5,0.13910290390264715) node {$\tfrac{1 + \omega_6}{3}$};
        \draw (0,1) node {$\mathcal{D}_3$}; 
        \end{scriptsize}
    \end{scope}
    
    \begin{scope}[shift = {(5.1,0)}] % d = 4
        \draw [line width = 1, densely dotted] (-0.5,0) -- (-0.5, 1.5); 
        \draw [line width = 1] (0.5,0) -- (0.5, 1.5); 
        \draw [shift={(-0.25,0.)},line width=1pt, densely dotted, color = red]  plot[domain=0.:3.141592653589793,variable=\t]({-1.*0.25*cos(\t r)+0.*0.25*sin(\t r)},{0.*0.25*cos(\t r)+1.*0.25*sin(\t r)});
        \draw [shift={(0.25,0.)},line width=1pt, color = red]  plot[domain=0.:3.141592653589793,variable=\t]({1.*0.25*cos(\t r)+0.*0.25*sin(\t r)},{0.*0.25*cos(\t r)+1.*0.25*sin(\t r)});
        
        \begin{scriptsize}
        \draw [fill=white] (0,0) circle (0.7pt);
        \draw (0,-0.15) node {$0$};
        \draw [fill=black] (0.5,0) circle (0.7pt);
        \draw (0.5,-0.15) node {$\tfrac12$};
        \draw (0,1) node {$\mathcal{D}_4$}; 
        \end{scriptsize}
    \end{scope}
    \begin{scope}[shift = {(0,-2)}] % d = 5
        \draw [line width = 1, densely dotted] (-0.5, 0) -- (-0.5, 1.5); 
        \draw [line width = 1] (0.5, 0) -- (0.5, 1.5); 
        \draw [shift={(-0.25,0.)},line width=1pt, color=red, densely dotted]  plot[domain=0.:3.141592653589793,variable=\t]({-1.*0.25*cos(\t r)+0.*0.25*sin(\t r)},{0.*0.25*cos(\t r)+1.*0.25*sin(\t r)});
        \draw [shift={(00.166666,0.)},line width=1pt, color=red]  plot[domain=0.:3.141592653589793,variable=\t]({1.*00.166666*cos(\t r)+0.*00.166666*sin(\t r)},{0.*00.166666*cos(\t r)+1.*00.166666*sin(\t r)});
        \draw [shift={(0.36666,0.)},line width=1pt, color=blue]  plot[domain=0.:3.141592653589793,variable=\t]({1.*0.033333*cos(\t r)+0.*0.033333*sin(\t r)},{0.*0.033333*cos(\t r)+1.*0.033333*sin(\t r)});
        \draw [shift={(0.45,0.)},line width=1pt, color=blue, densely dotted]  plot[domain=0.:3.141592653589793,variable=\t]({1.*0.05*cos(\t r)+0.*0.05*sin(\t r)},{0.*0.05*cos(\t r)+1.*0.05*sin(\t r)});
        \begin{scriptsize}
        \draw [fill=white] (0,0) circle (0.7pt); 
        \draw (0,-0.15) node {$0$};
        \draw [fill=black] (0.4,0) circle (0.7pt);
        \draw (0.4,-0.15) node {$\tfrac25$};
        \draw [fill=red] (0.5,0) circle (0.7pt); 
        \draw (0.5,-0.15) node {$\tfrac12$};
        \draw [fill=red] (0.3333,0) circle (0.7pt);
        \draw (0.3333,-0.15) node {$\tfrac13$};
        \draw (0,1) node {$\mathcal{D}_5$}; 
        \end{scriptsize}
    \end{scope}
    
    \begin{scope}[shift = {(1.7,-2)}] % d = 6
        \draw [line width = 1, densely dotted] (-0.5,0) -- (-0.5, 1.5); 
        \draw [line width = 1] (0.5,0) -- (0.5, 1.5); 
        \draw [shift={(-0.416666,0.)},line width=1pt, color=red, densely dotted]  plot[domain=0.:3.141592653589793,variable=\t]({1.*0.083333*cos(\t r)+0.*0.083333*sin(\t r)},{0.*0.083333*cos(\t r)+1.*0.083333*sin(\t r)});
        \draw [shift={(-0.16666,0.)},line width=1pt, color=blue, densely dotted]  plot[domain=0.:3.141592653589793,variable=\t]({1.*0.16666*cos(\t r)+0.*0.16666*sin(\t r)},{0.*0.16666*cos(\t r)+1.*0.16666*sin(\t r)});
        \draw [shift={(0.16666,0.)},line width=1pt, color=blue]  plot[domain=0.:3.141592653589793,variable=\t]({1.*0.16666*cos(\t r)+0.*0.16666*sin(\t r)},{0.*0.16666*cos(\t r)+1.*0.16666*sin(\t r)});
        \draw [shift={(0.416666,0.)},line width=1pt, color=red]  plot[domain=0.:3.141592653589793,variable=\t]({1.*0.083333*cos(\t r)+0.*0.083333*sin(\t r)},{0.*0.083333*cos(\t r)+1.*0.083333*sin(\t r)});
        \begin{scriptsize}
        \draw [fill=white] (0,0) circle (0.7pt);
        \draw (0,-0.15) node {$0$};
        \draw [fill=black] (0.5,0) circle (0.7pt);
        \draw (0.5,-0.15) node {$\tfrac12$};
        \draw [fill=black] (0.3333,0) circle (0.7pt);
        \draw (0.3333,-0.15) node {$\tfrac13$};
        \draw (0,1) node {$\mathcal{D}_6$}; 
        \end{scriptsize}
    \end{scope}
    
    \begin{scope}[shift = {(3.4,-2)}] % d = 8
        \draw [line width = 1, densely dotted] (-0.5,0) -- (-0.5, 1.5); 
        \draw [line width = 1] (0.5,0) -- (0.5, 1.5); 
        \draw [shift={(-0.25,0.)},line width=1pt, color=red, densely dotted]  plot[domain=0.:3.141592653589793,variable=\t]({1.*0.25*cos(\t r)+0.*0.25*sin(\t r)},{0.*0.25*cos(\t r)+1.*0.25*sin(\t r)});
        \draw [shift={(0.125,0.)},line width=1pt, color=blue, densely dotted]  plot[domain=0.:3.141592653589793,variable=\t]({1.*0.125*cos(\t r)+0.*0.125*sin(\t r)},{0.*0.125*cos(\t r)+1.*0.125*sin(\t r)});
        \draw [shift={(0.29166,0.)},line width=1pt, color=blue]  plot[domain=0.:3.141592653589793,variable=\t]({1.*0.04166*cos(\t r)+0.*0.04166*sin(\t r)},{0.*0.04166*cos(\t r)+1.*0.04166*sin(\t r)});
        \draw [shift={(0.416666,0.)},line width=1pt, color=red]  plot[domain=0.:3.141592653589793,variable=\t]({1.*0.083333*cos(\t r)+0.*0.083333*sin(\t r)},{0.*0.083333*cos(\t r)+1.*0.083333*sin(\t r)});
        \begin{scriptsize}
        \draw [fill=white] (0,0) circle (0.7pt);   
        \draw (0,-0.15) node {$0$};
        \draw [fill=red] (0.5,0) circle (0.7pt);
        \draw (0.5,-0.15) node {$\tfrac12$};
        \draw [fill=white] (0.3333,0) circle (0.7pt);
        \draw (0.3333,-0.15) node {$\tfrac13$};
        \draw [fill=black] (0.25,0) circle (0.7pt);
        \draw (0.25,-0.15) node {$\tfrac14$};
        \draw (0,1) node {$\mathcal{D}_8$}; 
        \end{scriptsize}
    \end{scope}

    \begin{scope}[shift = {(4.6,-0.7)}]
        \draw [fill=black] (0,-0.5) circle (0.7pt); 
        \draw (0.375,-0.5) node {$[\trpt{d}]$}; 
        \draw [fill=white] (0,-0.8) circle (0.7pt); 
        \draw (0.3,-0.8) node {$[0]$}; 
        \draw [fill=red] (0,-1.1) circle (0.7pt); 
        \draw (0.6,-1.1) node {other cusp}; 
    \end{scope}
    
    \end{tikzpicture}
    \caption{Fundamental domain $\mathcal{D}_d$ of $\rX(\Gamma_d)$ with cusps and elliptic points.}
    \label{fig:fund_dom}
\end{figure}

Since the coefficients of the differential equation \eqref{eqfODE} and the operator $\theta$ is invariant under the action of $\Gamma_d$, we see that $f_d(A \tauq) = \rj(A, \tauq) f_d(\tauq)$ is also a solution for $A \in \Gamma_d$. In particular, $\tauq f_d$ is a solution as well. This means that for any solution $f$, the following Wronskian determinant vanishes: 
\begin{align*}
    0 &= \begin{vmatrix}
        f & f_d & \tauq f_d \\ 
    \theta f & \theta f_d & \theta (\tauq f_d) \\ 
    \theta^2 f & \theta^2 f_d & \theta^2(\tauq f_d)
    \end{vmatrix} = W_2\cdot \theta^2 f + W_1 \cdot \theta f + W_0\cdot f.  
\end{align*}
Together with the differential equation \eqref{eqfODE}, this shows that 
\[\theta \log u_d = \frac{\theta u_d}{u_d} = \frac{W_1}{W_2} = \frac{-f_d\cdot (\theta^2\tauq\cdot f_d + 2\theta\tauq\cdot\theta f_d)}{\theta \tauq \cdot f_d^2} = - \frac{\theta^2 \tauq}{\theta \tauq} - 2 \frac{\theta f_d}{f_d} = -\theta\log (\theta \tauq \cdot f_d^2). \]
Hence, the quantity
\[u_df_d^2 \cdot \odv{\log q}{\,\log P^\ell} = 2\pi \ii\cdot u_df_d^2 \cdot \theta \tauq \]
is constant. Since $P^\ell = - q + O(q^2)$ near $q = 0$ and $u_df_d^2 = 1 + O(P^\ell)$, it follows that 
\begin{equation}\label{eq:dP/dq}
    \odv{\log P^\ell}{\,\log q} = u_df_d^2. 
\end{equation}
This implies
\[\odv{\log Q^\ell}{\log q} = \odv{\log P^\ell}{\log q}\cdot \odv{\log Q^\ell}{\,\log P^\ell} = u_df_d^3. \]
Also, we have $Q^\ell = -q + O(q^2)$. 

Note that, by Proposition~\ref{propGWYloc} \eqref{propGWYloc_5}, $u_df_d^3$ is equal to the ratio
\[\frac{d}{\langle E, E, E\rangle^{Y_d}} = \frac{E^3}{\sum_{m\ge 0}\langle E, E, E\rangle_{m\ell}^{Y_d} Q^{m\ell}} = \rE_d.\]% \equalscolon \rE_d. \]
We are therefore led to the following definition: 
\begin{definition}\label{def;extr_fun}
    For a Type II extremal transition $\Xres \searrow \Xsm$ with exceptional divisor $E$ and exceptional curve $\ell$, we define its \emph{extremal function} to be 
    \[\rE^{\Xres \searrow \Xsm} = \frac{E^3}{\sum_{m\ge 0}\langle E, E, E\rangle_{m\ell}^Y Q^{m\ell}}. \]
    Then the \emph{canonical modular coordinate} $q$ of the transition is defined by
    \begin{equation}\label{eq:dfofq}
        \odv{\log q}{\,\log Q^\ell} = \frac{1}{\rE^{\Xres \searrow \Xsm}},\quad q = -Q^\ell + O(Q^{2\ell}). 
    \end{equation}
\end{definition}

The following proposition shows that extremal functions depend only on the degree of $\Xres \searrow \Xsm$:

\begin{proposition}
    If $\Xres \searrow \Xsm$ is a Type II extremal transition of degree $d$, then
    \[\rE^{\Xres \searrow \Xsm} = \rE_d. \]
\end{proposition}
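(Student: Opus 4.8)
The plan is to reduce the identity $\rE^{\Xres\searrow\Xsm}=\rE_d$ to a termwise comparison of extremal three-point invariants and then to establish that comparison by deforming to the canonical local model. By definition and by Proposition~\ref{propGWYloc}\,\eqref{propGWYloc_5},
\[
\rE^{\Xres \searrow \Xsm} = \frac{E^3}{\sum_{m\ge 0}\langle E, E, E\rangle_{m\ell}^Y Q^{m\ell}}, \qquad \rE_d = \frac{d}{\langle E, E, E\rangle^{Y_d}} = \frac{E^3}{\sum_{m\ge 0}\langle E, E, E\rangle_{m\ell}^{Y_d} Q^{m\ell}},
\]
so it suffices to prove $\langle E,E,E\rangle_{m\ell}^Y=\langle E,E,E\rangle_{m\ell}^{Y_d}$ for every $m\ge 0$, under the identification of the extremal class $\ell\in\NE(Y)$ with that of $Y_d$ and of the Novikov variable $Q^\ell$. (As noted before the statement, the only effective classes contributing to $\langle E,E,E\rangle^Y$ are the contracted multiples $\beta=m\ell$, since $(H,\beta)=0$ forces $\beta\in\bR_{\ge0}\ell$.) For $m=0$ both sides equal the classical triple self-intersection $E^3=d$, the degree of the transition, so only the quantum terms $m\ge 1$ require work.

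First I would replace $\Xres$ by a symplectic deformation in which $E$ is smooth. By Theorem~\ref{thm;ssdeg}\,(1), which rests on Proposition~\ref{prop;locDef_res}, the threefold $Y$ is symplectic deformation equivalent to some $Y'$ in which $E$ is deformed to a smooth del Pezzo surface $S_d$ of degree $d$, with $[E]$ deformed to $[E']$. Since genus-zero GW invariants are invariants of symplectic deformation, $\langle E,E,E\rangle_{m\ell}^{Y}=\langle E',E',E'\rangle_{m\ell}^{Y'}$. Next, for $m\ge 1$ every stable map of class $m\ell$ is contracted by $\crpcon$ and hence has image contained in $E'=\crpcon^{-1}(p)$: any non-constant component not lying in $E'$ would map to a curve in $\Xsing$ and contribute nontrivially to $\crpcon_*(m\ell)=0$, a contradiction. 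Thus $\oM_{0,3}(Y',m\ell)$ is supported on maps into $E'$, is already proper, and the invariant is computed on stable maps to $S_d$ with virtual class governed by the restricted obstruction theory of $f^{*}(T_{Y'}|_{E'})$, the three insertions restricting to $E'|_{E'}=K_{S_d}$.

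The heart of the matter is to match this data with that of $Y_d$. Since $E'$ is crepant, $N_{E'/Y'}=\cO_{E'}(E')=K_{S_d}$, which is exactly the normal bundle $N_{E_{Y_d}/Y_d}=K_{S_d}$ of the zero section $E_{Y_d}=\bP_{S_d}(\cO)$ of $Y_d=\bP_{S_d}(K_{S_d}\oplus\cO)$. I would then identify the analytic germ of $(Y',E')$ with the germ of $(Y_d,E_{Y_d})$: for $d\ge 5$ this is Proposition~\ref{prop;hol_tubu}, which biholomorphically identifies a neighborhood of $E'$ with a neighborhood of the zero section in the total space of $K_{S_d}$, i.e.\ in $Y_d$; for $d\le 4$ the same conclusion follows from the explicit weighted-blow-up presentation of $(\Xsing,p)$ and of $\crpcon$ in Remark~\ref{rmk;can_sing_13} and Example~\ref{ex:dPtrans}, both threefolds being the weighted blow-up of the cone over $S_d$. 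Under this germ isomorphism $E'\mapsto E_{Y_d}$, $\ell\mapsto\ell$, and $T_{Y'}|_{E'}\cong T_{Y_d}|_{E_{Y_d}}$ (for $d\ge 5$ both split as $T_{S_d}\oplus K_{S_d}$), so the moduli spaces, their obstruction theories, and the insertions $K_{S_d}$ all coincide. Hence $\langle E',E',E'\rangle_{m\ell}^{Y'}=\langle E,E,E\rangle_{m\ell}^{Y_d}$ for all $m\ge 1$, the two generating functions agree, and $\rE^{\Xres\searrow\Xsm}=\rE_d$.

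I expect the main obstacle to be the precise justification that the contracted-class invariants depend only on the germ $(Y,E)$, namely that the germ isomorphism transports the \emph{virtual} fundamental class and not merely the underlying moduli space. This amounts to checking that the restricted obstruction theory is faithfully governed by $T_Y|_E$ as an extension of $T_{S_d}$ by $N_{E/Y}=K_{S_d}$, and that the isomorphism identifies these extensions; the genuinely delicate points are the a priori non-split behavior of this extension for $d\le 4$, where only the weighted-blow-up description is available in place of a holomorphic tubular neighborhood, and the care needed to track the symplectic deformation of Theorem~\ref{thm;ssdeg}\,(1) at the level of the virtual class.
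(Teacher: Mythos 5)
Your reduction to the termwise identity $\langle E,E,E\rangle^{Y}_{m\ell}=\langle E,E,E\rangle^{Y_d}_{m\ell}$ and your use of the symplectic deformation making $E$ smooth are sound, and your route is genuinely different from the paper's: the paper does not attempt any germ identification but instead applies the degeneration formula \eqref{eqn;degen_fomula} to the deformation to the normal cone $Y'\rightsquigarrow Y'\cup_{E'}Y_d$ of Theorem~\ref{thm;ssdeg}; the constraint $|\mu|=(E',m'\ell)=-m'\ge 0$ forces all the degree onto the $Y_d$-component, and the relative invariant of $(\Yloc,H)$ with empty contact data is the absolute one since $(H,m\ell)=0$. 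That argument is uniform in $d$ and never needs to know the analytic structure of $Y$ near $E$.

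The gap in your version is the asserted germ isomorphism $(Y',E')\cong(Y_d,E_{Y_d})$ for $d\le 4$. Proposition~\ref{prop;hol_tubu} is stated only for $E^3\ge 5$, and this restriction is not cosmetic: for $d\le 3$ the singularity $(\Xsing,p)$ in Remark~\ref{rmk;can_sing_13} is a hypersurface whose equation has the weighted-homogeneous cone equation only as its \emph{lowest-order} part, with higher-order terms $f_{>4}$, $g_{>6}$, etc.\ generically present; these survive the deformation of Proposition~\ref{prop;locDef_res} (which only arranges smoothness of the exceptional divisor of the weighted blow-up) and they change the analytic germ. So it is false in general that ``both threefolds are the weighted blow-up of the cone over $S_d$''---only $\oY_d$ is the exact cone---and Remark~\ref{rmk;can_sing_13} together with Example~\ref{ex:dPtrans} does not yield the identification you need. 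The statement you actually use, and which is true, is weaker: since every stable map of class $m\ell$, $m\ge 1$, has image in $E'$ and $H^0(C,f^*K_{E'})=0$ for $f$ non-constant, the moduli space and its virtual class localize to $E'$ with excess bundle $R^1\pi_*f^*N_{E'/Y'}$, so the invariant depends only on the pair $(E',N_{E'/Y'})=(S_d,K_{S_d})$ and not on the germ or on the (possibly non-split, but irrelevant) extension $0\to T_{S_d}\to T_{Y'}|_{E'}\to K_{S_d}\to 0$. If you replace the germ-isomorphism step by this standard localization of the virtual class, your argument closes for all $d$; as written, the case $d\le 4$ rests on a claim that is not available.
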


\begin{proof}
    Let $E$ be the exceptional divisor appearing in $\Xres \searrow \Xsm$. By the degeneration formula~\eqref{eqn;degen_fomula}, we have
    \[\sum_{m \ge 0} \langle E, E, E\rangle_{m\ell}^Y Q^{m\ell} = \sum_{m\ge m'\ge 0}\sum_{\eta} C_\eta \langle \varnothing \mid e_I, \mu\rangle_{\Gamma_0}^{\bullet(Y, E)}Q^{m'\ell} \langle E, E, E\mid e^I, \mu \rangle_{\Gamma_\infty}^{\bullet(\Yloc, H)} Q^{(m-m')\ell}. \]
    Since $|\mu| = (E, m'\ell) = -m' \le 0$, we must have $m' = 0$, which reduces the sum to
    \[\sum_{m\ge 0} \langle E, E, E\mid \varnothing \rangle_{m\ell}^{(\Yloc, H)} Q^{m\ell}. \]
    This quantity is independent of the choice of $Y$, and therefore equals $\sum_{m\ge 0} \langle E, E, E\rangle_{m\ell}^{\Yloc} Q^{m\ell}$. We thus conclude that $\rE^{\Xres \searrow \Xsm} = \rE^{\Yloc\searrow \Xloc} = \rE_d$. 
\end{proof}

Since $u_d$ is a weight $0$ meromorphic modular function and $f_d$ is a weight $1$ modular form (with root and monodromy if $d \le 2$), we see that $\rE_d = u_d f_d^3$ is a weight $3$ modular form. 

It follows from~\eqref{eq:dP/dq} that
\[v_d = \frac{\theta f_d}{f_d} = \frac{1}{f_d}\cdot \odv{\log q}{\,\log P^{\ell}}\cdot \odv{f_d}{\log q} = \frac{D_q f_d}{u_d f_d^3}\]
where $D_q = \odv{}{\log q} = q \odv{}{q}$. Since $f_d$ is a modular form of weight $1$, $D_q f_d$ is a quasi-modular form of weight $3$ and depth $1$. Therefore, $v_d$ is a quasi-modular form of weight $0$ and depth $1$ (with root and monodromy if $d = 1$). This implies that every element in $\sS_d = \bC(y)[v_d]$ is a quasi-modular form of weight $0$. 

For the cases $d \in \{4, 5, 6\I, 6\II, 8\}$, the transition point $[\trpt{d}]$ is a cusp. Take a representative $r \in [\trpt{d}]$ and an element $A = \smx {a_A}{b_A}{c_A}{d_A} \in \operatorname{SL}(2, \bZ)$ such that $A\cdot \ii\infty = r$. Then, since 
\[f_d|_1^A(A^{-1}\tauq) = (a_A - c_A \tauq)f_d(\tauq)\quad\text{and}\quad f_d|_1^A(\tauq) \in \bC\llbracket q \rrbracket, \]
the holomorphic solution $f_d^\reg(\tauq)$ of \eqref{eqfODE} near $y = 0$ is proportional to $(a_A - c_A \tauq)f_d(\tauq)$. Hence, 
\[v_d^\reg = \frac{\theta f_d^\reg}{f_d^\reg} = \frac{1}{u_d f_d^2}\cdot \frac{D_q ((a_A - c_A \tauq)f_d(\tauq))}{(a_A - c_A \tauq)f_d(\tauq)} = \frac{D_q f_d}{u_d f_d^3} -\frac{1}{2\pi \ii}\frac{c_A}{(a_A - c_A \tauq) u_df_d^2} \]
and therefore, 
\[v_d^\reg(A\tauq) = \frac{D_q f_d|_3^A(\tauq) - \frac{1}{2\pi\ii} \rk(A, \tauq) f_d}{u_d|_0^A(\tauq) f_d|_1^A(\tauq)^3} = \left(\frac{D_q f_d}{u_d f_d^3}\right)_0^A (\tauq), \]
where $F_0^A$ is defined in \eqref{eq:q-mfA} for a meromorphic quasi-modular form $F$. %(recall the definition of $F_0^A$ for a a meromorphic quasi-modular form $F$ in \eqref{eq:q-mfA}). 
From this, we obtain the modular version of the main result: 
\begin{theorem}\label{thr:mainmodularver}
    Let $\Xres \searrow \Xsm$ be a Type II transition of degree $d \neq 7$ and let $\bar{\beta} \neq 0$. Let $\tilde{\beta}$ be a lifting of $\bar{\beta}$ such that $(E, \tilde{\beta}) = 0$. Then, in the canonical modular coordinate $q$ of $\Xres \searrow \Xsm$, 
    \[\langle \phi^*\vec{b} \rangle_{\bar{\beta}}^{\Xres \searrow \Xsm}Q^{-\tilde{\beta}} = \sum_{\beta \to \bar{\beta}}\langle \phi^*\vec{b}\rangle_\beta^Y Q^{\beta - \tilde{\beta}} = \sum_m \langle \phi^*\vec{b}\rangle^Y_{\tilde{\beta} + m\ell} Q^{m\ell} \]
    is a meromorphic quasi-modular form of weight $0$. Moreover, its value at the transition point $[\trpt{d}]$ coincides with the invariant $\langle \vec{b}\rangle_{\bar{\beta}}^X$. 
\end{theorem}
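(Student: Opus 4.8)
The plan is to deduce Theorem~\ref{thr:mainmodularver} by combining the algebraic regularization result of Theorem~\ref{thrGWmodI} with the modularity dictionary just established. First I would unwind the definitions: by Theorem~\ref{thrGWmodI} the generating function $\langle \phi^*\vec{b}\rangle_{\bar\beta}^{\Xres\searrow\Xsm}Q^{-\tilde\beta}$ lies in $\sR_Y = \sR_d\PSR{Q^{\phi^*\NE(X)}}$, and since the insertions $\vec b$ and the curve class $\bar\beta$ are fixed, the relevant coefficient (of the monomial $Q^{\tilde\beta-\tilde\beta}=1$ in the $\phi^*\NE(X)$-direction) is a single element of $\sR_d \subseteq \sS_d = \bC(y)[v_d]$. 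I would then invoke the observation recorded just before the theorem: because $v_d = D_qf_d/(u_df_d^3)$ is a quasi-modular form of weight $0$ and depth $1$, and $y=P^{-\ell}$ is a weight $0$ meromorphic modular function in $q$ (via \eqref{eq:dP/dq}), every element of $\sS_d$ is a meromorphic quasi-modular form of weight $0$ on $\Gamma_d$. This gives the first assertion immediately.

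For the evaluation at the transition point, I would split into the two cases dictated by the monodromy. When $d\in\{4,5,6\I,6\II,8\}$ the point $\trpt{d}$ is a cusp, so I take a representative $r\in[\trpt{d}]$ and $A\in\operatorname{SL}(2,\bZ)$ with $A\cdot\ii\infty=r$. The key computation is already laid out: the regularized function $f_d^\reg$ near $y=0$ is proportional to $(a_A-c_A\tauq)f_d(\tauq)$, and consequently $v_d^\reg(A\tauq)=(D_qf_d/(u_df_d^3))_0^A(\tauq)$ is precisely the constant term $F_0^A$ in the quasi-modular expansion of $v_d$ at the cusp $r$. Thus applying the regularization map $(-)^\reg$ and then evaluating $y\to 0$ is the same as reading off the value of the quasi-modular form at $[\trpt{d}]$ in the sense of Definition~\ref{d:QMF}. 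Combining this with Corollary~\ref{cor:GWlimitd=568}, which asserts that $\lim_{P^\ell\to\infty}(\langle\phi^*\vec b\rangle_{\bar\beta}^{\Xres\searrow\Xsm}Q^{-\tilde\beta})^\reg=\langle\vec b\rangle_{\bar\beta}^X$, identifies this cusp value with the target invariant.

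The remaining cases $d\in\{1,2,3\}$ (and the elliptic-point subcase) I would handle by the parallel but simpler mechanism: here $\trpt{d}$ is an elliptic point (Table~\ref{tab:GammadSingfd}), the monodromy of $f_d$ at $P^\ell=\infty$ is finite, no regularization is needed (cf.\ Theorem~\ref{thrGWmodI} specialized via Corollary~\ref{cor:GWlimitd=12347}), and $v_d$ is genuinely bounded near the transition point so that its value there is simply the holomorphic limit. In each case the invariant $\langle\vec b\rangle_{\bar\beta}^X$ is recovered as the limiting value, matching the congruence \eqref{thrGWmodI_eqn} modulo $\sI_Y$ evaluated at $P^\ell=\infty$.

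I expect the main obstacle to be the bookkeeping that identifies the algebraic limit $P^\ell\to\infty$ with the modular-geometric notion of ``value at $[\trpt{d}]$'' coherently across the regularization map. Concretely, one must verify that applying $(-)^\reg$ commutes with the slash action $|^A$ and with passage to the constant term $F_0^A$, i.e.\ that replacing $v_d$ by $v_d^\reg$ throughout a polynomial expression in $\sR_d$ and then evaluating at the cusp yields the same answer as first transporting by $A$ and extracting $F_0^A$. This rests on the proportionality $f_d^\reg\propto(a_A-c_A\tauq)f_d$ together with the fact that the regularization only discards the $\rk(A,\tauq)$-dependent (positive-depth) pieces, which are exactly the terms that vanish upon taking $F_0^A$. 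Once this compatibility is checked, the theorem follows formally from Theorem~\ref{thrGWmodI}, Proposition~\ref{p:MZ}, and Corollaries~\ref{cor:GWlimitd=12347} and~\ref{cor:GWlimitd=568}.
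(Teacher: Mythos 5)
Your proposal is correct and follows essentially the same route as the paper: the paper derives the theorem from the quasi-modularity of $v_d = D_qf_d/(u_df_d^3)$ (hence of all of $\sS_d=\bC(y)[v_d]$), the identity $v_d^\reg(A\tauq)=\bigl(D_qf_d/(u_df_d^3)\bigr)_0^A(\tauq)$ at a cusp representative of $[\trpt{d}]$, and Corollaries~\ref{cor:GWlimitd=12347} and~\ref{cor:GWlimitd=568}, exactly as you outline. Your closing compatibility check (that regularization commutes with $|^A$ and with extracting $F_0^A$, since both are ring homomorphisms agreeing on the generators $y$ and $v_d$) is left implicit in the paper but is correctly identified and correctly resolved.
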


\begin{remark}\label{rmk:mdisD}
    Recall that for all $\vec{b}\in H(X)^{\otimes n}$, the coefficient of $\langle \phi^*\vec{b} \rangle_{\bar{\beta}}^{\Xres \searrow \Xsm}$ lies in $\sS_d = \bC(P^\ell)[v_d]$. Let $\mathcal{D}$ be the maximal domain of the invariants. Then, for $d \in \{2,3,4,5,6,8\}$, since $v_d$ has to be defined on $\mathcal{D}$ and $v_d$ has logarithmic singularities at each rational point $\tauq \in \bQ$, we see that $\bH \cup \{\ii\infty\}$ is a cover of $\mathcal{D}$. Since $P^\ell$ is well-defined on $\mathcal{D}$, we have the following diagram: 
    \[\begin{tikzcd}
        \bH \cup \{\ii\infty\} \ar[r, "\varphi"] & \mathcal{D} \ar[r, "\psi"] \ar[rd, "P^\ell"'] & \rX(\Gamma_d) \ar[d, "\wr"]\\
        && \bP^1.
    \end{tikzcd}\]
    If $\varphi(\tauq') = \varphi(\tauq'')$ for some $\tauq'$, $\tauq'' \in \bH$, then $\psi(\varphi(\tauq')) = \psi(\varphi(\tauq''))$ implies that $\tauq'' = A\tauq'$ for some $A \in \Gamma_d$. Since $v_d$ is defined on $\mathcal{D}$, we get $v_d(A\tauq) = v_d(\tauq)$ near $\tauq = \tauq'$, and hence $v_d(A\tauq) = v_d(\tauq)$ for all $\tauq \in \bH$, as they are both holomorphic on $\bH$. Using  
    \[v_d(A\tauq) = v_d(\tauq) +  \frac{\rk(A, \tauq)}{2\pi \ii}\frac{1}{u_d f_d^2}, \]
    we must have $\rk(A, \tauq) = 0$, i.e., $A = \smx {1}{b_A}{0}{1}$ for some $b_A$. This means that the maximal domain
    \[\mathcal{D} = \{ \smx 1{b_A}01\} \backslash (\bH \cup \{\ii\infty\})\]
    is exactly the unit disk $\mathbb{D} = \{q \mid |q| < 1\}$. 
\end{remark}

\subsection{The wild case: periods of Eisenstein series} \label{s:wild}

%Let $\chi$ and $\psi$ be Dirichlet characters modulo $N_\chi$ and $N_\psi$, respectively, so that $\chi\psi(-1) = -1$. Given a positive integer $n$, we denote by $\Eis_k^{\chi, \psi, n}(\tauq)$ the normalized Eisenstein series of weight $k \geq 3$ with parameter $(\chi, \psi, n)$, which has the following Fourier expansion:
Let $k \geq 3$ be an integer, and let $\chi$ and $\psi$ be Dirichlet characters modulo $N_\chi$ and $N_\psi$, respectively, so that $\chi\psi(-1) = (-1)^k$. Given a positive integer $n$, we denote by $\Eis_k^{\chi, \psi, n}(\tauq)$ the normalized Eisenstein series of weight $k$ with parameter $(\chi, \psi, n)$, which has the following Fourier expansion:
\begin{equation}\label{eq:defEis}
    \Eis_k^{\chi, \psi, n}(\tauq) = \frac12 \delta_{\bm{1}_1, \chi}\rL(1-k, \psi) + \sum_{m\ge 1} \psi(m)m^{k-1} \sum_{\nu=1}^{N_\chi} \frac{\chi(\nu) q^{\nu mn}}{1 - q^{N_\chi mn}}, 
\end{equation}
where $\delta_{\bm{1}_1, \chi}$ is $1$ if $\chi = \bm{1}_1$ and is $0$ otherwise. %\cite[Theorem 4.5.1]{DiaShu}
\begin{proposition}\label{prop:EdinEis}
    For $d \in \{3,4,5,6\}$, the weight $3$ modular form $\rE_d$, as a function of $\tauq$, is the Eisenstein series 
    \[\Eis_{\ii\infty, 3}^{\Gamma_1(d)}(\tau_q) \coloneqq \sum_{(a,c) \equiv (1,0)\,\mathrm{mod}\,d} \frac{1}{(-c\tauq + a)^3} \]
    of weight $3$ associate with the cusp $\ii\infty$ of the group $\Gamma_1(d)$. In particular, for $d \in \{3,4,5,6,8\}$, $\rE_d$ can be written in terms of the normalized Eisenstein series: 
    \begin{enumerate}[(i)]
        \item $\rE_3 = -9\Eis_3^{\bm{1}_1,\chi_{3,2},1}$; 
        \item $\rE_4 = -4\Eis_3^{\bm{1}_1,\chi_{4,2},1}$;  
        \item $\rE_5 = (-1 + \frac{\ii}{2})\Eis_3^{\bm{1}_1,\chi_{5,2},1} + (-1 - \frac{\ii}{2})\Eis_3^{\bm{1}_1,\chi_{5,3},1}$; 
        \item $\rE_6 = -\Eis_3^{\bm{1}_1,\chi_{3,2},1} - 8\Eis_3^{\bm{1}_1,\chi_{3,2},2}$; 
        \item $\rE_8 = -4\Eis_3^{\bm{1}_1,\chi_{4,2},2}$, 
    \end{enumerate}
    where $\chi_{3,2} = \left(\frac{-3}{\cdot}\right)$ and $\chi_{4,2} = \left(\frac{-4}{\cdot}\right)$ are the unique nontrivial Dirichlet characters modulo $3$ and $4$, respectively, $\chi_{5, 2}$ is the Dirichlet character modulo $5$ defined by $\chi_{5,2}(2) = \ii$, and $\chi_{5,3} = \bar{\chi}_{5,2} = \chi_{5,2}^3$. 
\end{proposition}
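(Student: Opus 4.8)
The plan is to identify $\rE_d = u_d f_d^3$ as a weight $3$ Eisenstein series attached to the cusp $\ii\infty$ of $\Gamma_1(d)$, and then decompose it into normalized Eisenstein series $\Eis_3^{\chi,\psi,n}$. First I would verify the claimed equality with $\Eis_{\ii\infty,3}^{\Gamma_1(d)}(\tauq)$ for $d\in\{3,4,5,6\}$. By Proposition~\ref{p:MZ} and the discussion following it, $\rE_d$ is a weight $3$ holomorphic modular form on $\Gamma_d = \Gamma_1(d)$ (for these $d$). The key analytic input is the cusp behavior already recorded in Table~\ref{tab:GammadSingfd}: the transition point $P^\ell = \infty$ corresponds to an elliptic point (hence $\rE_d$ is finite and its monodromy is finite there), while the other singular points of $f_d$ correspond to genuine cusps. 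I would pin down the divisor of $\rE_d$ by combining the relation $\odv{\log Q^\ell}{\log q} = \rE_d$ with the location of zeros of $\rE_d$, which occur exactly where $Q^\ell$ has a critical point, i.e.\ at the transition point. Since the space of weight $3$ Eisenstein series on $\Gamma_1(d)$ (for $d\le 6$) is finite-dimensional and $\rE_d$ has the normalization $\rE_d = 1 + O(q)$ (from $u_d f_d^3 = 1 + O(P^\ell)$ and $P^\ell = -q + O(q^2)$), it suffices to check that $\rE_d$ and $\Eis_{\ii\infty,3}^{\Gamma_1(d)}$ share the same $q$-expansion up to enough terms, which is a finite computation using the Fourier expansions in Table~\ref{table:fdPd}.

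Next I would carry out the decomposition into normalized Eisenstein series. The lattice sum $\Eis_{\ii\infty,3}^{\Gamma_1(d)}$ is, by a standard unfolding of the Eisenstein series attached to a cusp, expressible as a $\bC$-linear combination of the $\Eis_3^{\chi,\psi,n}$ with $\chi\psi(-1) = (-1)^3 = -1$ and $\chi\psi$ of conductor dividing $d$. For each $d$ the admissible triples $(\chi,\psi,n)$ are severely constrained: one needs $\chi = \bm{1}_1$ (so that the series has the shape matching $\Eis_{\ii\infty,3}$), $\psi$ an odd character modulo a divisor of $d$, and $n$ a divisor of $d$. I would then match Fourier coefficients. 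Concretely, using \eqref{eq:defEis} with $\chi = \bm{1}_1$, the coefficient of $q^m$ in $\Eis_3^{\bm{1}_1,\psi,n}$ is a twisted divisor sum $\sum_{e\mid (m/n)} \psi(m/ne)\,e^2$ (when $n\mid m$), and I would compare these against the explicit $q$-expansions of $\rE_d$ obtained from the product/theta formulas for $f_d$ and $P_d$ in Table~\ref{table:fdPd}. The coefficients $-9$, $-4$, $(-1\pm\frac{i}{2})$, $(-1,-8)$, $-4$ are then read off by solving a small linear system; the constant term $\frac12\rL(1-3,\psi)$ appears only for $\chi = \bm{1}_1$, and I would check it against $\rE_d = 1 + O(q)$ to fix normalizations.

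For $d = 8$ the situation is slightly different since $\Gamma_8 = \Gamma_0(8)$ rather than $\Gamma_1(8)$ in the stated setup, but the relation $f_8(P^\ell) = f_4(-P^{2\ell})$ from Proposition~\ref{prop:fdisHG}, together with $Q^\ell = P^\ell e^{\frac12 g_4(-P^{2\ell})}$, lets me pull back the $d = 4$ identity under $\tauq \mapsto 2\tauq$, which geometrically sends $\Eis_3^{\bm{1}_1,\chi_{4,2},1}$ to $\Eis_3^{\bm{1}_1,\chi_{4,2},2}$ and accounts for the claimed $\rE_8 = -4\Eis_3^{\bm{1}_1,\chi_{4,2},2}$. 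The $d = 6$ case similarly combines the $n = 1$ and $n = 2$ pieces with character $\chi_{3,2}$, reflecting the $\Gamma_0(6)$-level structure as a fiber product over $\Gamma_0(2)$ and $\Gamma_0(3)$.

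The main obstacle I anticipate is the $d = 5$ case, where the relevant characters $\chi_{5,2}$, $\chi_{5,3}$ are complex (quartic) and the coefficients $-1 \pm \frac{i}{2}$ are non-real. Here I cannot simply match a real divisor sum; instead I must track the Galois-conjugate pair of Eisenstein series and verify that their prescribed complex-linear combination reproduces the real $q$-expansion of $\rE_5$ (which comes from the real modular form $f_5$). The check amounts to confirming that the imaginary parts cancel in every Fourier coefficient, which follows from $\chi_{5,3} = \bar\chi_{5,2}$ and the conjugate-symmetry of the coefficients, but getting the normalization constants $-1\pm\frac{i}{2}$ exactly right — rather than off by a root of unity — requires care with the Gauss-sum conventions implicit in \eqref{eq:defEis}. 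Once the $d = 5$ normalization is settled, the remaining cases are routine coefficient comparisons.
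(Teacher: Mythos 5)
Your overall architecture matches the paper's: identify $\rE_d$ with the cusp-$\ii\infty$ Eisenstein series of $\Gamma_1(d)$, then decompose into the $\Eis_3^{\chi,\psi,n}$ (with the $d=8$ case handled by $\rE_8(\tauq)=\rE_4(2\tauq)$, exactly as in the paper). But the identification step is done differently. The paper's argument is softer and computation-free: since the singular points of \eqref{eqfODE} other than $P^\ell=0$ are either zeros of $u_d$ or the point $P^\ell=\infty$ where $f_d^3$ vanishes faster than $u_d$ blows up, $\rE_d=u_df_d^3$ vanishes at \emph{every} cusp of $\Gamma_1(d)$ except $[\ii\infty]$, where it equals $1$; hence $\rE_d-\Eis_{\ii\infty,3}^{\Gamma_1(d)}$ is a cusp form, and $\dim\mathcal{S}_3(\Gamma_1(d))=0$ for $d\le 6$ forces equality. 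Your route instead matches $q$-expansions to finitely many terms. That works, but only via the Sturm bound for the full space $M_3(\Gamma_1(d))$ — finite-dimensionality of the \emph{Eisenstein} subspace, which is what you invoke, does not by itself let you conclude anything about $\rE_d$ until you know $\rE_d$ lies in that subspace. The cusp-value argument buys you exactly this for free, and also pins down which basis elements $\Eis_3^{\chi,\psi,n}$ can appear (only those matching the cusp data), which is why the paper can simply read the coefficients off \cite[Thm~4.2.3]{DiaShu} rather than solve a linear system.

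One concrete error to fix: your claim that the zeros of $\rE_d$ ``occur exactly at the transition point'' is false. For $d=3$ the transition point is the \emph{elliptic} point $[\tfrac{1+\omega_6}{3}]$, where $u_3f_3^3\sim y^{-1}\cdot(y^{1/3})^3$ is finite and nonzero, while the actual cuspidal zero of $\rE_3$ sits at $[0]$ (the root of $u_3$); for $d=5,6$ the zeros occur at \emph{all} cusps other than $[\ii\infty]$, not just the transition cusp. You abandon this claim in favor of coefficient matching, so it is not load-bearing, but as written it would misidentify the divisor of $\rE_d$ and should be removed or corrected to the statement actually needed, namely vanishing at every cusp except $[\ii\infty]$.
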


\begin{proof}
    It follows from Table~\ref{tab:Singfd} and Table~\ref{tab:GammadSingfd} that $\rE_d = u_d f_d^3$ vanishes at all cusps except at $[\ii\infty]$. Since $\rE_d = 1 + O(q)$, this means that the difference $\rE_d - \Eis_{\ii\infty, 3}^{\Gamma_1(d)}$ is a weight $3$ cusp form on $\Gamma_1(d)$. 
    
    By the dimension formula for spaces of cusp forms (see, e.g., \cite[Theorem 3.6.1]{DiaShu}), we have $\dim \mathcal{S}(\Gamma_1(d)) = 0$ for $d \in \{3,4,5,6\}$. Hence, the difference must be zero, i.e., $\rE_d = \Eis_{\ii\infty, 3}^{\Gamma_1(d)}$. 
    
    Using the explicit formula for Eisenstein series given in \cite[Equation~(4.6), Theorem~4.2.3]{DiaShu}, we obtain the expressions of $\rE_d = \Eis_{\ii\infty, 3}^{\Gamma_1(d)}$ in (i) - (iv). For (v), we compute
    \[\rE_8(\tauq) = u_8(\tauq)f_8(\tauq)^3 = u_4(2\tauq)f_4(2\tauq)^3 = \rE_4(2\tauq) = - 4 \Eis_3^{\bm{1}_1, \chi_{4,2}, 2}. \qedhere\]
\end{proof}

Any path $p\colon [0,1] \to \bP^1$ can be lifted to a path $\tilde{p} \colon [0,1] \to \obH$ via the projection $\obH \to \rX(\Gamma_d) \cong \bP^1$. Suppose $p(0) = 0$, $p(1) = \infty$; then $\tilde{p}(0) = \ii\infty$ and $\tilde{p}(1)$ will be a rational number $r \in \bQ$ that represents the transition point $[\trpt{d}]$. It follows from the definition of $q$ in~\eqref{eq:dfofq} that, along the path $\tilde{p}$, we have
\[Q^\ell(s) = - e^{2\pi \ii \tilde{p} 
(s)}\exp\left(\int_{\ii \infty}^{\tilde{p}(s)} (\rE_d(\tauq) - 1) \odif{(2\pi \ii \tauq)}\right). \]
In particular, the limit $\lim_{s\to 1}Q^\ell(s)$ exists and depends only on the endpoint $r$. We denote this limiting value by $Q_r^\ell$. 

\begin{definition}\label{df:Q_r^l}
    For a rational number $r$, we define
    \[Q_r^\ell = - e^{2\pi \ii r} \exp \left(\int_{\ii \infty}^{r} (\rE_d(\tauq) - 1) \odif{(2\pi \ii \tauq)}\right). \] 
\end{definition}

Consider the translation $\tauq \mapsto \tauq + r$, we have 
\[\int_{\ii \infty}^{r} (\rE_d(\tauq) - 1) \odif{(2\pi \ii \tauq)} = \int_{\ii \infty}^{0} (\rE_d(\tauq + r) - 1) \odif{(2\pi \ii \tauq)}. \]
By Proposition~\ref{prop:EdinEis}, when $d = 5$, $6\I$, $6\II$, $8$, $\rE_d$ can be written as a linear combination of Eisenstein series $\sum_{\psi, n} \alpha_{\psi, n}\Eis_3^{\bm{1}_1,\psi,n}$. So 
\[\int_{\ii \infty}^{0} (\rE_d(\tauq + r) - 1) \odif{(2\pi \ii \tauq)} = \int_{\ii\infty}^{0} \left(\sum \alpha_{\psi, n}\Eis_3^{1,\psi,n}(\tauq + r) - 1\right) \odif{(2\pi \ii \tauq)}\]

The following proposition expresses the translated Eisenstein series $\Eis_3^{\bm{1}_1, \psi, n}(\tauq + r)$ explicitly as a linear combination of Eisenstein series. 
\begin{prop}[Translation formula] \label{prop:TFforEis}
    Let $r = \frac ac$ be a rational number with $\gcd (a, c) = 1$. Define 
    \[\Eis_r^{\psi}(\tauq) = \Eis_3^{\bm{1}_1,\psi,1}(\tauq + r).\] 
    Then 
    \[\Eis_r^{\psi} = \sum_{c = c_1c_2c_3}  \sum_{\chi \in \widehat{(\bZ/c_3\bZ)^\times}} \frac{\chi(a)\fg(\bar{\chi})\psi(c_1)c_1^2}{\varphi(c_3)}\cdot  \Eis_3^{\chi, \bm{1}_{c_2c_3}\chi \psi,c_1c_2}, \]
    where $\fg(-)$ denotes the Gauss sum of a Dirichlet character and $\varphi(-)$ denotes the Euler's totient function. 
\end{prop}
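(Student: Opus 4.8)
The plan is to compute the Fourier expansion of the translated Eisenstein series $\Eis_3^{\bm{1}_1,\psi,1}(\tauq+r)$ directly and then reorganize the resulting exponential sums into the normalized Eisenstein series defined in \eqref{eq:defEis}. First I would start from the defining series
\[
\Eis_3^{\bm{1}_1,\psi,1}(\tauq) = \tfrac12\rL(-2,\psi) + \sum_{m\ge 1}\psi(m)m^2\,\frac{q^m}{1-q^m},
\]
(using $N_\chi = 1$ for $\chi = \bm{1}_1$), and expand $\frac{q^m}{1-q^m} = \sum_{j\ge 1}q^{jm}$ to write it as a sum over $q^\nu$ with coefficient the divisor-type function $\sum_{m\mid \nu}\psi(m)m^2$. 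Replacing $\tauq$ by $\tauq + r = \tauq + \tfrac ac$ multiplies the $\nu$-th Fourier coefficient by $e^{2\pi \ii \nu a/c} = \omega_c^{\nu a}$, where I write the root of unity in terms of the paper's $\omega_c$. The constant term $\tfrac12\rL(-2,\psi)$ is unaffected by the translation.

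Next I would perform the key arithmetic reorganization. The additive character $\nu \mapsto \omega_c^{\nu a}$ (as a function of $\nu \bmod c$) must be resolved via the factorization $c = c_1c_2c_3$ and the finite Fourier/Gauss-sum inversion $\omega_c^{\nu a} = \sum_{\chi}\frac{\chi(a)\fg(\bar\chi)}{\varphi(c_3)}\chi(\nu)$ over primitive-type pieces, which is exactly where the sum over divisors $c = c_1c_2c_3$ and over $\chi \in \widehat{(\bZ/c_3\bZ)^\times}$ enters. Concretely, one separates the gcd of $\nu$ with $c$ into the part $c_1$ that the frequency shares with $\nu$, the part $c_2$ that is ``trivial'' in the resulting character, and the part $c_3$ carrying a genuine Dirichlet character $\chi$; the Gauss sum $\fg(\bar\chi)$ and the factor $\psi(c_1)c_1^2$ appear from pulling the common factor $c_1$ out of the coefficient $\psi(m)m^2$. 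After collecting terms, each block should match the Fourier expansion of $\Eis_3^{\chi,\,\bm{1}_{c_2c_3}\chi\psi,\,c_1c_2}$, giving the claimed coefficient $\tfrac{\chi(a)\fg(\bar\chi)\psi(c_1)c_1^2}{\varphi(c_3)}$.

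The main obstacle I anticipate is bookkeeping: matching the twisted divisor sum $\sum_{\nu}\bigl(\sum_{m\mid\nu}\psi(m)m^2\bigr)\omega_c^{\nu a}q^\nu$ term-by-term against the three-parameter family $\Eis_3^{\chi,\psi',n}$ requires carefully tracking how the level $n = c_1c_2$, the modulus $N_\chi = c_3$ of the inserted character, and the second character $\psi' = \bm{1}_{c_2c_3}\chi\psi$ all arise from a single factorization of $c$, and verifying that the constant terms agree. Since $\chi\psi' (-1)$ must equal $(-1)^3 = -1$ for the normalized series on the right to be nonzero, I would also check the parity/consistency of characters so that no spurious terms survive. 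A clean way to organize this is to first prove the identity at the level of the coefficient-generating Dirichlet series (i.e.\ compare the associated $\rL$-functions $\rL(s,\psi)\rL(s-2,\bm{1}_1)$ after twisting by the additive character), which converts the combinatorial matching into the multiplicativity of Gauss sums and the standard factorization $\rL(s,\bm{1}_1)=\sum_{c_2}\cdots$; the Fourier-coefficient comparison then follows by the uniqueness of Dirichlet-series coefficients. I expect the verification of the constant term and the parity condition to be the only genuinely delicate points, with the rest being a (lengthy but routine) Gauss-sum computation.
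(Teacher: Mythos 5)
Your proposal follows essentially the same route as the paper's proof: expand the translated series in $q$, stratify the divisor/co-divisor pair of each frequency according to its gcd's with $c$ (producing the factorization $c=c_1c_2c_3$, with $c_1$ extracted from the divisor $m$ carrying $\psi(m)m^2$ and $c_2$ from the co-divisor), and resolve the remaining coprime additive character by Gauss-sum/orthogonality inversion --- exactly the manipulation the paper performs on $\sum_{n=1}^{c}\omega^{nm}q^{nm}/(1-q^{cm})$. The only real divergence is the constant term: you plan a direct verification (which does work, reducing to the multiplicative identity $\sum_{c_1c_2=c}\psi(c_1)c_1^2\prod_{p\mid c_2}(1-\psi(p)p^2)=1$), whereas the paper dispenses with it by noting that the two sides differ by a constant that is itself a weight-$3$ modular form and hence vanishes.
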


\begin{remark}
    The case $n > 1$ reduces to the case $n = 1$ by scaling: for any integer $n > 1$, we have 
    \[\Eis_3^{\bm{1}_1, \psi, n}(\tauq + r) = \Eis_3^{\bm{1}_1, \psi}(n\tauq + nr) = \Eis_{nr}^\psi(n\tauq). \]
\end{remark}

\begin{proof}
    Let $\omega = e^{2\pi \ii r}$. Define the non-constant part of the translated Eisenstein series by 
    \[\Eis_{r,+}^\psi \coloneqq \Eis_r^{\psi} - a_0(\Eis_r^{\psi}) = \sum \psi(m) m^2 \frac{\omega^m q^m}{1 - \omega^m q^m}. \]
    Expanding the denominator, we write
    \[\frac{\omega^m q^m}{1 - \omega^m q^m} = \sum_{n = 1}^c \frac{\omega^{nm} q^{nm}}{1 - q^{cm}}. \]
    Let $c_1 = \gcd (c, m)$ and write $c = c_1c'$, $m = c_1m'$. Then 
    \[\sum_{n = 1}^c \omega^{nm} q^{nm} = \frac{1 - q^{cm}}{1 - q^{c'm}}\sum_{n = 1}^{c'} e^{2\pi \ii anm'/c'} q^{nm}. \]
    Next, for each $n$, let $c_2 = \gcd (n, c')$ and write $c' = c_2c_3$, $n = c_2n'$. Applying orthogonality of characters on $(\mathbb{Z}/c_3\mathbb{Z})^\times$, we get
    \begin{align*}
        \sum_{n = 1}^{c'} e^{2\pi \ii anm'/c'} q^{nm} &= \sum_{c' = c_2c_3} \sum_{n'\perp c_3} e^{2\pi \ii an'm'/c_3} q^{c_1c_2n'm'} \\
        &= \sum_{c' = de} \sum_{n'\perp c_3} \Biggl(\frac{1}{\varphi(c_3)}\sum_{\chi \in \widehat{(\mathbb{Z}/c_3\mathbb{Z})^\times}} \chi(a)\chi(m') \fg(\bar{\chi})\cdot \chi(n')\Biggr) q^{c_1c_2n'm'}. 
    \end{align*}
    Here we use the notation $n_1 \perp n_2$ to mean that $n_1$ and $n_2$ are coprime. 
    
    Putting everything together: 
    \begin{align*}
        \Eis_{r,+}^\psi &= \sum \psi(m) m^2 \sum_{n = 1}^c  \frac{\omega^{nm} q^{nm}}{1 - q^{cm}} \\
        &= \sum_{c = c_1c_2c_3} \sum_{m' \perp c_2c_3} \sum_{n'\perp c_3} \sum_{\chi \in \widehat{(\mathbb{Z}/c_3\mathbb{Z})^\times}} \frac{\chi(a)\fg(\bar{\chi})\psi(c_1)c_1^2}{\varphi(c_3)}\cdot\psi(m')\chi(m')(m')^2\cdot \frac{\chi(n') q^{c_1c_2n'm'}}{1 - q^{c_1c_2c_3m'}} \\
        &= \sum_{c = c_1c_2c_3}  \sum_{\chi \in \widehat{(\mathbb{Z}/c_3\mathbb{Z})^\times}} \frac{\chi(a)\fg(\bar{\chi})\psi(c_1)c_1^2}{\varphi(c_3)}\cdot \sum_{m' = 1}^\infty (\bm{1}_{c_2c_3}\chi \psi)(m')(m')^2\sum_{n' = 1}^{c_3} \frac{\chi(n') (q^{c_1c_2})^{n'm'}}{1 - (q^{c_1c_2})^{c_3m'}} \\
        &= \sum_{c = c_1c_2c_3}  \sum_{\chi \in \widehat{(\mathbb{Z}/c_3\mathbb{Z})^\times}} \frac{\chi(a)\fg(\bar{\chi})\psi(c_1)c_1^2}{\varphi(c_3)}\cdot \Eis_{3,+}^{\chi, \bm{1}_{c_2c_3}\chi \psi,c_1c_2}, 
    \end{align*}
    where $\Eis_{3,+}^{\chi, \bm{1}_{c_2c_3}\chi \psi,c_1c_2}$ denotes the non-constant part of $\Eis_{3,+}^{\chi, \bm{1}_{c_2c_3}\chi \psi,c_1c_2}$. Since both sides are non-constant parts of weight $3$ modular forms, they must have the same constant term. This completes the proof.
\end{proof}

Using the translation formula, we can write 
\[\rE_d(\tauq + r) = \sum \alpha_{\psi, n} \Eis_3^{\bm{1}_1, \psi, n}(\tauq + r) = \sum \alpha_{\chi',\psi', n'}' \Eis_3^{\chi', \psi', n'}(\tauq) \]
for some coefficients $\alpha_{\chi', \psi', n'}' \in \bC$. It remains to compute the contribution of each $\Eis_3^{\chi', \psi', n'}$ to the integral. 

\begin{prop}[\cite{ABYZ02}, extended by \cite{Yang04}] \label{prop:Yang04ext}
    Let $k\ge 2$ be an integer. For any triple $(\chi, \psi, n)$ with $\chi\psi(-1) = (-1)^k$, define
    \[\rI_k^{\chi, \psi, n} = n^{-1}\lim_{s \to 0} \rL(s+1, \chi)\rL(s+2-k, \psi). \]
    Let $a_{\ii\infty,0}$ and $a_{0,0}$ denote the cusp values of $\Eis_k^{\chi, \psi, n}(\tauq)$ at $\tauq = \ii\infty$ and $\tauq = 0$, respectively. 
    Then the integral
    \[\int_{\ii\infty}^0 (\Eis_k^{\chi, \psi, n}(\tauq) - a_{\ii\infty,0} - a_{0,0}\tauq^{-k}) \odif{(2\pi \ii\tauq)} = \rI_k^{\chi, \psi, n}. \] 
    In particular, when $\chi$ is even and nontrivial, i.e., $\chi(-1) = 1$ but $\chi \neq \bm{1}_N$ for all $N$, the integral is equal to $0$. 
\end{prop}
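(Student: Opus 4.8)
The plan is to follow Hecke's method: realize the regularized period as the meromorphically continued Mellin transform of $\Eis_k^{\chi,\psi,n}$ and identify its value at the relevant point with the product of $\rL$-values. This is the route of \cite{ABYZ02} for $\chi=\bm{1}_1$, extended as in \cite{Yang04}.

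First I would expand the non-constant part. Summing the geometric series in \eqref{eq:defEis}, the function $g \coloneqq \Eis_k^{\chi,\psi,n} - a_{\ii\infty,0}$ has the double Dirichlet expansion
\[
g(\ii t) = \sum_{\ell, m \ge 1} \chi(\ell)\psi(m)\, m^{k-1}\, e^{-2\pi \ell m n t}, \qquad t > 0,
\]
so that term-by-term integration gives, for $\operatorname{Re} s$ large, the Mellin transform
\[
\Lambda(s) \coloneqq \int_0^\infty g(\ii t)\, t^{s-1}\, \odif{t} = \frac{\Gamma(s)}{(2\pi n)^s}\, \rL(s, \chi)\, \rL(s - k + 1, \psi).
\]
Writing $\tauq = \ii t$ and $\odif{(2\pi \ii \tauq)} = -2\pi\, \odif{t}$, the target integral becomes $2\pi \int_0^\infty \phi(t)\, \odif{t}$ with $\phi(t) = \Eis_k^{\chi,\psi,n}(\ii t) - a_{\ii\infty,0} - a_{0,0}(\ii t)^{-k}$. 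I would check that the two subtractions cancel the leading terms at the two cusps: $g(\ii t)$ decays exponentially as $t \to \infty$, so $\phi(t) = O(t^{-k})$ there, while by the definition of the cusp value $a_{0,0}$ one has $\Eis_k^{\chi,\psi,n}(\ii t) \sim a_{0,0}(\ii t)^{-k}$ as $t \to 0$, so $\phi(t) \to -a_{\ii\infty,0}$ there. Hence $\int_0^\infty \phi\, \odif{t}$ converges precisely because $k \ge 2$.

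The key step is the evaluation. I would compute $\int_0^\infty \phi(t)\, t^{s-1}\, \odif{t}$ in the strip $0 < \operatorname{Re} s < k$ by splitting at $t=1$, substituting $t \mapsto 1/t$ on $[0,1]$, and invoking the weight-$k$ modularity: under $S = \smx{0}{-1}{1}{0}$ the function $G(\tauq) \coloneqq \tauq^{-k}\Eis_k^{\chi,\psi,n}(-1/\tauq)$ is an Eisenstein series at the cusp $0$ whose $q$-expansion at $\ii\infty$ has constant term $a_{0,0}$. The $a_{0,0}$-terms then cancel against the subtraction, leaving two integrals over $[1,\infty)$ that are entire in $s$ together with the elementary polar contributions $-\tfrac{a_{0,0}\ii^{-k}}{k-s} - \tfrac{a_{\ii\infty,0}}{s}$. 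This exhibits $\int_0^\infty \phi\, t^{s-1}\, \odif{t}$ as the meromorphic continuation of $\Lambda(s)$, and evaluating at $s=1$ gives $\int_0^\infty \phi\, \odif{t} = \tfrac{1}{2\pi n}\lim_{s\to0}\rL(s+1,\chi)\rL(s+2-k,\psi) = \tfrac{1}{2\pi}\rI_k^{\chi,\psi,n}$, whence the target integral equals $\rI_k^{\chi,\psi,n}$.

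The main obstacle is the bookkeeping at $s=1$ when $\chi = \bm{1}_1$: then $\rL(s+1,\chi) = \zeta(s+1)$ has a simple pole there, and one must confirm that the apparent pole of $\Lambda$ is cancelled, so that the regularized integral is finite and equals the stated limit. This is exactly what the $\lim_{s\to0}$ in the definition of $\rI_k^{\chi,\psi,n}$ records: the parity constraint $\chi\psi(-1) = (-1)^k$ forces $\psi(-1) = (-1)^k \ne (-1)^{k-1}$, so by the Bernoulli vanishing $\rL(2-k,\psi) = -B_{k-1,\psi}/(k-1) = 0$, and the product $\zeta(s+1)\rL(s+2-k,\psi)$ has a finite limit (pole against zero). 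For the final \emph{in particular} clause I would note that when $\chi$ is even and nontrivial, $\rL(s+1,\chi)$ is holomorphic at $s=0$ while $\rL(2-k,\psi)=0$ by the same vanishing, so the limit — and hence the integral — is $0$.
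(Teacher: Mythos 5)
Your argument is correct and reaches the same identity, but it runs the Mellin machinery in the opposite direction from the paper. The paper's proof (following \cite{Yang04}) \emph{inverts} the Mellin transform: it writes $\Eis_{k,+}^{\chi,\psi,n}$ as a Barnes contour integral of $\rL(s+1-k,\psi)\rL(s,\chi)\Gamma(s)x^{-s}$, integrates over $x\in[\varepsilon,R]$, shifts the contour past $s=0$, and reads off $\rI_k^{\chi,\psi,n}$ as the residue at $s=0$; the divergent $\varepsilon^{-(k-1)}$ residue from $s=k-1$ is then matched against the $a_{0,0}\tauq^{-k}$ subtraction \emph{a posteriori}, by the observation that the regularized limit must be finite. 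You instead compute the forward Mellin transform $\Lambda(s)=\Gamma(s)(2\pi n)^{-s}\rL(s,\chi)\rL(s-k+1,\psi)$, continue it by Hecke's split-at-$t=1$ trick using the transformation under $S$, and identify the doubly subtracted integral with the continuation at $s=1$. What your route buys is transparency: the two subtractions are exactly the polar terms $-a_{\ii\infty,0}/s$ and $a_{0,0}\ii^{-k}/(s-k)$ of the continuation, so convergence of the regularized period and the matching of the $a_{0,0}$ coefficient come for free rather than by a finiteness argument. What it costs is an explicit appeal to the modular transformation of $\Eis_k^{\chi,\psi,n}$ under $\tauq\mapsto-1/\tauq$ and to the compatibility of the constant term of $\tauq^{-k}\Eis_k^{\chi,\psi,n}(-1/\tauq)$ with the paper's (loosely normalized) cusp value $a_{0,0}$ --- a point you should state precisely, including the power of $\ii$, since it is exactly where the two regularizing terms are reconciled; the paper buries the same normalization in the constant $c_0$. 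One caveat on your pole-cancellation step: the Bernoulli vanishing $\rL(2-k,\psi)=0$ for $\psi(-1)=(-1)^k$ fails precisely when $k=2$ and $\psi=\bm{1}_1$ (where $\rL(0,\bm{1}_1)=-\tfrac12$, and indeed $\Eis_2$ is only quasi-modular, so the $t\mapsto1/t$ step also breaks); this case is excluded by the paper's standing convention $k\ge3$ in the definition of $\Eis_k^{\chi,\psi,n}$, and only $k=3$ is used, but since the proposition is stated for $k\ge2$ you should either exclude it or note it.
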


\begin{proof}
    This identity was originally established by \cite{ABYZ02} in the special case $\chi = \bm{1}_1$, i.e., $a_{0,0} = 0$. To handle the general case, we follow the method in \cite{Yang04}. 

    Let $x = - \log q = - 2\pi \ii \tauq$. Define $\Eis_{k,+}^{\chi, \psi, 1}(\tauq) = \Eis_k^{\chi, \psi, 1}(\tauq) - a_{\ii\infty, 0}$. Then, by the definition of Eisenstein series \eqref{eq:defEis} and the Mellin transform on exponential function, we have
    \begin{align*}
        \Eis_{k,+}^{\chi, \psi, 1}(\tauq) &= \sum_{m\ge 1}\psi(m)m^{k-1} \sum_{\nu \ge 1} \chi(\nu) e^{-m\nu x} \\
        &= \sum_{m\ge 1}\psi(m)m^{k-1} \sum_{\nu \ge 1} \chi(\nu) \cdot \frac{1}{2\pi \ii}\int_{\Lambda - \ii \infty}^{\Lambda + \ii \infty} \Gamma(s) (m \nu x)^{-s} \odif{s} \\
        &= \frac{1}{2\pi \ii}\int_{\Lambda - \ii \infty}^{\Lambda + \ii \infty} \sum_{m\ge 1} \frac{\psi(m) m^{k-1}}{m^s}\cdot \sum_{\nu \ge 1}\frac{\chi(\nu)}{\nu^s} \cdot \Gamma(s) x^{-s}\odif{s} \\
        &= \frac{1}{2\pi \ii} \int_{\Lambda - \ii \infty}^{\Lambda + \ii \infty} \rL(s+1-k, \psi) \rL(s,\chi) \Gamma(s) x^{-s}\odif{s}, 
    \end{align*}
    where $\Lambda > 0$ is a sufficiently large number. Thus, for $0 < \varepsilon < R < \infty$, 
    \begin{align}
        &n\int_{nR\ii/2\pi }^{n\varepsilon \ii/2\pi} \Eis_{k,+}^{\chi, \psi, n}(\tauq)\odif{(2\pi\ii \tauq)} \notag\\
        &\quad = \int_{R\ii/2\pi}^{\varepsilon \ii/2\pi} \Eis_{k,+}^{\chi, \psi, 1}(\tauq)\odif{(2\pi\ii \tauq)} \notag\\
        &\quad = \int_\varepsilon^R \Eis_{k,+}^{\chi, \psi, 1}\odif{x} \notag\\
        &\quad = \frac{1}{2\pi \ii} \int_{\Lambda - \ii \infty}^{\Lambda + \ii \infty} \rL(s+1-k, \psi) \rL(s,\chi) \Gamma(s) \frac{\varepsilon^{1-s} - R^{1-s}}{s - 1}\odif{s} \notag\\
        &\quad = \frac{1}{2\pi \ii} \int_{\Lambda + 1 - \ii \infty}^{\Lambda + 1 + \ii \infty} \rL(s+2-k, \psi) \rL(s+1,\chi) \Gamma(s) (\varepsilon^{-s} - R^{-s})\odif{s}. \label{eq:Yang04}
    \end{align}
    As $R \to \infty$, the $R^{-s}$ term vanishes, and we are left with
    \[\frac{1}{2\pi \ii} \int_{\Lambda + 1 - \ii \infty}^{\Lambda + 1 + \ii \infty} \rL(s+2-k, \psi) \rL(s+1,\chi) \Gamma(s) \varepsilon^{-s}\odif{s}. \]
    To evaluate this integral, we shift the contour of integration left ward to $\operatorname{Re} s = -\delta$ for a small $\delta > 0$. In the area $-\delta < \operatorname{Re} s < \Lambda + 1$, there are two possible poles: if $\psi = \bm{1}_N$, then at $s = k - 1$, 
    \begin{align*}
        \mathop{\operatorname{Res}}_{s = k-1} \rL(s+2-k, \psi) \rL(s+1,\chi) \Gamma(s) \varepsilon^{-s} &= a \varepsilon^{-(k-1)}
    \end{align*}
    for some constant $a$ only depends on $\chi$, $\psi$ and $k$; at $s = 0$, 
    \[\mathop{\operatorname{Res}}_{s = 0} \rL(s+2-k, \psi) \rL(s+1,\chi) \Gamma(s) \varepsilon^{-s} = \lim_{s \to 0}\rL(s+2-k,\psi)\rL(s+1,\chi) = \rI_k^{\chi, \psi, 1}. \]
    By the residue theorem, 
    \begin{align*}
        &\frac{1}{2\pi \ii} \int_{\Lambda + 1 - \ii \infty}^{\Lambda + 1 + \ii \infty} \rL(s+2-k, \psi) \rL(s+1,\chi) \Gamma(s) \varepsilon^{-s}\odif{s} \\
        &\quad = \frac{1}{2\pi \ii} \int_{-\delta - \ii \infty}^{-\delta + \ii \infty} \rL(s+2-k, \psi) \rL(s+1,\chi) \Gamma(s) \varepsilon^{-s}\odif{s} + a\varepsilon^{-(k-1)} + \rI_k^{\chi, \psi, 1}. 
    \end{align*}
    As $\varepsilon \to 0$, the integral over the shifted path tends to $0$. Hence, by~\eqref{eq:Yang04},    
    \[\lim_{\varepsilon \to 0}\int_{\ii\infty}^{n\varepsilon \ii/2\pi} \Eis_{k,+}^{\chi, \psi, n}(\tauq)\odif{(2\pi\ii \tauq)} - \frac{a}{n} \varepsilon^{-(k-1)} - \rI_k^{\chi, \psi, n} = 0. \]
    Now observer that 
    \[\int_{\ii\infty}^{n\varepsilon\ii/2\pi} \tauq^{-k}\odif{(2\pi \ii \tauq)} = c_0\varepsilon^{-(k-1)} \]
    for some constant $c_0$, and since the limit 
    \[\lim_{\varepsilon \to 0}\int_{\ii \infty}^{n\varepsilon \ii /2\pi} (\Eis_{k, +}^{\chi, \psi, n}(\tauq) - a_{0,0} \tau^{-k})\odif{(2\pi \ii\tauq)}\]
    exists and is finite, we must have $\frac{a}{n} = a_{0,0}c_0$. Therefore,  
    \[\int_{\ii \infty}^{n\varepsilon \ii /2\pi} (\Eis_{k}^{\chi, \psi, n}(\tauq) - a_{\ii\infty, 0} - a_{0,0} \tau^{-k})\odif{(2\pi \ii\tauq)} = \rI_k^{\chi, \psi, n}, \]
    as claimed. 

    For the last statement, simply note that, in this case, $\rL(s, \chi)$ is bounded near $s = 1$ and $\psi(-1) = (-1)^k$, so 
    \[\lim_{s \to 0} \rL(s + 1, \chi)\rL(s + 2 - k, \psi) = \rL(1, \chi)\rL(2 - k, \psi) = 0. \qedhere\]
\end{proof}

Let $a_{\ii\infty,0}(\Eis_3^{\chi', \psi', n'})$ and $a_{0,0}(\Eis_3^{\chi', \psi', n'})$ denote the cusp values of $\Eis_3^{\chi', \psi', n'}(\tauq)$ at $\ii\infty$ and $0$, respectively. Since $\rE_d(\tauq) = 1 + O(q)$ and $\rE_d(\tauq)$ vanishes at $r \in [\trpt{d}]$, we have
\[\sum \alpha_{\chi', \psi', n'}'a_{\ii\infty,0}(\Eis_3^{\chi', \psi', n'}) = 1,\quad \sum \alpha_{\chi', \psi', n'}'a_{0,0}(\Eis_3^{\chi', \psi', n'}) = 0. \]
Therefore, 
\begin{align*}
    &\int_{\ii\infty}^0 \left(\sum \alpha_{\chi', \psi', n'}'\Eis_3^{\chi', \psi', n'} - 1\right) \odif{(2\pi \ii \tauq)} \\
    &\quad = \int_{\ii\infty}^0 \sum \alpha_{\chi', \psi', n'}'\left(\Eis_3^{\chi', \psi', n'} - a_{\ii\infty,0}(\Eis_3^{\chi', \psi', n'}) - a_{0,0}(\Eis_3^{\chi', \psi', n'})\right) \odif{(2\pi \ii \tauq)} \\
    &\quad = \sum \alpha_{\chi', \psi', n'}' \rI_3^{\chi', \psi', n'}. 
\end{align*}
This means that the limit can be computed explicitly in terms of special values of Dirichlet $\rL$-functions. 

\begin{example}
    When $d = 6$, we have 
    \[\rE_6 = - \Eis_3^{\bm{1}_1,\chi_{3,2},1} - 8\Eis_3^{\bm{1}_1,\chi_{3,2},2}.\] 
    To simplify notation, we will write $\chi = \chi_{3,2}$ throughout this example. Take $r_\I = \frac13$ that represents the transition point $[\trpt{6\I}]$. Then, by Proposition~\ref{prop:TFforEis}, we have
    \begin{align*}
        \Eis_3^{\bm{1}_1,\chi,1}(\tauq + r_\I) &= \Eis_{r_\I}^{\chi}(\tauq) = \Eis_3^{\bm{1}_1, \chi,3} - \frac12 \Eis_3^{\bm{1}_3,\chi, 1} + \frac{\sqrt{3} \ii}{2} \Eis_3^{\chi,\bm{1}_3,1}. \\
        \Eis_3^{\bm{1}_1,\chi,2}(\tauq + r_\I) &= \Eis_{2r_\I}^{\chi}(2\tauq) = \Eis_3^{\bm{1}_1, \chi,6} - \frac12 \Eis_3^{\bm{1}_3,\chi, 2} - \frac{\sqrt{3} \ii}{2} \Eis_3^{\chi,\bm{1}_3,2}.
    \end{align*}
    So by Proposition~\ref{prop:Yang04ext}, 
    \begin{align*}
        \int_{\ii \infty}^{0} (\rE_6(\tauq + r_\I) - 1) \odif{(2\pi \ii \tauq)} &= - \Bigl(\rI_3^{\bm{1}_1, \chi,3} - \frac12 \rI_3^{\bm{1}_3,\chi, 1} + \frac{\sqrt{3} \ii}{2} \rI_3^{\chi,\bm{1}_3,1}\Bigr) \\
        &\quad - 8\Bigl(\rI_3^{\bm{1}_1, \chi,6} - \frac12 \rI_3^{\bm{1}_3,\chi, 2} - \frac{\sqrt{3} \ii}{2} \rI_3^{\chi,\bm{1}_3,2}\Bigr). 
    \end{align*}
    Since 
    \begin{align*}
        \rI_3^{\bm{1}_1, \chi,3} &= \frac13 \rI_3^{\bm{1}_1,\chi, 1}, \\
        \rI_3^{\bm{1}_3, \chi, 1} &= \lim_{s\to 0} \rL(s+1, \bm{1}_3) \rL(s-1, \chi) \\
        &= \lim_{s\to 0} (1 - 3^{-(s+1)})\rL(s+1, \bm{1}_1) \rL(s-1, \chi) = \frac23 \rI_3^{\bm{1}_1,\chi, 1}, 
    \end{align*}
    it follows that $\rI_3^{\bm{1}_1, \chi,3} = \frac12 \rI_3^{\bm{1}_3,\chi, 1}$. Similarly, $\rI_3^{\bm{1}_1, \chi,6} = \frac12 \rI_3^{\bm{1}_3,\chi, 2}$. Thus, it remains to compute 
    \[- \frac{\sqrt{3} \ii}{2} \rI_3^{\chi,\bm{1}_3,1} + 8 \frac{\sqrt{3} \ii}{2} \rI_3^{\chi,\bm{1}_3,2} = \frac{3\sqrt{3} \ii}{2} \rL(1, \chi) \rL(-1, \bm{1}_3), \]
    which evaluates to $\frac{3\sqrt{3} \ii}{2}\cdot \frac{\pi}{3\sqrt{3}}\cdot \frac16 = \frac{\pi \ii}{12}$. Hence, the resulting limit is 
    \[Q_{r_\I}^\ell = -e^{2\pi \ii\cdot r_\I} \cdot e^{\frac{\pi \ii}{12}} = e^{-\frac{1}{8}\cdot 2\pi \ii}, \]
    which is an $8$-th root of unity. 

    Now take $r_\II = \frac12$ which represents another transition point $[\trpt{6\II}]$. Then, by Proposition~\ref{prop:TFforEis}, 
    \begin{align*}
        \Eis_3^{\bm{1}_1,\chi,1}(\tauq + r_\II) &= \Eis_{r_\II}^{\chi}(\tauq) = -4\Eis_3^{\bm{1}_1, \chi,2} + \Eis_3^{\bm{1}_1,\bm{1}_2\chi, 2} - \Eis_3^{\bm{1}_2,\bm{1}_2\chi,1}. \\
        \Eis_3^{\bm{1}_1,\chi,2}(\tauq + r_\II) &= \Eis_{2r_\II}^{\chi}(2\tauq) = \Eis_3^{\bm{1}_1,\chi,2}.
    \end{align*}
    Thus, by Proposition~\ref{prop:Yang04ext}, 
    \begin{align*}
        \int_{\ii \infty}^{0} (\rE_d(\tauq + r_\II) - 1) \odif{(2\pi \ii \tauq)} &= -\bigl(-4 \rI_3^{\bm{1}_1, \chi, 2} + \rI_3^{\bm{1}_1, \bm{1}_2\chi, 2} - \rI_3^{\bm{1}_2, \bm{1}_2\chi, 1}\bigr) - 8\rI_3^{\bm{1}_1, \chi, 2} \\
        &= -4 \rI_3^{\bm{1}_1, \chi, 2} - \rI_3^{\bm{1}_1, \bm{1}_2\chi, 2} + \rI_3^{\bm{1}_2, \bm{1}_2\chi, 1}. 
    \end{align*}
    Similar to the case $6\I$, we have  
    \begin{align*}
        \rI_3^{\bm{1}_1, \chi,2} &= \frac12 \rI_3^{\bm{1}_1,\chi, 1}, \\
        \rI_3^{\bm{1}_1, \bm{1}_2\chi,2} &= \frac12 \lim_{s\to 0} \rL(s+1, \bm{1}_1) \rL(s-1, \bm{1}_2\chi) \\
        &= \frac12 \lim_{s\to 0} \bigl(1 - \chi(2)2^{-(s-1)}\bigr)\rL(s+1, \bm{1}_1) \rL(s-1, \chi) = \frac32 \rI_3^{\bm{1}_1,\chi, 1}, \\
        \rI_3^{\bm{1}_2, \bm{1}_2\chi,1} &= \lim_{s\to 0} \rL(s+1, \bm{1}_2) \rL(s-1, \bm{1}_2\chi) \\
        &= \lim_{s\to 0} \bigl(1 - 2^{-(s+1)}\bigr)\bigl(1 - \chi(2)2^{-(s-1)}\bigr) \rL(s+1, \bm{1}_1) \rL(s-1, \chi) = \frac32 \rI_3^{\bm{1}_1,\chi, 1}. 
    \end{align*}
    Finally, since
    \[\rI_3^{\bm{1}_1,\chi, 1} = \lim_{s\to 0} \rL(s+1, \bm{1}_1) \rL(s-1, \chi) = \rL'(-1, \chi), \]
    we conclude that the limit is 
    \[Q_\II^\ell = -e^{2\pi \ii\cdot r_\II} \cdot e^{-2 \rL'(-1, \chi)} = e^{-2 \rL'(-1, \chi)}.  \]
    Here, using the functional equation for Dirichlet $\rL$-series, we have 
    \begin{equation}\label{eq:L-1chi}
        \rL'(-1, \chi_{3,2}) = \frac{3\cdot \fg(\chi)}{2\cdot 2\pi \ii}\rL(2, \bar{\chi}) = \frac{3\sqrt{3}}{4\pi}\rL(2, \chi) \approx 0.3230659... 
    \end{equation}
\end{example}

\begin{example}
    When $d = 5$, we have 
    \[\rE_5 = (-1 + \frac {\ii}{2})\Eis_3^{\bm{1}_1,\chi_{5,2},1} + (-1 - \frac {\ii}{2})\Eis_3^{\bm{1}_1,\chi_{5,3},1}. \]
    To simplify notation, let us write $\chi = \chi_{5,2}$ throughout this example. Then, by Proposition~\ref{prop:TFforEis}, 
    \begin{align*}
        \Eis_3^{\bm{1}_1,\chi,1}(\tauq + r) &= \Eis_{r}^{\chi}(\tauq) = \Eis_3^{\bm{1}_1, \chi,5} - \frac14 \Eis_3^{\bm{1}_5, \chi, 1} + \frac {\ii }{4} \fg(\chi^3)\Eis_3^{\chi, \chi^2, 1} - \frac{1}4\fg(\chi^2) \Eis_3^{\chi^2, \chi^3, 1} - \frac {\ii }4 \fg(\chi)\Eis_3^{\chi^3, \bm{1}_5, 1}, \\
        \Eis_3^{\bm{1}_1,\chi^3,1}(\tauq + r) &= \Eis_{r}^{\chi^3}(\tauq) = \Eis_3^{\bm{1}_1, \chi^3,5} - \frac14 \Eis_3^{\bm{1}_5, \chi^3, 1} + \frac {\ii }{4} \fg(\chi^3)\Eis_3^{\chi, \bm{1}_5, 1} - \frac{1}4 \fg(\chi^2)\Eis_3^{\chi^2, \chi, 1} - \frac {\ii }4\fg(\chi) \Eis_3^{\chi^3, \chi^2, 1}.
    \end{align*}
    As in the case $d = 6$, $\rI_3^{\bm{1}_1, \chi,5} = \frac14 \rI_3^{\bm{1}_5, \chi, 1}$ and $\rI_3^{\bm{1}_1, \chi^3,5} = \frac14 \rI_3^{\bm{1}_5, \chi^3, 1}$. Since $\chi^2$ is even and nontrivial, we find that $\rI_3^{\chi^2, \chi, 1} = \rI_3^{\chi^2, \chi^3, 1} = 0$. We now compute the remaining constants
    \begin{align*}
        \fg(\chi^3)\rL(1, \chi) &= -\pi \ii B_{1,\chi^3} = \frac{\pi(1+3\ii)}{5}, \\
        \fg(\chi)\rL(1, \chi^3) &= -\pi \ii B_{1,\chi} = \frac{\pi(-1+3\ii)}{5}, \\
        \rL(-1, \bm{1}_5) &= -\frac{B_{2, \bm{1}_5}}{2} = \frac{1}{3}, \\
        \rL(-1, \chi^2) &= -\frac{B_{2, \chi^2}}{2} = -\frac{2}{5}. 
    \end{align*}
    Therefore, 
    \begin{align*}
        \int_{\ii \infty}^0 (\rE_5(\tauq + r) - 1) \odif{(2\pi \ii \tauq)} &= \frac{\ii}{4}\fg(\chi^3) \rL(1, \chi) \left(\left(-1+\tfrac{\ii}{2}\right)\rL(-1, \chi^2) + \left(-1-\tfrac{\ii}{2}\right) \rL(-1, \bm{1}_5)\right) \\
        &\quad - \frac{\ii}{4}\fg(\chi) \rL(1, \chi^3) \left(\left(-1+\tfrac{\ii}{2}\right)\rL(-1, \bm{1}_5) + \left(-1-\tfrac{\ii}{2}\right) \rL(-1, \chi^2)\right) \\
        &= \frac{7\pi \ii}{60}. 
    \end{align*}
    Thus, we obtain 
    \[Q_r^\ell = -e^{2\pi \ii r} \cdot e^{7\pi \ii /60} = e^{-\frac{1}{24}\cdot 2\pi \ii}, \]
    which is a $24$-th root of unity. 
\end{example}

\begin{theorem}\label{thr:limQabs}
    For $d = 4$, $5$, $6\I$, $6\II$, $8$ and $r \in [\trpt{d}]$, we have 
    \begin{enumerate}[(i)]
        \item\label{thr:limQabs_1} $Q_r^\ell$ is a root of unity if $d \neq 6\II$; 
        \item\label{thr:limQabs_2} if $d = 6 \II$ and $r = \frac{a}{c}$ with $\gcd(a, c) = 1$ and $c \equiv 2 \pmod{6}$, then 
        \[Q_r^\ell = \omega\cdot e^{-c \rL'(-1, \chi_{3,2})} \]
        for some root of unity $\omega$. In particular, $Q_r^\ell$ is not a root of unity (cf.~\eqref{eq:L-1chi}). 
    \end{enumerate}
\end{theorem}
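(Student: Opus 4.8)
The plan is to evaluate, uniformly over all representatives $r = \tfrac{a}{c}$ of the transition cusp $[\trpt{d}]$, the period integral $I_r \coloneqq \int_{\ii\infty}^0 (\rE_d(\tauq + r) - 1)\,\odif{(2\pi\ii\tauq)}$ that governs the limit through $Q_r^\ell = -e^{2\pi\ii r}\,e^{I_r}$, exactly as in the two worked examples but without restricting to the simplest cusp representative. First I would expand $\rE_d$ in normalized Eisenstein series via Proposition~\ref{prop:EdinEis}, apply the translation formula (Proposition~\ref{prop:TFforEis}) to write $\rE_d(\tauq+r)$ as a finite combination $\sum \alpha'_{\chi,\psi,n}\,\Eis_3^{\chi,\psi,n}(\tauq)$, and then invoke Proposition~\ref{prop:Yang04ext} termwise to reduce the period to a sum of $L$-value data,
\[ I_r = \sum \alpha'_{\chi,\psi,n}\,\rI_3^{\chi,\psi,n}. \]

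The decisive step is to sort the contributions $\rI_3^{\chi,\psi,n} = n^{-1}\lim_{s\to 0}\rL(s+1,\chi)\rL(s-1,\psi)$ by the \emph{first} character $\chi$. When $\chi$ is even and nontrivial the term vanishes by Proposition~\ref{prop:Yang04ext}. When $\chi$ is odd, $\rL(s+1,\chi)$ is holomorphic at $s=0$, so $\rI_3^{\chi,\psi,n} = n^{-1}\rL(1,\chi)\rL(-1,\psi)$ with $\rL(-1,\psi)\in\bQ$; the Gauss-sum factor $\fg(\bar\chi)$ supplied by the translation formula combines with $\rL(1,\chi)$ as $\fg(\bar\chi)\rL(1,\chi) = -\pi\ii\,B_{1,\bar\chi}\in\pi\ii\,\overline{\bQ}$, so these terms feed only a root of unity into $e^{I_r}$ once the algebraic irrationalities cancel as in the examples. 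The sole source of a transcendental contribution is $\chi = \bm{1}_N$ principal: then $\rL(s+1,\bm{1}_N)$ has a simple pole of residue $\varphi(N)/N$ which meets the trivial zero of $\rL(-1,\psi)$ (for $\psi$ odd), yielding $\rI_3^{\bm{1}_N,\psi,n} = \tfrac{\varphi(N)}{nN}\,\rL'(-1,\psi)$. Thus $I_r$ splits as a rational multiple of $\pi\ii$ plus a $\bQ$-linear combination of the derivative values $\rL'(-1,\psi)$, and everything reduces to computing the coefficient of each such $\rL'(-1,\psi)$.

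For $d\in\{4,5,6\I,8\}$ I would show every derivative-$L$ coefficient is zero, so that $I_r\in\pi\ii\,\bQ$, $|Q_r^\ell|=1$, and $Q_r^\ell$ is a root of unity (assertion (i)). The cancellation is already visible termwise: inside the translate of a single $\Eis_3^{\bm{1}_1,\psi,n_0}(\tauq+r)$ the principal-first-character pieces occur in matched pairs $\Eis_3^{\bm{1}_1,\psi,N}$ and $\Eis_3^{\bm{1}_N,\psi,1}$ whose residue weights $\tfrac1N$ and $\tfrac{\varphi(N)}{N}$ are precisely offset by the translation coefficients, so their $\rL'(-1,\psi)$ contributions sum to $0$, and this vanishing is preserved by the specific coefficients of $\rE_d$. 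For $d=6\II$ the obstruction survives because the transition cusp $[\tfrac12]$ has even denominator: the summand $-8\,\Eis_3^{\bm{1}_1,\chi_{3,2},2}$ of $\rE_6$ is invariant under $\tauq\mapsto\tauq+r$ when $c\equiv 2\pmod 6$ (so $2r$ has odd denominator), producing an \emph{unpaired} $\rL'(-1,\chi_{3,2})$. Tracking the factorizations $c=c_1c_2c_3$ in Proposition~\ref{prop:TFforEis} and assembling the Euler-factor corrections (generalizing $\rI_3^{\bm1_1,\chi,2}=\tfrac12\rI_3^{\bm1_1,\chi,1}$ and $\rI_3^{\bm1_2,\bm1_2\chi,1}=\tfrac32\rI_3^{\bm1_1,\chi,1}$ from the example), the net coefficient of $\rL'(-1,\chi_{3,2})$ should equal $-c$; note it is independent of $a$ because the numerator enters only through $\chi(a)$, which is $1$ for the principal characters $\chi=\bm{1}_{c_3}$ that carry the $\rL'$ terms. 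This gives $Q_r^\ell=\omega\,e^{-c\,\rL'(-1,\chi_{3,2})}$ with $\omega=-e^{2\pi\ii r}e^{(\pi\ii\text{-part})}$ a root of unity, and since $\rL'(-1,\chi_{3,2})\neq 0$ is real by \eqref{eq:L-1chi}, this is not a root of unity, proving assertion (ii).

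The main obstacle I anticipate is the \emph{uniform} bookkeeping over arbitrary representatives $r=\tfrac ac$ with composite $c$: the worked examples only handle $r=\tfrac13,\tfrac12,\tfrac25$, and for general $c$ one must enumerate all factorizations $c=c_1c_2c_3$, identify exactly those producing a principal first character together with an \emph{odd} twisted character $\psi'$, and then sum the products of translation coefficients $\tfrac{\chi(a)\fg(\bar\chi)\psi(c_1)c_1^2}{\varphi(c_3)}$ against the residues $\tfrac{\varphi(N)}{nN}$. Proving that this weighted sum is identically $0$ for $d\in\{4,5,6\I,8\}$ and identically $-c$ for $d=6\II$ — uniformly in $c$ rather than case by case — is where the real work lies. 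I expect it to follow from the multiplicativity of Gauss and Ramanujan sums, combined with the parity dichotomy governing the vanishing of $\rL(-1,\psi')$, but organizing these into a single closed evaluation of the $\rL'$-coefficient is the delicate point.
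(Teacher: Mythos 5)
Your overall architecture --- decompose $\rE_d$ via Proposition~\ref{prop:EdinEis}, translate via Proposition~\ref{prop:TFforEis}, integrate termwise via Proposition~\ref{prop:Yang04ext}, and isolate the principal first characters as the sole source of $\rL'$-values --- is exactly the paper's. But two of your key steps do not hold up. First, the cancellation mechanism you propose for the $\rL'$-coefficient is wrong: the principal-first-character contributions inside the translate of a \emph{single} $\Eis_3^{\bm{1}_1,\psi,n}(\tauq+r)$ do not cancel in matched pairs. Summing over all factorizations $c_0=c_1c_2c_3$ with $\chi=\bm{1}_{c_3}$, using $\fg(\bm{1}_{c_3})=\mu(c_3)$ and M\"obius inversion, the total is $\tfrac{\psi(c_0)c_0}{n}\,\rL'(-1,\psi)$ with $c_0=c/\gcd(c,n)$, which is generically nonzero. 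Test this against the worked example $d=6\II$, $r=\tfrac12$: the summand $-\Eis_3^{\bm{1}_1,\chi,1}$ contributes $2\rL'$ and $-8\Eis_3^{\bm{1}_1,\chi,2}$ contributes $-4\rL'$, totalling $-2\rL'=-c\rL'$; your mechanism (first summand cancels internally, second is "unpaired") would give $-4\rL'$, which is already wrong at $c=2$. The actual reason the coefficient vanishes for $d\in\{4,5,6\I,8\}$ is that at the transition cusp the conductor of $\psi$ shares a factor with $c_0$, so $\psi(c_0)=0$; for $d=6\II$ one has $3\nmid c$ and the two summands combine to $(-\chi_{3,2}(c)\,c-2\chi_{3,2}(c/2)\,c)\rL'=-c\rL'$.

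Second, for assertion (i) you must show the remaining contribution (odd non-principal $\chi$) lies in $\pi\ii\,\bQ$, not merely $\pi\ii\,\bar{\bQ}$: each term $\chi(a)\fg(\bar\chi)\rL(1,\chi)\rL(-1,\bm{1}_{c_2c_3}\chi\psi)$ involves $B_{1,\bar\chi}$, generalized Bernoulli numbers, and roots of unity, all algebraic irrationalities in general. Asserting that they "cancel as in the examples" is not an argument for arbitrary $c$. The paper closes this by proving that $\tfrac{1}{2\pi\ii}\sum\alpha_{\psi,n}\rI_{r,\mathrm{nontriv}}^{\psi,n}$ is fixed by every automorphism $\sigma$ of $\bar{\bQ}$ --- using $\sigma(B_{m,\chi'})=B_{m,(\chi')^\sigma}$, the transformation of Gauss sums, and the Galois-stability of the coefficient set $\{\alpha_{\psi,n}\}$ --- hence rational, so its exponential is a root of unity. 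Without this Galois-invariance argument (or an explicit closed evaluation uniform in $c$), the root-of-unity conclusion in (i), and the "root of unity $\omega$" in (ii), remain unproven.
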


\begin{proof}
    We write $\rE_d = \sum \alpha_{\psi, n} \Eis^{\bm{1}_1, \psi, n}$ as in Proposition~\ref{prop:EdinEis}, and write $r = \frac{a}{c}$ with $\gcd (a, c) = 1$. 
    
    For fixed $n$, write $c = \gcd (c, n)\cdot c_0$, $n = \gcd (c, n) \cdot n_0$. Then $nr = \frac{an_0}{c_0}$ with $\gcd (an_0, c_0) = 1$. By Proposition~\ref{prop:Yang04ext}, the contribution from $\Eis_3^{\bm{1}_1, \psi, n}(\tauq + r)$ is equal to 
    \begin{align*}
        &\sum_{c_0 = c_1 c_2 c_3} \frac{\bm{1}_{c_3}(n_0a)\fg(\bm{1}_{c_3}) \psi(c_1) c_1^2}{\varphi(c_3)}\cdot (c_1c_2n)^{-1} \lim_{s \to 0} \rL(s+1, \bm{1}_{c_3}) \rL(s-1, \bm{1}_{c_2c_3} \psi) \\
        &+ \sum_{c_0 = c_1 c_2 c_3} \sum_{\substack{\chi \in \widehat{(\bZ/c_3\bZ)^\times} \\ \chi(-1) = -1}} \frac{\chi(n_0a)\fg(\bar{\chi})\psi(c_1)c_1^2}{\varphi(c_3)}\cdot  (c_1c_2n)^{-1} \rL(1, \chi) \rL(-1, \bm{1}_{c_2c_3} \chi\psi). 
    \end{align*}
    We denote these two summations by $\rI_{r, \mathrm{triv}}^{\psi, n}$ and $\rI_{r, \mathrm{nontriv}}^{\psi, n}$, respectively. Then the limit
    \begin{align}
        Q_r^\ell &= -\exp\left(\sum \alpha_{\psi, n} (\rI_{r, \mathrm{triv}}^{\psi, n} +\rI_{r, \mathrm{nontriv}}^{\psi, n})\right) \notag\\
        &= -\exp\left(\sum \alpha_{\psi, n} \rI_{r, \mathrm{triv}}^{\psi, n}\right)\exp\left(\sum \alpha_{\psi, n}\rI_{r, \mathrm{nontriv}}^{\psi, n}\right). \label{eqlogabsQ}
    \end{align}
    For the trivial part, 
    \begin{align*}
        \rI_{r, \mathrm{triv}}^{\psi, n} &= \sum_{c_0 = c_1c_2c_3} \frac{\mu(c_3) \psi(c_1) c_1}{\varphi(c_3) c_2 n} \cdot \prod_{p \mid c_3} (1 - p^{-1}) \cdot \rL'(-1, \bm{1}_{c_2c_3}\psi) \\
        &= \sum_{c_0 = c_1c_2c_3}  \frac{\mu(c_3) \psi(c_1) c_1}{c_2c_3 n}  \cdot \rL'(-1, \bm{1}_{c_2c_3}\psi) \\
        &= \sum_{c_1 \mid c_0} \frac{\psi(c_1)c_1^2}{c_0n} \rL'(-1, \bm{1}_{c_0/c_1} \psi) \sum_{c_2 c_3 = c_0/c_1} \moe(c_3) = \frac{\psi(c_0)c_0}{n} \rL'(-1, \psi). 
    \end{align*}
    Here $\moe$ is the M\"obius function and we use the fact that $\sum_{m\mid n} \moe(m) = \delta_{1, n}$. 
    
    Thus, we find 
    \[\sum \alpha_{\psi, n} \rI_{r, \mathrm{triv}}^{\psi, n} = \sum \alpha_{\psi, n} \frac{\psi(\frac{c}{\gcd (c, n)})\frac{c}{\gcd (c, n)}}{n} \rL'(-1, \psi). \]
    In particular, one can verify case by case that this sum is $0$ when $d = 4$, $5$, $6\I$, $8$, and is equal to 
    \[ - \frac{\chi_{3,2}(2) c}{1} \rL'(-1, \chi_{3,2}) - 8 \frac{\chi_{3,2}(1) \frac{c}{2}}{2} \rL'(-1, \chi_{3,2}) = -c \rL'(-1, \chi_{3,2}) \]
    when $d = 6\II$. 

    By \eqref{eqlogabsQ}, it remains to show that $\sum \alpha_{\psi, n}\rI_{r, \mathrm{nontriv}}^{\psi, n}$ lies in $2\pi\ii \bQ$. We show that $\frac{1}{2\pi \ii} \sum \alpha_{\psi, n}\rI_{r, \mathrm{nontriv}}^{\psi, n}$ is $\sigma$-invariant for any Galois action $\sigma\colon \bar{\bQ} \to \bar{\bQ}$. 
    
    Suppose $\chi \in \widehat{(\bZ/c_3\bZ)^\times}$ with $\chi(-1) = -1$, and let its conductor be $N \mid c_3$, so that $\chi = \bm{1}_{c_3} \chi_N$ for some primitive character $\chi_N$ modulo $N$. Then the term 
    \[\rI_{c_1, c_2, c_3, n_0a}^{\chi, \psi, n} \coloneqq \frac{1}{2\pi \ii}\frac{\chi(n_0a)\fg(\bar{\chi})\psi(c_1)c_1^2}{\varphi(c_3)}\cdot  (c_1c_2n)^{-1} \rL(1, \chi) \rL(-1, \bm{1}_{c_2c_3} \chi\psi) \]
    can be rewritten using properties of Gauss sums and special $\rL$-values as 
    \begin{align*}
        &\frac{1}{2\pi \ii}\frac{\chi(n_0a) (\fg(\bar{\chi}_N)\mu(\frac{c_3}{N})\chi_N(\frac{c_3}{N}))\psi(c_1) c_1}{\varphi(c_3)c_2 n} \cdot \prod_{p \mid c_3}(1 - p^{-1}\chi_N(p))\rL(1, \chi_N) \cdot \rL(-1, \bm{1}_{c_2c_3} \chi \psi) \\
        &= \frac{1}{2\pi \ii}\frac{\chi(n_0a) \fg(\bar{\chi}_N) \mu(\frac{c_3}{N})\chi_N(\frac{c_3}{N})\psi(c_1) c_1}{\varphi(c_3)c_2 n} \prod_{p \mid c_3}(1 - p^{-1}\chi_N(p)) \left(\frac{\pi \ii \fg(\chi_N)B_{1, \bar{\chi}_N}}{N}\right)\left(-\frac{B_{2, \bm{1}_{c_2c_3}\chi\psi}}{2}\right) \\
        &= -\frac{\chi(n_0a)  \mu(\frac{c_3}{N})\chi_N(\frac{c_3}{N})\psi(c_1) c_1}{4\varphi(c_3)c_2 n} \prod_{p \mid c_3}(1 - p^{-1}\chi_N(p)) B_{1, \bar{\chi}_N}B_{2, \bm{1}_{c_2c_3}\chi\psi} 
    \end{align*}
    where, for a Dirichlet character $\chi' \in \widehat{(\bZ/N'\bZ)^\times}$, $B_{m, \chi'}$ denotes the $m$-th generalized Bernoulli number associated to $\chi'$, defined by the series expansion
    \[\sum_{\nu = 1}^{N'} \chi'(\nu) \frac{x e^{\nu x}}{e^{N'x} - 1} = \sum_{m = 0}^\infty \frac{B_{m, \chi'}}{m!} x^m. \]

    Because $\chi^\sigma = \sigma \circ \chi = \bm{1}_{c_3}\chi_N^\sigma$, $\sigma(B_{m, \chi'}) = B_{m, (\chi')^\sigma}$, and $\bar{\chi}_N^\sigma = \overline{\chi_N^\sigma}$, it follows that 
    \[\sigma(\alpha_{\psi, n}\rI_{c_1, c_2, c_3, n_0a}^{\chi, \psi, n}) = \sigma(\alpha_{\psi, n})\rI_{c_1, c_2, c_3, n_0a}^{\chi^\sigma, \psi^\sigma, n} = \alpha_{\psi^\sigma, n} \rI_{c_1, c_2, c_3, n_0a}^{\chi^\sigma, \psi^\sigma, n}. \]
    Hence, the sum $\frac{1}{2\pi \ii} \sum \alpha_{\psi, n}\rI_{r, \mathrm{nontriv}}^{\psi, n}$ is $\sigma$-invariant, as desired. 
\end{proof}

\begin{rmk} \label{r:complete}
    Using the above method, along with induction on the denominator of $r$ and change of coordinates $\tauq\mapsto A\tauq$ (as suggested by Y.~Yang), the limit of $Q^\ell$ is computed in \cite{LeePhD} at any $r = \frac{a}{c}\notin [\ii \infty]$. Below are the explicit result in the cases $d = 3$, $4$, $5$, $6$. 
    \begin{enumerate}[(i)]
        \item For $d = 3$, 
        \[Q_r^\ell = (-1)^ce^{-9c\rL'(-1, \chi_{3,2})},\quad r \in [0],\ c \equiv 1 \pmod{3}. \]
        \item For $d = 4$, 
        \[Q_r^\ell = \begin{cases}
            -e^{-4c\rL'(-1, \chi_{4,2})} & \text{ if } r \in [0],\ c \equiv 1 \pmod{4}, \\
            1 & \text{ if } r \in [\frac12]. 
        \end{cases} \]
        \item For $d = 5$, 
        \[Q_r^\ell = \begin{cases}
            (-1)^c e^{c\operatorname{Re}((-2+\ii)\rL'(-1, \chi_{5,2}))} & \text{ if } r \in [0],\ c \equiv 1 \pmod{5}, \\
            e^{c\operatorname{Re}((-1-2\ii)\rL'(-1, \chi_{5,2}))} & \text{ if } r \in [\frac12],\ c \equiv 2 \pmod{5}, \\
            \omega_{120}^{-c} & \text{ if } r \in [\trpt{5}] = [\frac25],\ c\equiv 0\pmod{5},\ a \equiv 2 \pmod{5}.
        \end{cases}\]
        In particular, for some $r \in [\trpt{5}]$, for example, $r = \frac{7}{120}$, one has $Q_r^\ell = 1$. In other words, there exists a path on $\mathbb{P}^1$ connecting $P^\ell = 0$ and $P^\ell = \infty$ along which $Q^\ell$ tends to $1$. 
        \item For $d = 6$, 
        \[Q_r^\ell = \begin{cases}
            -e^{-5c\rL'(-1, \chi_{3,2})} & \text{ if } r = \frac ac \in [0], \ c \equiv 1 \pmod{6}, \\
            \omega_{24}^{-c} & \text{ if } r = \frac ac \in [\trpt{6\I}] = [\tfrac13],\ c\equiv 3 \pmod{6},\ a\equiv 1 \pmod{3}, \\
            e^{-c\rL'(-1, \chi_{3,2})} & \text{ if } r = \frac ac \in [\trpt{6\II}] = [\tfrac12],\ c \equiv 2 \pmod{6}. 
        \end{cases}\]
        This shows that, in Theorem~\ref{thr:limQabs} \eqref{thr:limQabs_2}, the root of unity $\omega$ is actually superfluous. 
    \end{enumerate}
\end{rmk}

%\newpage 

Here, we plot the images of several straight line segments $[0, \omega_n)$ under the maps $P_d$ and $Q^\ell$, for $d = 6\I$ and $6\II$ (corresponding to Figure~\ref{fig:limit6I} and Figure~\ref{fig:limit6II}, respectively), to illustrate how paths lift under the projection $\mathbb{D}^\times \to \bP_{P^\ell}^1 \setminus \operatorname{Sing} f_d$, and how $Q^\ell$ tends to $Q_r^\ell$, $r = 1/n$. Note that the function $Q^\ell(q)$ is the same in both figures. 

Comparing the image curves $Q^\ell([0, \omega_N])$ with $N = 8, 10$ in Figure~\ref{fig:limit6II} and $N = 9$ in Figure~\ref{fig:limit6I} reveals the complexity of the K\"ahler moduli associated to the function $Q^\ell(q)$. We plan to study the geometry of the K\"ahler moduli in details in a subsequent work.

\newpage

\begin{figure}[htbp!]
    \centering
    \includegraphics{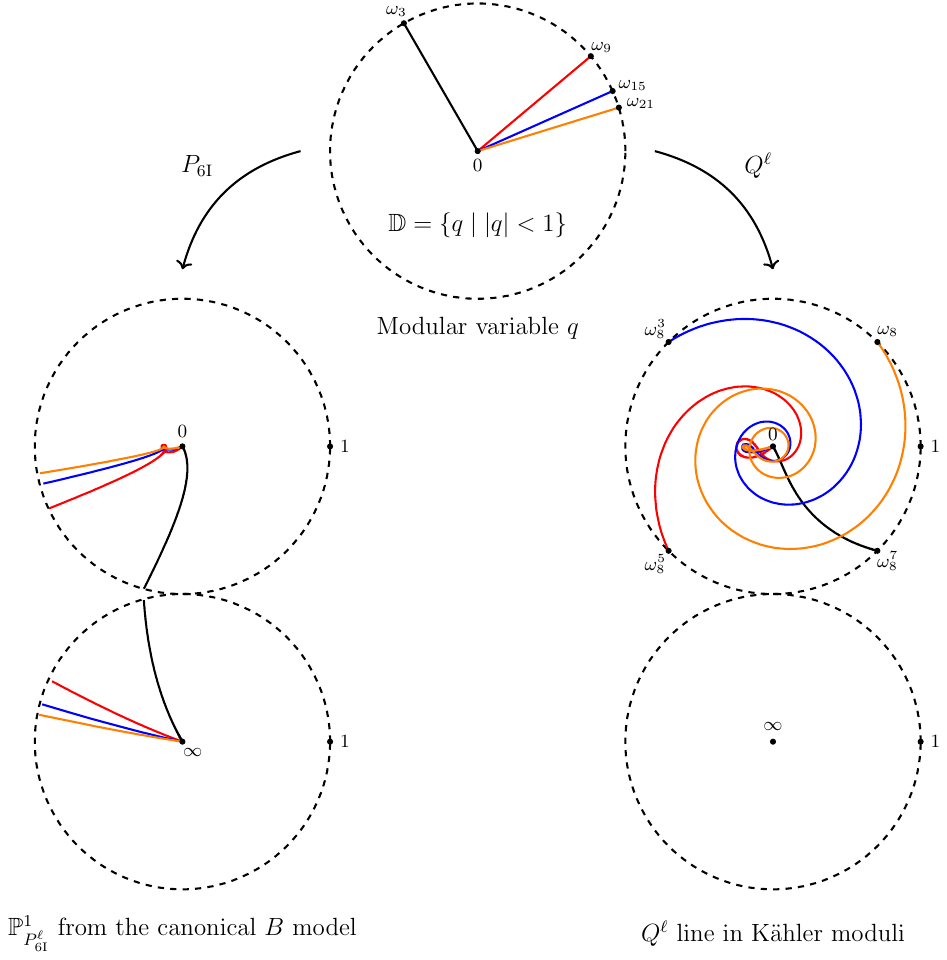}
    \caption{$d = 6\I$}
    \label{fig:limit6I}
\end{figure}

\begin{figure}[htbp!]
    \centering
    \includegraphics{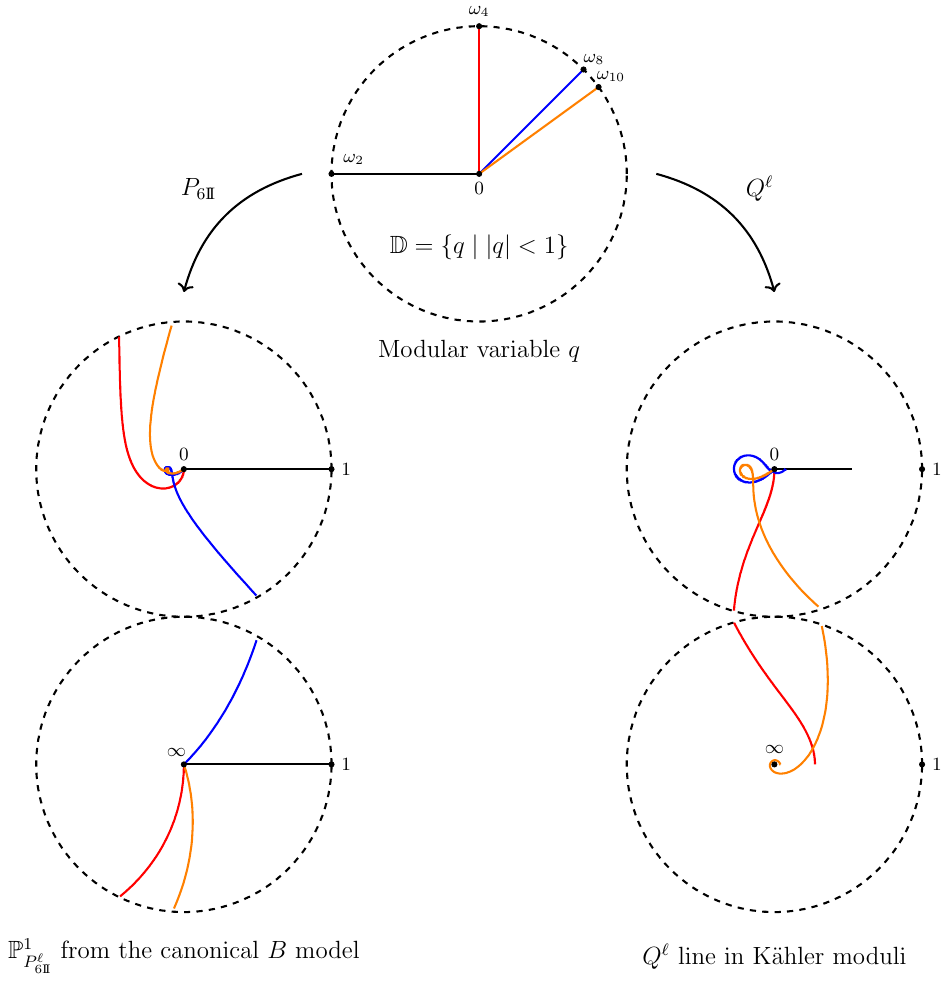}
    \caption{$d = 6\II$}
    \label{fig:limit6II}
\end{figure}

\newpage

\appendix
%%%%%%%%%%%%%%%%%%%%
\section{On del Pezzo surfaces and threefolds} \label{sec;appdP}
%%%%%%%%%%%%%%%%%%%%

We briefly review the classification of del Pezzo manifolds of dimensions two and three. The standard reference is \cite{bookAGV}.

In classical geometry, a (smooth) \emph{del Pezzo surface} is a smooth projective surface $S$ such that the anti-canonical divisor $- K_S$ is ample. The self-intersection number $d = K_S^2$ is called the \emph{degree} of the del Pezzo surface $S$. It satisfies $1 \leq d \leq9$. The description of $S$ is well-known as below, see \cite[\S 24]{Manin86} for example. 

\begin{theorem}\label{thm;dP2}
Let $S$ be a smooth del Pezzo surface of degree $d$. Set the anti-canonical algebra $R = \bigoplus_{m \geq 0} R_m$ with grading $R_m = H^0 (S, \cO (-m K))$. Then $S = \Proj R$ is isomorphic to either a blow-up of $\bP^2$ at $9 - d$ general points or $\bP^1 \times \bP^1$ with $d = 8$, and has Betti numbers
\begin{align*}
    b_2(S) = h^{1,1} (S) = 10 - d,
\end{align*}
$b_0(S) = b_4(S) = 1$ and $b_1(S) = b_3(S) = 0$. Furthermore, we have:
\begin{enumerate}[(i)]
    \item\label{thm;dP2-1} If $d = 1$, then $R$ is generated by $R_1$, $R_2$ and $R_3$ and $S$ is a sextic $(6) \subseteq \bP (3,2,1,1)$. To be precise, $R \cong \bC [x_1, x_2, x_3, x_4] / (F)$ with $x_3, x_4 \in R_1$, $x_2 \in R_2$ and $x_1 \in R_3$ and a weighted homogeneous polynomial $F = F (x_1, x_2, x_3, x_4)$ of weighted degree six.

    \item If $d = 2$, then $R$ is generated by $R_1$ and $R_2$ and $S$ is a quartic $(4) \subseteq \bP (2,1,1,1)$, that is, $R \cong \bC [x_1, x_2, x_3, x_4] / (F)$ with $x_2, x_3, x_4 \in R_1$ and $x_1 \in R_2$ and $F$ of weighted degree four.

    \item If $d = 3$, then $R \cong \bC [x_1, x_2, x_3, x_4] / (F)$ with $x_i \in R_1$ $(1 \leq i \leq 4)$ and a homogeneous polynomial $F$ of degree three. That is, $S$ is a cubic surface in $\bP^3$.
        
    \item\label{thm;dP2-4} If $d \geq 4$, then $R \cong \bC [x_1, \ldots, x_{d + 1}] / \cI$ with $x_i \in R_1$ $(1 \leq i \leq d + 1)$ and $\cI$ generated by $d (d - 3) / 2$ quadrics. In particular, if $d = 4$ then $S$ is a complete intersection of two quadrics in $\bP^4$.
\end{enumerate}
\end{theorem}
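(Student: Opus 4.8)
The plan is to recover this classical description (due to del Pezzo, reorganized by Manin) in three movements: first fix the numerical invariants, then realize $S$ as a blow-up, and finally analyze the anti-canonical graded ring $R$ degree by degree.

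First I would pin down the cohomological invariants purely formally. Since $-K_S$ is ample, Kodaira vanishing applied to $\cO_S = K_S \otimes (-K_S)$ gives $H^i(S, \cO_S) = 0$ for $i > 0$, so that $q = h^1(\cO_S) = 0$ and $p_g = h^0(K_S) = h^2(\cO_S) = 0$; as no positive multiple of $K_S$ is effective, all plurigenera vanish and Castelnuovo's rationality criterion shows $S$ is rational. Consequently $b_1 = b_3 = 2q = 0$ and $b_0 = b_4 = 1$, while Noether's formula $\chi(\cO_S) = \tfrac{1}{12}(K_S^2 + c_2)$ together with $\chi(\cO_S) = 1 - q + p_g = 1$ yields $c_2 = \chitop(S) = 12 - d$, hence $b_2 = 10 - d$; since $h^{2,0} = p_g = 0$ we get $h^{1,1} = b_2 = 10 - d$. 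The same vanishing plus Riemann--Roch computes all graded pieces, $\dim R_m = \chi(-mK_S) = 1 + \tfrac{m(m+1)}{2}\,d$ for $m \geq 0$, which will be the backbone of the ring analysis.

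Next I would produce the birational model. As $S$ is a rational surface it admits a birational morphism to a minimal rational surface, namely $\bP^2$ or a Hirzebruch surface $\bF_n$; ampleness of $-K_S$ forbids $\bF_n$ for $n \geq 2$, because the negative section $C_0$ satisfies $-K_{\bF_n}\cdot C_0 = 2 - n \leq 0$, so the only minimal models surviving under blow-down are $\bP^2$ and $\bF_0 = \bP^1 \times \bP^1$. When $S \not\cong \bP^1 \times \bP^1$ I would write the morphism as a composition of point blow-ups $S \to \bP^2$; since each blow-up drops $K^2$ by one and $K_{\bP^2}^2 = 9$, there are exactly $9 - d$ of them. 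Finally I would match ampleness of $-K_S$ with the ``general position'' hypothesis on the centres: any special configuration (three collinear points, six on a conic, an infinitely near point, and so on) produces an irreducible curve of non-positive anti-canonical degree, contradicting ampleness, while general position conversely guarantees $-K_S$ ample.

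The substantive part is the structure of $R$ in cases (i)--(iv), and this is where I expect the real work. For $d \geq 3$ I would first prove that $-K_S$ is very ample and that $S$ embeds as a projectively normal surface of degree $d$ in $\bP^d = \bP(R_1)$, so that $R$ is generated by $R_1$; the number of quadratic relations is then forced by the dimension count $\dim \Sym^2 R_1 - \dim R_2 = \binom{d+2}{2} - (1+3d) = \tfrac{d(d-3)}{2}$, giving the $d(d-3)/2$ quadrics of (iv) for $d \geq 4$ and, for $d = 3$, a single cubic cutting out $S \subseteq \bP^3$. For $d = 2$ the class $-K_S$ is base-point-free but defines a double cover $S \to \bP^2$, so $\dim R_1 = 3$ and one new generator $x_1 \in R_2$ is needed; the covering involution forces a single relation $x_1^2 = F(x_2,x_3,x_4)$ of weighted degree four, realizing $S$ as a quartic in $\bP(2,1,1,1)$. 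For $d = 1$ the pencil $|-K_S|$ has a single base point, $\dim R_1 = 2$, and passing to $-2K_S$ (a double cover of a quadric cone) shows generators appear in degrees $1,2,3$ with one sextic relation, giving $S \subseteq \bP(3,2,1,1)$. The main obstacle is precisely the very-ampleness and projective-normality input for $d \geq 3$ together with the exact determination of generators and the single relation in the degenerate cases $d = 1, 2$; the cleanest route is Reider's theorem (or a Saint-Donat--type analysis of the anti-canonical map) to control base points and separation of points and tangents, after which the generation statements follow from the Riemann--Roch dimension counts above and the relations from comparing $\Sym^\bullet R_1$, suitably augmented by the extra generators, with the Hilbert series $\sum_{m\geq 0} \bigl(1 + \tfrac{m(m+1)}{2}d\bigr)\,t^m$.
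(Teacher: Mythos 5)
The paper does not prove this theorem: it is stated in the appendix as a classical result with a pointer to Manin's book (\S 24), so there is no in-paper argument to compare against. Your outline is the standard proof of that classical result and is essentially sound: Kodaira vanishing, Noether's formula and Riemann--Roch for the invariants and the Hilbert function $\dim R_m = 1 + \tfrac{m(m+1)}{2}d$; the minimal-model reduction with the negative section of $\bF_n$ ($n\ge 2$) ruling out all targets except $\bP^2$ and $\bP^1\times\bP^1$; and the degree-by-degree analysis of the anti-canonical ring in the three regimes $d\ge 3$, $d=2$, $d=1$ all match the cited treatment. The one step where your dimension count does not by itself deliver the stated conclusion is (iv) for $d\ge 5$: the computation $\binom{d+2}{2}-(1+3d)=d(d-3)/2$ counts the independent quadrics through $S$, but the assertion that these quadrics \emph{generate} the homogeneous ideal $\cI$ requires a separate argument (comparison with the Hilbert series of $\Sym^\bullet R_1$ does not rule out new generators in degree $3$ because of possible syzygies among the quadrics); the usual fix is to pass to a general hyperplane section, an elliptic normal curve of degree $d\ge 5$, whose ideal is classically generated by quadrics. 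You flag very-ampleness and projective normality as the main inputs but should add this ideal-generation step to the list.
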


\begin{remark}\label{rmk;Sd}
If a del Pezzo surface $S$ of degree $d$ is a blow-up of $\bP^2$ at more than one point, then it can also be described as a blow-up of $\bP^1 \times \bP^1$ at $8 - d$ points.   
\end{remark}

\begin{remark}\label{rmk;GdP2}
The \eqref{thm;dP2-1}-\eqref{thm;dP2-4} in Theorem \ref{thm;dP2} still holds when $S$ is normal and Gorenstein as shown by Hidaka and Watanabe \cite{HW81}. This holds even for a (not necessarily normal) generalized del Pezzo surface as derived from the works of Reid \cite{Reid94}. 
\end{remark}

Now, we recall the definition of del Pezzo manifolds, which is a natural higher dimensional polarized version of the notion of del Pezzo surfaces. 

A smooth polarized variety $(X, \cO_X (1))$ of dimension $n \geq 3$ is \emph{del Pezzo} if $\cO_X (- K) = \cO_X (n - 1)$. An ample divisor $H$ is called a \emph{fundamental divisor} of $X$ if $\cO_X (H) \cong \cO_X(1)$. The corresponding linear system $|H|$ is called the \emph{fundamental system} of $X$. The self-intersection number $d = H^n$ is also called the degree of the del Pezzo manifold $X$. Note that $H^i (X, \cO_X) = $ for all $i > 0$ by Kodaira vanishing Theorem.

The classification of del Pezzo manifolds was obtained by Fujita and Iskovskikh. We are mainly interested in smooth del Pezzo threefolds in this paper.

For the following proposition, see \cite[Propositions 3.2.3, 3.2.4]{bookAGV} and the references therein.

\begin{proposition} \label{p:Xd}
Let $(X, \cO_X (1))$ be a smooth del Pezzo threefold of degree $d$. Then:
\begin{enumerate}
    \item If $S$ is the zero locus defined by a general global section of $\cO_X (1)$, then it is a smooth del Pezzo surface and $\cO_S (- K) \cong \cO_X(1)|_S$.

    \item If $d \geq 3$, then the fundamental system of $X$ determines an embedding $X \hookrightarrow \bP^{d + 1}$ which is projectively normal and $\cO_X (1) \cong \cO_{\bP^{d + 1}} (1)|_X$.

    \item\label{deld_prop3} If $d \geq 4$, then the image $X \hookrightarrow \bP^{d + 1}$ is an intersection of quadrics.
\end{enumerate}
\end{proposition}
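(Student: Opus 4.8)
The plan is to prove the three assertions in sequence, using the classification of del Pezzo surfaces (Theorem~\ref{thm;dP2}) as the base case of a hyperplane-section (``ladder'') induction. Throughout write $H$ for the fundamental divisor, so that $\cO_X(H) \cong \cO_X(1)$ and, since $\dim X = 3$, the del Pezzo condition reads $-K_X = 2H$. For assertion~(1), granting for the moment that $|H|$ is base point free, a general member $S \in |H|$ is smooth by Bertini. Adjunction then gives
\[
K_S = (K_X + S)|_S = (-2H + H)|_S = -H|_S,
\]
so $\cO_S(-K_S) \cong \cO_X(1)|_S$ is ample and $S$ is a del Pezzo surface, of degree $K_S^2 = (H|_S)^2 = H^3 = d$. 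Connectedness (so that $S$ is irreducible) follows from $h^0(\cO_S) = 1$, which comes from $0 \to \cO_X(-H) \to \cO_X \to \cO_S \to 0$ together with the vanishing recorded below.

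For~(2) and~(3) the engine is the vanishing
\[
H^i(X, \cO_X(mH)) = 0, \qquad i > 0,\ m \ge -1,
\]
which is Kodaira vanishing applied to $\cO_X(mH) = \cO_X(K_X + (m+2)H)$ with $(m+2)H$ ample. First I would pin down the target space by Riemann--Roch: combined with this vanishing and $\chi(\cO_X) = 1$ (the case $m = 0$, which forces $h^i(\cO_X)=0$ for $i>0$), one computes $h^0(X, \cO_X(1)) = d + 2$, so that the fundamental system maps $X$ to $\bP^{d+1}$; this is consistent with the cubic threefold in $\bP^4$ ($d=3$) and the intersection of two quadrics in $\bP^5$ ($d=4$). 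I would then build a flag $C \subset S \subset X$ of successive general hyperplane sections, where $S \in |H|$ is a del Pezzo surface of degree $d$ and, since $H|_S = -K_S$, a general $C \in |-K_S|$ is a smooth elliptic normal curve of degree $d$ in $\bP^{d-1}$.

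Projective normality and quadric-generation are then transported up the flag by the classical lifting principle (Mumford): for a smooth hyperplane section $Y' = Y \cap \bP^{N-1}$ of $Y \subseteq \bP^N$, the sequences
\[
0 \to \cO_Y((m-1)H) \to \cO_Y(mH) \to \cO_{Y'}(mH) \to 0
\]
together with $H^1(Y, \cO_Y((m-1)H)) = 0$ show that surjectivity of $H^0(\bP^{N-1}, \cO(m)) \to H^0(Y', \cO_{Y'}(m))$ for all $m$ lifts to $Y$, and a parallel argument on the ideal sheaves lifts generation by quadrics. Concretely, for~(2) I would start from the projective normality of the elliptic normal curve $C \subset \bP^{d-1}$ of degree $d \ge 3$ (classical), lift it to $S \subset \bP^d$, and then to $X \subset \bP^{d+1}$. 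For~(3) the base input is supplied directly by Theorem~\ref{thm;dP2}~\eqref{thm;dP2-4}: for $d \ge 4$ the surface $S$ has homogeneous ideal generated by quadrics, and the lifting principle applied once more shows that $X$ is an intersection of quadrics in $\bP^{d+1}$.

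The main obstacle is the input I deferred: that $|H|$ is base point free and very ample for $d \ge 3$. This is the genuinely geometric step (Fujita's theorem on del Pezzo manifolds), and it is precisely where the hypothesis $d \ge 3$ enters --- for $d \le 2$ the fundamental system is not very ample (it realizes $X$ as a double cover rather than an embedding), which is why those degrees are excluded from~(2) and~(3). Establishing base point freeness and very ampleness requires a case analysis of the possible base loci and of the elementary contractions of $X$, and is carried out in \cite{bookAGV}; once it is in hand, the cohomological ladder above is formal.
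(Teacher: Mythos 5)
The paper does not prove this proposition at all: it is recalled as a classical fact with a pointer to \cite[Propositions 3.2.3, 3.2.4]{bookAGV}, so there is no in-paper argument to compare against. Your outline is the standard ``ladder'' proof that the cited reference itself uses --- adjunction for (1), the Riemann--Roch computation $h^0(X,\cO_X(1)) = d+2$ together with Kodaira vanishing, and Mumford's $H^1$-lifting of projective normality and quadric generation up the flag $C \subset S \subset X$ from the elliptic normal curve and from Theorem~\ref{thm;dP2} --- and it is essentially correct, with the genuinely hard input (Fujita's analysis of the fundamental system) honestly deferred to the same source the paper cites.

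One small imprecision: assertion (1) is stated for all $d$, but your Bertini argument needs $|H|$ base point free, which you only claim (as deferred input) for $d \ge 3$. For $d = 2$ the system is base point free (it exhibits $X$ as a double cover of $\bP^3$), but for $d = 1$ it has a single base point, so smoothness of the general member at that point requires a separate local analysis of the base locus; this too is part of Fujita's theory, but as written your proof of (1) does not cover $d = 1$.
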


We have the following classification of smooth del Pezzo threefolds, see \cite[Theorem 3.3.1]{bookAGV} and the references therein.

\begin{theorem}\label{thm;dP3claf}
Let $(X, \cO_X (1))$ be a smooth del Pezzo threefold of degree $d$. Then $X$ is one of the following:
\begin{enumerate}
    \item $d = 1$ and $X$ is a sextic hypersurface in $\bP (3, 2, 1, 1, 1)$.

    \item $d = 2$ and $X$ is a quartic hypersurface in $\bP (2, 1, 1, 1, 1)$.

    \item $d = 3$ and $X \hookrightarrow \bP^{4}$ is a cubic hypersurface.

    \item $d = 4$ and $X \hookrightarrow \bP^{5}$ is a complete intersection of two quadrics.

    \item $d = 5$ and $X \hookrightarrow \bP^{6}$ is a linear section of Pl\"ucker embedding of $Gr(2,5) \subseteq \bP^9$ by codimension $3$ subspace.
    \myitem{6I}\label{thm;dP3claf_6I} $d = 6$ and $X \hookrightarrow \bP^{7}$ is a divisor on $\bP^2 \times \bP^2$ of bidegree $(1,1)$, and $\bP^2 \times \bP^2 \hookrightarrow \bP^8$ by Segre embedding.
    
    \myitem{6I\!I}\label{thm;dP3claf_6II} $d = 6$ and $X = \bP^1 \times \bP^1 \times \bP^1 \hookrightarrow \bP^7$ by Segre embedding. 

    \setcounter{enumi}{6}
    
    \item $d = 7$ and $X \hookrightarrow \bP^{8}$ is a blow-up of $\bP^3$ at a point.

    \item $d = 8$ and $X = \bP^{3} \hookrightarrow \bP^{9}$ with $\cO_X (1) = \cO_{\bP^3} (2)$.
\end{enumerate}
The Hodge numbers of del Pezzo threefolds are given in Table \ref{tab;HGXd}.
\end{theorem}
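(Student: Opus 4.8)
The plan is to follow the Fujita--Iskovskikh program, whose backbone is the \emph{ladder} (or rung) construction together with Fujita's $\Delta$-genus. Writing $-K_X = 2H$ with $H$ a fundamental divisor and $d = H^3$, I would first record the cohomological preliminaries: $H^i(X, \cO_X) = 0$ for $i > 0$ by Kodaira vanishing, and then build a flag $X \supseteq S \supseteq C$ by taking a general $S \in |H|$ and a general $C \in \bigl||H|_S\bigr|$. Adjunction gives $K_S = (K_X + S)|_S = -H|_S$, so by Proposition~\ref{p:Xd} the surface $S$ is a smooth del Pezzo of degree $(H|_S)^2 = d$, and $K_C = (K_S + C)|_C = 0$, so $C$ is a smooth elliptic curve with $\deg(\cO_X(1)|_C) = H^3 = d$, hence a linearly normal elliptic curve of degree $d$ in $\bP^{d-1}$. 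Since such a curve has $\Delta$-genus $1 + d - d = 1$, Fujita's theory propagates $\Delta(X, H) = 1$ up the ladder, yielding $h^0(X, \cO_X(1)) = \dim X + d - 1 = d + 2$ and sectional genus one. This both pins down the target projective space $\bP^{d+1}$ for $d \ge 3$ and, via the del Pezzo surface $S$ (Theorem~\ref{thm;dP2}), gives the degree bound $1 \le d \le 9$; excluding $d = 9$ (which would force $S \cong \bP^2$, incompatible with $S$ arising as an ample rung inside a del Pezzo threefold) leaves $1 \le d \le 8$.

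For $d \ge 3$ the fundamental system is very ample and projectively normal by Proposition~\ref{p:Xd}, embedding $X \hookrightarrow \bP^{d+1}$ with $\cO_X(1) = \cO_{\bP^{d+1}}(1)|_X$; for $d \ge 4$ the same proposition gives that $X$ is an intersection of quadrics. I would then compute the dimension of the degree-two part of the ideal by comparing $h^0(\bP^{d+1}, \cO(2)) = \binom{d+3}{2}$ with $h^0(X, \cO_X(2)) = h^0(X, -K_X)$ (the latter from Riemann--Roch and the vanishing above). The easy endpoints are settled immediately: $d = 3$ gives a cubic hypersurface in $\bP^4$, and $d = 8$ forces the surface rung $S$ to be $\bP^1\times\bP^1$ or $\bP^2$ of maximal degree, whence $X \cong \bP^3$ polarized by $\cO(2)$.

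The substantive work is the middle range $d = 4, 5, 6, 7$, where $X$ must be recognized among varieties cut out by quadrics. For $d = 4$ the quadric count is two, giving a complete intersection $(2,2)$ in $\bP^5$. For $d = 5$ the space of quadrics containing $X$ is five-dimensional, and I would match these against the five Plücker quadrics defining $\operatorname{Gr}(2,5) \subseteq \bP^9$, identifying $X$ as a codimension-three linear section. For $d = 6$ the delicate point is that there are \emph{two} deformation types, \eqref{thm;dP3claf_6I} and \eqref{thm;dP3claf_6II}; distinguishing them is not numerical but geometric, requiring analysis of the fibration/contraction structure of $X$ (equivalently, the Hilbert scheme of lines or the extremal rays) to separate the $(1,1)$-divisor in $\bP^2\times\bP^2$ from the Segre $\bP^1\times\bP^1\times\bP^1$. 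For $d = 7$ one exhibits $X$ as $\Bl_{\pt}\bP^3$ by producing the blow-down of the exceptional divisor.

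Finally, the low-degree cases $d = 1, 2$, where $H$ is not very ample, are handled by the graded-ring method exactly mirroring Theorem~\ref{thm;dP2}\eqref{thm;dP2-1}: one studies $R = \bigoplus_{m \ge 0} H^0(X, \cO_X(m))$, computes its generators and relations by Riemann--Roch together with the vanishing theorems, and reads off $X$ as a quartic in $\bP(2,1,1,1,1)$ for $d = 2$ (a double cover of $\bP^3$ branched over a quartic) and as a sextic in $\bP(3,2,1,1,1)$ for $d = 1$, using $|2H|$ and $|3H|$. The Hodge numbers of Table~\ref{tab;HGXd} then follow from these explicit models by the Lefschetz hyperplane theorem applied to the ladder and the known cohomology of the ambient spaces. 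I expect the main obstacles to be twofold: first, making the ladder rigorous --- establishing base-point-freeness and very ampleness of $H$ for $d \ge 3$ and Bertini smoothness of each rung; and second, the geometric (non-numerical) identification in the $d = 5, 6$ cases, especially the separation of the two degree-six types, which is where the classification genuinely uses the structure theory of Mori contractions rather than mere invariant counting.
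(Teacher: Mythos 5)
The paper does not prove this theorem at all: Theorem~\ref{thm;dP3claf} sits in Appendix~\ref{sec;appdP} as a recalled classification, with the ``proof'' consisting of the citation to \cite[Theorem 3.3.1]{bookAGV} (the Fujita--Iskovskikh classification) and the references therein. So there is no internal argument to compare against; what you have written is a reconstruction of the outline of the literature proof. As an outline it is faithful to how the classification actually goes: the ladder $X \supseteq S \supseteq C$ with $S$ a del Pezzo surface of degree $d$ and $C$ an elliptic curve of degree $d$, the $\Delta$-genus computation $\Delta(X,H)=1$ giving $h^0(H)=d+2$, the split into the very ample range $d\ge 3$ versus the graded-ring treatment of $d=1,2$, and the recognition of the middle-range cases among varieties cut out by quadrics. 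Your quadric counts ($h^0(-K_X)=4d+3$, hence $\binom{d+3}{2}-(4d+3)$ quadrics through $X$ in $\bP^{d+1}$) are correct and do produce the right numbers for $d=4,5$.

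Two places where the proposal, as written, does not yet contain the decisive idea. First, the exclusion of $d=9$: your parenthetical (``$S\cong\bP^2$, incompatible with $S$ arising as an ample rung'') is not an argument, since there is nothing a priori wrong with $\bP^2$ sitting in $|H|$ with $H|_S=\cO(3)$. The actual reason is Fujita's non-extendability result for the $3$-uple Veronese surface: the only threefold in $\bP^{10}$ having $v_3(\bP^2)\subseteq\bP^9$ as a hyperplane section is the cone, which is singular. This (or the equivalent structure theory of $\Delta$-genus one polarized varieties) is where the degree bound genuinely lives. Second, the recognition steps for $d=5,6,7$ --- matching the five quadrics through $X_5$ with the Pl\"ucker quadrics, and separating the two $d=6$ types --- require the study of the family of lines on $X$ (or a Mukai-type vector bundle construction), not just the Mori contraction structure you invoke; you correctly flag these as the main obstacles but do not supply them. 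With those caveats, the proposal is an honest roadmap to the cited theorem rather than a self-contained proof, which is consistent with the paper's own decision to treat the statement as a black box.
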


\begin{table}[H]
    \centering
    \begin{tabular}{cccccccccc}
        \toprule
        $d$ & $1$ & $2$ & $3$ & $4$ & $5$ & $6 \I$ & $6 \II$ & $7$ & $8$ \\
        \midrule
        $h^{1,1} (X)$ & $1$ & $1$ & $1$ & $1$ & $1$ & $2$ & $3$ & $2$ & $1$ \\
        $h^{2,1} (X)$ & $21$ & $10$ & $5$ & $2$ & $0$ & $0$ & $0$
        & $0$ & $0$ \\
        \bottomrule
    \end{tabular}
    \caption{Hodge numbers of del Pezzo threefolds.}
  \label{tab;HGXd}
\end{table}

\begin{remark}\label{rmk;OX7_1}
Let $h_1$ denote the pullback, via the blow-up $X_7 \to \bP^3$, of the hyperplane class on $\bP^3$. Let $e$ be the exceptional divisor in $X_7$. Write $h_2 = h_1 - e$. Then $\cO_{X_7} (1) = \cO (h_1 + h_2)$.
\end{remark}

\bibliography{main}
\bibliographystyle{plain}%{alpha}

\end{document}